\newtheorem{theorem}{Theorem}[section]
\newtheorem{proposition}[theorem]{Proposition}
\newtheorem{lemma}[theorem]{Lemma}
\newtheorem{lemdef}[theorem]{Lemma/Definition}
\newtheorem{corollary}[theorem]{Corollary}
\newtheorem{conjecture}[theorem]{Conjecture}
\theoremstyle{definition}
\newtheorem{definition}[theorem]{Definition}
\newtheorem{remark}[theorem]{Remark}
\newtheorem{example}[theorem]{Example}
\newtheorem{question}[theorem]{Question}
\newcommand{\newword}[1]{\textbf{\emph{#1}}}
\newcommand{\sh}{\mathrm{sh}}
\newcommand{\rsh}{\mathrm{rsh}}
\newcommand{\rectify}{\mathrm{rect}}
\newcommand{\ev}{\mathrm{ev}}
\newcommand{\esh}{\mathrm{esh}}
\newcommand{\lesh}{\mathrm{local\text{-}esh}}
\newcommand{\eset}{\varnothing}
\newcommand{\rect}{{\scalebox{.3}{\yng(3,3)}}}
\newcommand{\x}{\times}
\newcommand{\ybox}{\boxtimes}
\newcommand{\ebox}{\scalebox{.5}{\yng(1)}}
\newcommand{\pieri}{{\bf \mathrm{Pieri}}}
\newcommand{\vertical}{{\bf \mathrm{Vert}}}
\newcommand{\jump}{{\bf \mathrm{Jump}}}
\newcommand{\horiz}{{\bf \mathrm{Horiz}}}
\newcommand{\cpieri}{{\bf \mathrm{CPieri}}}
\newcommand{\LR}{\mathrm{LR}}
\newcommand{\HW}{\mathrm{LR}}
\newcommand{\DE}{\mathrm{DE}}
\newcommand{\DEyb}{\DE(\alpha,{\scalebox{.5}{\yng(1)}},\beta, \gamma)}
\newcommand{\DEby}{\DE(\alpha,\beta,{\scalebox{.5}{\yng(1)}}, \gamma)}
\newcommand{\LRyb}{\LR(\alpha,{\scalebox{.5}{\yng(1)}},\beta, \gamma)}
\newcommand{\LRby}{\LR(\alpha,\beta,{\scalebox{.5}{\yng(1)}}, \gamma)}
\newcommand{\Kabg}{K(\gamma^c/\alpha; \beta)}
\DeclareMathOperator{\id}{id}
\title{Monodromy and K-theory of Schubert curves \\ via generalized jeu de taquin}
\author{Maria Monks Gillespie}
\address[Maria Monks Gillespie]{
Mathematics Department\\
University of California\\
Davis, CA}
\email{mgillespie@math.ucdavis.edu}
\author{Jake Levinson}
\address[Jake Levinson]{
Mathematics Department\\
University of Michigan\\
Ann Arbor, MI}
\email{jakelev@umich.edu}
\keywords{Schubert calculus, Young tableaux, jeu de taquin, K-theory, monodromy, osculating flag}
\subjclass[2010]{Primary 05E99, Secondary 14N15}
\thanks{The first author was supported by NSF GRFP and the Hertz Foundation. The second author was supported by FRQNT and by NSF grants DMS-1160720, 1464693 and 1101152. \bigskip \\ The final publication is available at Springer via http://dx.doi.org/10.1007/s10801-016-0705-7}
\date{\today}
\begin{document}

\begin{abstract}

We establish a combinatorial connection between the real geometry and the K-theory of complex \textit{Schubert curves} $S(\lambda_\bullet)$, which are one-dimensional Schubert problems defined with respect to flags osculating the rational normal curve.  In \cite{bib:Levinson}, it was shown that the real geometry of these curves is described by the orbits of a map $\omega$ on skew tableaux, defined as the commutator of jeu de taquin rectification and promotion.  In particular, the real locus of the Schubert curve is naturally a covering space of $\mathbb{RP}^1$, with $\omega$ as the monodromy operator.

We provide a fast, local algorithm for computing $\omega$ without rectifying the skew tableau, and show that certain steps in our algorithm are in bijective correspondence with Pechenik and Yong's \emph{genomic tableaux} \cite{bib:Pechenik}, which enumerate the K-theoretic Littlewood-Richardson coefficient associated to the Schubert curve. We then give purely combinatorial proofs of several numerical results involving the K-theory and real geometry of $S(\lambda_\bullet)$.


\end{abstract}

\maketitle

\section{Introduction}\label{sec:introduction}
In this paper, we study the real and complex geometry of certain one-dimensional intersections $S$ of Schubert varieties defined with respect to `osculating' flags.  
To define $S$, recall first that the \emph{rational normal curve} is the image of the Veronese embedding $\mathbb{P}^1 \hookrightarrow \mathbb{P}^{n-1} = \mathbb{P}(\mathbb{C}^n)$, defined by
\[t \mapsto [1 : t : t^2 : \cdots : t^{n-1}].\]
Let $\mathcal{F}_t$ be the \emph{osculating} or \emph{maximally tangent flag} to this curve at $t \in \mathbb{P}^1$,  i.e. the complete flag in $\mathbb{C}^n$ formed by the iterated derivatives of this map. The $i$-th part of the flag is spanned by the top $i$ rows of the matrix
\begin{equation*} \label{eqn:flag-matrix}
\begin{bmatrix}
\big(\frac{\mathrm{d}}{\mathrm{d}t}\big)^{i-1}(t^{j-1})
\end{bmatrix}
=
\begin{bmatrix}
1 & t & t^2 & \cdots & t^{n-1} \\
0 & 1 & 2t & \cdots & (n-1) t^{n-2} \\
0 & 0 & 2 & \cdots & (n-1)(n-2) t^{n-3} \\
\vdots & \vdots & \vdots &\ddots & \vdots \\
0 & 0 & 0 & \cdots & (n-1)!
\end{bmatrix}.
\end{equation*}
Let $G(k,\mathbb{C}^n)$ be the Grassmannian, and $\mathrm{\Omega}(\lambda,\mathcal{F}_t)$ the Schubert variety for the condition $\lambda$ with respect to $\mathcal{F}_t$. The \newword{Schubert curve} is the intersection \[S = S(\lambda^{(1)}, \ldots, \lambda^{(r)}) = \mathrm{\Omega}(\lambda^{(1)}, \mathcal{F}_{t_1}) \cap \cdots \cap \mathrm{\Omega}(\lambda^{(r)}, \mathcal{F}_{t_r}),\]
where the osculation points $t_i$ are real numbers with $0 = t_1 < t_2 < \cdots < t_r = \infty$, and $\lambda^{(1)},\ldots,\lambda^{(r)}$ are partitions for which $\sum |\lambda^{(i)}|=k(n-k)-1$.   For simplicity, we always consider intersections of only three Schubert varieties, though the results of this paper (in particular, Theorems \ref{thm:intro-2}, \ref{thm:MainResult2} and \ref{thm:intro-parity}) extend to the general case without difficulty.  With this in mind, we consider a triple of partitions $\alpha, \beta, \gamma$ with $|\alpha| + |\beta| + |\gamma| = k(n-k) - 1$, and we study the Schubert curve
\[S(\alpha,\beta,\gamma) = \mathrm{\Omega}(\alpha,\mathcal{F}_0) \cap \mathrm{\Omega}(\beta,\mathcal{F}_1) \cap \mathrm{\Omega}(\gamma,\mathcal{F}_\infty).\]

Schubert varieties with respect to osculating flags have been studied extensively in the context of degenerations of curves \cite{bib:Chan} \cite{bib:EH86} \cite{bib:Oss06}, Schubert calculus and the Shapiro-Shapiro Conjecture \cite{bib:MTV09} \cite{bib:Pur13} \cite{bib:Sot10}, and the geometry of the moduli space $\overline{M_{0,r}}(\mathbb{R})$ \cite{bib:Speyer}. They satisfy unusually strong transversality properties, particularly under the hypothesis that the osculation points $t$ are real \cite{bib:EH86} \cite{bib:MTV09}; in particular, $S$ is known to be one-dimensional (if nonempty) and reduced \cite{bib:Levinson}. Moreover, intersections of such Schubert varieties in dimensions zero and one have been found to have remarkable topological descriptions in terms of Young tableau combinatorics. \cite{bib:Chan} \cite{bib:Levinson} \cite{bib:Pur10} \cite{bib:Speyer}

The Schubert curve is no exception: recent work \cite{bib:Levinson} has shown that its \emph{real} connected components can be described by combinatorial operations, related to jeu de taquin and Sch\"{u}tzenberger's promotion and evacuation, on chains of skew Young tableaux. Recall that a skew semistandard Young tableau is \emph{Littlewood-Richardson} if its reading word is \emph{ballot}, meaning that every suffix of the reading word has partition content.
  
 \begin{definition}\label{def:chains}
  We write $\mathrm{LR}(\lambda^{(1)},\ldots,\lambda^{(r)})$ to denote the set of sequences $(T_1, \ldots, T_r)$ of skew Littlewood-Richardson tableaux, filling a $k\times (n-k)$ rectangle, such that the shape of $T_i$ extends that of $T_{i-1}$ and $T_i$ has content $\lambda^{(i)}$ for all $i$. (The tableaux $T_1$ and $T_r$ are uniquely determined and may be omitted.)
\end{definition}

The theorem below describes the topology of $S(\alpha,\beta,\gamma)(\mathbb{R})$ in terms of tableaux:

\begin{theorem}[\cite{bib:Levinson}, Corollary 4.9]\label{thm:intro-2}
There is a map $S \to \mathbb{P}^1$ that makes the real locus $S(\mathbb{R})$ a smooth covering of the circle $\mathbb{RP}^1$ (Figure \ref{fig:covering-space}). The fibers over $0$ and $\infty$ are in canonical bijection with, respectively, $\LRyb$ and $\LRby$. Under this identification, the arcs of $S(\mathbb{R})$ covering $\mathbb{R}_-$ induce the \emph{jeu de taquin bijection}
\[\sh : \LRby \to \LRyb,\]
and the arcs covering $\mathbb{R}_+$ induce a different bijection $\esh$, called \emph{evacuation-shuffling}. The monodromy operator $\omega$ is, therefore, given by $\omega = \sh \circ \esh.$
\end{theorem}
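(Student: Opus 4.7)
The plan is to construct an explicit algebraic map $S \to \mathbb{P}^1$ using a Wronski-type construction and then analyze its monodromy over the real locus. For each $V \in S$, the codimension-one Schubert condition $V \in \Omega(\tinybox, \mathcal{F}_t)$ (i.e., $V$ meets the $(n-k)$-plane of $\mathcal{F}_t$ nontrivially) is a single polynomial equation in $t$. Since $V$ already satisfies the conditions $\alpha, \beta, \gamma$ at $0, 1, \infty$, that polynomial has forced zeros of prescribed multiplicities there; after dividing them out, one zero $t^*(V) \in \mathbb{P}^1$ remains, and I would take $V \mapsto t^*(V)$ to be the desired map. The total degree of this map counts $0$-dimensional Schubert intersections against the hyperplane class $\sigma_1$, which matches the expected cardinalities of $\LRyb$ and $\LRby$ as fibers over the special points.

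Next, I would invoke the Mukhin--Tarasov--Varchenko transversality theorem for osculating flags to conclude that, for each $t^* \in \mathbb{RP}^1(\mathbb{R})$, the $4$-fold intersection $\Omega(\alpha, \mathcal{F}_0) \cap \Omega(\beta, \mathcal{F}_1) \cap \Omega(\gamma, \mathcal{F}_\infty) \cap \Omega(\tinybox, \mathcal{F}_{t^*})$ is $0$-dimensional, reduced, and entirely real. This makes $S(\mathbb{R}) \to \mathbb{RP}^1$ an unramified smooth covering. The fibers over $0$ and $\infty$ are the limits in which the extra $\tinybox$ merges with the $\alpha$- or $\gamma$-condition; a standard Schubert-calculus (Pieri-rule) limit argument identifies these fibers with $\LRyb$ and $\LRby$ respectively.

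To compute monodromy along $\mathbb{R}_-$, I would follow the unique real-analytic branch $V(t^*)$ of a fiber point as $t^*$ traverses the negative reals from $\infty^-$ to $0^-$; the endpoints yield a bijection $\LRby \to \LRyb$ from the topology of $S(\mathbb{R})$ alone. Identifying this bijection with jeu de taquin shuffling $\sh$ requires a local analysis at the limits: as $t^* \to 0^-$, the side of approach selects a particular Pieri summand of $\Omega(\alpha, \mathcal{F}_0) \cdot \sigma_1$, and matching the chosen summands on the two sides of the arc with the combinatorics of a single-box slide through the intermediate tableaux yields the identification. The same scheme with $\mathbb{R}_+$ in place of $\mathbb{R}_-$ produces the bijection $\esh : \LRyb \to \LRby$, except that the arc now passes through $t^* = 1$, where the $\tinybox$ interacts with $\beta$ and forces a more subtle combinatorial operation (an evacuation on the $\beta$-part of the chain) in addition to a slide. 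Composing the two arcs gives $\omega = \sh \circ \esh$.

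The hardest step will be the explicit combinatorial identification of the continuously-varying algebraic branches with $\sh$ and $\esh$. I would expect to either perform a tangent-space computation at each limit point, showing that the local tangent direction of $V(t^*)$ singles out the correct Pieri summand, or else set up an induction via an explicit degeneration of the osculating flags that reduces the claim to the one-box Pieri case, where the combinatorial rule is transparent. The $\esh$ side is especially delicate because one must track how the $\tinybox$ ``threads past'' the entire tableau $T_\beta$ at $t^* = 1$, rather than past a single box.
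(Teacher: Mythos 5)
First, note that this theorem is not proved in the present paper at all: it is imported verbatim from \cite{bib:Levinson}, Corollary 4.9, and serves only as the geometric input for the combinatorial results here. That said, your outline does reproduce the strategy of the cited reference at a high level: the map $S \to \mathbb{P}^1$ is indeed the Wronski-type map sending $V$ to the one remaining root of the degree-$k(n-k)$ Wronskian after the forced roots at $0$, $1$, $\infty$ are divided out; the reality and reducedness of the fibers over real points come from Mukhin--Tarasov--Varchenko \cite{bib:MTV09}; and the identification of the arcs over $\mathbb{R}_-$ with jeu de taquin is the monodromy theorem of Speyer \cite{bib:Speyer}, building on Purbhoo \cite{bib:Pur10}.

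As a proof, however, the proposal has a genuine gap, and you have located it yourself: the identification of the two families of arcs with $\sh$ and $\esh$ is not a ``hardest step'' to be filled in later --- it is essentially the entire content of the theorem --- and the plan you give for it (a tangent-space computation selecting a Pieri summand, or an unspecified induction) is not adequate. Two specific problems. First, MTV transversality does not apply at $t^* \in \{0,1,\infty\}$, where the extra osculation point collides with one of the existing ones; making sense of the scheme-theoretic fibers there, proving they remain reduced, and indexing them by the chains $\LRyb$ and $\LRby$ requires the compactified family over $\overline{M_{0,r}}$ (equivalently, Eisenbud--Harris limit linear series), not a ``standard Pieri-rule limit argument.'' The same machinery is needed to see that $S(\mathbb{R}) \to \mathbb{RP}^1$ is unramified \emph{over} $0$ and $\infty$, which is part of the covering-space assertion. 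Second, the identification of the $\mathbb{R}_+$ arcs with $\esh$ --- the rectify--promote--unrectify description --- is exactly what \cite{bib:Levinson} establishes by degenerating the four marked points along the boundary of $\overline{M_{0,4}}(\mathbb{R})$ and composing the known jeu-de-taquin monodromies of \cite{bib:Speyer} along the resulting chain of nodal degenerations; your remark that the box must ``thread past'' $T_\beta$ at $t^* = 1$ points in the right direction but does not constitute an argument. In short: correct skeleton, matching the cited source's approach, but the decisive steps live entirely in the citations and are not supplied.
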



\begin{figure}[ht]
\centering
\qquad \includegraphics[scale=0.57]{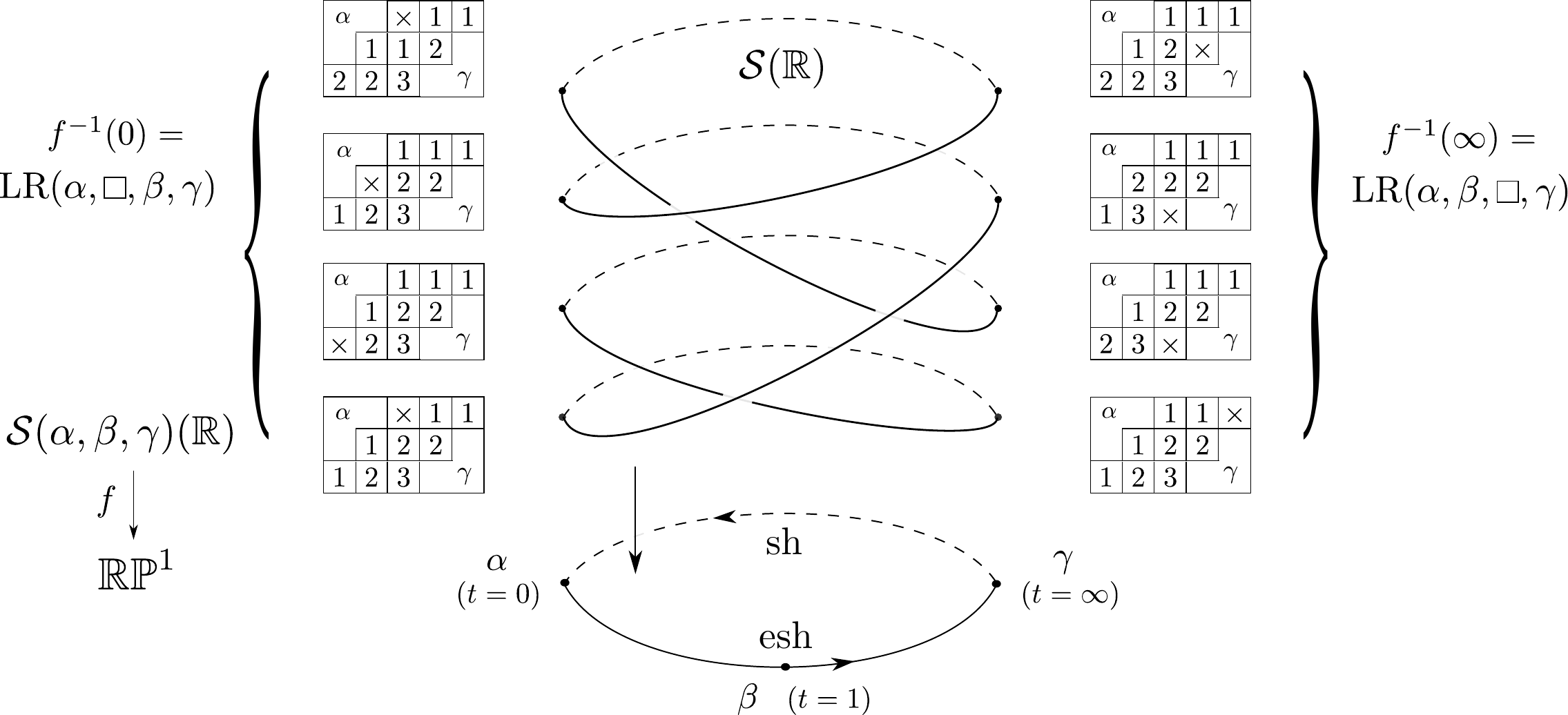}
\caption{An example of the covering space of Theorem \ref{thm:intro-2}.  The fibers over $0$ and $\infty$ are indexed by chains of tableaux, with $\ybox$ denoting the single box.  The dashed arcs correspond to sliding the $\ybox$ through the tableau using jeu de taquin.  The monodromy operator is $\omega = \sh \circ \esh$.}
\label{fig:covering-space}
\end{figure}


The operators $\esh$ and $\omega$ are our objects of study.  In \cite{bib:Levinson}, the second author described $\esh$ as the conjugation of jeu de taquin \emph{promotion} by \emph{rectification} (see Section \ref{sec:background} for a precise definition).  Variants of this operation have appeared elsewhere in \cite{bib:BerKir}, \cite{bib:HenrKam}, \cite{bib:KirBer}.


We prove two main theorems. The first is a shorter, `local' combinatorial description of the map $\esh$, which no longer requires rectifying or otherwise modifying the skew shape. We call our algorithm \emph{local evacuation shuffling}. Local evacuation-shuffling resembles jeu de taquin: it consists of successively moving the $\ybox$ through $T$ through a weakly increasing sequence of squares. Unlike jeu de taquin, the path is in general disconnected. (See Section \ref{sec:local-esh} for the definition, and Figure \ref{fig:antidiagonal} for a visual description of the path of the $\ybox$.)

\begin{figure}[b]
\centering
\includegraphics[scale=0.8]{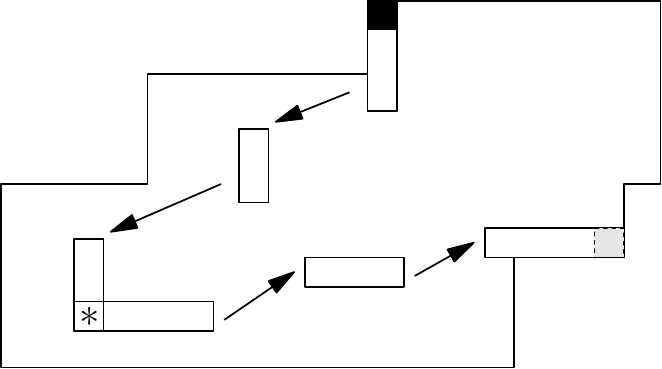}
\caption{The path of the $\ybox$ in a local evacuation-shuffle.  The black and gray squares are the initial and final locations of the $\ybox$; the algorithm switched from ``Phase 1'' to ``Phase 2'' at the square marked by a $*$. There is an \emph{antidiagonal symmetry}: the Phase 1 path forms a vertical strip, while the Phase 2 path forms a horizontal strip. We characterize this symmetry precisely in Corollary \ref{cor:antidiag-evacu-path}.}
\label{fig:antidiagonal}
\end{figure}

\begin{theorem}\label{thm:MainResult1}
  The map $\esh$ agrees with local evacuation shuffling. In particular, $\omega = \sh \circ \lesh$.
\end{theorem}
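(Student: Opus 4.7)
The plan is to unfold the definition of $\esh$ as $\rectify^{-1} \circ \mathrm{promotion} \circ \rectify$ applied to the middle tableau of the chain and to track the motion of the single box $\ybox$ throughout, without ever actually constructing the rectified tableau. The key enabling fact is that inward jeu de taquin slides can be performed in any order without affecting the final rectification; this gives us the freedom to interleave the slides with the promotion step in an order adapted to the local algorithm.

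Concretely, I would introduce an intermediate algorithm that rectifies the tableau one slide at a time, starting from squares near $\ybox$, and that records the position of $\ybox$ in the partially rectified tableau after each slide. I would then identify a combinatorial rule, depending only on a bounded neighborhood of $\ybox$ in the original skew shape, that predicts how $\ybox$ will move after the next slide composed with the (eventual) promotion. I expect two cases to emerge, corresponding to whether the motion is effectively vertical or horizontal, and these should match the Phase 1 and Phase 2 rules of $\lesh$ precisely (including the switchover point marked by $*$ in Figure \ref{fig:antidiagonal}). After the rectification is complete and promotion is applied, the reverse slides should faithfully reproduce the chain of tableaux recorded by $\lesh$, yielding $\esh = \lesh$ and hence $\omega = \sh \circ \lesh$.

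The main obstacle is establishing the locality of the rule. A priori, a single inward slide can rearrange many entries far from $\ybox$, so it is not obvious that the next step of $\esh$ is determined by a bounded neighborhood. The technical heart of the argument will be a commutation (or confluence) lemma showing that rearrangements away from $\ybox$ do not affect its next motion, together with a case analysis when $\ybox$ is adjacent to the region being rectified. I expect to prove this by induction on the number of remaining slides, with the inductive step reducing to a one-slide comparison that can be checked on a small local configuration; the antidiagonal symmetry of the path (Corollary \ref{cor:antidiag-evacu-path}) should then fall out naturally from the symmetry between the inward rectification and its outward reversal.
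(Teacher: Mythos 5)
Your high-level framing --- unfold $\esh$ as promotion conjugated by rectification and track the $\ybox$ slide by slide --- is the right starting point, and in fact it is essentially how the paper proves the one-row (Pieri) base case, Theorem \ref{thm:Pieri}: there one compares $T'$ with the tableau $T$ obtained by a single inward slide and checks that the Pieri rule is preserved, by induction on $|\alpha|$. But your plan has a genuine gap at its core. The rule you hope to extract cannot depend ``only on a bounded neighborhood of $\ybox$'': the actual moves of $\lesh$ (Definition \ref{def:algorithm}) send $\ybox$ to the \emph{nearest $i$ prior to it in reading order} (Phase 1) or to the nearest $i$ after it \emph{whose $(i,i+1)$-suffix is tied} (Phase 2). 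Both conditions are global statistics of the reading word --- the relevant $i$ may sit arbitrarily far from $\ybox$, and the suffix-tie condition involves counts over the entire tail of the word. So no bounded-local rule is correct, and the proposed ``one-slide comparison on a small local configuration'' cannot certify the inductive step. Relatedly, the transition point between the two phases (the row $s$ at which the shuffle path of $\ybox$ in the rectification turns from vertical to horizontal) is itself a global invariant of $(\ybox,T)$; your proposal gives no mechanism for locating it or for proving it is stable under the interleaved slides.

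What the paper does instead is worth contrasting with your plan, because it shows where the real work lies. Rather than inducting on slides, it chooses, for each $(\ybox,T)$, an \emph{$s$-decomposition} of $T$ into $s-1$ horizontal strips followed by $\beta_s$ vertical strips (with $s$ determined by where $\ybox$ lands after rectifying and shuffling), proves via dual equivalence that $\esh$ factors as a product of \emph{partial evacuation shuffles} $e_t\cdots e_1$ along this decomposition (Lemma \ref{lem:partial-esh}), and then reduces each factor to the already-established Pieri case --- directly for the horizontal strips (Phase 1), and after rotating and transposing dual equivalence classes together with Lemma \ref{lem:outer-esh} for the vertical strips (Phase 2). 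The antidiagonal symmetry you hope will ``fall out naturally'' is precisely this rotate-and-transpose step, and it requires the machinery of Lemmas \ref{lem:factor-horiz} and \ref{lem:factor-other}, not just the reversibility of slides. The authors even remark that an earlier attempt along the lines you describe (un-rectification slide by slide, using only horizontal strips) leads to a harder proof and a worse algorithm. To salvage your approach you would need, at minimum, to (a) replace ``bounded neighborhood'' with the correct reading-word conditions, (b) formulate and prove the one-slide compatibility statement for the full multi-row algorithm including the phase transition, and (c) handle the fact that a single inward slide can shift the transition step $s$. As written, the proposal is a plausible program rather than a proof, and its central locality claim is false.
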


Our second main result is related to K-theory and the orbit structure of $\omega$. We first recall a key consequence of Theorem \ref{thm:intro-2}:

\begin{proposition}[\cite{bib:Levinson}, Lemma 5.6] \label{prop:numerics}
Let $S$ have $\iota(S)$ irreducible components and let $S(\mathbb{R})$ have $\eta(S)$ connected components. Let $\chi(\mathcal{O}_S)$ be the holomorphic Euler characteristic. Then
\begin{align*}
\eta(S) &\geq \iota(S) \geq \chi(\mathcal{O}_S) \text{ and } \\
\eta(S) &\equiv \chi(O_S) \pmod 2.
\end{align*}
\end{proposition}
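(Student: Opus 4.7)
I would treat the three claims separately and in the order stated.

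For the bound $\chi(\mathcal{O}_S) \le \iota(S)$, the argument is standard algebraic geometry. Let $\nu : \tilde{S} \to S$ be the normalization, a disjoint union of $\iota(S)$ smooth projective curves $\tilde{C}_i$ of genera $g_i \ge 0$. The short exact sequence $0 \to \mathcal{O}_S \to \nu_*\mathcal{O}_{\tilde S} \to Q \to 0$, with $Q$ a torsion sheaf supported on the singular locus of $S$, yields $\chi(\mathcal{O}_S) \le \chi(\mathcal{O}_{\tilde S}) = \sum_i(1 - g_i) \le \iota(S)$.

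For the bound $\iota(S) \le \eta(S)$, the crucial input is Theorem~\ref{thm:intro-2}: the fibers of $\pi : S \to \mathbb{P}^1$ over real $t$ consist entirely of real points, and $S(\mathbb{R})$ is a smooth covering of $\mathbb{RP}^1$. From this I would extract three observations. (i) Every complex irreducible component $C \subset S$ is defined over $\mathbb{R}$: otherwise $C$ and $\overline{C}$ would be two distinct components whose fibers above $0$ are nonempty, disjoint, Galois-conjugate subsets of the fully real set $\pi^{-1}(0)$, which is impossible since every point of $\pi^{-1}(0)$ is conjugation-fixed. (ii) Such a $C$ then has real points, namely the entire fiber $C \cap \pi^{-1}(0)$. (iii) Two distinct components $C_i \ne C_j$ cannot share a real connected component of $S(\mathbb{R})$: were they to meet at a real point $p$, then $S(\mathbb{R})$ would have two smooth branches crossing at $p$, contradicting the smoothness from Theorem~\ref{thm:intro-2}. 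Combining (i)--(iii) produces at least one disjoint real connected component per complex irreducible component.

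For the parity $\eta(S) \equiv \chi(\mathcal{O}_S) \pmod 2$, the main tool is Klein's classical theorem on \emph{dividing} (Type~I) real curves. Each normalized component $\tilde{C}_i$ is a smooth real projective curve of genus $g_i$ equipped with a real map $\tilde{\pi}_i : \tilde{C}_i \to \mathbb{P}^1$ whose real fibers are fully real; the real locus $\tilde{C}_i(\mathbb{R}) = \tilde{\pi}_i^{-1}(\mathbb{RP}^1)$ therefore separates $\tilde{C}_i(\mathbb{C})$ into the two nonempty disjoint open preimages of the upper and lower half-planes. Hence $\tilde{C}_i$ is dividing, and Klein's parity theorem gives $|\tilde{C}_i(\mathbb{R})| \equiv g_i + 1 \equiv \chi(\mathcal{O}_{\tilde C_i}) \pmod 2$. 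Summing over $i$, and combining with the identification from step~2 of $\eta(S)$ with $\sum_i |\tilde{C}_i(\mathbb{R})|$, one gets $\eta(S) \equiv \chi(\mathcal{O}_{\tilde S}) \pmod 2$. To descend to $\chi(\mathcal{O}_S)$, one argues that $h^0(Q)$ is even: non-real singularities occur in complex-conjugate pairs with equal $\delta$-invariants, while smoothness of $S(\mathbb{R})$ rules out real singularities with odd contribution (in particular real nodes with real branches, which would violate the 1-manifold structure).

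The main obstacle is the parity statement: in addition to Klein's dichotomy, it requires a case analysis of which singularities are compatible with $S(\mathbb{R})$ being a smooth 1-manifold. If transversality of osculating flags forces $S$ to have only nodal singularities (as the literature suggests), the analysis collapses---real nodes are forbidden and complex nodes come in Galois-conjugate pairs, so $h^0(Q)$ is automatically even. For more exotic singularity types a finer local study is needed, and this is presumably the bulk of the work in the underlying \cite{bib:Levinson} argument.
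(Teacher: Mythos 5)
The paper does not prove this proposition; it is quoted verbatim from \cite{bib:Levinson} (Lemma 5.6), and your argument follows essentially the same route as the proof there: normalization plus a torsion quotient for $\iota(S)\geq\chi(\mathcal{O}_S)$, reality and smoothness of the real locus for $\eta(S)\geq\iota(S)$, and the dividing-curve (Klein) congruence together with conjugate pairing of singularities for the parity. One point deserves to be made explicit, since it is where all the geometric input actually enters: both the disjointness claim in your step (i) and the asserted equality $\tilde{C}_i(\mathbb{R})=\tilde{\pi}_i^{-1}(\mathbb{RP}^1)$ require knowing that the scheme-theoretic fibers of $\pi$ over \emph{all} real $t$ are reduced of full degree with all points real. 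This is the transversality result of \cite{bib:Levinson} (via Mukhin--Tarasov--Varchenko); it forces $S$ to be smooth along $\pi^{-1}(\mathbb{RP}^1)$, so that each real point lies on a unique branch, the normalization is an isomorphism near the real locus, and no non-real point of $\tilde{C}_i$ sits over $\mathbb{RP}^1$ (otherwise the two half-plane preimages could be joined through such a point and the dividing claim would fail). Once this is in place, your closing concern about exotic singularity types evaporates: every real point of $S$ lies over $\mathbb{RP}^1$, hence is smooth, so \emph{all} singularities are non-real and pair up under conjugation with equal $\delta$-invariants, making $h^0(Q)$ even with no case analysis needed.
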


We note that $\eta(S)$ is the number of orbits of $\omega$, viewed as a permutation of $\LRyb$. The numerical consequences above are most interesting in the context of K-theoretic Schubert calculus, which expresses $\chi(\mathcal{O}_S)$ in terms of both ordinary and K-theoretic \newword{genomic tableaux}, namely
\[\chi(\mathcal{O}_S) = |\LRyb| - |\Kabg|.\]
See Section \ref{sec:K-theory} for the definition of $K(\gamma^c/\alpha; \beta)$ due to Pechenik-Yong \cite{bib:Pechenik}. In particular, we see that
\begin{align}\label{eqn:ktheory-ineq-A}
|\Kabg| &\geq |\LRby| - |\mathrm{orbits}(\omega)|, \text{ and} \\
\label{eqn:ktheory-mod2-A}
|\Kabg| &\equiv |\LRby| - |\mathrm{orbits}(\omega)| \pmod 2.
\end{align}
The following reformulation is instructive: we recall that the \emph{reflection length} of a permutation $\sigma \in S_N$ is the minimum length of a factorization of $\sigma$ into arbitrary (not necessarily adjacent) transpositions. We have
\[\mathrm{rlength}(\sigma) = \sum_{\mathcal{O} \in \mathrm{orbits}(\sigma)}(|\mathcal{O}| - 1) = N - |\mathrm{orbits}(\sigma)|.\]
We also recall that the \textit{sign} of a permutation is the parity of the reflection length:
\[ \mathrm{sgn}(\sigma) \equiv \mathrm{rlength}(\sigma)\pmod 2.\]
where we use the convention that the sign of a permutation is $0$ or $1$ (rather than $\pm 1$).  Applying these relations to equations \eqref{eqn:ktheory-ineq-A} and \eqref{eqn:ktheory-mod2-A}, we see that
\begin{align} \label{eqn:ktheory-ineq}
|\Kabg| &\geq \mathrm{rlength}(\omega), \text{ and} \\
\label{eqn:ktheory-mod2}
|\Kabg| &\equiv \mathrm{sgn}(\omega) \pmod 2.
\end{align}

For the case where $\beta$ is a horizontal strip, a combinatorial interpretation of these facts was given in \cite{bib:Levinson}, indexing all but one step of an orbit by genomic tableaux. Our second main result generalizes this combinatorial interpretation, showing that certain steps of local evacuation-shuffling correspond bijectively to the genomic tableaux $\Kabg$:

\begin{theorem}\label{thm:MainResult2}
As $T$ ranges over $\LRyb$, for either phase of the local description of $\esh(T)$, the gaps in the $\ybox$ path are in bijection with the set $K(\gamma^c/\alpha;\beta)$.
\end{theorem}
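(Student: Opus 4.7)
The plan is to construct, for each phase $i \in \{1,2\}$, an explicit bijection
\[\Phi_i \;:\; \bigsqcup_{T \in \LRyb}\, \{\text{gaps of Phase $i$ of }\lesh(T)\} \;\longrightarrow\; \Kabg.\]
Because local evacuation-shuffling is defined by a local rule on $T$, the bijection itself should admit a local description: each gap should record a ``genomic event'' in the sense of Pechenik--Yong \cite{bib:Pechenik}, where a single letter of $\beta$ occupies multiple cells of $\gamma^c/\alpha$ (the multi-cell letters being the ``genes'' of the genomic tableau). A gap in the $\ybox$ path of Phase~$i$ is, concretely, a pair of consecutive positions of $\ybox$ that fail to be vertically (Phase~1) or horizontally (Phase~2) adjacent in the weakly increasing path depicted in Figure~\ref{fig:antidiagonal}.

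To construct $\Phi_i$, I would trace through local-esh on $T$ and record, cell-by-cell, how the motion of $\ybox$ induces a relabeling of cells with letters of $\beta$ together with gene annotations. The crucial observation is that steps of local-esh which coincide with ordinary jeu de taquin necessarily produce singleton genes, whereas gaps in the $\ybox$ path are precisely the loci where repeated, multi-cell genes arise; fixing a particular gap $g$ thus amounts to marking the corresponding non-singleton gene in $\Phi_i(T,g) \in \Kabg$. For the inverse, given $G \in \Kabg$ one uses the genomic ballot structure to peel off genes one at a time in a canonical order (for instance, the gene-reading word of \cite{bib:Pechenik}), reconstructing $T$ together with the marked gap.

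The main obstacle will be showing that $\Phi_i$ is genuinely a bijection. On the forward side, one must verify that the labeling produced by local-esh automatically satisfies the semistandard and genomic ballot conditions of Pechenik--Yong; on the inverse side, one must show that every genomic tableau arises uniquely from some $(T,g)$. I anticipate a careful case analysis of the states of local-esh immediately before and after each gap, generalizing the horizontal-strip case already treated in \cite{bib:Levinson}. Finally, the ``either phase'' assertion should follow from the antidiagonal symmetry of Corollary~\ref{cor:antidiag-evacu-path}: the involution swapping Phase~1 and Phase~2 permutes gaps in a manner compatible with the combinatorial definition of $\Kabg$, so the Phase~1 result transfers automatically to Phase~2.
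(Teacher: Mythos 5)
Your outline points in the same general direction as the paper's argument (each gap should be recorded as the unique repeated gene of a genomic tableau), but as written it is a plan rather than a proof, and two of the three mechanisms you propose are not the ones that actually work. First, the forward map: the genomic tableau attached to a gap is \emph{not} read off from $T$ alone via a single ``relabeling with gene annotations.'' The paper takes the intermediate tableau $T_i$ at the moment of the $\pieri_i$ (or $\cpieri$) move, fills the $\ybox$ with $i$, and marks the two endpoints $\ybox_1,\ybox_2$ of the gap as the one repeated gene; different gaps of the same $T$ thus produce genomic tableaux with \emph{different} underlying semistandard tableaux. The verification that this yields a ballot genomic tableau is not an open-ended case analysis: it reduces, via a reformulation of $\Kabg(i)$ as data $(T_i,\{\ybox_1,\ybox_2\})$ with ``deleting either marked square leaves a ballot word'' (Lemma \ref{lem:genomic-criterion}), to the fact that every intermediate tableau of $\lesh$ is ballot and semistandard (Theorem \ref{thm:ballotness}). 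You flag this verification as an obstacle but do not supply it, and it is the substantive content of the forward direction.

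Second, your proposed inverse (``peel off genes one at a time in a canonical order'') would not recover the starting tableau $T$ or the position of the $\ybox$; removing genes changes content and shape rather than undoing the algorithm. The correct inverse takes a genomic tableau, forms the two ordinary ballot tableaux $T',T''$ obtained by replacing one marked square with $\ybox$ and keeping $i$ in the other, observes that they differ by a non-vertical Pieri move (resp.\ a conjugate Pieri move), and then runs the \emph{reverse} local algorithm (Reverse Phase 1, resp.\ Reverse Phase 2) to land in $\LRyb$; ballotness of \emph{both} $T'$ and $T''$ is what makes this well defined. Third, the ``either phase'' assertion does not follow automatically from Corollary \ref{cor:antidiag-evacu-path}: rotating and transposing replaces $(\alpha,\beta,\gamma)$ by transposed/complemented data, so the Phase 1 bijection for the reflected problem lands in a \emph{different} set of genomic tableaux, and you would still need a canonical identification of that set with $\Kabg$ (and the later applications require $\varphi_1,\varphi_2$ to land in literally the same set). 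The paper instead proves the Phase 2 case by the mirror of the Phase 1 argument, noting only that the pair $T',T''$ occurs in the opposite order.
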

Using the bijections of Theorem \ref{thm:MainResult2}, we give an independent, purely combinatorial proof of the relations \eqref{eqn:ktheory-ineq} and \eqref{eqn:ktheory-mod2}, by factoring $\omega$ into auxiliary operators $\omega_i$, which roughly correspond to the individual steps of local evacuation-shuffling, applied in isolation. If $\beta$ has $\ell(\beta)$ parts, we have the following:

\begin{theorem}\label{thm:intro-parity} 
There is a factorization of $\omega$ as a composition $\omega_{\ell(\beta)} \cdots \omega_{1}$, such that for every $i$, and every orbit $\mathcal{O}$ of $\omega_i$, the bijections of Theorem \ref{thm:MainResult2} yield \emph{exactly} $|\mathcal{O}|-1$ distinct genomic tableaux.
\end{theorem}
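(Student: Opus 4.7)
The plan is to define the auxiliary operators $\omega_i$ as ``single-row'' versions of local evacuation-shuffling, indexed by the rows of $\beta$, so that $\omega_i$ processes only the portion of the $\lesh$ algorithm that interacts with entries of type $i$ while treating the rest of the data as an intermediate partial chain. Because the local description of $\esh$ in Section \ref{sec:local-esh} moves the $\ybox$ through a weakly increasing sequence of entries, it should factor as $\lesh = (\text{row-}\ell(\beta)\text{ step}) \circ \cdots \circ (\text{row-}1\text{ step})$. Composing with $\sh$ and absorbing the shuffle step into the final operator yields the desired $\omega = \omega_{\ell(\beta)} \cdots \omega_1$. This factorization step is largely a matter of unwinding definitions, though care is needed to identify a common state space (an enlarged set of partial chains) on which all the $\omega_i$ act as permutations.

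Next I would match each $\omega_i$ against a piece $K_i \subseteq \Kabg$ of the genomic tableaux. By Theorem \ref{thm:MainResult2}, the gaps recorded during a full run of $\lesh$ on $T$ are labeled by genomic tableaux, and the row-by-row factorization should partition this labeling into $\Kabg = K_1 \sqcup \cdots \sqcup K_{\ell(\beta)}$, where $K_i$ records exactly those genes produced by the $i$-th local step. The claim in Theorem \ref{thm:intro-parity} then reduces, one $i$ at a time, to the following orbit-counting property of $\omega_i$: every orbit $\mathcal{O}$ contains precisely $|\mathcal{O}|-1$ states whose execution of $\omega_i$ yields a gap, so there is exactly one ``gapless'' state per orbit. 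Equivalently, $\mathrm{rlength}(\omega_i) = |K_i|$.

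The main obstacle will be proving this gapless-representative-per-orbit statement. My approach is to exhibit $\omega_i$ explicitly as a product of transpositions indexed by $K_i$, one transposition for each potential gap, and then show this product is \emph{reduced}. Concretely, each gap can be interpreted as swapping two states that differ by a single ``outer jump'' of the $\ybox$ past some local configuration, so applying $\omega_i$ at a state either performs one such transposition or, if no gap is produced, acts as the identity part of the step. To force exactly one gapless state per orbit, I would track an invariant of the intermediate chain (for instance, the cumulative shape swept out by the $\ybox$, or a ballot-word statistic of the accumulated suffix) across successive applications of $\omega_i$: each gap shifts this invariant by a controlled amount, and closing the orbit forces exactly one step in the orbit to reset the invariant without producing a gap.

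Once the orbit-counting property is established for each $\omega_i$, Theorem \ref{thm:intro-parity} follows directly from the row-wise decomposition, and the desired consequences \eqref{eqn:ktheory-ineq} and \eqref{eqn:ktheory-mod2} are recovered by summing $\mathrm{rlength}(\omega) \leq \sum_i \mathrm{rlength}(\omega_i) = \sum_i |K_i| = |\Kabg|$ and taking the same identity modulo $2$.
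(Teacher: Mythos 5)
Your high-level plan --- factor $\omega$ into per-row operators $\omega_i$ and show that each orbit of each $\omega_i$ yields exactly $|\mathcal{O}|-1$ genomic tableaux --- is the same as the paper's, but the factorization you propose is not the right one, and the issue you defer as ``a matter of care'' is precisely where the content lies. If you absorb all of $\sh$ into the final operator, the earlier factors are the bare steps $\ell_i$ of $\lesh$, which are bijections between \emph{different} intermediate sets (the set $X_i$ of punctured ballot tableaux with the $\ybox$ sitting between the $(i-1)$-st and $i$-th horizontal strips maps to $X_{i+1}$); they are not permutations of any one set, and no enlargement of the state space will give them the cyclic orbit structure your counting argument needs. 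The paper's key observation is that $\sh$ factors through the \emph{same} intermediate sets: the tableaux arising during the jeu de taquin shuffle with the $\ybox$ between strips $i-1$ and $i$ are again exactly $X_i$. One therefore pairs each $\ell_i : X_i \to X_{i+1}$ with the corresponding shuffle piece $s_i : X_{i+1} \to X_i$ and sets $\omega_i = s_1\cdots s_{i-1}(s_i\ell_i)s_{i-1}^{-1}\cdots s_1^{-1}$, a genuine permutation of $\LRyb$ with $\omega = \omega_{\ell(\beta)}\cdots\omega_1$. This pairing is not bookkeeping; it is what makes the second half of the argument work.

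Your orbit-counting step (``track an invariant\dots closing the orbit forces exactly one gapless state'') gestures at the right conclusion but never identifies the mechanism. For the correct $\omega_i$, the composite $s_i\ell_i$ acts on $X_i$ exactly as in the Pieri case relative to the horizontal strip of $i$'s: either the $i$-th step is a jeu de taquin slide (a vertical slide, or a Phase 2 jump consisting only of horizontal moves), in which case the tableau is fixed and no genomic tableau is produced; or it is a $\pieri_i$ move followed by $s_i$, which moves the $\ybox$ down exactly one row of the $i$-strip and produces exactly one genomic tableau via $\varphi_1$; or it is a single upward jump producing none. Since the only possible motions are ``descend one row'' or ``jump up,'' bijectivity of $\omega_i$ forces each nontrivial orbit to be a cycle containing exactly one upward jump, hence exactly $|\mathcal{O}|-1$ genomic tableaux, and Theorem \ref{thm:generating-ktheory} guarantees every element of $K(\gamma^c/\alpha;\beta)(i)$ arises exactly once this way. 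Your alternative of exhibiting $\omega_i$ as a reduced product of transpositions indexed by $K_i$ is a restatement of the target identity $\mathrm{rlength}(\omega_i)=|K(\gamma^c/\alpha;\beta)(i)|$ rather than a route to proving it.
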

\noindent By summing over the orbits of the $\omega_i$'s, we deduce
\begin{align*}
\mathrm{rlength}(\omega) \leq \sum_i \mathrm{rlength}(\omega_i) = \sum_{i,\mathcal{O}} (|\mathcal{O}|-1) =|\Kabg|,
\end{align*}
by the subadditivity of reflection length. The sign computation is analogous.

Finally, we conjecture that the inequality \eqref{eqn:ktheory-ineq} `applies orbit-by-orbit on $\omega$', in the following sense:
\begin{conjecture} \label{conj:orbit-by-orbit-intro}
Using the bijections of Theorem \ref{thm:MainResult2}, each orbit $\mathcal{O}$ of $\omega$ generates \emph{at least} $|\mathcal{O}| - 1$ genomic tableaux.
\end{conjecture}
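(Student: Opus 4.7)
The first step in the plan is to reformulate the conjecture using the partition structure afforded by Theorem \ref{thm:MainResult2}. Fix one phase of the local $\esh$ algorithm, and let $G(T) \subseteq \Kabg$ denote the subset of genomic tableaux associated to the gaps in the $\ybox$ path of that phase applied to $T$. Theorem \ref{thm:MainResult2} says precisely that $\bigsqcup_{T \in \LRyb} G(T) = \Kabg$, so $\{G(T)\}$ partitions $\Kabg$. The total number of genomic tableaux associated to an orbit $\mathcal{O}$ of $\omega$ is therefore $\sum_{T \in \mathcal{O}} |G(T)|$, and the conjecture reduces to proving
\[
\sum_{T \in \mathcal{O}} |G(T)| \;\geq\; |\mathcal{O}| - 1
\]
for every orbit $\mathcal{O}$ of $\omega$.

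The most natural approach is to exploit the finer factorization $\omega = \omega_{\ell(\beta)} \cdots \omega_1$ from Theorem \ref{thm:intro-parity}, under which each $\omega_i$-orbit generates \emph{exactly} $|\mathcal{O}_i| - 1$ genomic tableaux. Writing $G(T) = \bigsqcup_i G_i(T)$ for the decomposition of $G(T)$ into contributions from each factor, one has $\sum_{T \in \mathcal{O}_i} |G_i(T)| = |\mathcal{O}_i| - 1$ for each orbit $\mathcal{O}_i$ of $\omega_i$. I would try to prove the $\omega$-orbit bound by a weighted double-counting or matroid-union argument over the intersection pattern of $\omega$-orbits with $\omega_i$-orbits, attempting to transport the exact $\omega_i$-credit to the $\omega$-orbits that absorb it.

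The main obstacle, and likely the reason the inequality remains conjectural, is that the orbit structures of $\omega$ and of its factors $\omega_i$ are genuinely independent: a single $\omega_i$-orbit can sprawl across many $\omega$-orbits, distributing its $|\mathcal{O}_i| - 1$ units of credit unevenly, with no automatic way to collect at least $|\mathcal{O}| - 1$ units inside any fixed $\omega$-orbit. A complementary strategy is to bypass the factorization and construct, for each orbit $\mathcal{O} = \{T_0, T_1 = \omega(T_0), \ldots, T_{|\mathcal{O}|-1}\}$, an explicit injection from $\{T_1, \ldots, T_{|\mathcal{O}|-1}\}$ into $\bigsqcup_{T \in \mathcal{O}} G(T)$, generalizing the canonical gap-assignment used for horizontal strips in \cite{bib:Levinson}. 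The essential difficulty here is handling runs of tableaux in $\mathcal{O}$ that are fixed by $\esh$ (so that $G(T_j) = \varnothing$), since such runs would force all the needed genomic tableaux to come from the remaining orbit elements; one would need a structural result excluding such runs in excess of the available ``surplus'' $\sum_{T \in \mathcal{O}} \max(|G(T)| - 1, 0)$, and I expect this to be the central combinatorial step.
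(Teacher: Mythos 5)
This statement is a \emph{conjecture} in the paper, not a theorem: the authors prove it only in special cases (singleton orbits via Proposition \ref{prop:fixed-points}, the weaker two-phase bound $|K_1(\mathcal{O})|+|K_2(\mathcal{O})|\ge |\mathcal{O}|-1$ of Corollary \ref{cor:order2-orbits}, and the phase-1 bound when $\beta$ has two rows, Theorem \ref{thm:tworows}), and otherwise report computational verification. Your proposal is likewise not a proof, and you say so; judged as a plan it is essentially accurate. The reformulation via the partition $\bigsqcup_{T}G(T)=\Kabg$ is correct, and your diagnosis of why the factorization $\omega=\omega_{\ell(\beta)}\cdots\omega_1$ only yields the global inequality \eqref{eqn:ktheory-ineq} --- the $\omega_i$-orbits cut across $\omega$-orbits, so the exact credit $|\mathcal{O}_i|-1$ of Theorem \ref{thm:little-orbits} cannot be localized to a single $\omega$-orbit --- is exactly the obstruction; the paper's factorization argument proves Theorem \ref{thm:intro-parity} and nothing finer.

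Two corrections and a comparison. First, a tableau $T_j$ in a nontrivial orbit with $G(T_j)=\varnothing$ for your chosen phase is not ``fixed by $\esh$'': by Proposition \ref{prop:fixed-points}, $\omega(T)=T$ precisely when \emph{both} phases contribute nothing (the evacu-shuffle path is connected), so within a nontrivial orbit every step contributes at least one tableau to $K_1\cup K_2$ but may contribute nothing to the single phase you fixed. This is exactly why the order-2 bound is easy and the single-phase bound is hard, and your proposed ``surplus'' condition must control phase-1-trivial steps, not $\omega$-fixed points. Second, your complementary injection strategy is in essence what the paper's two-row argument does: it partitions the orbit into segments delimited by steps that ``lose'' a Pieri move and shows that every segment except possibly one --- the segment beginning at the unique \emph{exceptional} tableau of Definition \ref{def:exceptional} --- contributes nonnegatively. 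Finding the right generalization of ``exceptional'' beyond two rows, so that at most one segment per orbit can be deficient, is the genuinely open combinatorial step; neither your plan nor the paper supplies it.
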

Conjecture \ref{conj:orbit-by-orbit-intro} implies the inequality \eqref{eqn:ktheory-ineq}, by summing over the orbits of $\omega$. In section \ref{sec:omega-orbits}, we prove this conjecture in certain special cases and give computational evidence that it holds in general.

The paper is organized as follows.  In Section \ref{sec:background}, we briefly recall the necessary background and definitions from the theory of tableaux and dual equivalence.  In Section \ref{sec:local-esh}, we define $\lesh$ and establish its basic properties. In Section \ref{sec:main-result}, we prove Theorem \ref{thm:MainResult1} that $\lesh$ agrees with $\esh$. We also establish certain symmetries of the algorithm under rotation and transposition of the tableau. Section \ref{sec:K-theory} contains the link to K-theory, and the proofs of Theorems \ref{thm:MainResult2} and \ref{thm:intro-parity}.

The remaining sections explore some consequences of the main results, including new geometric facts about Schubert curves. Section \ref{sec:omega-orbits} contains the results on orbits of $\omega$, including a characterization of its fixed points. In Section \ref{sec:constructions}, we construct examples of Schubert curves with `extremal' geometric properties. In Section \ref{sec:conjectures} we state some remaining combinatorial and geometric conjectures.

\subsection{Acknowledgments}
We especially thank Oliver Pechenik for his help with testing our conjectures using Sage \cite{sage}, and for several helpful discussions about tableaux combinatorics. Computations in Sage \cite{sage} were very helpful for testing conjectures and verifying our results throughout this work. We also thank Mark Haiman and David Speyer for their guidance. Finally, we are grateful to Bryan Gillespie, Nic Ford, Gabriel Frieden, Rachel Karpman, Greg Muller and David Speyer for comments on earlier drafts of this paper.

\section{Background and Notation}\label{sec:background}


\subsection{Partitions and tableaux}
Let $\lambda=(\lambda_1\ge \cdots \ge \lambda_k)$ be a partition.  We will refer to the partition $\lambda$ and its \newword{Young diagram} interchangeably throughout, where we use the English convention for Young diagrams in which there are $\lambda_i$ squares placed in the $i$th row from the top. The \newword{corners} of $\lambda$ are the squares which, if removed, leave a smaller partition behind. The \newword{co-corners} are, dually, the exterior squares which, if added to $\lambda$, give a larger partition. The \newword{transpose} of $\lambda$ is the partition $\lambda^\ast$ obtained by transposing its Young diagram. The \newword{length} of $\lambda$ is $\ell(\lambda) = k$. 

If $\mu=(\mu_1\ge \cdots \ge \mu_r)$ is a partition with $r\le k$ and $\mu_i\le \lambda_i$ for all $i$, then the \newword{skew shape} $\lambda/\mu$ is the diagram formed by deleting the squares of $\mu$ from that of $\lambda$.  The \newword{size} of $\lambda/\mu$, denoted $|\lambda/\mu|$, is the number of squares that remain in the diagram.

We will occasionally refer to (co-)corners of a skew shape $\lambda/\mu$. The \newword{inner} (respectively, \newword{outer}) \newword{corners} of $\lambda/\mu$ are the corners of $\lambda$ (respectively, the co-corners of $\mu$). These are the squares which, if deleted, leave a smaller skew shape. Similarly, the \newword{inner} (resp. \newword{outer}) \newword{co-corners} are the co-corners of $\lambda$ (resp. the corners of $\mu$): the exterior squares which can be added to obtain a larger skew shape (Figure \ref{fig:corners}).\footnote{Note that the definition of corner of $\lambda/\mu$ depends on the pair of partitions $\lambda$ and $\mu$, not just the squares that make up the skew shape. The same square may be both an inner and outer corner; likewise for co-corners.}

\begin{figure}[ht]
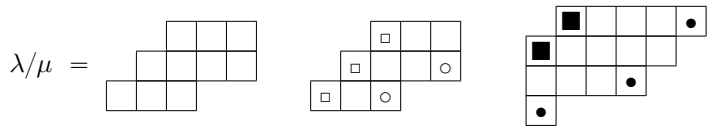

\centering
\[\lambda/\mu \ =\ \young(::\hfil\hfil\hfil,:\hfil\hfil\hfil\hfil,\hfil\hfil\hfil) \qquad \young(::\square\hfil\hfil,:\square\hfil\hfil\circ,\square\hfil\circ) \hspace{0.85cm} \raisebox{-7mm}{\young(:\blacksquare\hfil\hfil\hfil\bullet,\blacksquare\hfil\hfil\hfil\hfil,\hfil\hfil\hfil\bullet,\bullet)}\]
\caption{Corners ($\square,\circ$) and co-corners ($\blacksquare,\bullet$).}
\label{fig:corners}
\end{figure}

We write $\rect=((n-k)^k)$ to denote a fixed rectangular shape of size $k\times (n-k)$, and we will always work with skew shapes that fit inside $\rect$.  We define the \newword{complement} of a partition $\lambda\subset \rect$, denoted $\lambda^c$, to be the partition $(n-k-\lambda_k,n-k-\lambda_{k-1},\ldots,n-k-\lambda_1)$.  Note that $\lambda^c$ can be formed by rotating $\lambda$ by $180^\circ$ about the center of $\rect$ and then removing it from $\rect$.

A \newword{semistandard Young tableau} (SSYT) of skew shape $\lambda/\mu$ is a filling of the boxes of the Young diagram of $\lambda/\mu$ with positive integers such that the entries in are weakly increasing to the right across each row and strictly increasing down each column.   The \newword{content} of a semistandard Young tableau is the tuple $\beta=(\beta_1,\ldots,\beta_t)$ where $\beta_i$ is the number of times the number $i$ appears in the filling.  The \newword{reading word} is the sequence formed by reading the rows from bottom to top, and left to right within a row.

\begin{figure}[ht]
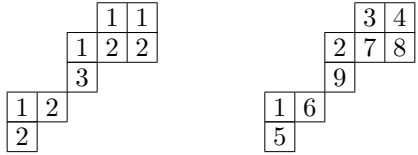

\centering
$$\young(:::11,::122,::3,12,2) \qquad \qquad \young(:::34,::278,::9,16,5)$$
\caption{Left: An SSYT with content $(2,2,1)$ and reading word $212312211$. Right: Its standardization.}
\label{fig:reading-word}
\end{figure}

An SSYT $S$ is \newword{standard} if the numbers $1,\ldots,|S|$ each appear exactly once as entries in $S$.  The \newword{standardization} of an SSYT $T$ is the tableau formed by replacing the entries of $T$ with the numbers $1,\ldots,|T|$ in the unique way that preserves the relative ordering of the entries, where ties are broken according to left-to-right ordering in the reading word (Figure \ref{fig:reading-word}).

The \newword{suffix} of an entry $m$ of $T$ is the suffix of the reading word consisting of the letters \emph{strictly} after $m$.  The \newword{weak suffix} is the suffix including that letter and those after it.  A suffix is \newword{ballot for $(i,i+1)$} if it contains at least as many $i$'s as $i+1$'s, and is \newword{tied} if it has the same number of $i$'s as $i+1$'s. Finally, a semistandard Young tableau $T$ is \newword{ballot} or \newword{Littlewood-Richardson} (also known as \emph{Yamanouchi} or \emph{lattice}) if every weak suffix of its reading word is ballot for $(i,i+1)$, for all $i$.  We write $\LR_\mu^\lambda(\beta)$ for the set of (semistandard) Littlewood-Richardson tableaux of shape $\lambda/\mu$ and content $\beta$. 

A tableau of shape $\lambda/\mu$ is \newword{straight shape}, or \newword{shape $\lambda$}, if $\mu=\eset$ is the empty partition.  The \newword{highest weight tableau} of straight shape $\lambda$ is the tableau in which the $i$th row from the top is filled with all $i$'s.  It is easily verified that this tableau is the only Littlewood-Richardson tableau of straight shape $\lambda$.

\subsubsection{Jeu de taquin rectification and shuffling}

An \newword{inward (resp. outward)  jeu de taquin} slide of a semistandard skew tableau $T$ is the operation of starting with an inner (resp. outer) co-corner of $T$ as the \newword{empty square}, and at each step sliding either the entry below or to the right (resp. above or to the left) of the empty square into that square in such a way that the resulting tableau is still semistandard. This condition uniquely determines the choice of slide. The former position of the moved entry is the new empty square, and the process continues until the empty square is an outer (resp. inner) co-corner of the remaining tableau.  An example is shown below.
$$\young(:13,23)\hspace{0.2cm} \longrightarrow \hspace{0.2cm}\young(1\cdot 3,23) \hspace{0.2cm}\longrightarrow\hspace{0.2cm} \young(133,2)$$
See \cite{bib:Fulton} for a more detailed introduction to jeu de taquin.

The \newword{rectification} of a skew tableau $T$, denoted $\rectify(T)$, is defined to be the straight shape tableau formed by any sequence of inwards jeu de taquin slides.  It is well known (often called the ``fundamental theorem of jeu de taquin'') that any sequence of slides results in the same rectified tableau.  

\begin{definition}
Let $S,T$ be semistandard skew tableaux so that the shape of $T$ extends the shape of $S$, that is, $T$ can be formed by successively adding outer co-corners starting from $S$.  We define the (jeu de taquin) \newword{shuffle} of $(S,T)$ to be the pair of tableaux $(T',S')$, where $S'$ is obtained from $S$ by performing outwards jeu de taquin slides in the order specified by the standardization of $T$, and $T'$ is obtained from $T$ by performing reverse slides in the order specified by the standardization of $S$.
\end{definition}

Equivalently, $T'$ records the squares vacated by $S$ as $S$ slides outwards, and $S'$ records the squares vacated by $T$ as $T$ slides inwards.  We then say $S$ and $S'$ are \newword{slide equivalent}, and likewise for $T, T'$. 

\begin{lemma}
Shuffling is an involution.
\end{lemma}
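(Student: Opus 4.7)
The plan is to reduce to the case of standard tableaux via standardization and then exploit the reversibility of individual jeu de taquin slides.

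I would first argue that it suffices to treat the case where $S$ and $T$ are both standard. The reason is that jeu de taquin slides commute with standardization, in the sense that performing a slide on a semistandard tableau and then standardizing yields the same result as standardizing first and then performing the analogous slide. Since standardization is bijective, the shuffle operation on semistandard pairs is conjugate to the shuffle on their standardizations, and the involution property transfers.

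For standard $S$ and $T$, I would view the shuffle of $(S,T)$ as a sequence of $|T|$ \emph{atomic exchanges} between $S$ and $T$, indexed by the entries of $T$ in std order. The $i$-th atomic exchange is induced by the outward slide of $S$ starting at the square $p_i$ (the position of $i$ in $T$): this slide moves $S$'s entries one step outward along a slide path and vacates some inner corner $q_i$; simultaneously, from $T$'s perspective, the corresponding reverse slide (part of the definition of $T'$) transfers $T$'s entry at $p_i$ to $q_i$. Each atomic exchange is manifestly invertible — the inverse is an inward slide of $S$ at $q_i$, which traces the same path in reverse and returns $T$'s entry from $q_i$ back to $p_i$. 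Thus the output $(T', S')$ records the cumulative effect of these $|T|$ atomic exchanges.

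The crux of the proof is then to verify that the shuffle of $(T', S')$ enacts precisely these same $|T|$ atomic exchanges but in the reverse order, returning the pair to $(S, T)$. This reduces to the following standardization-compatibility statement: $\mathrm{std}(S')$ labels the squares $\{p_i\}$, and $\mathrm{std}(T')$ labels the squares $\{q_i\}$, in orders that — when used to drive the respective slides of the second shuffle — reproduce the original atomic exchanges in reverse. I expect this compatibility to be the main obstacle, since $\mathrm{std}(S')$ and $\mathrm{std}(T')$ are determined by the combined effect of all the slides and do not visibly factor through a single atomic exchange. A clean way to establish it is by induction on $|T|$: peel off the atomic exchange corresponding to the largest std value of $T$ (whose slide happens last and is the most constrained), apply the inductive hypothesis to the remaining smaller shuffle on $(\widetilde{S}, \widetilde{T})$, and then verify that the peeled-off exchange is correctly reversed by the first atomic exchange of the second shuffle. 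Once this compatibility is in hand, each atomic exchange of the second shuffle inverts one of the first, so the composition is the identity and shuffling is an involution.
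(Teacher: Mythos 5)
Your route is genuinely different from the paper's. The paper disposes of the lemma in two sentences by encoding the shuffle as a growth diagram (input on the left and top edges, output on the bottom and right) and observing that the transpose of a growth diagram is again a growth diagram; the involution is then just the transpose symmetry. Your proof is the direct, slide-by-slide argument in the spirit of the original treatment of tableau switching by Benkart, Sottile and Stroomer. The reduction to standard tableaux and the decomposition into individually reversible atomic exchanges are both fine, and the overall shape of the argument is right.

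The soft spot is exactly where you locate it, and I do not think the induction you sketch closes it as stated. If you compute the second shuffle of $(T',S')$ by sliding $T'$ outward driven by $\mathrm{std}(S')$, then peeling off the last atomic exchange of the first shuffle does \emph{not} peel off the first exchange of the second one: the driving tableau $\mathrm{std}(S')$ is determined globally by all $|T|$ slides, so the inductive hypothesis for the smaller pair $(\widetilde{S},\widetilde{T})$ concerns slides driven by a different standardization on a different set of squares, and the two computations do not visibly match up. The clean fix is to use the \emph{dual} description of the shuffle that the paper builds into the definition: $T'$ is the record of the squares $q_1,\dots,q_n$ vacated by $S$, with $q_i$ receiving label $i$, and, equivalently, the inner output of the second shuffle may be computed by sliding $S'$ \emph{inward} driven by $\mathrm{std}(T')$. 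With that description the compatibility you worry about is automatic---$\mathrm{std}(T')$ labels the $q_i$ in exactly the order the first shuffle's slides occurred---and the inward slides at $q_n,q_{n-1},\dots,q_1$ undo the outward slides at $p_1,\dots,p_n$ one at a time, vacating $p_n,\dots,p_1$ and reconstituting $T$. Be aware, though, that the equivalence of the two descriptions (slide $S$ outward recording vacated squares, versus slide $T$ inward recording vacated squares) is itself the nontrivial input; it is precisely the growth-diagram symmetry the paper's proof invokes. Either you take it as given, as the paper's definition implicitly does, or your induction must prove it, at which point the two proofs essentially converge.
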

\begin{proof}[Proof sketch]
Shuffling can computed by growth diagrams (see \cite{bib:StanleyEC2}, appendix A1.2), with the input on the left and top sides, and the output on the bottom and right sides. The transpose of a growth diagram is again a growth diagram. \qed
\end{proof}

\subsection{Dual equivalence} 

We will use the theory of dual equivalence, particularly Lemmas \ref{lem:upper-shuffle} and \ref{lem:outer-esh}, to prove Theorem \ref{thm:MainResult1} on the correctness of our local algorithm for the monodromy operator $\omega$. Dual equivalence is not used outside of Section \ref{sec:main-result}.

 Let $S,S'$ be skew standard tableaux of the same shape. Following the conventions of \cite{bib:Haiman}, we say $S$ is \newword{dual equivalent} to $S'$ if the following is always true: let $T$ be a skew standard tableau whose shape extends, or is extended by, that of $S$. Let $\widetilde{T}, \widetilde{T}'$ be the results of shuffling $T$ with $S$ and with $S'$. Then $\widetilde{T} = \widetilde{T}'$.

In other words, $S$ and $S'$ are dual equivalent if they have the same shape, and they transform \emph{other} tableaux the same way under jeu de taquin.  Therefore, the fact that rectification of skew tableaux is well-defined, regardless of the rectification order, can be phrased in terms of dual equivalence as follows.
\begin{theorem} \label{thm-dual-jdt}
Any two tableaux of the same straight shape are dual equivalent.
\end{theorem}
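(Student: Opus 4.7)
The plan is to reduce dual equivalence of $S$ and $S'$ (two standard tableaux of the same straight shape $\lambda$) to the fundamental theorem of jeu de taquin---the well-definedness of rectification, stated just before the definition of the shuffle. Since $\lambda = \lambda/\varnothing$ has no inner boundary, the ``extended by'' clause of the dual equivalence definition forces the auxiliary tableau to be empty and is thus vacuous. It therefore suffices to treat the ``extends'' clause: for every skew standard tableau $T$ of shape $\nu/\lambda$ with $\nu \supset \lambda$, I must show that the $T$-components $\widetilde{T}$ of the shuffles of $(S,T)$ and of $(S',T)$ agree.

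By the definition of the shuffle, $\widetilde{T}$ is computed from $T$ by performing $|\lambda|$ reverse jeu de taquin slides, each using as its empty cell a corner of the current inner partition of $T$'s shape---starting from $\lambda$ and ending at $\varnothing$---in a sequence determined by the standardization of $S$. For any standard tableau $S$ of shape $\lambda$, removing its entries from largest to smallest visits a sequence of positions, each of which is a corner of the sub-shape formed by the remaining entries; these coincide with the successive inner corners of $T$ as the slides proceed. Thus the prescribed order yields a legitimate sequence of inward slides on $T$, and after all $|\lambda|$ of them the inner partition is exhausted, so $\widetilde{T}$ has straight shape.

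By the fundamental theorem of jeu de taquin, any sequence of inward slides reducing a skew tableau to straight shape yields the unique rectification $\rectify(T)$, regardless of the order. Therefore $\widetilde{T} = \rectify(T)$, which depends on $T$ alone and not on the choice of $S$. Repeating the argument with $S'$ in place of $S$ produces the same $\rectify(T)$, so the two $T$-components agree and $S, S'$ are dual equivalent. The main conceptual step---essentially the only place any care is required---is to recognize that, when one side of the shuffle has straight shape, the transformation of the other side is simply a jeu de taquin rectification; once this is isolated, the theorem follows at once from the fundamental theorem, with no further combinatorial obstacle.
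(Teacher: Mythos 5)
Your proof is correct and follows exactly the route the paper intends: the paper states this theorem as a direct rephrasing of the fundamental theorem of jeu de taquin, and your argument simply makes that reduction explicit (the ``extended by'' case is indeed vacuous for straight shapes, and the shuffle's action on $T$ is a rectification whose outcome is order-independent). Nothing further is needed.
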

\begin{definition}
We will write $D_\beta$ for the unique dual equivalence class of straight shape $\beta$.
\end{definition}

It is also known \cite{bib:Haiman} that $S$ and $S'$ are dual equivalent if \emph{their own} shapes evolve the same way under any sequence of slides.  We state this in the following lemma.

\begin{lemma} \label{lem-dual-def2}
Let $S,S'$ be skew standard tableaux of the same shape. Then $S$ is dual equivalent to $S'$ if and only if the following is always true: 
\begin{itemize}
\item Let $T$ be a tableau whose shape extends, or is extended by, that of $S$. Let $\widetilde{S}$ and $\widetilde{S'}$ be the results of shuffling $S,S'$ with $T$. Then $\widetilde{S}$ and $\widetilde{S}'$ have the same shape.
\end{itemize}
Additionally, in this case $\widetilde{S}$ and $\widetilde{S}'$ are also dual equivalent.
\end{lemma}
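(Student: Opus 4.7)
The plan is to prove the stated equivalence of characterizations of dual equivalence, and then the additional persistence claim, by tracking shuffles one step at a time. For the forward direction (primary definition implies the shape-only condition), I would observe that shuffling preserves the combined outer shape of the pair $S \cup T$. If $\widetilde{T} = \widetilde{T}'$ as in the primary definition, then in particular their shapes agree, and the complementary shapes of $\widetilde{S}, \widetilde{S}'$ — being the combined shape minus that of $\widetilde{T}$ — must also agree.

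For the reverse direction, assume the shape-only condition holds. Let $T^{(i)}$ denote the subtableau of $T$ consisting of its first $i$ entries in standardization order; this is still a valid tableau extending (or being extended by) the shape of $S$. Applying the shape-only hypothesis to each $T^{(i)}$ gives that the intermediate shape of $\widetilde{S}^{(i)}$ after $i$ shuffle steps agrees for $S$ and $S'$. By complementarity, the intermediate shapes of $\widetilde{T}^{(i)}$ also agree. Since the difference between consecutive intermediate shapes of $\widetilde{T}^{(i)}$ is a single square — namely the position of the $i$-th entry of $\widetilde{T}$ — every entry of $\widetilde{T}$ and $\widetilde{T}'$ sits in the same square, so $\widetilde{T} = \widetilde{T}'$.

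For the additional claim that $\widetilde{S}$ and $\widetilde{S}'$ are themselves dual equivalent, I would verify the (now-equivalent) shape-only condition for the pair $(\widetilde{S}, \widetilde{S}')$ with any further tableau $U$. The idea is to build a combined tableau $V$ from $T$ and $U$ whose standardization places all entries of $T$ before those of $U$. Then shuffling $S$ with $V$ factors as first shuffling $S$ with $T$ to produce $\widetilde{S}$, and then shuffling $\widetilde{S}$ with $U$ to produce $\widetilde{\widetilde{S}}$. Applying the original hypothesis to the pair $(S, S')$ and the tableau $V$ gives that the shape of $\widetilde{\widetilde{S}}$ agrees between the $S$ and $S'$ runs, which is exactly the shape-only condition for $(\widetilde{S}, \widetilde{S}')$ with respect to $U$.

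The main obstacle is verifying that the combined shuffle decomposes as claimed: that processing entries of $V$ in standardization order genuinely reproduces the shuffles with $T$ and then with $U$ in sequence. This requires care about how $T$ and $U$ sit relative to $S$ (inward versus outward extensions) and how their entries are interleaved in the standardization of $V$. The case where both $T$ and $U$ extend outward is the most transparent; mixed configurations may require reducing to this one via the involutivity of shuffling established in the previous lemma.
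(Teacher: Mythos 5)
First, note that the paper does not actually prove this lemma; it is quoted from Haiman \cite{bib:Haiman}, so your attempt is being measured against Haiman's argument rather than anything in the text. Your proof of the stated equivalence is essentially correct and pleasantly elementary. The forward direction via complementarity of shapes is fine. The reverse direction rests on the right observation: the recording tableau $\widetilde{T}$ is determined by the sequence of intermediate shapes of the sliding tableau, since the cell of $\widetilde{T}$ labelled $i$ is the unique cell by which the inner (or outer) boundary changes at the $i$-th slide; applying the shape hypothesis to the truncations $T^{(i)}$ then forces $\widetilde{T}=\widetilde{T}'$. One small correction: when $T$ lies \emph{inside} $S$, the shuffle processes the cells of $T$ in \emph{decreasing} order of its entries, so the relevant truncations are the last $i$ entries of $T$, not the first $i$ (the first $i$ entries of an inner $T$ do not form a tableau extended by $S$).

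The genuine gap is in the "additionally" clause. Your concatenation argument only verifies the shape condition for $(\widetilde{S},\widetilde{S}')$ against test tableaux $U$ lying on the \emph{same} side as $T$: if $T$ is outside $S$ and $U$ is outside $\widetilde{S}$, then $V=T\ast U$ is a valid standard tableau and the two-stage shuffle factors as claimed. But dual equivalence requires the condition for $U$ on \emph{both} sides, and when $U$ lies on the opposite side (say $T$ outside, $U$ inside $\widetilde{S}$) there is no single tableau whose shuffle with $S$ factors through both stages, because the two stages slide in opposite directions. The appeal to "involutivity" does not repair this: involutivity only tells you that shuffling $\widetilde{S}$ inward through the specific tableau $\widetilde{T}$ recovers $(S,T)$, and a general inner $U$ of shape $\nu'/\rho$ need not decompose as $P\ast\widetilde{T}$ with the entries of $\widetilde{T}$ processed first — the largest entries of $U$ can sit anywhere among the inner corners of its shape, so the inward shuffle through $U$ does not begin by undoing the shuffle through $T$. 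This mixed case is precisely the hard content of the lemma (it is why dual equivalence is a two-sided, slide-stable notion), and Haiman's proof of it goes through the machinery of elementary dual equivalences on triples of consecutive entries rather than through any factorization of shuffles. As written, your argument establishes the equivalence of the two characterizations but not the persistence of dual equivalence under shuffling.
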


We can extend the definition of shuffling to dual equivalence classes, using the following result. \cite{bib:Haiman}

\begin{lemma}
Let $S,T$ be skew tableaux, with $T$'s shape extending that of $S$, and let $(S,T)$ shuffle to $(\widetilde{T},\widetilde{S})$. The dual equivalence classes of $\widetilde{T}$ and $\widetilde{S}$ depend only on the dual equivalence classes of $S$ and $T$.
\end{lemma}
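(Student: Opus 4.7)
The plan is to reduce the statement to changing one of $S$ and $T$ at a time, and then apply the definition of dual equivalence together with Lemma \ref{lem-dual-def2}. Transitivity of dual equivalence then glues the two steps together.

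Suppose $S$ is dual equivalent to $S'$ (of the same shape) and $T$ is dual equivalent to $T'$ (of the same shape), with the shape of $T$ extending that of $S$. Write the four shuffles as
\[
(S,T)\mapsto(\widetilde T,\widetilde S),\qquad (S',T)\mapsto(\widetilde T_a,\widetilde S_a),\qquad (S',T')\mapsto(\widetilde T_b,\widetilde S_b).
\]
First I would hold $T$ fixed and change $S$ to $S'$. The direct definition of dual equivalence, applied to the pair $S,S'$ with partner $T$, states that the ``$T$-slot output'' of the shuffle is literally unchanged, so $\widetilde T=\widetilde T_a$. Lemma \ref{lem-dual-def2} then guarantees that the ``$S$-slot outputs'' $\widetilde S$ and $\widetilde S_a$ have the same shape and are dual equivalent.

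Next I would hold $S'$ fixed and change $T$ to $T'$. Applying the definition of dual equivalence to the pair $T,T'$ with partner $S'$ — which is legitimate because the definition explicitly allows the partner's shape to extend \emph{or} be extended by the shape of the tableau being tested — the ``$S'$-slot output'' is literally unchanged, giving $\widetilde S_a=\widetilde S_b$. Lemma \ref{lem-dual-def2} again yields $\widetilde T_a$ dual equivalent to $\widetilde T_b$. Chaining the two steps produces
\[
\widetilde T \;=\; \widetilde T_a \;\sim\; \widetilde T_b,\qquad \widetilde S \;\sim\; \widetilde S_a \;=\; \widetilde S_b,
\]
and since dual equivalence is transitive, the dual equivalence classes of $\widetilde T_b$ and $\widetilde S_b$ depend only on the classes of $S$ and $T$.

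The main thing to check carefully is the symmetry of this argument: in the first step I am using the ``literal equality'' half of the definition of dual equivalence on the $S$-side and the ``same-shape/dual-equivalent'' conclusion of Lemma \ref{lem-dual-def2} on the $T$-side, while in the second step these roles are swapped. There is no asymmetry to worry about, however, because the definition of dual equivalence quantifies over partner tableaux whose shape either extends or is extended by the tested tableau, so it applies equally well in both configurations. Beyond this, the argument is purely formal bookkeeping on top of the stated lemmas, so I do not anticipate any significant obstacle.
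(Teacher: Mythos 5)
Your argument is correct. The paper itself gives no proof of this lemma --- it is stated as a citation to Haiman's work, as is Lemma \ref{lem-dual-def2} --- so there is nothing to compare against directly; but your two-step reduction (change $S$ to $S'$ holding $T$ fixed, then change $T$ to $T'$ holding $S'$ fixed) is a clean and valid formal deduction from the stated definitions. In the first step the definition of dual equivalence gives literal equality $\widetilde T=\widetilde T_a$ and the final clause of Lemma \ref{lem-dual-def2} gives $\widetilde S\sim\widetilde S_a$; in the second step the roles swap, which is legitimate precisely because the definition quantifies over partners whose shape either extends or is extended by the tested tableau. The only genuinely non-formal input is that final clause of Lemma \ref{lem-dual-def2} (that shuffling two dual equivalent tableaux past the same partner yields dual equivalent outputs), which is where Haiman's theory actually enters; since that lemma is stated earlier in the paper, your use of it is consistent with the paper's logical ordering, and transitivity of dual equivalence (immediate from its definition as "always transform partners identically") closes the argument.
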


So we may use any tableau of straight shape $\mu$ to rectify a skew tableau $S$ of shape $\lambda/\mu$. Thus we may speak of the \newword{rectification tableau} of a slide equivalence class. Similarly, by the above facts, we may speak of the \newword{rectification shape of a dual equivalence class} $\rsh(D)$.  This is the shape of any rectification of any representative of the class $D$.

\newcommand{\cD}{\mathcal{D}}
\newcommand{\cS}{\mathcal{S}}
\begin{lemma}\label{slide-dual}
Let $\cD,\cS$ be a dual equivalence class and a slide equivalence class, with $\rsh(\cD) = \rsh(\cS)$. There is a unique tableau in $\mathcal{D} \cap \mathcal{S}$.
\end{lemma}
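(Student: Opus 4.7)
The plan is to induct on $|\mu|$, using a single jeu de taquin slide to progressively reduce the size of the inner shape.

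For the base case $\mu = \emptyset$, every tableau in $\mathcal{D}$ has straight shape $\lambda = \rsh(\mathcal{D})$, so by Theorem \ref{thm-dual-jdt} the class $\mathcal{D}$ is exactly the set of all standard tableaux of shape $\lambda$. The slide class $\mathcal{S}$ has a unique straight-shape representative $T_0$ of shape $\rsh(\mathcal{S}) = \lambda$, and since any straight-shape tableau is its own rectification, $T_0$ is the only element of $\mathcal{S}$ of shape $\lambda$. Hence $\mathcal{D} \cap \mathcal{S} = \{T_0\}$.

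For the inductive step, assume $|\mu| \geq 1$ and fix a corner $c$ of $\mu$. Let $\mu' = \mu \setminus \{c\}$. The jeu de taquin slide starting at $c$ defines a bijection $T \mapsto T^\flat$ from standard tableaux of shape $\lambda/\mu$ to standard tableaux of shapes $\lambda^\flat/\mu'$ (where $\lambda^\flat \subseteq \lambda$ depends on $T$), with inverse given by the reverse slide. This bijection preserves rectification, so it sends $\mathcal{S}$ onto itself. The key additional claim is that it preserves dual equivalence in the strong form needed: all $T \in \mathcal{D}$ slide to tableaux of a single common skew shape $\lambda^\flat/\mu'$ and lie in a single dual equivalence class $\mathcal{D}'$, which satisfies $\rsh(\mathcal{D}') = \rsh(\mathcal{D}) = \rsh(\mathcal{S})$. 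Granting this, the inductive hypothesis applied to $\mathcal{D}'$ and $\mathcal{S}$ gives $|\mathcal{D}' \cap \mathcal{S}| = 1$, and pulling back through the slide bijection yields $|\mathcal{D} \cap \mathcal{S}| = 1$.

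The main obstacle is justifying that a single jeu de taquin slide preserves dual equivalence, since Lemma \ref{lem-dual-def2} as stated addresses only \emph{complete} shuffles with an entire tableau. The approach is to compare shuffles of $T_1, T_2 \in \mathcal{D}$ with a standard tableau $M$ of shape $\mu$ chosen so that $M(c) = |\mu|$; because a shuffle processes the cells of $M$ in a corner-removal order dictated by the standardization, the slide at $c$ can be isolated as a single distinguished step of the shuffle, and the remaining slides operate on the common skew shape $\lambda^\flat/\mu'$. Lemma \ref{lem-dual-def2} tells us that the final outputs of the two shuffles are dual equivalent, and by factoring off the step at $c$ and using the involution property of shuffling, one deduces that $T_1^\flat$ and $T_2^\flat$ are themselves dual equivalent. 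Equivalently, this is Haiman's foundational result that individual slides permute dual equivalence classes, which is implicit in the paper's background treatment.
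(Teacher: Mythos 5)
Your proof is correct, but it takes a genuinely different route from the paper's. The paper proves existence by a single global construction: rectify a representative $T_{\mathcal{D}}$ against an arbitrary straight-shape tableau $X$, replace the resulting straight-shape output by the rectification tableau $R_{\mathcal{S}}$ of the class $\mathcal{S}$, and shuffle back; Theorem \ref{thm-dual-jdt} (all straight-shape tableaux of a given shape are dual equivalent) together with Lemma \ref{lem-dual-def2} then shows the result lies in $\mathcal{D}\cap\mathcal{S}$, with uniqueness handled separately. You instead induct on $|\mu|$ one jeu de taquin slide at a time, which amounts to re-deriving Haiman's fact that individual slides permute dual equivalence classes. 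Your justification of that key claim (choosing $M$ with $M(c)=|\mu|$ and ``factoring off'' the distinguished step of the shuffle) is more roundabout than necessary: the paper's unnumbered lemma following Lemma \ref{lem-dual-def2} says that when $(S,T)$ shuffles to $(\widetilde{T},\widetilde{S})$, the dual equivalence classes of the outputs depend only on those of the inputs; taking $S$ to be the one-cell tableau occupying $c$ makes the shuffle literally the single slide at $c$, and immediately yields your claim, including the constancy of $\lambda^\flat$ on $\mathcal{D}$ (since the shape of $\widetilde{S}$, the vacated cell, is also determined). Two small points to tighten: first, ``pulling back through the slide bijection'' needs surjectivity of the map $\mathcal{D}\cap\mathcal{S}\to\mathcal{D}'\cap\mathcal{S}$ as well as injectivity; this follows by applying the same one-cell argument to the \emph{reverse} slide at the fixed vacated cell $\lambda\setminus\lambda^\flat$, which therefore carries $\mathcal{D}'$ back into $\mathcal{D}$. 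Second, the slide does not send $\mathcal{S}$ ``onto itself'' (it is only defined on the members of $\mathcal{S}$ of shape $\lambda/\mu$), though it does preserve membership in $\mathcal{S}$, which is all you use. As for what each approach buys: the paper's argument is shorter and produces the desired tableau in one constructive step, while yours is more elementary in that it never fully rectifies and it exhibits an explicit slide bijection between $\mathcal{D}$ and the smaller class $\mathcal{D}'$, at the cost of invoking (or reproving) the single-slide invariance of dual equivalence.
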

\begin{proof}
Uniqueness is clear. To produce the tableau, pick any $T_\cD \in \cD$. Rectify $T_\cD$ using an arbitrary tableau $X$, so $(X,T_\cD)$ shuffles to $(\widetilde{T_\cD},\widetilde{X})$ (and $X$ and $ \widetilde{T_\cD}$ are of straight shape). Replace $\widetilde{T_\cD}$ by the rectification tableau $R_\cS$ for the class $\cS$, and let $(R_\cS,\widetilde{X})$ shuffle back to $(X,T)$. Then $T$ and $R_\cS$ are slide equivalent, and by Theorem \ref{thm-dual-jdt} and Lemma \ref{lem-dual-def2}, $T$ and $T_\cD$ are dual equivalent. \qed
\end{proof}

The dual equivalence classes of a given skew shape and rectification shape are counted by a Littlewood-Richardson coefficient.

\begin{lemdef} \label{lem:dual-LRcoeff}
Let $\lambda/\mu$ be a skew shape and let 
\[\DE_\mu^\lambda(\beta) = \{\text{dual equivalence classes } D \text{ with } \sh(D) = \lambda/\mu \text{ and } \rsh(D) = \beta \}.\]
Then $|\DE_\mu^\lambda(\beta)| = c_{\mu \beta}^\lambda.$
\end{lemdef}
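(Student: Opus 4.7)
The plan is to exhibit a bijection between $\DE_\mu^\lambda(\beta)$ and the set of standard skew tableaux of shape $\lambda/\mu$ that rectify to a fixed standard tableau of straight shape $\beta$, then invoke the standard combinatorial interpretation of $c_{\mu\beta}^\lambda$.

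Concretely, I would first fix any SYT $U$ of straight shape $\beta$ and let $\cS_U$ denote the slide-equivalence class consisting of all standard skew tableaux of shape $\lambda/\mu$ that rectify to $U$. Every element of $\cS_U$ has rectification shape $\beta$, so sending each $T \in \cS_U$ to its dual equivalence class defines a map
\[\Phi: \cS_U \longrightarrow \DE_\mu^\lambda(\beta).\]
I would then apply Lemma \ref{slide-dual} with the slide class $\cS_U$ (whose rectification shape is $\beta$): for every dual equivalence class $\cD \in \DE_\mu^\lambda(\beta)$, the intersection $\cD \cap \cS_U$ consists of exactly one tableau. This is precisely the statement that $\Phi$ is a bijection, so $|\DE_\mu^\lambda(\beta)| = |\cS_U|$.

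To finish, I would cite the well-known identification of $c_{\mu\beta}^\lambda$ with the number of standard skew tableaux of shape $\lambda/\mu$ rectifying to any fixed SYT of shape $\beta$ — this is the usual standardization bijection between LR tableaux of shape $\lambda/\mu$ and content $\beta$ (used to define $c_{\mu\beta}^\lambda$ in the excerpt) and the elements of $\cS_{U_\beta}$ for $U_\beta$ the superstandard tableau of shape $\beta$; the independence from the choice of $U$ follows from Theorem \ref{thm-dual-jdt}. Combining gives $|\DE_\mu^\lambda(\beta)| = c_{\mu\beta}^\lambda$.

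The only genuinely nontrivial ingredient is Lemma \ref{slide-dual}, which has already been proved, so the remaining work is purely bookkeeping: noting that $\Phi$ is well-defined (all tableaux in $\cS_U$ are automatically in some class of $\DE_\mu^\lambda(\beta)$ because they share rectification shape $\beta$), that surjectivity and injectivity follow from the "unique intersection" assertion of Lemma \ref{slide-dual}, and that $|\cS_U|$ is the standard count $c_{\mu\beta}^\lambda$. I do not anticipate a real obstacle — the conceptual content is that slide and dual equivalence classes cut the set of standard skew tableaux of shape $\lambda/\mu$ and rectification shape $\beta$ into a rectangular grid of size $c_{\mu\beta}^\lambda \times f^\beta$.
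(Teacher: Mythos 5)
Your proposal is correct and matches the paper's own argument: both reduce the count to Lemma \ref{slide-dual}, which says each dual equivalence class of rectification shape $\beta$ meets a fixed slide equivalence class (the tableaux rectifying to a chosen SYT of shape $\beta$, e.g.\ the standardization of the highest-weight tableau) in exactly one tableau, and then invoke the standard jeu-de-taquin interpretation of $c_{\mu\beta}^\lambda$. The only cosmetic difference is that you allow an arbitrary rectification target $U$ and note independence via Theorem \ref{thm-dual-jdt}, whereas the paper fixes the superstandard one from the start.
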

\begin{proof}
It is well-known that $c_{\mu \beta}^\lambda$ counts tableaux $T$ of shape $\lambda/\mu$ whose rectification is the standardization of the highest-weight tableau of shape $\beta$. This specifies the slide equivalence class of $T$; by Lemma \ref{slide-dual}, such tableaux are in bijection with $\DE_\mu^\lambda(\beta)$. \qed
\end{proof}

\subsubsection{Connection to Littlewood-Richardson tableaux}


As noted in the proof of Lemma \ref{lem:dual-LRcoeff}, we know by Lemma \ref{slide-dual} that a dual equivalence class $D$ of rectification shape $\beta$ has a unique \newword{highest-weight representative}, that is, the unique tableau $T$ dual equivalent to $D$ and slide equivalent to the standardization of the highest weight tableau of shape $\beta$.  By the fundamental theorem of jeu de taquin, if $S,T$ are highest-weight skew tableaux, the shuffles $(T',S')$ are also of highest weight.  We wish to work with Littlewood-Richardson tableaux, which are in bijection with these highest-weight representatives:


\begin{lemma}\label{lem:DE-LR}
  A semistandard skew tableau $T$ is Littlewood-Richardson (of content $\beta$) if and only if its standardization is the highest weight representative of its dual equivalence class $D$ (and $\rsh(D)=\beta$).
\end{lemma}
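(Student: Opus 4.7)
The plan is to reduce the statement to two well-known facts: (i) rectification preserves both the Littlewood--Richardson property and the content; and (ii) standardization commutes with jeu de taquin (hence with rectification). The key observation tying everything together is that the standardization $\tau_\beta$ of the highest-weight tableau $H_\beta$ of shape $\beta$ is ``row-superstandard'' (the first row contains $1,\dots,\beta_1$, the second contains $\beta_1+1,\dots,\beta_1+\beta_2$, and so on); this follows directly from the tie-breaking convention for standardization, since the reading word of $H_\beta$ lists rows from bottom to top.

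First I would unpack the hypothesis. By Lemma \ref{slide-dual} and the definition of highest-weight representative just stated, $\mathrm{std}(T)$ is the highest-weight representative of its dual equivalence class $D$ (with $\rsh(D)=\beta$) if and only if $\mathrm{rect}(\mathrm{std}(T)) = \tau_\beta$. Combined with the commutation identity $\mathrm{rect}(\mathrm{std}(T))=\mathrm{std}(\mathrm{rect}(T))$, the right-hand condition becomes
\[\mathrm{std}(\mathrm{rect}(T)) \;=\; \tau_\beta.\]

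For the forward direction, suppose $T$ is LR of content $\beta$. Jeu de taquin preserves both content and ballotness of the reading word (this is essentially the statement that rectification is well-defined and that LR tableaux form a subcrystal), so $\mathrm{rect}(T)$ is an LR tableau of straight shape and content $\beta$. The only such tableau is $H_\beta$, so $\mathrm{rect}(T)=H_\beta$ and hence $\mathrm{std}(\mathrm{rect}(T))=\tau_\beta$, as required.

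For the converse, suppose $\mathrm{std}(\mathrm{rect}(T))=\tau_\beta$. Since rectification preserves content, $\mathrm{rect}(T)$ is a straight-shape SSYT whose content $\beta'$ satisfies $|\beta'|=|\beta|$. In $\tau_\beta$, the entries $1,\dots,\beta_1$ occupy the first row, $\beta_1+1,\dots,\beta_1+\beta_2$ occupy the second row, etc., so the only SSYT standardizing to $\tau_\beta$ has its first row equal to $1^{\beta_1}$, its second row equal to $2^{\beta_2}$, and so on; that is, $\mathrm{rect}(T)=H_\beta$. Thus $T$ is slide equivalent to $H_\beta$, hence has content $\beta$ and is LR (since the ballot condition is preserved under reverse slides).

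The only step that requires real care is the commutation $\mathrm{std}\circ\mathrm{rect} = \mathrm{rect}\circ\mathrm{std}$, which I would either cite from \cite{bib:Fulton} or verify slide-by-slide by checking that an inward jdt slide on an SSYT agrees, after standardization, with the corresponding slide on the standardized tableau; this is routine since the semistandard slide rule and the choice between the entry below and to the right depend only on the relative order of the entries, which standardization preserves.
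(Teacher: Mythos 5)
The paper does not actually prove this lemma; it is cited as well-known from \cite{bib:Fulton}, so there is no in-paper argument to compare against. Judged on its own terms, your setup is sound: reducing ``$\mathrm{std}(T)$ is the highest-weight representative of $D$'' to ``$\rectify(\mathrm{std}(T)) = \tau_\beta$'' via Lemma \ref{slide-dual}, the commutation of standardization with jeu de taquin, the observation that $\tau_\beta$ is row-superstandard, and the entire forward direction (jeu de taquin preserves content and ballotness, and $H_\beta$ is the unique straight-shape ballot tableau of content $\beta$) are all correct.

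The converse has a genuine gap: the claim that $H_\beta$ is the \emph{only} semistandard tableau standardizing to $\tau_\beta$ is false. The condition $\mathrm{std}(U)=\tau_\beta$ for $U$ of shape $\beta$ only forces the entries of $U$, read along the rows of $\beta$ from top to bottom, to increase weakly within each row and strictly across row boundaries; it does not determine the content. For $\beta=(2,1)$, the standard tableau $\tau_\beta$ itself (first row $1,2$; second row $3$), viewed as a semistandard tableau, standardizes to $\tau_\beta$ but is not $H_\beta$. Indeed, taking $T=\tau_\beta$ shows the lemma is false as literally stated: $\mathrm{std}(T)$ is the highest-weight representative of its class and $\rsh(D)=(2,1)$, yet $T$ is not ballot. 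The intended reading supplies the missing hypothesis that $T$ has content $\beta$; with it, $\rectify(T)$ is a straight-shape tableau of shape $\beta$ (since $\rsh(D)=\beta$) \emph{and} of content $\beta$ (rectification preserves content), and column-strictness (entries in row $i$ are at least $i$, so rows $1,\dots,i-1$ contain precisely the entries less than $i$) forces $\rectify(T)=H_\beta$, after which your final step goes through. Without that hypothesis, the correct formulation is the bijection statement: standardization maps $\LR_\mu^\lambda(\beta)$ bijectively onto the set of standard tableaux of shape $\lambda/\mu$ rectifying to $\tau_\beta$, where the inverse destandardizes compatibly with the descent set of $\tau_\beta$ rather than merely inverting $\mathrm{std}$ on arbitrary semistandard preimages.
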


  This is well-known; see e.g. \cite{bib:Fulton}.   A consequence of this lemma is that there is a canonical bijection $$\DE_\mu^\lambda(\beta)\cong \LR_\mu^\lambda(\beta).$$  If $T$ is a highest weight representative for $D$ and $\beta$ is understood, we often write $$\HW(D)=T \text{ and } \DE(T)=D.$$

\subsubsection{Transposing and rotating dual equivalence classes}\label{sec:rotate-transpose-DE}

Let $T$ be a standard tableau of skew shape $\alpha/\beta$, and write $T^R$ for the tableau of shape $\beta^c/\alpha^c$ obtained by rotating $T$ by $180^\circ$, then reversing the numbering of its entries. Rotating commutes with jeu de taquin shuffling, so the dual equivalence class of $T^R$ depends only on the dual equivalence class of $T$. This gives an involution of dual equivalence classes
\[D \mapsto D^R : \DE_\mu^\lambda(\beta) \to \DE^{\mu^c}_{\lambda^c}(\beta).\]
In particular, any tableaux $T, T'$ of `anti-straight-shape' $\rect/\lambda^c$ are dual equivalent, and their rectifications have shape $\lambda$. The same remarks apply to transposing standard tableaux, so we may speak of transposing a dual equivalence class:
\[D \mapsto D^\ast : \DE_\mu^\lambda(\beta) \to \DE_{\mu^\ast}^{\lambda^\ast}(\beta^\ast).\]
We note that these operations do not correspond to simple operations on the Littlewood-Richardson tableau $LR(D)$. The combination, however, is straightforward:\footnote{This phenomenon reflects the fact that both transformations encode the `Fundamental Symmetry' of Young tableau bijections, in the sense of Pak and Vallejo's work in \cite{bib:PakVallejo}.  Consequently, the composition \emph{does not} encode this deep symmetry, hence is easier to compute.}

\begin{lemma} \label{lem:highwt-lowwt}
Let $D \in \DE_\mu^\lambda(\beta)$. Let $\tilde{D} = (D^R)^\ast$ be obtained by rotating \emph{and} transposing $D$.

Then $\tilde{T} = \LR(\tilde{D})$ is obtained from $T = \LR(D)$ as follows: for each $j = 1, \ldots, \beta_1$, let $V_j$ be the vertical strip containing the $j$-th-from-last instance of each entry $i$ in $T$. The squares obtained by rotating and transposing $V_j$ contain the entry $j$ in $\tilde{T}$.
\end{lemma}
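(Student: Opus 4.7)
The plan is to verify the formula $\tilde T = \LR(\tilde D)$ in three stages: that each $V_j$ is a vertical strip, that $\tilde T$ is a Littlewood-Richardson tableau of the claimed shape and content, and that $\tilde T$ represents the dual equivalence class $\tilde D = (D^R)^\ast$. For the first stage, suppose for contradiction that the $j$-th-from-last copies of two distinct values $i<i'$ both lay in the same row of $T$; then the $i$-cell lies strictly left of the $i'$-cell, so the reading-word suffix just after the $i$-cell contains exactly $j-1$ copies of $i$ but at least $j$ copies of $i'$ (the $i'$-cell plus the $j-1$ copies of $i'$ later in reading order), contradicting $\#i \geq \#i'$. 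Hence $V_j$ is a vertical strip of size $\beta^\ast_j$, and its image $\widetilde{V}_j$ under the combined rotate-and-transpose is a horizontal strip of the same size in the reflected rectangle.

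For the second stage, I would show by induction on $j$ that the successive unions $\lambda^{c\ast} \cup \widetilde{V}_1 \cup \cdots \cup \widetilde{V}_j$ form a nested chain of partitions inside $\mu^{c\ast}$, so that $\tilde T$ is semistandard of the claimed shape with content $\beta^\ast$. This reduces to showing that removing the last $j$ cells (in reading order) of each horizontal strip of $T$ leaves a skew subtableau of $\lambda/\mu$ that is itself Littlewood-Richardson with appropriately truncated content; both the base case and the inductive step follow from the ballot condition together with the first stage. Ballotness of $\tilde T$ is then a direct check: the anti-diagonal reflection reverses both row-order and column-order, translating the ballot inequalities for $T$ into the corresponding inequalities for $\tilde T$.

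The third and subtlest stage is to show $\tilde T \in \tilde D$. Since $\tilde T$ is Littlewood-Richardson of content $\beta^\ast$, by Lemma \ref{lem:DE-LR} its standardization $\tilde T_{\mathrm{std}}$ is the highest-weight representative of some dual equivalence class $\tilde D'$ with $\rsh(\tilde D') = \beta^\ast$. To show $\tilde D' = \tilde D$, I would fix any standard representative $T_{\mathrm{std}} \in D$, form $T_{\mathrm{std}}^{R\ast} \in \tilde D$ directly, and verify that $T_{\mathrm{std}}^{R\ast}$ is dual equivalent to $\tilde T_{\mathrm{std}}$. The two share both skew shape $\mu^{c\ast}/\lambda^{c\ast}$ and rectification shape $\beta^\ast$, but since multiple dual equivalence classes can share these invariants, I would invoke Lemma \ref{lem-dual-def2} and check that both tableaux evolve through the same sequence of shapes under a common rectification order. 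The cleanest implementation is an induction on $|\beta|$, peeling off the cells of $\tilde T$ containing entry $\beta_1$ (the single horizontal strip $\widetilde{V}_{\beta_1}$) and matching the removal with the corresponding step on $T_{\mathrm{std}}^{R\ast}$. The main obstacle is this inductive shape-tracking, which is technical but not conceptually deep.
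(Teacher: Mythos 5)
Your Stage 1 argument is correct (a clean suffix-count showing the $j$-th-from-last instances of distinct values cannot share a row), and the claims in Stage 2 are true; together they amount to parts (i)--(ii) of the paper's Lemma \ref{lem:5-facts}, reached by a direct combinatorial route rather than via rectification. The difficulty is Stage 3, and there the proposal has a genuine gap. You correctly observe that matching shape $\mu^{c\ast}/\lambda^{c\ast}$ and rectification shape $\beta^\ast$ does not pin down the dual equivalence class, but the proposed remedy --- ``check that both tableaux evolve through the same sequence of shapes under a common rectification order,'' implemented by an induction on $|\beta|$ peeling off $\widetilde{V}_{\beta_1}$ --- is left entirely unexecuted, and it omits the one idea that actually makes the identification possible. (A secondary point: Lemma \ref{lem-dual-def2} as stated quantifies over \emph{all} shuffles; the reduction to a single complete rectification sequence is valid, but it needs the short argument that the two rectifications are of the same straight shape, hence dual equivalent by Theorem \ref{thm-dual-jdt}, and that reversing the common slide sequence preserves dual equivalence.)

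The missing mechanism is the jeu-de-taquin invariance of the decomposition into the strips $V_j$: since the reading word is preserved up to Knuth equivalence under slides and Knuth moves never transpose equal letters, the $j$-th-from-last instance of each value remains the $j$-th-from-last instance throughout \emph{any} slide sequence. This is exactly parts (iii)--(iv) of Lemma \ref{lem:5-facts}, and it is what lets one track where each $V_j$ lands: anti-rectifying $T$ to shape $\rect/\beta^c$ places $V_j$ on the $j$-th column from the appropriate edge, and since rotation and transposition commute with shuffling on dual equivalence classes, $\widetilde{V}_j$ must rectify to the $j$-th row of $\beta^\ast$; combined with the horizontal-strip description of the highest-weight representative of $\tilde{D}$ (Lemma \ref{lem:5-facts}(i),(iii) applied to $\tilde{T}$), this forces the entry on $\widetilde{V}_j$ to be $j$. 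Without this invariance statement (or an equivalent substitute), the shape-tracking induction you sketch has no handle on how the standardizations of $\tilde{T}$ and of $T^{R\ast}$ actually evolve, so the induction cannot be closed as described; I would not call this step merely ``technical,'' as it is where the conceptual content of the lemma lives.
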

\begin{proof}\
We defer the proof to Section \ref{sec:main-result}, where we prove a stronger statement (Lemma \ref{lem:5-facts}). \qed
\end{proof}

\subsection{Chains of dual equivalence classes and tableaux}

Following the conventions of \cite{bib:Levinson}, we define a \newword{chain of dual equivalence classes} to be a sequence $(D_1, \ldots, D_r)$ of dual equivalence classes, such that the shape of $D_{i+1}$ extends that of $D_i$ for each $i$ (Figure \ref{fig:DualChain}). We say the chain has \newword{type} $(\lambda^{(1)}, \ldots, \lambda^{(r)})$ if for each $i$, $\rsh(D_i) = \lambda_i$.

\begin{lemdef} \label{lem:dual-multiLRcoeff}
Let $\DE_\mu^\nu(\lambda^{(1)}, \ldots, \lambda^{(r)})$ denote the set of chains of dual equivalence classes of type $(\lambda^{(1)}, \ldots, \lambda^{(r)})$, such that $D_1$'s shape extends $\mu$ and $\nu$ is the outer shape of $D_r$. This has cardinality equal to the Littlewood-Richardson coefficient $c_{\mu, \lambda^{(1)}, \ldots, \lambda^{(r)}}^\nu$.
\end{lemdef}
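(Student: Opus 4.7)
The plan is to establish the identity by induction on $r$, using the classical recursion for multi-Littlewood-Richardson coefficients together with the base case proven in Lemma/Definition \ref{lem:dual-LRcoeff}. Recall the recursion
\[ c_{\mu, \lambda^{(1)}, \ldots, \lambda^{(r)}}^\nu \;=\; \sum_{\tau} c_{\mu, \lambda^{(1)}, \ldots, \lambda^{(r-1)}}^\tau \cdot c_{\tau, \lambda^{(r)}}^\nu, \]
where $\tau$ runs over partitions fitting between $\mu$ and $\nu$. The strategy is to produce a matching recursion on the combinatorial side by partitioning chains $(D_1, \ldots, D_r)$ according to the outer shape $\tau$ of $D_{r-1}$.

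The first step is to set up the bijection cleanly. By definition a chain of type $(\lambda^{(1)}, \ldots, \lambda^{(r)})$ with outer boundary conditions $\mu \subseteq \nu$ is the same data as a chain $(D_1, \ldots, D_{r-1})$ of type $(\lambda^{(1)}, \ldots, \lambda^{(r-1)})$ extending $\mu$, with some intermediate outer shape $\tau$, together with a single dual equivalence class $D_r$ of shape $\nu/\tau$ whose rectification shape is $\lambda^{(r)}$. This decomposition is tautological because the only constraint linking $D_{r-1}$ and $D_r$ is that the shapes match at $\tau$. Summing over $\tau$ then gives
\[ |\DE_\mu^\nu(\lambda^{(1)}, \ldots, \lambda^{(r)})| \;=\; \sum_{\tau} |\DE_\mu^\tau(\lambda^{(1)}, \ldots, \lambda^{(r-1)})| \cdot |\DE_\tau^\nu(\lambda^{(r)})|. \]

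The second step is to apply the inductive hypothesis to the first factor and Lemma/Definition \ref{lem:dual-LRcoeff} to the second factor. This immediately matches the recursion for multi-LR coefficients above, completing the induction. The base case $r=1$ is exactly the content of Lemma/Definition \ref{lem:dual-LRcoeff}.

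There is really no obstacle here; the argument is essentially a bookkeeping reduction to the $r=1$ case. The only point worth double-checking is that the definition of a chain in \cite{bib:Levinson} genuinely allows this telescoping — i.e., that the only condition between successive $D_i$'s is the shape-matching at the intermediate partition, with no hidden compatibility requirement. An alternative, slightly slicker write-up would bypass the recursion altogether: use the canonical bijection $\DE_\mu^\lambda(\beta) \cong \LR_\mu^\lambda(\beta)$ from Lemma \ref{lem:DE-LR} levelwise along the chain to get $|\DE_\mu^\nu(\lambda^{(1)}, \ldots, \lambda^{(r)})| = |\LR_\mu^\nu(\lambda^{(1)}, \ldots, \lambda^{(r)})|$, the latter being the standard tableau-theoretic description of $c_{\mu, \lambda^{(1)}, \ldots, \lambda^{(r)}}^\nu$. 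Either approach is essentially immediate given the tools already developed in this section.
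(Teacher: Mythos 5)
Your proof is correct, and it coincides with the argument the paper leaves implicit: the paper states this Lemma/Definition without proof, evidently regarding it as immediate from the $r=1$ case (Lemma \ref{lem:dual-LRcoeff}) by exactly the telescoping over intermediate shapes $\tau$ that you carry out, which is valid since the paper's definition of a chain imposes only the shape-matching condition between consecutive classes.
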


\begin{figure}[t]
\begin{center}
  \includegraphics[scale=0.85]{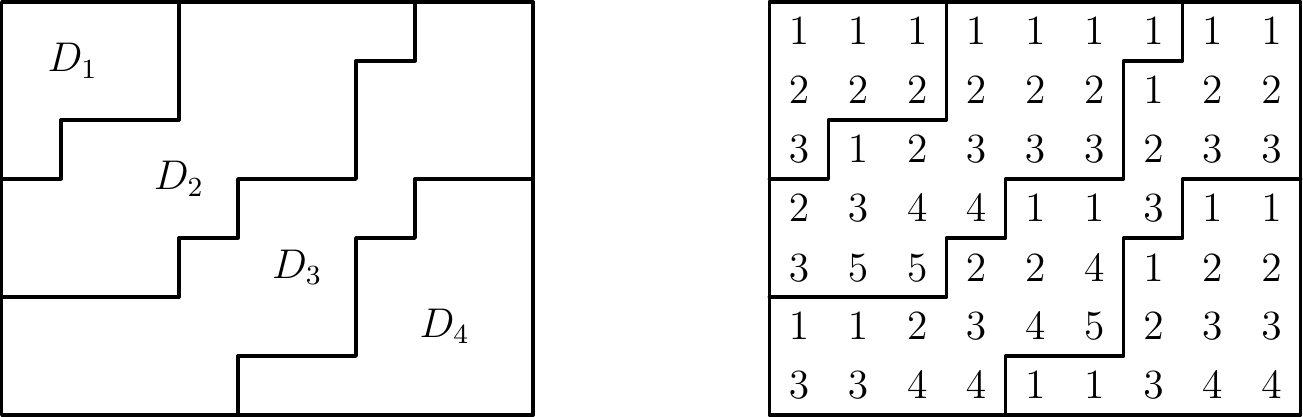}
\end{center}
\caption{\label{fig:DualChain} At left, a chain of dual equivalence classes that extend each other to fill a $k\times (n-k)$ rectangle, with rectification shapes $\lambda^{(1)},\ldots,\lambda^{(4)}$.  At right, a Littlewood-Richardson tableau with content $\lambda^{(i)}$ is given for the $i$th skew shape for $i=1,\ldots,4$.  Each dual equivalence class $D_\lambda$ of skew shape $\nu/\mu$ is represented by a unique Littlewood-Richardson tableau.}
\end{figure}

By Lemma \ref{lem:DE-LR}, we can work with Littlewood-Richardson tableaux in place of dual equivalence classes.  Define a \newword{chain of Littlewood-Richardson tableaux} to be a sequence $(T_1, \ldots, T_r)$ of Littlewood-Richardson tableaux, such that the shape of $T_{i+1}$ extends that of $T_i$ for each $i$.  We say the chain has \newword{type} $(\lambda^{(1)}, \ldots, \lambda^{(r)})$ if $T_i$ has content $\lambda^{(i)}$ for each $i$.

\begin{lemdef} \label{lem:LRchain}
Let $\LR_\mu^\nu(\lambda^{(1)}, \ldots, \lambda^{(r)})$ denote the set of chains of Littlewood-Richardson tableaux of type $(\lambda^{(1)}, \ldots, \lambda^{(r)})$, such that $T_1$'s shape extends $\mu$, and $\nu$ is the outer shape of $T_r$. There is a natural bijection $$\LR_\mu^\nu(\lambda^{(1)}, \ldots, \lambda^{(r)})\cong \DE_\mu^\nu(\lambda^{(1)},\ldots,\lambda^{(r)}).$$
\end{lemdef}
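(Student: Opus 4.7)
The plan is to bootstrap from the single-class bijection of Lemma \ref{lem:DE-LR} to produce a componentwise bijection of chains. Let $(D_1,\ldots,D_r) \in \DE_\mu^\nu(\lambda^{(1)},\ldots,\lambda^{(r)})$. By definition of a chain, there is a nested sequence of partitions $\mu = \nu^{(0)} \subseteq \nu^{(1)} \subseteq \cdots \subseteq \nu^{(r)} = \nu$ such that $D_i$ is a dual equivalence class of skew shape $\nu^{(i)}/\nu^{(i-1)}$ with $\rsh(D_i) = \lambda^{(i)}$. Applying Lemma \ref{lem:DE-LR} to each $D_i$ individually yields a unique semistandard Littlewood-Richardson tableau $T_i := \LR(D_i)$, of the same skew shape $\nu^{(i)}/\nu^{(i-1)}$ and content $\lambda^{(i)}$. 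The resulting tuple $(T_1,\ldots,T_r)$ then lies in $\LR_\mu^\nu(\lambda^{(1)},\ldots,\lambda^{(r)})$, because the shape-extension conditions in the two definitions are literally the same: both assert that the outer partition of the $i$-th component equals the inner partition of the $(i+1)$-st.

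For the inverse, I would send a chain $(T_1,\ldots,T_r) \in \LR_\mu^\nu(\lambda^{(1)},\ldots,\lambda^{(r)})$ to $(\DE(T_1),\ldots,\DE(T_r))$, where $\DE(T_i)$ is the dual equivalence class of the standardization of $T_i$. Again the shape-extension conditions transfer verbatim, so the result is a chain in $\DE_\mu^\nu(\lambda^{(1)},\ldots,\lambda^{(r)})$. Lemma \ref{lem:DE-LR} gives that the two componentwise assignments are mutually inverse, completing the bijection.

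There is no genuine obstacle: the entire content of the statement is that the single-class bijection $\DE_\mu^\lambda(\beta)\cong \LR_\mu^\lambda(\beta)$ is natural in the skew shape and preserves inner and outer boundaries, so it passes to chains without modification. As a consistency check, combining this bijection with Lemma/Definition \ref{lem:dual-multiLRcoeff} recovers the familiar formula $|\LR_\mu^\nu(\lambda^{(1)},\ldots,\lambda^{(r)})| = c_{\mu,\lambda^{(1)},\ldots,\lambda^{(r)}}^\nu$, which one could also verify directly by iterating the usual Littlewood-Richardson rule. Thus the statement is most naturally presented as a direct corollary of Lemma \ref{lem:DE-LR}, and the proof can be correspondingly brief.
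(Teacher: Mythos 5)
Your proof is correct and matches the paper's approach: the paper itself treats this Lemma/Definition as an immediate consequence of Lemma~\ref{lem:DE-LR} (stating only that ``By Lemma~\ref{lem:DE-LR}, we can work with Littlewood-Richardson tableaux in place of dual equivalence classes'') and offers no separate argument. Your fleshed-out version, applying the shape-preserving bijection $\DE_\mu^\lambda(\beta)\cong\LR_\mu^\lambda(\beta)$ componentwise and observing that the shape-extension and type conditions transfer verbatim, is exactly the implicit reasoning.
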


\begin{definition}
 When $\mu=\eset$ and $\nu=\rect$, we simply write $\LR(\lambda^{(1)},\ldots,\lambda^{(r)})$ and $\DE(\lambda^{(1)},\ldots,\lambda^{(r)})$ in place of $\LR_\mu^\nu(\lambda^{(1)},\ldots,\lambda^{(r)})$ and $\DE_\mu^\nu(\lambda^{(1)},\ldots,\lambda^{(r)})$ respectively.
\end{definition}

\subsubsection{Operations on chains} 

\label{sec:shuffling-ops} We define the \newword{shuffling} operations
\[\sh_i : \DE_\mu^\nu(\lambda^{(1)}, \ldots, \lambda^{(i)}, \lambda^{(i+1)}, \cdots \lambda^{(r)}) \to \DE_\mu^\nu(\lambda^{(1)}, \ldots, \lambda^{(i+1)}, \lambda^{(i)}, \cdots \lambda^{(r)})\]
\[\sh_i : \LR_\mu^\nu(\lambda^{(1)}, \ldots, \lambda^{(i)}, \lambda^{(i+1)}, \cdots \lambda^{(r)}) \to \LR_\mu^\nu(\lambda^{(1)}, \ldots, \lambda^{(i+1)}, \lambda^{(i)}, \cdots \lambda^{(r)})\]
by shuffling $(D_i,D_{i+1})$ or $(T_i,T_{i+1})$ respectively.  The shuffling operations commute with the correspondence between $\DE$ and $\LR$ of Lemma \ref{lem:LRchain}.  They satisfy the relations $\sh_i^2 = \mathrm{id}$ and $\sh_i \sh_j = \sh_j \sh_i$ when $|i-j| > 1$. Note, however, that $\sh_i \sh_{i+1} \sh_i \ne \sh_{i+1} \sh_i \sh_{i+1}$ in general.

We next define the $i$-th \newword{evacuation} operations 
\[\ev_i : \DE_\mu^\nu(\lambda^{(1)}, \ldots, \lambda^{(r)}) \to \DE_\alpha^\beta(\lambda^{(i)}, \ldots, \lambda^{(1)}, \lambda^{(i+1)}, \ldots, \lambda^{(r)})\]
\[\ev_i : \LR_\mu^\nu(\lambda^{(1)}, \ldots, \lambda^{(r)}) \to \LR_\alpha^\beta(\lambda^{(i)}, \ldots, \lambda^{(1)}, \lambda^{(i+1)}, \ldots, \lambda^{(r)})\]
by $\ev_i =  \sh_1 (\sh_2 \sh_1) \cdots (\sh_{i-2} \cdots \sh_1) (\sh_{i-1} \cdots \sh_1)$. This results in reversing the first $i$ parts of the chain's type, by first shuffling $D_1$ (or $T_1$) outwards past $D_i$, then shuffling the $D_2'$ (now the first element of the chain) out past $D_i'$, and so on.

In the case where $\mu = \eset$ and $\lambda^{(i)} = \ebox$ for all $i$, the operation $\ev_i$ reduces to evacuation of the standard tableau formed by the first $i$ entries. In general, $\ev_i$ is an involution:
\begin{lemma} \label{evac-involution}
The operation $\ev_i$ is an involution.
\end{lemma}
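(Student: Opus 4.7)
The plan is to prove $\ev_i^2 = \id$ by an algebraic induction on $i$, using \emph{only} the two relations already available: $\sh_j^2 = \id$ (shuffling is an involution) and $\sh_j \sh_k = \sh_k \sh_j$ for $|j-k| \geq 2$. No braid relation among the $\sh_j$'s will be needed. The argument applies equally to $\DE$- and $\LR$-chains, since they correspond under a shuffling-equivariant bijection.

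To streamline notation, write $W_i := \ev_i$ and $A_i := \sh_{i-1}\sh_{i-2}\cdots \sh_1$, so the defining formula gives the recursion $W_i = W_{i-1} A_i$, with $W_1 = \id$ (empty product) and $W_2 = \sh_1$. The base case $W_2^2 = \id$ is immediate. For the inductive step, assuming $W_{i-1}^2 = \id$, it suffices to prove
\[
A_i\, W_{i-1}\, A_i \;=\; W_{i-1}, \qquad (\dagger)
\]
since then $W_i^2 = W_{i-1}(A_i W_{i-1} A_i) = W_{i-1}^2 = \id$.

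To prove $(\dagger)$ I would first establish by a parallel induction the sharper auxiliary identity
\[
A_i\, W_{i-1} \;=\; \sh_{i-1}\, W_{i-2}. \qquad (*)
\]
Assuming $(*)$ at step $i-1$, the derivation proceeds by rewriting $A_i = \sh_{i-1} A_{i-1}$ and $W_{i-1} = W_{i-2} A_{i-1}$, applying the inductive $(*)$ to the resulting factor $A_{i-1} W_{i-2}$, and then substituting $A_{i-1} = \sh_{i-2} A_{i-2}$. At that point one commutes $W_{i-3}$ past a $\sh_{i-2}$; this is legal because every generator of $W_{i-3}$ is a $\sh_j$ with $j \leq i-4$, which commutes with $\sh_{i-2}$. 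Cancelling $\sh_{i-2}^2$ collapses the expression to $\sh_{i-1} W_{i-3} A_{i-2} = \sh_{i-1} W_{i-2}$, as claimed. Identity $(\dagger)$ then follows quickly: substituting $(*)$ gives $\sh_{i-1} W_{i-2} A_i$; rewriting $A_i = \sh_{i-1} A_{i-1}$ and commuting $\sh_{i-1}$ past $W_{i-2}$ (whose generators all satisfy $j \leq i-3$, hence commute with $\sh_{i-1}$), the two copies of $\sh_{i-1}$ cancel and one is left with $W_{i-2} A_{i-1} = W_{i-1}$.

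The main obstacle is simply the bookkeeping in the inductive proof of $(*)$: every commutation invoked must respect the constraint $|j-k|\geq 2$, and one must never be tempted to use a braid move. The structural reason the argument succeeds is that in each configuration encountered, the ``high'' generator $\sh_{i-1}$ (resp.\ $\sh_{i-2}$) always meets generators indexed by $j \leq i-3$ (resp.\ $j \leq i-4$), so the commutations needed are exactly the ones that are available in our relations.
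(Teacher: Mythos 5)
Your proof is correct and follows essentially the same route as the paper: both reduce $\ev_i^2 = \ev_{i-1}^2$ by induction via the identity $(\sh_{i-1}\cdots\sh_1)\ev_i = \ev_{i-1}$, which is literally your $(\dagger)$ since $\ev_i = W_{i-1}A_i$. The only difference is that the paper justifies this cancellation with a one-line remark (``each extra $\sh_j$ cancels the leftmost instance of $\sh_j$''), whereas you verify it rigorously through the auxiliary identity $(*)$.
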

\begin{proof}
By definition, $\ev_i = \ev_{i-1} (\sh_{i-1} \cdots \sh_1)$. On the other hand, observe that $(\sh_{i-1} \cdots \sh_1)\ev_i = \ev_{i-1}$. (Each extra $\sh_j$ cancels the leftmost instance of $\sh_j$ in $\ev_i$.) Thus we have
\[\ev_i^2 = \ev_{i-1}(\sh_{i-1} \cdots \sh_1) \ev_i = \ev_{i-1}^2,\]
and the claim follows by induction. \qed
\end{proof}

Finally, we define the $i$-th \newword{evacuation-shuffle} operations
\[\esh_i : \DE_\eset^{\rect}(\lambda^{(1)}, \ldots, \lambda^{(i)}, \lambda^{(i+1)}, \cdots \lambda^{(r)}) \to \DE_\eset^{\rect}(\lambda^{(1)}, \ldots, \lambda^{(i+1)}, \lambda^{(i)}, \cdots \lambda^{(r)})\]
\[\esh_i : \LR_\eset^{\rect}(\lambda^{(1)}, \ldots, \lambda^{(i)}, \lambda^{(i+1)}, \cdots \lambda^{(r)}) \to \LR_\eset^{\rect}(\lambda^{(1)}, \ldots, \lambda^{(i+1)}, \lambda^{(i)}, \cdots \lambda^{(r)})\]
by
\[\esh_i = \ev_{i+1}^{-1} \sh_1 \ev_{i+1}.\]
This operation is simpler than it appears: it only affects the $i$-th and $(i+1)$-th entries of the chain, and its effect is local (it depends only on the $i$-th and $(i+1)$-th entries). We have the following:
\begin{lemma}[\cite{bib:Levinson}, Lemma 3.12] \label{lem:upper-shuffle}
Let ${\bf D} = (D_1,\ldots, D_r) \in \DE_\eset^{\rect}(\lambda^{(1)}, \ldots, \lambda^{(r)})$ and write
\[\esh_i({\bf D}) = (D_1', \ldots, D'_{i+1}, D'_i, \ldots, D_r').\]
Then:
\begin{itemize}
\item[(i)] We have $D_j = D_j'$ for all $j \ne i,i+1$.
\item[(ii)] The remaining two classes $D_i', D_{i+1}'$ are computed as follows: let $D_1 \sqcup \cdots \sqcup D_{i-1} = D_\tau$ be the concatenation of the first $i-1$ classes (i.e. the unique class of straight-shape $\tau$, the outer shape of $D_{i-1}$). Let $\sigma$ be the outer shape of $D_{i+1}$. Consider $\overline{\bf D} = (D_\tau, D_{i}, D_{i+1}) \in \DE_\eset^\sigma(\tau, \lambda^{(i)}, \lambda^{(i+1)})$. Then
\[\esh_2(\overline{\bf D}) = \sh_1 \sh_2 \circ \sh_1 \circ \sh_2 \sh_1(\overline{\bf D}) = (D_\tau, D_{i+1}', D_i').\]
\end{itemize}
\end{lemma}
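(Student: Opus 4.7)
The plan is to prove part (ii) by an explicit computation and part (i) by reducing (via induction and merging) to the base case $i=2$, which can be handled using the straight-shape hypothesis at the start of the chain.

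For part (ii): from the recursion $\ev_{k+1}=\ev_k(\sh_k\cdots\sh_1)$ we get $\ev_3=\sh_1\sh_2\sh_1$, which is an involution by Lemma \ref{evac-involution}. Hence
\[\esh_2=\ev_3^{-1}\sh_1\ev_3=(\sh_1\sh_2\sh_1)\,\sh_1\,(\sh_1\sh_2\sh_1)=\sh_1\sh_2\sh_1\sh_2\sh_1,\]
after one application of $\sh_1^2=\id$, matching the stated formula.

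For part (i): locality at positions $j>i+1$ is immediate, since $\ev_{i+1}^{-1}\sh_1\ev_{i+1}$ involves only $\sh_1,\ldots,\sh_i$. For $j<i$, I would induct on $i$ using the identity
\[\esh_i=(\sh_1\sh_2\cdots\sh_i)\,\esh_{i-1}\,(\sh_i\sh_{i-1}\cdots\sh_1),\]
obtained from $\ev_{i+1}=\ev_i(\sh_i\cdots\sh_1)$ and $\sh_k^{-1}=\sh_k$. The base case $i=2$ uses part (ii) together with the following observation: applying $\sh_1\sh_2\sh_1\sh_2\sh_1$ to $(D_1,D_2,D_3)$, one checks step-by-step that the multiset of rectification shapes of the chain is permuted from $(\tau,\lambda,\mu)$ to $(\tau,\mu,\lambda)$, so the first entry of the result has rectification shape $\tau$; its inner shape is still $\eset$, so by Theorem \ref{thm-dual-jdt} it is the unique straight-shape class of shape $\tau$, namely $D_1$.

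The main obstacle is the inductive step for positions $2,\ldots,i-1$: conjugation by $(\sh_i\cdots\sh_1)$ does not simply permute positions, as each $\sh_j$ actually modifies the classes it shuffles, so one cannot appeal only to commutation relations. My plan is to combine the inductive locality of $\esh_{i-1}$ with a merging principle: since $D_1,\ldots,D_{i-1}$ form a sub-chain starting at $\eset$ with straight-shape concatenation $\tau$, Theorem \ref{thm-dual-jdt} lets us collapse this sub-chain to a single class $D_\tau$, reducing the analysis to the $3$-chain $\overline{\bf D}=(D_\tau, D_i, D_{i+1})$. One then verifies that $\esh_i$ on the full chain corresponds to $\esh_2$ on $\overline{\bf D}$, so that part (ii) gives the explicit formula, and unmerging $D_\tau$ back into $D_1,\ldots,D_{i-1}$ recovers these entries unchanged. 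The delicate compatibility between the full-chain operation and the $3$-chain operation is the technical heart of the argument and likely requires a growth-diagram analysis in the spirit of the involutivity proof for $\sh$, tracking that each intermediate modification to the first $i-1$ positions is consistent with the outer shape $\tau$ of the merged class and is therefore undone by the conjugating word $(\sh_1\cdots\sh_i)$.
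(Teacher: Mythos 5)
First, note that this paper does not actually prove Lemma \ref{lem:upper-shuffle}: it is imported verbatim from \cite{bib:Levinson}, Lemma 3.12, so there is no in-paper argument to compare yours against. Judged on its own terms, your proposal gets several pieces right: the word computation $\ev_3=\sh_1\sh_2\sh_1$ and hence $\esh_2=\sh_1\sh_2\sh_1\sh_2\sh_1$ is correct; locality at positions $j>i+1$ is indeed immediate since the word involves only $\sh_1,\ldots,\sh_i$; the conjugation identity $\esh_i=(\sh_1\cdots\sh_i)\,\esh_{i-1}\,(\sh_i\cdots\sh_1)$ is valid; and your base-case argument (the first entry of the output has inner shape $\eset$ and rectification shape $\tau$, hence is the unique straight-shape class $D_\tau=D_1$ by Theorem \ref{thm-dual-jdt}) is sound.

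The genuine gap is exactly where you flag it, and it is not filled: the claim that ``$\esh_i$ on the full chain corresponds to $\esh_2$ on $\overline{\bf D}$,'' and the restoration of positions $2,\ldots,i-1$ in the inductive step, are asserted as a plan (``one then verifies,'' ``likely requires a growth-diagram analysis'') rather than proved. Note that knowing the first $i-1$ output classes concatenate to the straight-shape class $D_\tau$ does \emph{not} determine them individually, so the merging principle alone is insufficient. The missing ingredient is the statement, used in the paper's own Lemma \ref{lem:factor-horiz}, that \emph{concatenation is compatible with shuffling}: the result of shuffling a class $E$ past two consecutive classes depends only on their concatenation. With this in hand the induction closes cleanly: after $(\sh_i\cdots\sh_1)$ displaces $D_1$ to position $i+1$ as $\tilde D_1$, the inductive hypothesis says $\esh_{i-1}$ modifies only positions $i-1,i$ while preserving their concatenation (it is a word in shuffles); hence when $(\sh_1\cdots\sh_i)$ shuffles $\tilde D_1$ back inward, its passage through the modified pair lands it at position $i-1$ as the \emph{same} class as if the pair were unmodified, and the remaining shuffles restore $D_1,\ldots,D_{i-1}$ by involutivity of shuffling. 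The same compatibility identifies the induced action on positions $i,i+1$ with $\esh_2$ of the merged $3$-chain. Without making this step explicit, the proposal is a strategy with its technical heart left open rather than a proof.
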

\noindent In other words, evacuation-shuffling a pair of consecutive tableaux $(S,T)$ in a Littlewood-Richardson chain consists of rectifying $(S,T)$ together, then shuffling them, then un-rectifying.

We may also compute $\esh_i$ by anti-rectifying into the lower right corner of the rectangle instead of rectifying:

\begin{lemma} \label{lem:outer-esh}
Let ${\bf D} = (D_1, D_2, D_3, D_4) \in \DE_\eset^\rect(\lambda^{(1)}, \lambda^{(2)}, \lambda^{(3)}, \lambda^{(4)})$. Then
\[\esh_2({\bf D}) = \sh_3 \sh_2 \circ \sh_3 \circ \sh_2 \sh_3({\bf D}).\]
\end{lemma}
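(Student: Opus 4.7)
The plan is to reduce Lemma \ref{lem:outer-esh} to Lemma \ref{lem:upper-shuffle}(ii) via the $180^\circ$ rotational symmetry of the rectangle. Let $R$ denote the rotation operator sending $\mathbf{D} = (D_1, D_2, D_3, D_4) \in \DE_\eset^\rect(\lambda^{(1)}, \ldots, \lambda^{(4)})$ to $R(\mathbf{D}) = (D_4^R, D_3^R, D_2^R, D_1^R)$, a bijection onto $\DE_\eset^\rect(\lambda^{(4)}, \ldots, \lambda^{(1)})$ by Section \ref{sec:rotate-transpose-DE}. Since $D_4$ has anti-straight shape $\rect/\sigma$ (with $\sigma$ the outer shape of $D_3$), its rotation $D_4^R$ has straight shape $\sigma^c$. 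Consequently Lemma \ref{lem:upper-shuffle}(ii) applies to $\esh_2$ on the rotated chain, giving
\[\esh_2(R(\mathbf{D})) = \sh_1\sh_2\sh_1\sh_2\sh_1(R(\mathbf{D})).\]

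Next, jeu de taquin shuffling commutes with the rotation of pairs of tableaux (Section \ref{sec:rotate-transpose-DE}), and rotation reverses the chain order; together these give the intertwining $\sh_i \circ R = R \circ \sh_{r-i}$ on $r$-chains. For $r = 4$ this specializes to $\sh_1 R = R \sh_3$ and $\sh_2 R = R \sh_2$, so
\[\sh_1\sh_2\sh_1\sh_2\sh_1 \circ R \;=\; R \circ \sh_3\sh_2\sh_3\sh_2\sh_3.\]
Combining with the previous display,
\[\esh_2(R(\mathbf{D})) \;=\; R\bigl(\sh_3\sh_2\sh_3\sh_2\sh_3(\mathbf{D})\bigr).\]

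The lemma then follows upon invoking the rotational invariance $R \circ \esh_2 = \esh_2 \circ R$ on 4-chains filling the rectangle: canceling $R$ from both sides of $R(\esh_2(\mathbf{D})) = \esh_2(R(\mathbf{D})) = R(\sh_3\sh_2\sh_3\sh_2\sh_3(\mathbf{D}))$ gives the desired identity.

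The main obstacle is this last rotational invariance. Unfolding the definition $\esh_2 = \ev_3^{-1}\sh_1\ev_3$ via the intertwining $\sh_i \leftrightarrow \sh_{r-i}$ shows that $R^{-1}\esh_2 R$ already computes to $\sh_3\sh_2\sh_3\sh_2\sh_3$, so the invariance $R^{-1}\esh_2 R = \esh_2$ is in effect \emph{logically equivalent} to Lemma \ref{lem:outer-esh} itself; a substantive non-circular argument is needed. I would attempt this by showing directly that the two candidate computations of $\esh_2$ --- ``rectify from the inner end'' (Lemma \ref{lem:upper-shuffle}(ii)) versus ``anti-rectify from the outer end'' --- coincide on rectangle-filling chains, for instance by tracking their effect on the Littlewood-Richardson tableau representatives of $D_2$ and $D_3$, or via a growth-diagram interpretation of shuffling in the rectangle. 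Heuristically the invariance reflects the $t \leftrightarrow -t$ symmetry of the Schubert curve whose monodromy is the operator $\omega = \sh \circ \esh$.
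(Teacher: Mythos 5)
Your setup is exactly the paper's: rotate the chain, use the intertwining $\sh_i \circ R = R \circ \sh_{4-i}$, and reduce everything to the invariance $R \circ \esh_2 \circ R = \esh_2$. You correctly observe that this invariance is logically equivalent to the lemma, so the whole content lies in proving it --- and that is the step your proposal leaves open. The suggestions you offer (tracking Littlewood--Richardson representatives, or ``a growth-diagram interpretation'') point in the right direction but are not carried out, so as written there is a genuine gap.

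The missing ingredient is a specific fact from \cite{bib:Speyer}: on chains filling the rectangle, the rotation operator is itself a word in the shuffles, namely
\[\mathrm{rev} = \sh_1 \circ \sh_2\sh_1 \circ \sh_3\sh_2\sh_1\]
(this is $\ev_4$ in the paper's notation; it is proved via dual equivalence growth diagrams). Once you have this, the invariance becomes a finite computation in the monoid generated by the $\sh_i$ subject only to $\sh_i^2 = \id$ and $\sh_i\sh_j = \sh_j\sh_i$ for $|i-j| \geq 2$: writing $\esh_2 = \ev_3^{-1}\sh_1\ev_3 = \sh_1\sh_2\sh_1\sh_2\sh_1$, the product $\mathrm{rev}\cdot\esh_2\cdot\mathrm{rev}$ of length $17$ cancels down to $\sh_1\sh_2\sh_1\sh_2\sh_1 = \esh_2$ using only the involution and far-commutation relations --- notably, no braid relation is needed (which is essential, since $\sh_i\sh_{i+1}\sh_i \neq \sh_{i+1}\sh_i\sh_{i+1}$ in general). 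Your heuristic appeal to the $t \leftrightarrow -t$ symmetry of the Schubert curve is suggestive but would require separate justification that the geometric symmetry induces exactly this combinatorial identity; the word-theoretic route via Speyer's formula is what closes the argument.
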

\begin{proof}
Rotating dual equivalence classes, as in Section \ref{sec:rotate-transpose-DE},
\[\mathrm{rev} : (D_1, D_2, D_3, D_4) \mapsto (D_4^R, D_3^R, D_2^R, D_1^R),\]
corresponds to the word
\[\mathrm{rev} = \sh_1 \circ \sh_2\sh_1 \circ \sh_3\sh_2\sh_1.\]
(See \cite{bib:Speyer} for a proof via dual equivalence growth diagrams.) We have
\[\mathrm{rev} \circ \sh_i = \sh_{4-i} \circ \mathrm{rev}, \ \ \text{and so}\ \ \mathrm{rev} \circ \esh_2 \circ \mathrm{rev} = \sh_3 \sh_2 \circ \sh_3 \circ \sh_2 \sh_3.\]
On the other hand, we see directly, by simplifying the corresponding words, that \[\mathrm{rev} \circ \esh_2 \circ \mathrm{rev} = \esh_2 \]  and the proof is complete. \qed 
\end{proof}

We remark that neither of Lemmas \ref{lem:upper-shuffle} or \ref{lem:outer-esh} is easy to prove directly for ballot semistandard tableaux. We will use them in the proof of Theorem \ref{thm:main-theorem}.

\subsection{The case of interest and the operator \texorpdfstring{$\omega$}{w}}
\label{sec:case-of-interest}

The geometry of Schubert curves (see Section \ref{sec:introduction}) suggests studying sets of the form
\[\DE_\eset^\rect(\lambda^{(1)}, \ebox, \lambda^{(2)}, \ldots, \lambda^{(r)}),\]
where $\rect$ is a $k\times(n-k)$ rectangle and $1 + \sum |\lambda^{(i)}| = k(n-k)$, with the composition of shuffles and evacu-shuffles
\[\omega = \sh_2 \circ \cdots \circ \sh_{r-1} \circ \esh_{r-1} \circ \cdots \circ \esh_2.\]
In general, $\omega$ describes the monodromy and real connected components of the Schubert curve
\[S(\lambda^{(1)}, \ldots, \lambda^{(r)}) = \mathrm{\Omega}(\lambda^{(1)}, \mathcal{F}_{t_1}) \cap \cdots \cap \mathrm{\Omega}(\lambda^{(r)}, \mathcal{F}_{t_r}),\]
where the osculation points $t_i$ are real numbers with $0 = t_1 < t_2 < \cdots < t_r = \infty.$  (See \cite{bib:Levinson}, Corollary 4.9.)
Our local description of $\esh$ will apply to each of the above $\esh_i$ operations, by Lemma \ref{lem:upper-shuffle}.  Therefore, our main results, in the case of three marked points, generalize without difficulty to this general case.  We leave these extensions to the interested reader.

Thus, for simplicity, we restrict for the remainder of the paper to the case of three partitions $\alpha, \beta, \gamma$, i.e. we study the operator
\[\omega = \sh_2 \circ \esh_2\]
on the sets
\[\DEyb \text{ and } \DEby,\]
or equivalently
\[\LRyb \text{ and } \LRby.\]

Since we mostly work only with $\sh_2$ and $\esh_2$, we often simply abbreviate them as $\sh$ and $\esh$, as in Section \ref{sec:introduction}.

\begin{remark}[Notation] Since the straight shape $\alpha$ and anti straight shape $\gamma^c$ each have only one dual equivalence class, an element of $\DEyb$ can be thought of as a pair $(\ybox,D)$, with $D$ a dual equivalence class of rectification shape $\beta$, and $\ybox$ an inner co-corner of $D$, such that the shape of $\ybox \sqcup D$ is $\gamma^c/\alpha$. We represent elements of $\DEby$ similarly, with $\ybox$ as an outer co-corner. 

We will occasionally refer to the element as $D$ if the position of the $\ybox$ is understood. Similar remarks apply to $\LRyb$ and $\LRby$, and we write $(\ybox,T)$ or $(T,\ybox)$ (or simply $T$) to denote elements of these sets.
\end{remark}

\subsubsection{Connection to tableau promotion}

Combinatorially, $\omega$ can be thought of as a commutator of well-known operations on Young tableaux.  Computing $\esh(\ybox,T)$ is equivalent to the following steps:
\begin{itemize}
  \item \textbf{Rectification.} Treat the $\ybox$ as having value $0$ and being part of a semistandard tableau $\widetilde{T}=\ybox \sqcup T$.  Rectify, i.e. shuffle $(S,\widetilde{T})$ to $(\widetilde{T}',S')$, where $S$ is an arbitrary straight-shape tableau.
  \item \textbf{Promotion (see \cite{bib:StanleyEC2}).} Delete the $0$ of $\widetilde{T}'$ and rectify the remaining tableau.  Label the resulting empty outer corner with the number $\ell(\beta)+1$.
  \item \textbf{Un-rectification.} Un-rectify the new tableau by shuffling once more with $S'$.  Replace the $\ell(\beta)+1$ by $\ybox$.
\end{itemize}
Note that the promotion step corresponds to shuffling the $\ybox$ past the rest of the rectified tableau. Thus, evacuation-shuffling corresponds to conjugating the promotion operator (on skew tableaux) by rectifying the tableau. Likewise, $\omega$ is the \textit{commutator} of promotion and rectification. \footnote{We note, however, that $\omega$ is not a commutator in the sense of group theory, since it involves maps between two different sets. In particular, as computed in Theorem \ref{thm:intro-parity}, $\omega$ need not be an even permutation.}

\section{A local algorithm for evacuation-shuffling} \label{sec:local-esh}

We will now define \newword{local evacuation-shuffling}, 
a local rule for computing $\esh$.
This section is devoted to the definition of the algorithm and proofs of its elementary properties. In Section \ref{sec:main-result}, we will prove that local evacuation-shuffling is the same as $\esh$.

The base case of the algorithm is the \newword{Pieri case}, where $\beta$ is a one-row partition. In this case, $\esh$ was computed in Theorem 5.10 of \cite{bib:Levinson}, and we recall it here. We will give an alternative proof of the Pieri case in Section 4, in part because the complete algorithm relies heavily on our understanding of it.

\begin{theorem}[Pieri case]\label{thm:Pieri}
Let $\beta$ be a one-row partition. Then $\esh(\ybox,T)$ exchanges $\ybox$ with the nearest $1 \in T$ \emph{prior to it} in reading order, if possible. If, instead, the $\ybox$ precedes all $1$'s in reading order, $\esh$ exchanges $\ybox$ with the \emph{last} $1$ in reading order (a \newword{special jump}).
\end{theorem}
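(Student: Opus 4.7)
The plan is to work from the rectify--promote--unrectify description of $\esh$ recalled at the end of Section \ref{sec:case-of-interest}, and to track the $\ybox$ through each step explicitly.

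First I would analyze the rectified tableau $\widetilde{T}' = \rectify(\ybox \sqcup T)$. Treating $\ybox$ as the value $0$, the reading word of $\widetilde{T}$ has the form $1^p\, 0\, 1^q$, where $p$ is the number of $1$'s in $T$ preceding $\ybox$ in reading order and $p + q = m$. A direct RSK calculation on this word yields rectified shape $(m+1)$ when $p = 0$ and $(m, 1)$ when $p \geq 1$, with $\ybox$ at position $(1,1)$ in both cases. Promotion deletes the $\ybox$ and slides the hole outward; applying the SSYT tie-break rule (move the entry from below when the right and below neighbors are equal) sends the hole to $(2, 1)$ when $p \geq 1$ and along the top row to $(1, m+1)$ when $p = 0$, and this outer co-corner is labeled $\ell(\beta)+1 = 2$.

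The heart of the proof is the un-rectification: I need to identify where the label $2$ lands in $\gamma^c/\alpha$. I would pass to the standardization of $\widetilde{T}$, in which $\ybox$ receives label $1$ and the $1$'s of $T$ receive labels $2, \ldots, m+1$ in reading order. Under this standardization, the promotion step places label $m+1$ at the outer-corner position of the rectified shape, and un-rectification transports this label back to a specific cell of $\gamma^c/\alpha$. The key claim is that this cell is exactly the one holding label $p+1$ in the original standardization when $p \geq 1$, or the cell of label $m+1$ when $p = 0$---that is, the $1$ immediately preceding $\ybox$ in reading order, or the last $1$, respectively.

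I expect the main obstacle to be tracking this claim through a general inner shape $\alpha$: although the rectified shape is at most two rows, the un-rectification can involve many JDT slides. I plan to handle this by induction on the number of inner corners of $\alpha$, using Lemma \ref{lem:upper-shuffle} to peel off a single corner at a time and reduce to smaller Pieri instances. The special jump case $p = 0$ is notably easier and can be treated first, since there the rectified shape is a single row and the hole slide is along the top. An alternative approach would be a growth-diagram argument that encodes the entire shuffle as a planar figure, using its symmetries to directly identify the image of the outer corner with the cell predicted by the theorem.
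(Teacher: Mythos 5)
Your overall strategy --- reduce to the trivial case $\alpha=\eset$ by peeling off one jeu de taquin slide at a time --- is exactly the paper's proof, and your analysis of the rectified picture is correct: the rectification has shape $(m,1)$ with the $\ybox$ at $(1,1)$ when $p\geq 1$ and shape $(m+1)$ when $p=0$, with the promotion hole exiting at $(2,1)$, resp.\ $(1,m+1)$ (the paper records the $p=0$ criterion separately as Proposition \ref{prop:pieri-criteria}). Note, though, that once the induction is set up this explicit computation is not needed: in the base case $\alpha=\eset$ both $\LRyb$ and $\LRby$ are singletons, so $\esh$ is forced. Also, the induction should be on $|\alpha|$ (adding one outer co-corner of $\alpha$ corresponds to one slide), not on the number of inner corners of $\alpha$, which does not decrease by one per slide.

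The gap is in the inductive step, which is where all the content lives and which your proposal does not engage with. First, Lemma \ref{lem:upper-shuffle} is not the tool for ``peeling off'' a box of $\alpha$: what is needed is the well-definedness of rectification, which lets you choose the rectification order of $(\ybox,T')$ so that its first inward slide absorbs the new box of $\alpha'/\alpha$, producing a smaller instance $(\ybox,T)$; the middle of the computation is then literally $\esh(\ybox,T)$, and $\esh(\ybox,T')$ is obtained by conjugating with that single slide pair. Second, and more seriously, transporting the statement ``$\esh(L_i)=R_{i+1\pmod r}$'' across one slide is not automatic: an inward slide can shift a whole row of $1$'s to the left, or move a single $1$ up a row, thereby creating or destroying a nonempty row of the horizontal strip and changing both $r$ and the index $i$ of the row containing the $\ybox$; and the special jump ($i=r$) requires a separate invariant, namely that ``the $\ybox$ is weakly below and strictly to the left of every other entry'' is preserved by slides. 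Your ``key claim'' about where the promoted label lands after un-rectification is exactly the theorem, and the case analysis just described is what proves it; as written, the proposal asserts it rather than establishing it. If you carry out the induction with these cases (the paper's Cases 1(a), 1(b), and 2), the argument goes through.
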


We give two examples, illustrating the possible actions of $\esh$ and the more familiar $\sh$. 

\begin{enumerate}
\item If the skew shape contains a (necessarily unique) vertical domino:
\[{\small \young(::\x11,:11,1)} \hspace{0.2cm}\stackrel{\xrightarrow{\esh}}{\xleftarrow[\,\sh\,]{}} \hspace{0.2cm}{\small \young(::111,:1\x,1)}\]
\item Otherwise, the action of $\esh \circ \sh$ cycles the $\ybox$ through the rows of $\gamma^c/\alpha$:
\[{\small\young(:::\x11,:11,1)} \hspace{0.2cm}\xrightarrow{\esh} \hspace{0.2cm}{\small\young(:::111,:1\x,1)}
\hspace{0.2cm}\xrightarrow{\sh} \hspace{0.2cm} {\small\young(:::111,:\x1,1)}\]
\[{\small\young(:::111,:11,\x)} \hspace{0.2cm}\xrightarrow{\esh} \hspace{0.2cm}{\small\young(:::11\x,:11,1)}
\hspace{0.2cm}\xrightarrow{\sh} \hspace{0.2cm} {\small\young(:::\x11,:11,1)}\]
\end{enumerate}

\subsection{The algorithm}

We now give the definition of the local algorithm.

\begin{definition}\label{def:algorithm}
Let $(\ybox, T) \in \LRby$. We define \textit{local evacuation-shuffling},
\[\lesh : \LRby \to \LRyb,\]
by the following algorithm.
  
  \begin{itemize}
    \item \textbf{Phase 1.} If the $\ybox$ does not precede all of the $i$'s in reading order, switch $\ybox$ with the nearest $i$ \emph{prior} to it in reading order. Then increment $i$ by $1$ and repeat Phase 1. \vspace{0.2cm}

    If, instead, the $\ybox$ precedes all of the $i$'s in reading order, go to Phase 2. \vspace{0.2cm}
    
    \item\textbf{Phase 2.} If the suffix from $\ybox$ is not tied for $(i,i+1)$, switch $\ybox$ with the nearest $i$ \emph{after it} in reading order whose suffix is tied for $(i,i+1)$. Either way, increment $i$ by $1$ and repeat Phase 2 until $i=\ell(\beta)+1$.
  \end{itemize}
\end{definition}
\begin{remark}[Alternate description of Phase 2]\label{rmk:alternate-phase-2}
We will sometimes use the following equivalent description of Phase 2, which we call the \newword{step-by-step} version of Phase 2:

\begin{itemize}
\item {\bf Phase 2$'$ (step-by-step).}
If the suffix from $\ybox$ is not tied for $(i,i+1)$, switch $\ybox$ with the nearest $i$ after it in reading order. Repeat this step until the suffix becomes tied for $(i,i+1)$. Then increment $i$ and repeat Phase 2$'$.
\end{itemize}
\end{remark}

\begin{remark}
Phase 1 is identical to the Pieri case \emph{unless} the Pieri case calls for a special jump.
\end{remark}
Note that in Phase 2, it is not obvious that we can find any $i$ with suffix tied for $(i,i+1)$. We show below, however, that $T$ remains ballot (and semistandard) throughout the algorithm. Consequently, the topmost $i$ is such a square (or $\ybox$ itself, if $\ybox$ is above this $i$).

In Phase 1, $\ybox$ moves down and to the left; in Phase 2 (or 2$'$), $\ybox$ instead moves to the right and up.  We refer to the squares occupied by the box during the step-by-step algorithm as the \newword{evacu-shuffle path}. See Figures \ref{fig:antidiagonal} and \ref{fig:antidiag-evacu-path} for examples.

\begin{remark}[Algorithmic complexity]
Non-local evacuation-shuffling, as defined in Section \ref{sec:case-of-interest}, has running time $O(|\alpha| \cdot b)$, where $b = \ell(\beta) + \ell(\beta^*)$. The local algorithm does not involve the $\alpha$ shape directly and is faster, with running time $O(b)$. Computing the entire orbit decomposition of $\omega$ on $\LRyb$, using the local algorithm, therefore takes $O(b \cdot c_{\alpha,\ybox,\beta,\gamma}^\rect)$ steps. See Corollary \ref{cor:running-time}.
\end{remark}

\begin{definition} 
  We use the following terminology for the $i$-th step of $\lesh$: \\
 
  $\pieri_i$ -- a \newword{regular Pieri jump}, a Phase 1 move in which the $\ybox$ moves down-and-left.

  $\vertical_i$ -- a \newword{vertical slide}, a Phase 1 move in which the $\ybox$ moves strictly down.

  $\jump_i$ -- a \newword{Phase 2 jump}, a move in Phase $2$ involving the $(i,i+1)$ suffixes. \\

\noindent When using Phase 2$'$, we will index moves by their position along the evacu-shuffle path. We write: \\

  $\cpieri_j$ -- a \newword{conjugate Pieri jump}, a Phase 2 move in which the $\ybox$ moves up-and-right.
  
  $\horiz_j$ -- a \newword{horizontal slide}, a Phase 2 move in which the $\ybox$ moves strictly right. \\

\noindent Thus a Phase 2 jump consists of, in general, a possibly empty sequence of conjugate Pieri moves and horizontal slides.

We also say that $s$ is the \newword{transition step} if the algorithm switches to Phase 2 while $i = s$. If the algorithm remains in Phase 1 throughout, we say the transition step is $s=\ell(\beta)+1$.
\end{definition}


\subsection{Examples}
We give two examples of our algorithm.  For an online animation, see \cite{bib:Gillespie}.

\begin{example} \label{exa:first-evacu-shuffle}
  Let $$T=\young(::::::111,:::\x 1122,:::1223,:::334,::44,235).$$ We compute $\lesh(\ybox,T)$.  We start in Phase 1 with $i=1$, and do a vertical slide past the $1$'s, then a regular Pieri jump past the $2$'s:
  
  $$\young(::::::111,:::\x 1122,:::1223,:::334,::44,235)\xrightarrow{\vertical_1}
    \young(::::::111,:::11122,:::\x 223,:::334,::44,235)\xrightarrow{\pieri_2}
    \young(::::::111,:::11122,:::2223,:::334,::44,\x35)$$
    \vspace{0.2cm}
    
    Since the $\ybox$ now precedes all the $3$'s in reading order, we transition to Phase 2. We look for the first $3$ after the $\ybox$ (or $\ybox$ itself) whose $(3,4)$-suffix is tied. We interchange the $\ybox$ with that $3$. We repeat for $4$ (interchanging the $\ybox$ with the last $4$, in this case). For $5$, the $(5,6)$-suffix of the $\ybox$ is already tied, since the $\ybox$ is past all the $5$'s. Thus the $\ybox$ does not move further. Phase 2 is as follows:

  $$\young(::::::111,:::11122,:::2223,:::334,::44,\x35)\xrightarrow{\jump_3}
    \young(::::::111,:::11122,:::2223,:::334,::44,3\x 5)\xrightarrow{\jump_4}
    \young(::::::111,:::11122,:::2223,:::33\x,::44,345) = \lesh(T)$$
        \vspace{0.2cm}
        
\noindent Note that $\jump_3$ corresponds in the step-by-step algorithm to $\horiz_3$, and the portion of the evacu-shuffle path corresponding to $\jump_4$ is the sequence of moves $\cpieri_4,\horiz_5, \cpieri_6$.

We will see later (Corollary \ref{cor:transition-step}) that the transition step of $s=3$ indicates that the partition $\beta = (6,5,4,3,1)$ has an outer co-corner in its third row, and that the evacu-shuffle path formed by the step-by-step algorithm therefore has $s + \beta_s = 7$ boxes, including both endpoints.
%
\end{example}

\begin{example}[Vertical Pieri case]
  As another example, we illustrate the action of $\omega = \sh \circ \lesh$ in the transpose of the Pieri case, where the skew shape is a vertical strip and $\beta=(1,1,\ldots,1)$ is a single column.
  
  Let $$T=\young(::1,:2,:3,\x,4).$$  The tableau is already in Phase $2$ at the step $i=1$.  Since the $(1,2)$-suffix and the $(2,3)$-suffix of the $\ybox$ are already tied, the next step in the evacu-shuffle path is a $\cpieri$ move that interchanges the $\ybox$ with the $3$.  At this point all higher suffixes are tied, and we are done.  For the shuffle step, the box then slides up the second column via jeu de taquin. We find: $$\omega(T)=\young(::1,:\x,:2,3,4).$$  The box continues moving from one column to the next in the until it reaches the top. For the final tableau, the evacuation shuffle consists only of Phase 1 moves and returns to $T$.  The $\omega$-orbit of $T$ is therefore:

$$\young(::1,:2,:3,\x,4)\xrightarrow{\ \omega\ } \young(::1,:\x,:2,3,4) \xrightarrow{\ \omega\ } \young(::\x,:1,:2,3,4)\xrightarrow{\ \omega\ } \young(::1,:2,:3,\x,4).$$

\end{example}

\subsection{Properties preserved by local evacuation shuffling}

We will require the fact that the tableau remains semistandard and ballot during local evacuation-shuffling, and moves past the strip of $i$'s at the $i$th step of the default algorithm.

\begin{theorem}\label{thm:ballotness}
Let $T$, including the $\ybox$, be a tableau that appears in the step-by-step (Phase 2$'$) computation of $\lesh(\ybox,T_1)$ for some pair $(\ybox,T_1)\in \LRyb$.  Then:
\begin{itemize}
\item[(1)] Omitting the $\ybox$, the reading word of $T$ is ballot.
\item[(2)] Omitting the $\ybox$, the rows (resp. columns) of $T$ are weakly (resp. strictly) increasing.
\item[(3)] If $T=T_i$ appears just before the $i$-th step of the \emph{default} (not step-by-step) algorithm, then the $\ybox$ is an outer co-corner of the collection of squares in $T$ having entries $1,\ldots,i-1$, and an inner co-corner of the squares in $T$ having entries $i,\ldots,t$.
\end{itemize}
\end{theorem}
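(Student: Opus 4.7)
The plan is to prove statements (1), (2), and (3) simultaneously by induction on the number of moves executed in the step-by-step algorithm. Statements (1) and (2) must be checked after every single micro-move, while (3) is only asserted at the boundaries between consecutive default steps; the three are intertwined because (3) provides the local structural control around $\ybox$ needed to verify (1) and (2). For the base case, $(\ybox, T_1) \in \LRyb$ directly gives (1) and (2), and (3) at $i=1$ reduces to the condition that $\ybox$ is an inner co-corner of $\gamma^c/\alpha$, which is exactly the $\LRyb$ condition on the pair.

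For the inductive step, the main tool is to exploit invariant (3) at the current step $i$: it forces the entries immediately left of and above $\ybox$ to be $<i$ and those immediately right of and below $\ybox$ to be $\geq i$. By row-weak-increase and column-strict-increase, this propagates: every entry in $\ybox$'s row to its left and every entry in its column above is strictly less than $i$, and the $i$'s in the $(\geq i)$-shape form a horizontal strip along its inner boundary. For a Phase 1 move ($\pieri_i$ or $\vertical_i$), this immediately forces the nearest prior $i$ in reading order to sit in a strictly lower row; semistandardness after the swap is then a local check at $p$ and $q$. For ballotness, since only a single $i$ moves (from an earlier reading position $q$ to a later position $p$), only suffixes with starting index in $(q,p]$ are affected, and there $\#i$ increases by exactly one. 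The $(i,i+1)$ ballot condition is preserved trivially, and the $(i-1,i)$ condition is the heart of the argument: I would identify a particular suffix of $W_{\text{before}}$, starting at the leftmost $i$ of the row just below $\ybox$ (or a similarly chosen suffix when $\ybox$'s row below has no $i$), whose ballot bound, exploiting the block of $i$s in the horizontal strip, gives the \emph{strict} inequality $\#(i-1)\geq \#i+1$ precisely where it is needed. Invariant (3) at step $i+1$ is then a geometric check that $q$ is an outer co-corner of the updated $(<i+1)$-shape (old $(<i)$-shape together with the $i$s in their new positions) and an inner co-corner of the updated $(\geq i+1)$-shape.

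For a Phase 2 micro-move ($\cpieri$ or $\horiz$), the analysis is dual. The $\ybox$ swaps with the nearest $i$ after it, so an $i$ moves \emph{earlier} in the reading word, and suffixes in a certain range lose an $i$. Here the "tied for $(i,i+1)$" condition built into the Phase 2 specification provides the analog of the ballot surplus used in Phase 1: each swap is with an $i$ whose own suffix is tied, which ensures that removing an $i$ from any intermediate suffix still leaves $\#i \geq \#(i+1)$ and does not create any $(i-1,i)$ violation. Semistandardness is again a local check. Since invariant (3) is only asserted at the beginning of each default step, it suffices to verify it once the full Phase 2 default step (possibly a composition of $\cpieri$ and $\horiz$ moves) is completed.

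The main obstacle I anticipate is the ballotness verification for the pure $\pieri_i$ case in Phase 1, when the partner $i$ at $q$ is several rows below $\ybox$ rather than directly beneath it. In this setting $q$ can lie arbitrarily far before $p$ in reading order, and the choice of a "strengthened" suffix of $W_{\text{before}}$ yielding the required strict ballot inequality is more delicate; one must invoke the horizontal strip structure of the $i$'s together with the invariant that entries between $q$ and $p$ in reading order carry no $i$'s (by nearness of $q$) and only restricted patterns of $(i-1)$'s (by column strictness against the row of $i$'s below). Once this subcase is dispatched, the induction closes uniformly across both phases.
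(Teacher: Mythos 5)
Your overall architecture (induction over the moves, with invariant (3) supplying the local structure around the $\ybox$ needed to verify (1) and (2)) matches the paper's. But there is a genuine gap at exactly the point you flag as your ``main obstacle,'' and the route you propose for closing it is not the one that works. For a $\pieri_i$ move, the affected suffixes are those beginning strictly after the target $i$ (at $z$, say) and weakly before the old position $x$ of the $\ybox$; each gains one $i$, so you need the \emph{strict} inequality $\#(i-1)\geq \#i+1$ for these suffixes in $T_i$ before the move. You propose to extract this from the structure of $T_i$ alone, via a suffix anchored at the leftmost $i$ of the row below the $\ybox$; you do not supply that argument, and it is unclear it can be made to work (that suffix starts \emph{before} the $\ybox$ in reading order and picks up uncontrolled contributions from the intervening cells, so its ballot bound does not isolate the quantity you need). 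The paper instead runs a \emph{two-step} induction: since $i\geq 2$ and all prior moves were Phase 1, the $(i-1)$-st move removed an $i-1$ from the very square $x$ and reinserted it later in the reading word. Ballotness of $T_{i-1}$ applied to the suffix strictly after $x$ then yields $\#(i-1)_{>x}\geq \#i_{>x}+1$ in $T_i$; and since $z$ is the \emph{nearest} prior $i$, there are no $i$'s strictly between $z$ and $x$, so this surplus propagates to every affected suffix. This use of the algorithm's history (properties of $T_{i-1}$, not just $T_i$) is the missing idea.

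A second, smaller gap of the same nature: in Phase 2 you call semistandardness ``a local check,'' but when the old position of the $\ybox$ is refilled with $i$ one must rule out an $i$ sitting directly below it, and invariant (3) only guarantees that square is $\geq i$. If the preceding move was Phase 1 (or $i$ is the transition step) this is easy, since such an $i$ would have triggered a vertical slide; but if the preceding move was also Phase 2, it requires a genuine argument --- the paper derives a contradiction from the fact that the $(i-1,i)$-suffix of the $\ybox$ was tied at the previous step, combined with ballotness of $T_{i-1}$ --- again a two-step use of the history rather than a purely local check. By contrast, your treatment of Phase 2 ballotness via the ``first $i$ with tied suffix'' condition is essentially the paper's argument and is fine, as is your plan to deduce the step-by-step (Phase 2$'$) statements from the default ones at the end.
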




\begin{proof}
We first show that the conditions hold for the tableaux occurring via the default algorithm.  Let $T_i$ be the tableau before the $i$-th move, using the default description of Phase 2. Conditions (1)-(3) are clearly satisfied in the starting tableau $T_1$.  Now let $i\ge 1$ and suppose $T=T_{i+1}$.  Assume for induction that the conditions are satisfied for $T_{i}$.

\textbf{Case 1:} Suppose $T_i$ is in Phase 1, i.e., a Phase 1 move is applied to $T_i$ to get $T_{i+1}$.

We first check that $T_{i+1}$ satisfies (2) and (3).  Since the move from $T_i$ to $T_{i+1}$ was a vertical slide or Pieri move that switches the $\ybox$ with the next $i$ in reverse reading order, the old position of the $\ybox$ is now filled with an $i$.  This position must satisfy (2) in $T_{i+1}$, since $T_i$ satisfied condition (3) and the only way an $i$ could be below this square in $T_i$ is if a vertical slide occurs (in which case it's no longer there in $T_{i+1}$).  All other rows and columns clearly still satisfy (2), and by the definition of the Phase 1 moves we see that $T_{i+1}$ satisfies (3) as well.


We now check that $T_{i+1}$ satisfies (1). The effect of the move on the reading word is to move a single $i$ entry later in the word, so we need only check that the $(i-1,i)$-subword is still ballot after the move. This is vacuous if $i=1$, so assume $i \geq 2$.

Let $x$ and $z$ be the positions of $\ybox$ in $T_i$ and $T_{i+1}$ respectively.  The only suffixes affected by the Phase 1 move are the suffixes of squares $y$ that occur weakly after $x$ and strictly before $z$ in reading order.  Let $y$ be such a square.  Since $i\ge 2$, we know $x$ contained an $i-1$ in $T_{i-1}$, and that this $i-1$ moved later in the reading word to form $T_i$.  Since the suffix of $y$ was ballot in $T_{i-1}$, it follows that in $T_i$ the suffix of $y$ has at least one more $i-1$ than $i$.  Thus the suffix of $y$ formed by replacing $x$ by $i$ is ballot as well.

\textbf{Case 2:}  Suppose $T_i$ is in Phase 2, i.e., a Phase 2 move will be applied to $T_i$ to get $T_{i+1}$.

We first show (2).  If the $\ybox$ moves, the condition (3) on $T_i$ shows that the old location, say $x$, of $\ybox$ becomes semistandard when filled with $i$ in $T_{i+1}$, except possibly if the square just below $x$ is also filled with $i$.  If the previous move was Phase 1 or if $i=1$, then this is impossible since then we would stay in Phase 1 using a vertical slide.  

Otherwise, if the previous move was Phase 2, assume for contradiction that the square below $x$ contains $i$.  Then it contained $i$ in $T_{i-1}$ and $T_i$ as well.  Consider the leftmost $i-1$ in $x$'s row in $T_i$, or $\ybox$ if there are no other $i$'s. Let $y$ be the square below it, demonstrated with $i=2$ below:
\[\young(11\cdots 1\x,y2\cdots 22)\]
We have $y=i$ since the tableau is semistandard. By definition, the suffix from $\ybox$ in $T_{i}$ is tied for $(i-1,i)$. Hence, the \emph{weak} suffix starting at $y$ is not ballot for $(i-1,i)$. This contradicts ballotness of $T_{i-1}$.  Thus $T_{i+1}$ satisfies (2).

To check (3), we wish to show that the new position of $\ybox$ in $T_{i+1}$, when filled with $i$, was an outer corner of the strip of $i$'s in $T_i$.  Indeed, if not then since the $i$'s form a horizontal strip it must be directly to the left of another $i$, which contradicts ballotness of $T_i$ (since the weak suffix of the $\ybox$ is already tied for $(i,i+1)$, and so the suffix of the $i$ to the right would not be ballot).  Since $T_i$ is semistandard, the $\ybox$ is then also an inner co-corner of the entries larger than $i$ in $T_{i+1}$.

Finally, we check (1), that $T_{i+1}$ is ballot. If the reading word is unchanged by the move, we are done. Otherwise, it has moved a single $i$ earlier in the word. In the latter case we only need to check that the $(i,i+1)$-subword is still ballot after the move.  
  
  By definition, we switch the $\ybox$ (say in position $x$) with the first $i$ after it whose $(i,i+1)$-suffix is tied (say in position $z$).  This does not affect any suffix starting before $x$ or weakly after $z$, so let $y$ be a square between $x$ and $z$ in reading order, possibly equal to $x$.  If $y$ contains an $i$ in $T_i$, its suffix is not tied before the move, hence has strictly more $i$'s than $i+1$'s.  Thus the suffix remains ballot after losing an $i$. Otherwise, let $y'$ be the closest square containing $i$ prior to $y$ in the reading word.  Since $T_i$ is semistandard, the suffix from $y$ contains as many $i$'s, and at most as many $i+1$'s, as the suffix from $y'$.  Since the latter remains ballot, the former does as well.

This completes Case 2.

Finally, to deduce properties (1) and (2) for the step-by-step algorithm, consider that $\jump_i$ corresponds to moving the $\ybox$ past a portion of the horizontal strip of $i$'s. Since the tableaux before and after the jump are semistandard and ballot, it's easy to check that each intermediate tableau (arising in Phase 2$'$) is semistandard and ballot as well. \qed
\end{proof}

\subsection{Reversing the algorithm}
We now give an algorithm that undoes $\lesh$.

\begin{definition}\label{def:reverse-algorithm}
  We define the \textit{reverse (local) evacuation-shuffle} of $(T',\ybox) \in \LRby$ to be the pair $(\ybox,T)$ of the same total shape, defined by the following algorithm.
  \begin{itemize}
  \item Set $i=t$.
  \item \textbf{Reverse Phase 2.}   If the suffix of the $\ybox$ has strictly more $i$'s than $i+1$'s, go to Reverse Phase 1.
Otherwise, choose the first $i$ (or $\ybox$) \emph{prior} to the $\ybox$ in reading order whose weak suffix (including itself) has exactly as many $i-1$'s as $i$'s.  If no such entry exists, choose the very first $i$ in reading order.  Interchange this choice of $i$ (or $\ybox$) with the $\ybox$. Decrement $i$ and repeat this step.
  
  \item \textbf{Reverse Phase 1.} Switch $\ybox$ with the nearest $i$ \emph{after} it in reading order.  Decrement $i$ and repeat this step until $i=0$.
  \end{itemize}
\end{definition}

\begin{theorem}\label{thm:reverse-algorithm}
  Reverse local evacuation shuffling is the inverse of local evacuation shuffling.
\end{theorem}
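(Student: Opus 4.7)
The plan is to prove the theorem by reverse induction on the step index $i$: starting from $i = t = \ell(\beta)$ and decrementing to $i = 1$, I show that applying reverse step $i$ to the tableau $T_{i+1}$ (the state immediately after the $i$-th forward step of $\lesh$) recovers $T_i$. Correspondingly, the argument splits according to whether forward step $i$ was a Phase 1 or a Phase 2 move, mirroring the two branches of the reverse algorithm.

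The first task is to show that the Reverse Phase 2 conditional -- \emph{suffix of $\ybox$ has strictly more $i$'s than $(i+1)$'s} -- correctly triggers Reverse Phase 1 exactly for forward Phase 1 steps. For forward Phase 1, the swap sends $\ybox$ from the old slot $y$ to the nearest prior $i$ at $x$, so the suffix of $\ybox$ in $T_{i+1}$ gains exactly one extra $i$ at $y$ compared to the strict suffix from $x$ in $T_i$. Ballotness of $T_i$ applied at position $x+1$ shows that the weak suffix from $x$ in $T_i$ has strictly more $i$'s than $(i+1)$'s (a tied weak suffix from a position holding $i$ would force a ballotness violation one step later), so the suffix in $T_{i+1}$ has strict dominance in $i$'s and Reverse Phase 1 is triggered. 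Conversely, for forward Phase 2 -- whether a no-swap (by the Phase 2 tied conditional) or a swap (because the strict suffix past the swap target $z$ in $T_i$, which was the criterion for its selection, equals the strict suffix past $\ybox$ in $T_{i+1}$) -- the suffix in $T_{i+1}$ is tied, so Reverse Phase 2 is triggered.

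Once the correct branch is selected, I verify the swap itself is correct. Reverse Phase 1 is immediate: since the forward Phase 1 swap was with the nearest prior $i$, no $i$ lies strictly between $x$ and $y$ in reading order, so $y$ is the nearest $i$ \emph{after} $\ybox$ in $T_{i+1}$, which is exactly what Reverse Phase 1 selects. Reverse Phase 2 is the delicate case and is the main obstacle: in the no-swap forward subcase I must show that $\ybox$'s own weak suffix is tied for $(i-1, i)$ (so that $\ybox$ is selected and the algorithm performs a no-op), and in the swap forward subcase I must show that $y$ (now holding $i$ in $T_{i+1}$) is either itself tied for $(i-1, i)$ or is the first $i$ in reading order, triggering the fallback clause. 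The tiedness at $\ybox$ or $y$ is inherited from the previous forward Phase 2 step, which by construction placed $\ybox$ at a position with a tied strict suffix (and weak and strict suffixes agree at $\ybox$). The crucial remaining claim -- that no position of value $i$ strictly earlier than $y$ has a tied weak suffix for $(i-1, i)$ -- I would establish by tracking the running difference of $(i-1)$- versus $i$-counts through the reading word of $T_{i+1}$, combined with the positional invariants of Theorem \ref{thm:ballotness}(3), which pin $\ybox$ to the interface between $\{1, \ldots, i-1\}$ and $\{i, \ldots, t\}$ and prevent the running difference from returning to zero at an earlier $i$-square.
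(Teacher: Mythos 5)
Your overall strategy is the same as the paper's: undo the forward steps one index at a time, split on whether forward step $i$ was Phase~1 or Phase~2, verify that the Reverse Phase~2 trigger fires exactly when the forward step was Phase~1, and then check that the reverse swap lands on the correct square. Your branch-selection analysis is correct (the ``one extra $i$ at $y$'' bookkeeping and the observation that a square holding $i$ cannot have a tied weak $(i,i{+}1)$-suffix in a ballot word are exactly right), as is your identification of the two subcases of Reverse Phase~2 (tied weak suffix at the old $\ybox$ position versus the fallback to the first $i$ in reading order when the transition happened at step $i$). One small omission: showing that the reverse algorithm undoes each forward step gives only a left inverse; you should add, as the paper does, that $\lesh$ maps between two sets of equal (finite) cardinality, so a left inverse is a two-sided inverse.

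The genuine soft spot is your ``crucial remaining claim'' that no $i$-square strictly between the old position $s$ and the new position $s'$ of the $\ybox$ has a tied weak $(i{-}1,i)$-suffix in $T_{i+1}$. Your proposed method --- tracking the running $(i{-}1)$-versus-$i$ difference through the reading word of $T_{i+1}$, pinned down by the positional invariants of Theorem~\ref{thm:ballotness}(3) --- is unlikely to close: ballotness of $T_{i+1}$ by itself does \emph{not} forbid a tied weak $(i{-}1,i)$-suffix at a square containing $i$ (deleting that $i$ still leaves a ballot suffix), and the running difference can legitimately return to zero at such a square whenever an $(i{-}1)$ occurs between $s$ and that square in reading order, which the co-corner invariants do not rule out. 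The correct argument is a comparison between $T_{i+1}$ and $T_i$ rather than an argument internal to $T_{i+1}$: for any such square $p$ with $s < p < s'$, the weak suffix from $p$ in $T_i$ contains exactly one more $i$ than in $T_{i+1}$ (namely the $i$ at $s'$ that became the $\ybox$), so if it were tied in $T_{i+1}$ it would have strictly more $i$'s than $(i{-}1)$'s in $T_i$, contradicting the ballotness of $T_i$ guaranteed by Theorem~\ref{thm:ballotness}(1). This is precisely how the paper disposes of the claim, and it is the step your sketch does not actually secure.
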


\begin{proof}
Let $(\ybox,T) \in \LRby$ and put $\lesh(\ybox,T) = (T',\ybox)$. We show that the reverse evacuation shuffle of $(T',\ybox)$ is equal to $(\ybox, T)$. Since $\lesh$ is a function between sets of the same cardinality, we will be done.

  Let $\beta=(\beta_1,\ldots,\beta_t)$ be the content of $T$.  Suppose the local evacuation shuffle of $(\ybox,T)$ consists of $k$ moves in Phase 1 and $t-k$ in Phase 2.  If $k=t$ then the last step is still in Phase 1, coming from a Pieri move across the horizontal strip of $t$'s.  Then after this move, the $(t,t+1)$ suffix is not tied because there are no $t+1$'s and there is at least one $t$ after the $\ybox$.  Thus there is no Reverse Phase 2 when applying the reverse algorithm; it starts immediately in Reverse Phase 1.
  
  Otherwise, if $k<t$, the local evacuation shuffle ended with a sequence of $\jump$ moves.  We show inductively that each Reverse Phase 2 step undoes a Phase 2 step in succession. Suppose that reverse-shuffling past the $t,t-1,\ldots,t-i+1$ leaves us at the end of step $t-i$ of $\lesh$, and that step $t-i$ was a $\jump$ move.
  
  In what remains, let $r=t-i$. Let $T_{r}$ and $T_{r+1}$ be the respective tableaux before and after the $\jump_{r}$ step, and let $s$ and $s'$ denote the squares that contain the $\ybox$ in $T_{r}$ and $T_{r+1}$ respectively.  Then $T_{r+1}$ is formed by switching the $\ybox$ (from position $s$) with the first $r$ after it (in position $s'$) whose $(r,r+1)$ suffix is tied.  The Reverse Phase 2 step, backwards past the $r$ strip, would take the $\ybox$ and switch it with either the first $r$ to the left whose weak $(r-1,r)$ suffix is tied, or the very first $r$ in reading order.  We wish to show that this $r$ is in location $s$ in $T_{r+1}$.
  
  First suppose that the $(r-1)$st step was also a $\jump$ move.  Then in $T_{r}$, the $(r-1,r)$-suffix of the $\ybox$ is tied.  So, in $T_{r+1}$, the \textit{weak} suffix starting at square $s$ is tied for $(r-1,r)$ as well.  Assume for contradiction that there were another $r$ between $s$ and $s'$ in reading order whose weak $(r-1,r)$ suffix is tied in $T_{r+1}$.  Then in $T_{r}$, that suffix would have strictly more $r$'s than $r-1$, contradicting ballotness of $T_{r}$ (see Lemma \ref{thm:ballotness}).  Thus the $r$ in square $s$ is the first $r$ to the left of the $\ybox$ in reading order in $T_{r+1}$ whose weak $(r-1,r)$ suffix is tied, and so the reverse process moves the $\ybox$ back to square $s$.
  
  Otherwise, if the $(r-1)$st step was a Pieri (Phase 1) move, then in $T_{r}$, the $(r-1,r)$-suffix of the $\ybox$ cannot be tied, since $T_{r-1}$ is ballot and we replaced the $\ybox$ with another $r-1$, which adds to that suffix.  Notice also that since the $(r)$th step is the first step in Phase 2, the $\ybox$ must precede all $r$'s in reading order in $T_{r}$.  Thus square $s$ is the leftmost $r$ in reading order in $T_{r+1}$, and no other $r$ can have weakly tied $(r-1,r)$ suffix by the same ballotness argument as above.  It follows that the reverse move does switch the $\ybox$ with the $r$ in square $s$ in this case as well.
  
  This shows that the $\jump$ moves are undone by the Reverse Phase 2 moves, and that the reverse algorithm switches to Reverse Phase 1 exactly after undoing all the forward Phase 2 moves.  It is easy to see that a Reverse Phase 1 move is the inverse of a forward Phase 1 move as well, so this algorithm reverses the local evacuation shuffling algorithm. \qed
\end{proof}

\begin{remark}
  The algorithm in Definition \ref{def:reverse-algorithm} reverses the ordinary (not step-by-step) algorithm.  To reverse the step-by-step algorithm, we simply break each Reverse Phase 2 jump into smaller steps, interchanging $\ybox$ with each $i$ that precedes it in succession until it reaches the first $i$ whose suffix had exactly as many $i$'s as $i-1$'s (before switching it with $\ybox$).
\end{remark}

\section{Proof of local algorithm}\label{sec:main-result}


\begin{figure}[t]
\begin{center}
  \includegraphics[scale=1]{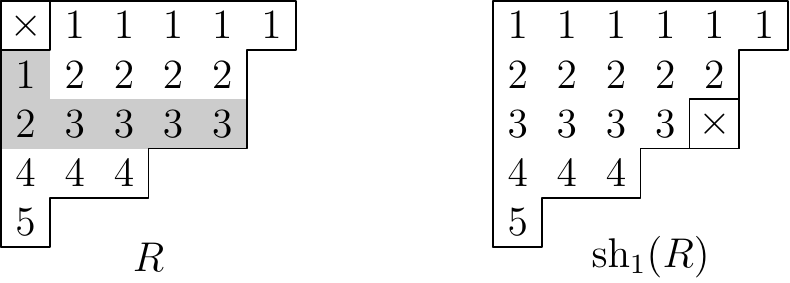}
\end{center}
\caption{An example of a rectified tableau $R$ with transition step $s=3$.  The rectification path of the box is down to row $s$ and then directly right.}
\label{fig:R-diagram}
\end{figure}

In this section we prove the following:

\begin{theorem}\label{thm:main-theorem}
  Local evacuation-shuffling is the same map as evacuation-shuffling, that is, for any $(\ybox,T)\in \LRby$,
  \[\lesh(\ybox,T) = \esh(\ybox,T).\]
\end{theorem}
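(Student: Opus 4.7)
My plan is to prove Theorem \ref{thm:main-theorem} by induction on $t = \ell(\beta)$, with base case $t = 1$ supplied by the Pieri case (Theorem \ref{thm:Pieri}). When $\beta$ has a single row, Phase~1 of the local algorithm executes a standard Pieri swap whenever $\ybox$ has a preceding $1$; otherwise it transitions to Phase~2 at $i = 1$ and selects the unique $1$ with tied $(1,2)$-suffix, which (since there are no $2$'s) is the last $1$ in reading order, recovering the special jump in Theorem \ref{thm:Pieri}.

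For the inductive step I would use Lemma \ref{lem:upper-shuffle} to reduce $\esh(\ybox, T)$ to the three-stage computation: rectify $(\ybox, T)$ to a pair $(\ybox_R, R)$ where $R$ is the highest-weight tableau of shape $\beta$ and $\ybox_R$ is the rectified position of the box; shuffle $\ybox_R$ outward past $R$; then un-rectify. The central claim is that this composite procedure produces exactly the path traced by the local algorithm, with the transition step $s$ identified with the row of $R$ in which $\ybox_R$ resides after rectification, matching the forthcoming Corollary \ref{cor:transition-step}.

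To analyze Phases~1 and~2 separately I would exploit the two dual characterizations of $\esh$ given by Lemmas \ref{lem:upper-shuffle} and \ref{lem:outer-esh}. For Phase~1, I would show that the $i$-th step agrees with the Pieri-case $\esh$ applied to the sub-tableau of entries $\le i$ together with $\ybox$: this uses the fundamental theorem of jeu de taquin to decouple the successive horizontal $i$-strips during rectification, together with the ballotness and semistandardness bookkeeping of Theorem \ref{thm:ballotness} to confirm that the intermediate tableaux are legitimate inputs for the inductive hypothesis. For Phase~2, I would dualize via the anti-rectifying description in Lemma \ref{lem:outer-esh} combined with the rotate-and-transpose symmetry of Lemma \ref{lem:highwt-lowwt}, so that each $\jump_i$ becomes a conjugate Pieri move on a rotated and transposed chain, to which induction again applies.

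The principal obstacle, I expect, will be gluing the two phases at the transition step $s$. Concretely, I need to verify that the configuration of the tableau just before the first Phase~2 jump coincides with the configuration produced after the rectification half of $\esh$, and that the tied-suffix condition selecting the target of a $\jump_i$ matches the position that outward promotion followed by un-rectification would yield. This requires a careful ballotness argument: using Theorem \ref{thm:ballotness} together with a precise determination of the column of $\ybox_R$ in $R$ (namely the leftmost column of row $s$), one can track tied $(i, i+1)$-suffixes through un-rectification and match them with the ones identified locally. Once this intermediate matching is pinned down, the two phases concatenate without further adjustment.
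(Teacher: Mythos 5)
Your outline follows the same strategic arc as the paper's proof -- Phase~1 reduces to the Pieri case on horizontal strips, Phase~2 is handled by the antidiagonal (rotate-and-transpose) symmetry via Lemma \ref{lem:outer-esh}, and the transition step is the row of the rectified $\ybox$ -- but the central mechanism that makes this rigorous is missing. The phrase ``use the fundamental theorem of jeu de taquin to decouple the successive horizontal $i$-strips during rectification'' is where the real work lives, and the fundamental theorem alone does not supply it: $\esh$ is a \emph{global} operation (conjugation of promotion by full rectification), and to break it into per-strip steps one needs (a) strip-extraction maps $\iota_H, \iota_V^\ast$ that are right inverses to concatenation of dual equivalence classes and commute with shuffling (Lemmas \ref{lem:factor-horiz} and \ref{lem:factor-other}), (b) the $s$-decomposition of $T$ into $s-1$ horizontal strips followed by $\beta_s$ vertical strips, chosen according to where the rectified $\ybox$ lands (Lemma \ref{lem:5-facts}), and (c) the word-level factorization $\iota_s \circ \esh = e_t \cdots e_1 \circ \iota_s$ into partial evacuation-shuffles (Lemma \ref{lem:partial-esh}). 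Without (c) there is no precise statement of what ``the $i$-th step of $\esh$'' means; and your formulation of it -- the Pieri case applied to ``the sub-tableau of entries $\le i$ together with $\ybox$'' -- is not the right unit: the $i$-th partial step is a \emph{relative} evacuation-shuffle of $\ybox$ with the single strip $H_i$ after rectifying the first $i-1$ strips, and the reason it collapses to an honest Pieri-case $\esh$ is that $\ybox \sqcup H_i$ becomes a straight shape in that partial rectification. That observation, not the fundamental theorem, is what decouples the strips.

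Relatedly, the induction on $t=\ell(\beta)$ is never actually set up: you do not say to which smaller object the inductive hypothesis is applied, and the natural candidate (peel off $H_1$ and induct on the rest) cannot carry you through the transition, because at step $s$ the decomposition must switch from peeling rows off the inner edge to peeling \emph{columns} off the outer edge -- this is exactly why the paper needs the $s$-decomposition rather than a single uniform recursion. The gluing issue you flag as the ``principal obstacle'' is resolved in the paper not by tracking tied suffixes through un-rectification, but by two cleaner facts: the special-jump criterion of Proposition \ref{prop:pieri-criteria} shows that $e_s$ would be a special Pieri jump exactly when the local transition condition holds, and after Phase~1 the pair $(\ybox, M_V)$ (the box together with all entries $\ge s$) rectifies to a straight shape, so the remainder of the computation is an ordinary $\esh$ of that pair, to which Lemma \ref{lem:outer-esh} and rotation-transposition apply. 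As written, your proposal correctly identifies the shape of the argument but defers all three load-bearing steps -- the factorization, the identification of each factor with a (conjugate) Pieri move, and the gluing -- so it does not yet constitute a proof.
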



The main idea is as follows.  In computing $\esh$, when we first rectify $(\ybox,T)$, we obtain a tableau $R$ of the form shown in Figure \ref{fig:R-diagram}.  In particular, the $\ybox$ is in the inner corner and the total shape of $\ybox \sqcup R$ is formed by adding an outer co-corner to $\beta$ in some row $s$.  

When shuffling the $\ybox$ past $R$, the $\ybox$ follows a path directly down to row $s$ and then directly over to the end of row $s$, as shown.  It turns out that this corresponds to a more refined process in which we shuffle the $\ybox$ past rows $1,2,\ldots,s-1$, then shuffle it past the $\beta_s$ vertical strips formed by greedily taking vertical strips from the right of the bottom $l(\beta)-s+1$ rows of the tableau.  We call this decomposition into horizontal and vertical strips the \newword{$s$-decomposition}, as illustrated in Example \ref{ex:s-decompositions}.


We show that each step of Phase 1 of $\lesh$ corresponds to a single move of the $\ybox$ past a horizontal strip, and that the transition step is $s$. We then show, using the \emph{antidiagonal symmetry} suggested by Figure \ref{fig:antidiagonal}, that the movements of the $\ybox$ during Phase 2 correspond similarly to shuffles past each of the $s$-decomposition's vertical strips.

\begin{definition} \label{def:conjugate-pieri}
  Let $V$ be a vertical strip, i.e., no row of $V$ contains more than one entry.  Let $\ybox$ be an inner co-corner of $V$.  Then we define the \newword{conjugate move} to be the action of switching the location of the $\ybox$ with the square of $V$ that comes directly \emph{after} it in reading order.
\end{definition}


\subsection{\texorpdfstring{$s$}{s}-decompositions}

We formalize the notion of an $s$-decomposition and extend it to an arbitrary Littlewood-Richardson tableau as follows.

\begin{definition}[$s$-decompositions]
Let $1 \leq s \leq \ell(\beta)+1$. 

\begin{enumerate}
\item Let $\beta'$ be obtained by deleting the first $s-1$ rows of $\beta$. Let $r_1, \ldots, r_{s-1}$ be one-row partitions with lengths the first $s-1$ rows of $\beta$, and let $c_s, \ldots, c_t$ be one-column partitions of lengths given by the columns of $\beta'$ in reverse order. (Here $t = \beta_s + s - 1$.)  We say that $(r_1,\ldots,r_{s-1},c_s,\ldots,c_t)$ is the \newword{$s$-decomposition} of the shape $\beta$.

\item Let $T \in \LR_\mu^\lambda(\beta)$ be a ballot SSYT. The $s$\newword{-decomposition of} $T$ is the decomposition of $T$ into its first $s-1$ horizontal strips $H_1,\ldots,H_{s-1}$ where $H_i$ consists of the entries labeled $i$ in $T$, followed by $\beta_s$ vertical strips $V_s, \ldots, V_t$, where $V_{t+1-i}$ contains the $i$-th-from-last instance (when possible), in reading order, of each of the entries $j \geq s$.

\end{enumerate}
\end{definition}

The $s$-decomposition of the highest weight filling of $\beta$ will be of particular importance.

\begin{example}\label{ex:s-decompositions}
The $3$-decomposition of the tableau $T$ used in Example \ref{exa:first-evacu-shuffle} is shown in Figure \ref{fig:s-decomposition-color} (note that $3$ is the transition point for the initial position of the $\ybox$ in that example). Notice that this corresponds to the $s$-decomposition of the rectified tableau of shape $\beta$ shown in Figure \ref{fig:rectified-s-decomposition}.
\end{example}

\begin{figure}[h] 
\begin{center}
\includegraphics[width=15.8cm]{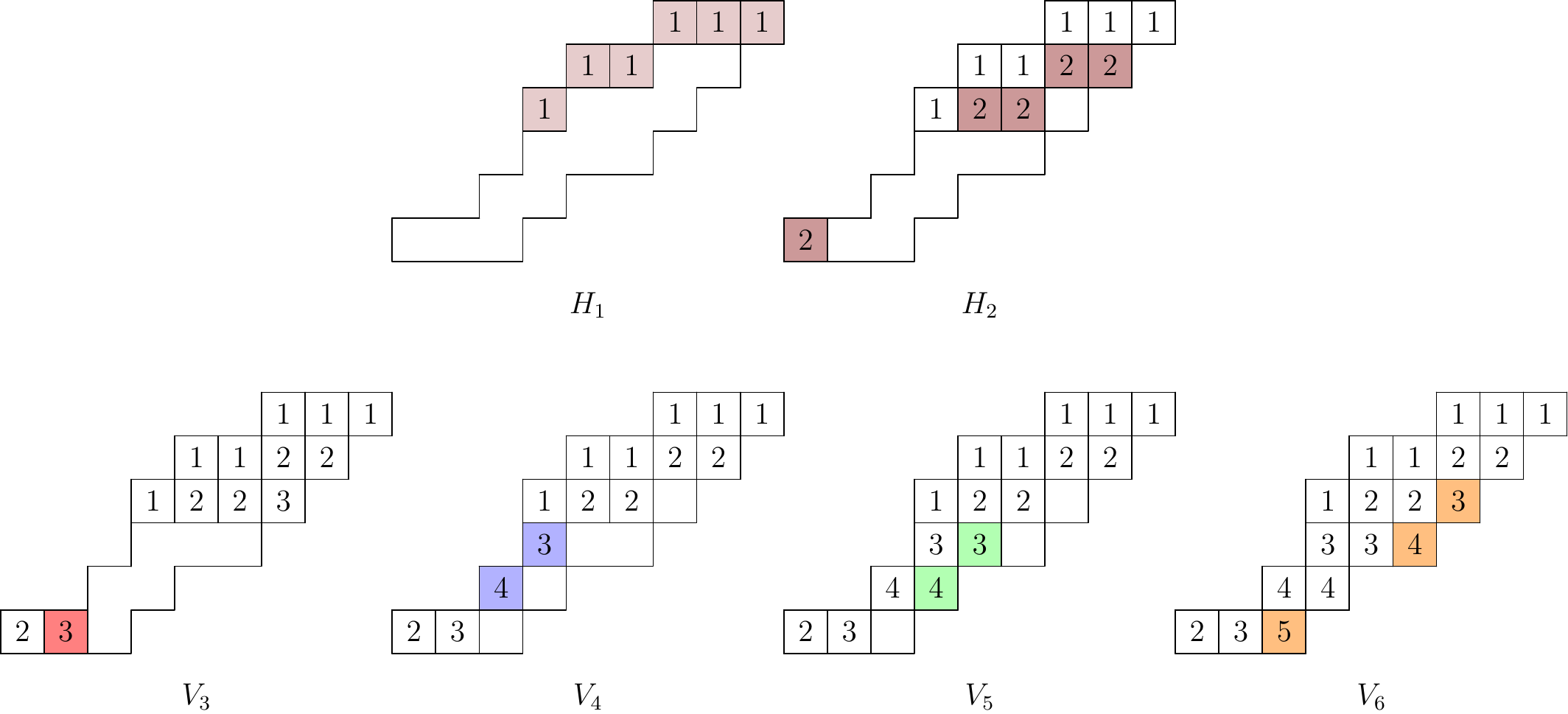}
\caption{\label{fig:s-decomposition-color} The $3$-decomposition into horizontal and vertical strips of the tableau discussed in Example \ref{ex:s-decompositions}.}
\end{center}
\end{figure}

\begin{figure}[h]
\begin{center}
\hspace{-1cm}\includegraphics[height=2.6cm]{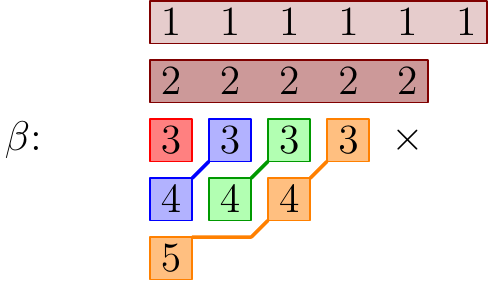} \hspace{1cm}
\includegraphics[height=2.6cm]{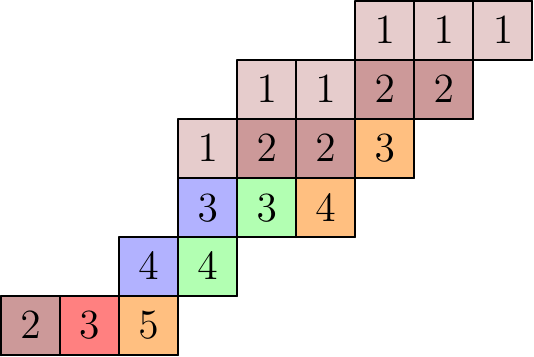}
\caption{\label{fig:rectified-s-decomposition} At left, the $3$-decomposition of $\beta$, where $\beta$ is the rectification shape of  the tableau $T$ from Example \ref{ex:s-decompositions}.  The $3$-decomposition of $T$ is color-coded at right.}
\end{center}
\end{figure}

\begin{lemma}\label{lem:5-facts}
  Let $(\ybox,T)\in \LRyb$, and let $H_1,\ldots,H_{s-1},V_s,\ldots,V_{t}$ be its $s$-decomposition.  Then
  \begin{enumerate}
    \item[(i)] $H_1,\ldots,H_{s-1}$ are horizontal strips with $H_i$ extending $H_{i-1}$ for all $i$.
    \item[(ii)] $V_s,\ldots,V_{t}$ are vertical strips, with $V_s$ extending $H_{s-1}$ and $V_j$ extending $V_{j-1}$ for all $j$.
    \item[(iii)] For any $i$, $H_i$ rectifies to the $i$th row in $\rectify(T)$.
    \item[(iv)] For any $i$, $V_{t-i+1}$ rectifies to the $i$th vertical strip in the $s$-decomposition of $\rectify(T)$.
  \end{enumerate}
\end{lemma}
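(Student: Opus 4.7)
The proof decomposes by part, with most of the content in (iv).

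Parts (i) and (iii) are routine. For (i), since $T$ is semistandard, the entries equal to $i$ occupy distinct columns (a horizontal strip), and the sub-shape of entries $\le i$ extends that of entries $\le i-1$ by exactly this strip. For (iii), a ballot SSYT of content $\beta$ rectifies to the unique highest-weight filling $R$ of shape $\beta$, in which row $j$ is entirely filled with $j$'s; since rectification is a bijection on entries preserving values, the $i$-labeled entries of $T$ map onto the $i$-labeled entries of $R$, i.e., row $i$.

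The main content is (iv). We invoke the standard fact that jeu de taquin commutes with standardization. Labeling the $k$-th instance of each value $j$ in $T$'s reading order with $L(j,k) = \beta_1 + \cdots + \beta_{j-1} + k$ produces a standard tableau $T^{\mathrm{std}}$, and $\rectify(T^{\mathrm{std}}) = R^{\mathrm{std}}$ under the same convention. Since the formula $L(j,k)$ depends only on $\beta$ and the rank $k$, the position of label $L(j, \beta_j - i + 1)$ in $T^{\mathrm{std}}$ (respectively $R^{\mathrm{std}}$) is exactly the $i$-th from last $j$ in $T$ (respectively $R$). As labels are preserved under jeu de taquin, the $i$-th from last $j$ in $T$ rectifies to the $i$-th from last $j$ in $R$. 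Taking the union over $j \ge s$ shows that $V_{t-i+1}$ in $T$ rectifies to $V_{t-i+1}$ in $R$, which is the $i$-th vertical strip in the $s$-decomposition of $R$.

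For (ii), the vertical-strip property follows from ballotness. Suppose the $i$-th from last instances of values $k < k'$ were in the same row of $T$; semistandardness places the $k$ strictly left of the $k'$. The strict suffix immediately after the $k$ then contains $i-1$ copies of $k$ but at least $i$ copies of $k'$, which violates the iterated ballot inequalities $\#k \ge \#(k+1) \ge \cdots \ge \#k'$. The extension property is easy to check directly in $R$ from the explicit staircase structure of the $V_j$'s in the highest-weight tableau, and transfers to $T$ via (iv): the partial union $H_1 \cup \cdots \cup H_{s-1} \cup V_s \cup \cdots \cup V_j$ in either tableau consists of exactly those entries whose standardization label falls below a specific threshold depending on $s$ and $j$, so label-preservation identifies these partition sub-shapes.

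The main obstacle is making the transfer of the extension property in (ii) rigorous: (iv) gives a pointwise correspondence between $V_{t-i+1}$ in $T$ and in $R$, but showing that the union $H_1 \cup \cdots \cup V_j$ forms a partition sub-shape of $T$'s skew shape (rather than merely a set of positions) requires identifying this union as the zero set of a threshold in the standardization, and checking that a below-threshold sub-tableau of a ballot SSYT is always a valid partition sub-shape. This is a combinatorially clean but notationally involved verification, and is where care is most needed.
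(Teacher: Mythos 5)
Parts (i), (iii), (iv), and the vertical-strip half of (ii) are correct and essentially match the paper's argument: where you invoke ``jeu de taquin commutes with standardization'' to track the $i$-th-from-last $j$, the paper uses Knuth equivalence of reading words plus the fact that Knuth moves never transpose equal letters; these are interchangeable. Your ballotness argument for the vertical-strip property is also sound (the paper phrases it only for consecutive values $j,j+1$, but the chained inequality you use is the same thing).

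The genuine gap is in your transfer of the \emph{extension} property in (ii). Your claimed mechanism --- that $H_1\cup\cdots\cup H_{s-1}\cup V_s\cup\cdots\cup V_j$ is ``exactly those entries whose standardization label falls below a specific threshold'' --- is false. Under the standardization $L(m,k)=\beta_1+\cdots+\beta_{m-1}+k$, this union takes, for each value $m\ge s$, the first $\max(0,\beta_m-t+j)$ instances of $m$, and this count varies with $m$; so the label set is not an initial segment. Concretely, for $\beta=(3,2)$ and $s=1$ the union $V_1\cup V_2$ has label set $\{1,2,4\}$. Without the threshold description, knowing (via (iv)) that each $V_k$ in $T$ corresponds cell-by-cell to the $k$-th strip of $R$ does \emph{not} imply that the union forms a valid skew sub-shape of $T$: an arbitrary subset of cells of a skew tableau can rectify onto a partition shape without itself being one, and ``extends'' is a statement about positions in $T$, not in $\rectify(T)$. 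The paper avoids this entirely by arguing directly in $T$: for each value $m\ge s$, the instance of $m$ lying in $V_k$ precedes the instance lying in $V_{k+1}$ in reading order (hence, within the horizontal strip of $m$'s, lies weakly below and strictly to its left), which together with the already-established vertical-strip property forces each $V_{k+1}$ to be added as successive outer co-corners of $H_1\cup\cdots\cup V_k$; and $V_s$ extends $H_{s-1}$ because $V_s$ consists of the leftmost entry $\ge s$ in each of its rows. You should replace the threshold/transfer step with a direct positional argument of this kind (or carry out the up-and-left closure check on $U_j$ inside $T$ explicitly).
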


\begin{proof}
  To prove (i) and (iii), note that $H_i$ rectifies to the $i$th row  of the highest weight filling of $\beta$ since it is filled with all $i$'s in $T$.  They form a horizontal strip because $T$ is semistandard.
  
  To prove (iv), let $j\ge s$.  If we order the $j$'s in the highest weight filling of $\beta$ in reading order, then they must occur in that order in $T$ as well, since the reading word of $T$ is Knuth equivalent to that of its rectification and Knuth moves cannot switch equal-valued entries.  (See \cite{bib:Fulton} for an introduction to Knuth equivalence.)  The vertical strip $\beta^{(t-i+1)}$ in the rectified picture consists of the $i$th copies from the right of each such entry $j$, and so in $T$ the entry $j$ occurring in $V_{t-i+1}$ is still the $i$th from the end.
  
  For (ii), since the reading word of $T$ is ballot, the $i$th-to-last copy of $j$ must occur strictly after the $i$th-to-last copy of $j+1$ for any $j$, and since the tableau is semistandard, this $j+1$ cannot appear strictly to the left of the $j$.  It follows that the $j+1$ in $V_{t-i+1}$ appears in a row strictly below the $j$ in $V_{t-i+1}$ for each $j$.  Therefore, $V_{t-i+1}$ is a vertical strip for all $i$.  Since each of the entries $j\ge s$ appears in $V_k$ before $V_{k+1}$ for all $k$, the strips must extend each other as well.  Finally, $V_s$ extends $H_{s-1}$ because it consists of entries larger than $s-1$ and is the first of each of those entries in its row. \qed
\end{proof}

\begin{remark}
Lemma \ref{lem:highwt-lowwt} follows from Lemma \ref{lem:5-facts} in the case $s=1$. To see this, observe that the $s$-decomposition is, in particular, preserved by jeu de taquin slides applied to $T$. If we \emph{anti-rectify} $T$ to a tableau of shape $\rect/\beta^c$, the explicit description of the entries of $V_{t-i+1}$ shows that it forms precisely the $i$-th-rightmost column of $\rect/\beta^c$.
\end{remark}

Lemma \ref{lem:5-facts} allows us to factor Littlewood-Richardson chains into longer chains based on the $s$-decomposition. In particular, for a horizontal strip $H_i$ or vertical strip $V_j$ in an $s$-decomposition, let $\overline{H_i}$ and $\overline{V_j}$ be the corresponding Littlewood-Richardson tableaux of content $r_i$ and $c_j$ respectively formed by decreasing the entries appropriately.  We have the following map.

\begin{definition}
We write $$\iota_s:\LRyb\to \LR(\alpha,\ybox,r_1,\ldots,r_{s-1},c_s,\ldots,c_t,\gamma)$$ by $$\iota_s(T_\alpha,\ybox,T,T_\beta)=(T_\alpha,\ybox, \overline{H_1},\ldots,\overline{H_{s-1}},\overline{V_s},\ldots,\overline{V_{t}},T_\gamma)$$ where $(H_i,V_j)$ is the $s$-decomposition of $T$.  We define $$\iota_s:\LRby\to \LR(\alpha,r_1,\ldots,r_{s-1},c_s,\ldots,c_t,\ybox,\gamma)$$ in a similar fashion.
\end{definition}

Note that $\iota_s$ is injective, because the process of reducing the strips into Littlewood-Richardson tableau can be reversed by increasing the entries of each $\overline{H_i}$ by $i-1$ and increasing those of $\overline{V_j}$ by $s-1$.  We also claim that shuffling any tableau with $T$ is the same as shuffling past each of the $H_i$ and $V_j$ in sequence. This is proven in the two lemmas that follow.

In these lemmas it is helpful to use the language of dual equivalence classes in place of Littlewood-Richardson tableaux (note that $s$-decompositions and the map $\iota_S$ can be similarly defined on dual equivalence classes, by taking the associated classes of the tableaux at each step.)

\begin{lemma}[Extracting horizontal strips] \label{lem:factor-horiz}
Let $\lambda/\mu$ be a skew shape and $\beta = (\beta_1, \ldots, \beta_r)$ a partition. Let $\beta' = (\beta_2, \ldots, \beta_r)$. Consider the concatenation map on dual equivalence classes,
\[\DE_\mu^\lambda(\beta_1, \beta') \to \bigsqcup_{\tau} \DE_\mu^\lambda(\tau), \qquad (D_1, D') \mapsto D_1 \sqcup D',\]
where the union is over $\tau \subseteq \beta'$ with $\tau/\beta'$ a horizontal strip of length $\beta_1$.

There is a unique `factorization' injection, a right inverse to concatenation,
\[\iota_H : \DE_\mu^\lambda(\beta) \hookrightarrow \DE_\mu^\lambda(\beta_1, \beta').\]
It is `compatible with shuffling' in the sense that the following diagram commutes, for any partition $\pi$:
\[\xymatrix{
\DE_\mu^\lambda(\beta, \pi) \ar[r]^-{\iota_H} \ar[d]_{\sh_1} & \DE_\mu^\lambda(\beta_1, \beta', \pi) \ar[d]^{\sh_1 \sh_2} \\
\DE_\mu^\lambda(\pi, \beta) \ar[r]^-{\iota_H} & \DE_\mu^\lambda(\pi, \beta_1, \beta').
}\]
\end{lemma}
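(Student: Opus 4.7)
The approach is to construct $\iota_H$ explicitly on Littlewood-Richardson representatives, prove uniqueness via Pieri's rule together with Lemma \ref{lem:dual-multiLRcoeff}, and then verify the commuting square by combining uniqueness with an associativity property of shuffling.

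\textbf{Construction of $\iota_H$.} Given $D \in \DE_\mu^\lambda(\beta)$ with LR representative $T = \LR(D)$, the squares of $T$ labeled $1$ form a horizontal strip $T_1$ of content $(\beta_1)$, while the remaining squares, with entries decremented by one, form a Littlewood-Richardson tableau $T'$ of content $\beta'$: the restriction of $T$'s reading word to entries $\geq 2$ is still ballot after decrementing, since ballotness for $(i,i+1)$ in $T$ translates to ballotness for $(i-1,i)$ in $T'$ for each $i\geq 2$. Through the $\LR \leftrightarrow \DE$ correspondence, this yields a chain $(D_1, D') \in \DE_\mu^\lambda(\beta_1, \beta')$, and I set $\iota_H(D) := (D_1, D')$. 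By construction, $D_1 \sqcup D' = D$.

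\textbf{Uniqueness.} By Pieri's rule, $c^\tau_{(\beta_1), \beta'} = 1$ whenever $\tau/\beta'$ is a horizontal strip of length $\beta_1$, and is zero otherwise. Applying Lemma \ref{lem:dual-multiLRcoeff} together with associativity of Littlewood-Richardson coefficients,
\[|\DE_\mu^\lambda(\beta_1, \beta')| \;=\; c^\lambda_{\mu,(\beta_1),\beta'} \;=\; \sum_\tau c^\tau_{(\beta_1),\beta'}\,c^\lambda_{\mu,\tau} \;=\; \sum_\tau |\DE_\mu^\lambda(\tau)|,\]
so concatenation is a cardinality-preserving map $\DE_\mu^\lambda(\beta_1, \beta') \to \bigsqcup_\tau \DE_\mu^\lambda(\tau)$. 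Since $\iota_H$ (and its analogues on each $\tau$-component) already provides a right inverse, the cardinality match forces concatenation to be a bijection, and the right inverse is unique on each $\tau$-summand. In particular, $\iota_H$ on $\DE_\mu^\lambda(\beta)$ is unique.

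\textbf{Compatibility with shuffling.} Fix $(D, D_\pi) \in \DE_\mu^\lambda(\beta, \pi)$, let $\iota_H(D) = (D_1, D')$, and write $\sh_1(D, D_\pi) = (\tilde D_\pi, \tilde D)$. Unpacking the definitions, $\sh_1 \sh_2(D_1, D', D_\pi) = (D_\pi^{**}, D_1^{\mathrm{new}}, D'^{\mathrm{new}})$, where $(D_\pi^*, D'^{\mathrm{new}}) := \sh(D', D_\pi)$ and $(D_\pi^{**}, D_1^{\mathrm{new}}) := \sh(D_1, D_\pi^*)$. Since shuffling preserves the rectification shapes of individual classes, $(D_1^{\mathrm{new}}, D'^{\mathrm{new}})$ is a chain of type $((\beta_1), \beta')$ over a new intermediate shape. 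By uniqueness of $\iota_H$, commutativity of the square reduces to the two identifications $D_\pi^{**} = \tilde D_\pi$ and $D_1^{\mathrm{new}} \sqcup D'^{\mathrm{new}} = \tilde D$, both of which are instances of the same associativity statement: shuffling $D_\pi$ past the concatenation $D_1 \sqcup D'$ agrees with shuffling past $D'$ and then past $D_1$, with the output retaining the factorization. This associativity is the heart of the lemma and is the main obstacle; I would establish it via a growth-diagram argument in the spirit of Lemma \ref{lem:outer-esh}. Both procedures are encoded by the same growth diagram on a rectangular grid with $\mu$, $D_\pi$, and $(D_1, D')$ as boundary data, and the only extra datum recorded by the two-step $\sh_1 \sh_2$ is the shape along the interior horizontal line between $D_1$ and $D'$; this datum drops out when comparing outputs on the opposite boundary, yielding the claim.
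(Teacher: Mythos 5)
Your explicit construction of $\iota_H$ over the component $\tau=\beta$ (extract the $1$'s from $\LR(D)$, decrement the rest) is correct, and your reduction of the commuting square to the statement that concatenation is compatible with shuffling is exactly what the paper does; the paper simply cites that compatibility as a known property of jeu de taquin rather than re-deriving it from growth diagrams, so your sketch there is acceptable. The genuine gap is in the uniqueness step. Your cardinality computation $|\DE_\mu^\lambda(\beta_1,\beta')| = \sum_\tau |\DE_\mu^\lambda(\tau)|$ is right, but to conclude that concatenation is a bijection you need it to be surjective onto the \emph{entire} disjoint union, and the claimed ``analogues on each $\tau$-component'' of your construction do not exist as described: for $D\in\DE_\mu^\lambda(\tau)$ with $\tau\neq\beta$, the tableau $\LR(D)$ has content $\tau$, so extracting its $1$'s yields a chain of type $((\tau_1),(\tau_2,\ldots,\tau_r))$, not of type $((\beta_1),\beta')$, and hence is not a preimage under the concatenation map in question. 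Knowing only that the image contains $\DE_\mu^\lambda(\beta)$, the cardinality match does not preclude concatenation from hitting some element of $\DE_\mu^\lambda(\beta)$ twice while missing part of another component, so uniqueness of the right inverse over $\tau=\beta$ does not follow from what you have written.

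The gap is repairable in two ways. One can establish surjectivity over every $\tau$ abstractly: rectify $D\in\DE_\mu^\lambda(\tau)$ against $D_\mu$ to reach the straight-shape class $D_\tau$, replace $D_\tau$ by any element of the nonempty set $\DE_\eset^\tau((\beta_1),\beta')$ (nonempty since $c^\tau_{(\beta_1),\beta'}=1$ by Pieri), and un-rectify; compatibility of concatenation with shuffling then shows the resulting chain concatenates to $D$. Alternatively one can prove injectivity directly, which is the paper's route and avoids the counting entirely: given two factorizations $(D_1,D')$ and $(E_1,E')$ of the same $D$, shuffle $D_\mu$ outward past both chains; the first class rectifies to $D_{\beta_1}$ and the middle classes land in $\DE_{\beta_1}^{\beta}(\beta')$, which is a singleton because $\beta/(\beta_1)$ is a translate of a straight shape, so the two rectified chains agree and un-shuffling forces $(D_1,D')=(E_1,E')$. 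With either repair inserted, the remainder of your argument goes through.
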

We think of $\iota_H$ as `extracting the highest-weight horizontal strip' from the inner edge of the shape.
\begin{proof}
Let $D \in \DE_\mu^\lambda(\beta)$. By the Pieri rule, at least one pair $(D_1, D') \in \DE_\mu^\lambda(\beta_1, \beta')$ has $D_1 \sqcup D' = D$. We wish to define $i_H(D) := D'$.

Suppose $(E_1, E')$ is another such pair. Let $D_\mu$ be the unique dual equivalence class of straight shape $\mu$. Perform shuffles to obtain
\begin{align*}
\sh_2\sh_1(D_\mu, D_1, D') &= (D_{\beta_1}, \tilde{D'}, \tilde{D_\mu}), \\
\sh_2\sh_1(D_\mu, E_1, E') &= (D_{\beta_1}, \tilde{E'}, \tilde{E_\mu}).
\end{align*}
Concatenation is compatible with shuffling, so $\tilde{D_\mu} = \tilde{E_\mu}$, as both correspond to shuffling $D_\mu$ with $D$. Moreover, we have $\tilde{E'}, \tilde{D'} \in \DE_{\beta_1}^\beta(\beta')$, which is a singleton set. (Note that $\beta / \beta_1$ is effectively a straight shape.) So $\tilde{E'} = \tilde{D'}$ and so, after shuffling once more with $\tilde{D_\mu}$, we conclude $(E_1,E') = (D_1,D')$. Finally, $\iota_H$ is compatible with shuffling because concatenation is (and $\iota_H$ is a right inverse to concatenation). \qed
\end{proof}
\begin{lemma}[Vertical strips and outer strips] \label{lem:factor-other}
Let $c$ be the first column of $\beta$, and let $\beta''$ be $\beta$ with $c$ deleted. There are injections
\begin{align*}
\iota_H^\ast : \DE_\mu^\lambda(\beta) &\hookrightarrow \DE_\mu^\lambda(\beta', \beta_1), \\
\iota_V : \DE_\mu^\lambda(\beta) &\hookrightarrow \DE_\mu^\lambda(c, \beta''), \\
\iota_V^\ast : \DE_\mu^\lambda(\beta) &\hookrightarrow \DE_\mu^\lambda(\beta'', c),
\end{align*}
where $\iota_H^\ast$ corresponds to extracting the maximal horizontal strip along the outer (southeast) edge of the shape, and $\iota_V, \iota_V^\ast$ extract maximal \emph{vertical} strips from the inner and outer edges, respectively. Each of these is a right inverse to concatenation and is compatible with shuffling.
\end{lemma}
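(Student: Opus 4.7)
The plan is to obtain $\iota_V$, $\iota_H^\ast$, and $\iota_V^\ast$ by conjugating Lemma \ref{lem:factor-horiz} with the symmetries of dual equivalence classes from Section \ref{sec:rotate-transpose-DE}. Transposition $D \mapsto D^\ast$ interchanges horizontal and vertical strips (and exchanges the rows and columns of $\beta$), while rotation $D \mapsto D^R$ interchanges the inner and outer edges of the skew shape (and reverses the order of a chain). Both are involutions that respect concatenation with the appropriate order convention and, as noted in Section \ref{sec:rotate-transpose-DE} and invoked in the proof of Lemma \ref{lem:outer-esh}, commute with shuffling.

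For $\iota_V$, the construction is: given $D \in \DE_\mu^\lambda(\beta)$, first transpose to $D^\ast \in \DE_{\mu^\ast}^{\lambda^\ast}(\beta^\ast)$. Since the first row of $\beta^\ast$ has length $\ell(c)$, applying Lemma \ref{lem:factor-horiz} yields a pair in $\DE_{\mu^\ast}^{\lambda^\ast}(\ell(c),(\beta^\ast)')$. Transposing each piece back produces a pair in $\DE_\mu^\lambda(c,\beta'')$ whose first component is supported on a vertical strip, namely the transpose of the horizontal strip extracted by $\iota_H$. Since transposition is an involution that commutes with concatenation, this is a right inverse to concatenation, and compatibility with shuffling follows by conjugating the commutative square of Lemma \ref{lem:factor-horiz} by transposition on each chain.

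For $\iota_H^\ast$, rotate $D$ to $D^R \in \DE_{\lambda^c}^{\mu^c}(\beta)$, so that the former outer edge of the skew shape is now the inner edge. Apply $\iota_H$ to extract the length-$\beta_1$ horizontal strip from this new inner edge, then rotate each piece back. Because rotation reverses chain order, the length-$\beta_1$ piece ends up in the second slot, giving an element of $\DE_\mu^\lambda(\beta',\beta_1)$ whose second component is supported on a horizontal strip along the outer edge of $\lambda/\mu$. Finally, $\iota_V^\ast$ is obtained by combining the two symmetries: rotate, apply $\iota_H$, then both rotate and transpose back. Equivalently, one may apply rotation to the construction of $\iota_V$ or transposition to the construction of $\iota_H^\ast$; each of these paths yields the same map.

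The main verification is that transposition and rotation of dual equivalence classes really do commute with shuffling and respect concatenation (with order reversal in the case of rotation). The concatenation statement is purely combinatorial: a horizontal strip transposes to a vertical strip, rotation sends the inner edge to the outer edge, and extending shapes is preserved modulo these conventions. The compatibility with shuffling is the standard symmetry of dual equivalence growth diagrams already used implicitly in Section \ref{sec:rotate-transpose-DE} and in Lemma \ref{lem:outer-esh}. Once these compatibilities are in hand, injectivity, the right-inverse property, and compatibility with shuffling for $\iota_V$, $\iota_H^\ast$, and $\iota_V^\ast$ are inherited term-by-term from the corresponding properties of $\iota_H$.
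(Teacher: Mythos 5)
Your proposal is correct and follows essentially the same route as the paper, which likewise defines $\iota_H^\ast(D) = \iota_H(D^R)^R$, obtains $\iota_V$ by conjugating $\iota_H$ with transposition, and obtains $\iota_V^\ast$ by conjugating with both symmetries. The additional verifications you spell out (that rotation and transposition respect concatenation and commute with shuffling) are exactly the facts the paper relies on implicitly from Section \ref{sec:rotate-transpose-DE}.
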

\begin{proof}
We obtain $\iota_H^\ast$ by rotating tableaux, that is, $\iota_H^\ast(D) = \iota_H(D^R)^R.$ Similarly, we obtain $\iota_V$ by transposing, and $\iota_V^\ast$ by rotating and transposing. \qed
\end{proof}
Notice that rotating and transposing $D$ exchanges $\iota_H$ with $\iota_V^*$. By Lemma \ref{lem:highwt-lowwt}, it follows that the maximal outer vertical strips extracted by $\iota_V^\ast$ are the same as those of the $1$-decomposition of $\beta$. More generally, $\iota_s$ is the composition of several applications of $\iota_H$ and $\iota_V^\ast$: if $D = \DE(T)$, we have
\[\iota_s(T) = \LR \circ (\iota_V^\ast)^{\beta_s} (\iota_H)^{s-1}(D).\]

We now refine evacuation-shuffling by factoring $\esh$ into a sequence of operations $e_1, \ldots, e_{s-1+\beta_s}$, corresponding to an $s$-decomposition.

\begin{definition}
For a fixed $s$, and for $1 \leq i \leq t = s-1+\beta_s$, we define the \newword{partial evacuation shuffle}
  \[e_i : \LR(\alpha, r_1, \ldots, \ybox, r_i, \ldots, c_t, \gamma) \to \LR(\alpha, r_1, \ldots, r_i, \ybox, \ldots, c_t, \gamma)\]
by the composition
\[e_i=(\sh_1\sh_2\cdots \sh_{i+1})\sh_i(\sh_{i+1}\cdots \sh_2\sh_1).\]
(If $i \geq s$, the $r_i$ in the definition above should be replaced by $c_i$.)
\end{definition}
Combinatorially, $e_i$ is a modified version of evacuation shuffling, where:
\begin{enumerate}
\item We rectify the first $i-1$ strips, obtaining a straight shape tableau $B$;
\item We then perform a ``relative'' evacuation-shuffle on $\ybox$ and the $i$-th strip: we rectify them only up to the outer boundary of $B$, then shuffle and un-rectify.
\end{enumerate}

\begin{lemma}\label{lem:partial-esh}
  For any $T\in \LRyb$, and any $s$, we have $$\iota_{s}(\esh(T))=e_t\cdots e_1(\iota_{s}(T)).$$
\end{lemma}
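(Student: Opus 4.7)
The plan is to translate both sides of the desired equation into explicit products of shuffle operators on the factored $(t+2)$-chain. By Lemma \ref{lem:upper-shuffle}, on the reduced three-chain $(\alpha, \ybox, T)$ we have $\esh = \sh_1 \sh_2 \sh_1 \sh_2 \sh_1$ (composition right-to-left). The map $\iota_s$ factors as an iterated composition of $s-1$ horizontal-strip extractions $\iota_H$ from the inner edge followed by $\beta_s$ outer-vertical-strip extractions $\iota_V^\ast$, each of which is compatible with shuffling by Lemmas \ref{lem:factor-horiz} and \ref{lem:factor-other}.

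Iterating these compatibilities translates each of the five shuffles in the word defining $\esh$ into a corresponding product of shuffles on the factored chain. The key observation is: an $\sh_1$ of the three-chain acts only on positions $1$ and $2$, so its translation is always a single $\sh_1$ on the longer chain; an $\sh_2$ of the three-chain, however, swaps positions $2$ and $3$, and when one of these positions holds the whole tableau $T$, its translation becomes a product of $t$ consecutive shuffles that moves the relevant piece past each strip one at a time. Tracking which of $\alpha$, $\ybox$, or the individual strips occupies each position through the five stages of $\esh$ and assembling the translations yields
\[
\iota_s \circ \esh \;=\; W_t \circ \iota_s, \qquad \text{where}\qquad W_t := (\sh_1 \cdots \sh_t)\,(\sh_{t+1})\,(\sh_t \cdots \sh_1)\,(\sh_{t+1} \cdots \sh_2)\,\sh_1.
\]

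It remains to show the operator identity $W_t = e_t \cdots e_1$. I would argue by induction on $t$: the base case $t=1$ reduces to $W_1 = \sh_1 \sh_2 \sh_1 \sh_2 \sh_1 = e_1$. For the inductive step, assuming $W_{t-1} = e_{t-1} \cdots e_1$, one computes $e_t W_{t-1}$ and finds that the middle junction $(\sh_{t+1} \cdots \sh_1)(\sh_1 \cdots \sh_{t-1})$ collapses to $\sh_{t+1} \sh_t$ via the telescoping identity $(\sh_{k+1} \cdots \sh_1)(\sh_1 \cdots \sh_k) = \sh_{k+1}$; after a short sequence of commutations using $\sh_i \sh_j = \sh_j \sh_i$ for $|i-j| > 1$ (and one $\sh_t^2 = \id$ cancellation), the resulting word matches $W_t$.

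I expect the main obstacle to be the careful bookkeeping in the translation step: the two $\sh_2$'s and the three $\sh_1$'s of $\esh$ each translate to \emph{different} products on the factored chain, depending on whether they act between strips and $\alpha$ (a length-$t$ product), between strips and $\ybox$ (also length-$t$), or between $\ybox$ and $\alpha$ alone (single shuffle). Once the correct word $W_t$ is established, the algebraic identification with $e_t \cdots e_1$ is a direct, inductive simplification using only the involution property of shuffles and the commutation of non-adjacent shuffles.
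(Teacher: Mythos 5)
Your proposal is correct and follows essentially the same route as the paper: both express $\esh$ as the word $\sh_1\sh_2\sh_1\sh_2\sh_1$ via Lemma \ref{lem:upper-shuffle}, push it through $\iota_s$ using the shuffle-compatibility of $\iota_H$ and $\iota_V^\ast$ to obtain exactly your word $W_t$ (the paper writes it grouped as $(\sh_1\cdots\sh_{t+1})(\sh_t\cdots\sh_1)(\sh_{t+1}\cdots\sh_1)$), and then verify $W_t = e_t\cdots e_1$ using only $\sh_i^2=\id$ and commutation of distant shuffles --- the paper by directly telescoping the product of the $e_i$'s, you by an equivalent induction on $t$. One small caution: your ``key observation'' that an $\sh_1$ of the short chain always translates to a single $\sh_1$ is false as stated (the second and third $\sh_1$'s move $\ybox$ or $\alpha$ past all $t$ strips and hence translate to length-$t$ products), but your displayed $W_t$ and your final paragraph already reflect the correct bookkeeping, so this is only a slip in exposition.
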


\begin{proof}
Recall that $\esh:\LRyb\to \LRby$ is the composition 
$$\xymatrix{\LRyb \ar[r]^{\sh_2\sh_1} & \LR(\ebox,\beta,\alpha,\gamma) \ar[r]^{\sh_1} &
\LR(\beta,\ebox,\alpha,\gamma) \ar[r]^{\sh_1\sh_2}
& \LRby}$$  

The maps $\iota_H$ and $\iota_V^\ast$ respect shuffling (in the sense stated in Lemmas \ref{lem:factor-horiz} and \ref{lem:factor-other}, translated from dual equivalence classes to the corresponding Littlewood-Richardson tableaux). We thus write
\[\xymatrix{
\LRyb 
  \ar[r]^-{\iota_{s}} \ar[d]^{\sh_2\sh_1}  &
\LR(\alpha,\ebox,r_1, \ldots, c_t,\gamma)     
  \ar[d]^{\sh_{t+1}\cdots\sh_2\sh_1} \\
\LR(\ebox,\beta,\alpha,\gamma) 
  \ar[r]^-{\iota_{s}} \ar[d]^{\sh_1}       & 
\LR(\ebox,r_1, \ldots, c_t,\alpha,\gamma)
  \ar[d]^{\sh_t\cdots\sh_1}          \\
\LR(\beta,\ebox,\alpha,\gamma) 
  \ar[r]^-{\iota_{s}} \ar[d]^{\sh_1\sh_2}  & 
\LR(r_1, \ldots, c_t,\ebox,\alpha,\gamma)
  \ar[d]^{\sh_1\sh_2\cdots\sh_{t+1}}\\ 
\LRby 
  \ar[r]^-{\iota_{s}} & 
\LR(\alpha,r_1, \ldots, c_t,\ebox,\gamma)
}\]

Thus we have 
\[\iota_{s} \circ \esh=(\sh_1\sh_2\cdots \sh_{t+1})(\sh_{t}\cdots\sh_1)(\sh_{t+1}\cdots \sh_2\sh_1)\circ \iota_{s}.\]

We now write out the composition of the $e_i$'s as the reverse-ordered product
\[e_t\cdots e_1=\prod_{i=t}^1(\sh_1\cdots \sh_{i+1})\sh_i(\sh_{i+1}\cdots\sh_1).\]

Notice that, since the shuffles are all involutions, the right-hand term of the $i$-th factor mostly cancels with the left-hand term of the $(i-1)$-st factor. After all such cancellations, we are left with the product
\[(\sh_1\cdots\sh_{t+1})(\sh_{t}\sh_{t+1})(\sh_{t-1}\sh_{t})\cdots (\sh_3\sh_4)(\sh_{2}\sh_{3})(\sh_{1}\sh_2)\sh_1.\]

Recall that $\sh_i$ commutes with $\sh_j$ whenever $|i-j|\ge 2$.  Thus we can move the rightmost $\sh_3$ past the $\sh_1$ next to it, then move the rightmost $\sh_4$ past the $\sh_2$ and $\sh_1$ to its right, and so on. We obtain the product $$(\sh_1\cdots \sh_{t+1})\sh_t\cdots \sh_1 (\sh_{t+1}\cdots \sh_1).$$ This matches our expression for $\esh$ above. \qed
\end{proof}
We emphasize that, for each choice of $s$, we have a \emph{distinct} factorization of $\esh$ into partial evacuation-shuffles as above. In our proof of Theorem \ref{thm:main-theorem}, we cannot use the same choice of $s$ for all $(\ybox,T) \in \LRyb$. Our proof relies on a careful choice of $s$ depending on $(\ybox,T)$, which will make the partial steps $e_i$ correspond to the steps of \emph{local} evacuation-shuffling for the particular pair $(\ybox,T)$. \\

\subsection{The Pieri Case\texorpdfstring{, $\beta = (m)$}{}.}

We now give the proof of Theorem \ref{thm:Pieri}, the Pieri case. We give a more detailed statement:

\begin{theorem}[Pieri case]
Let $\beta=(m)$ be a one-row partition.
\begin{enumerate}
 \item Suppose $\gamma^c/\alpha$ is \emph{not} a horizontal strip. Then $\gamma^c/\alpha$ contains a unique vertical domino; otherwise there is no semistandard filling of $\gamma^c/\alpha$ using a $\ybox$ and $1$'s.

In this case, $\LRyb$ and $\LRby$ have one element each, since the $\ybox$ must be at the top or bottom of the domino. Then $\esh$ slides the $\boxtimes$ down. 

\item Suppose $\gamma^c/\alpha$ is a horizontal strip having $r$ nonempty rows. There is a natural ordering\footnote{Our ordering is the reverse of the ordering used in \cite{bib:Levinson}.} of the tableaux
\[\LRyb = \{L_1, \ldots, L_r\},\]
where $L_i$ is the tableau having $\boxtimes$ at the left end of the $i$th row of $\gamma^c/\alpha$. Likewise, 
\[\LRby = \{R_1, \ldots, R_r\},\]
where $R_i$ is the tableau having $\boxtimes$ at the right end of the $i$th row of $\gamma^c/\alpha$.

We have the following:
\begin{align*}
\esh(L_i) &= R_{i+1} \pmod{r}
\end{align*}
We will say that $\esh(L_r) = R_1$ is a \newword{special jump}, and any other application of $\esh$ to $L_i$ for $i\neq 1$ is \newword{non-special}.

\end{enumerate}
\end{theorem}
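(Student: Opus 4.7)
My plan is to prove both parts of the Pieri case by direct computation of $\esh$ via the rectify--shuffle--un-rectify description, using the $s$-decomposition machinery of this section for bookkeeping. This gives an alternative proof of Theorem 5.10 of \cite{bib:Levinson}, which is useful later since the overall algorithm for $\lesh$ reduces to this case.

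For part (1), I would begin with a combinatorial observation: in any semistandard filling of $\gamma^c/\alpha$ by $\ybox$ and $m$ copies of $1$, the $\ybox$ must occupy one cell of every vertical domino of the skew shape, since otherwise two $1$'s would stack in a single column. Hence $\gamma^c/\alpha$ contains at most one vertical domino, and exactly one if $\LRyb$ is nonempty and $\gamma^c/\alpha$ is not a horizontal strip. The two cells of this domino furnish the unique elements of $\LRyb$ and $\LRby$, distinguished by whether $\ybox$ sits on top or on the bottom. To verify that $\esh$ swaps these, I would rectify the unique element of $\LRyb$: since the $1$ directly below $\ybox$ precedes $\ybox$ in reading order, the rectification has shape $(m,1)$ with $\ybox$ at $(1,1)$. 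The promotion step produces a two-row tableau with $m$ copies of $1$ in row $1$ and a single $2$ in row $2$. Un-rectifying sends the $2$, now relabeled as $\ybox$, down to the lower cell of the original vertical domino, as required.

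For part (2), I would carry out an analogous rectify--promote--un-rectify calculation for each $L_i \in \LRyb$, case-splitting on whether $i < r$ or $i = r$. In the generic case $i < r$, rectifying $(\ybox, L_i)$ produces a tableau of shape $(m, 1)$: the $1$ at $(2,1)$ comes from the leftmost $1$ in row $i+1$ of $\gamma^c/\alpha$, while the remaining $m-1$ ones fill row $1$. Promotion yields a tableau with $m$ ones in row $1$ and a $2$ in row $2$, and un-rectification shuffles the $2$ back along the reversed rectification path, landing it at the rightmost cell of row $i+1$, namely $R_{i+1}$. In the special case $i = r$, no row of $\gamma^c/\alpha$ lies below row $r$, so the reading word of $(\ybox, L_r)$ begins with $0$, and the rectification is the single row $(m+1)$. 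Promotion gives a row of $m$ ones followed by a $2$, and un-rectification keeps the $2$ in row $1$, depositing it at the rightmost cell of the topmost row of $\gamma^c/\alpha$ to give $R_1$, the special jump.

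The main obstacle is verifying the precise destination of the $2$ during un-rectification, since this is controlled by the recording tableau $S'$ produced during the original rectification, which depends on the global shapes $\alpha$ and $\gamma^c$ rather than just the local neighborhood of $\ybox$. To streamline the verification, I would invoke Lemma~\ref{lem:partial-esh} with $s = 1$, factoring $\esh$ as a composition of $m$ partial evacuation-shuffles $e_m \circ \cdots \circ e_1$, each acting on a single-box column of the $s = 1$ decomposition of $\beta$. This reduces the analysis to $m$ independent applications of the Pieri case for $\beta = (1)$, each of which is a direct single-box swap verifiable by hand, and composing the resulting one-cell motions yields the cyclic behavior of $\ybox$ across the rows of $\gamma^c/\alpha$, with the special jump emerging precisely when $\ybox$ has no further row to descend into.
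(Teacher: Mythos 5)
Your overall strategy differs from the paper's: the paper proves the Pieri case by induction on $|\alpha|$, adding one outer co-corner at a time and tracking how a single jeu de taquin slide changes the row index of the $\ybox$ (so the un-rectification step is handled one slide at a time), whereas you propose either a direct computation of the rectification and un-rectification, or a factorization via Lemma~\ref{lem:partial-esh} with $s=1$ into $m$ single-box partial shuffles. The factorization is not circular (Lemma~\ref{lem:partial-esh} rests only on the dual-equivalence lemmas, not on the Pieri case), and the base case $\beta=(1)$ can be settled by hand using preservation of the rectification shape of the two-box pair. So the skeleton is viable and genuinely different from the paper's route.

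However, there is a real gap at exactly the point you flag as ``the main obstacle.'' Passing to the $e_j$'s does not eliminate the difficulty of locating the $\ybox$ after un-rectification; it redistributes it. Each $e_j$ is a \emph{relative} evacuation-shuffle of $(\ybox, V_j)$ against the straight shape obtained by rectifying $\alpha$ together with the strips already processed, and its outcome (vertical slide, non-special swap, or special jump) is governed by the dual equivalence class of that two-box pair \emph{in its current configuration}, which depends on the outcomes of $e_1,\dots,e_{j-1}$. Your closing assertion --- that composing the one-cell motions yields the cyclic behavior, with the special jump emerging precisely when the $\ybox$ has no further row to descend into --- \emph{is} the statement of part (2), and it is asserted rather than derived. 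To close the gap you would need to trace the $\ybox$ through all $m$ single-box shuffles (the $V_j$ being the boxes of $\gamma^c/\alpha$ in reading order), showing that for $i<r$ the net effect moves $\ybox$ from the left end of row $i$ to the right end of row $i+1$, and that for $i=r$ every one of the $m$ shuffles pushes $\ybox$ up-and-right, landing it at the right end of row $1$. That bookkeeping is comparable in length to the paper's induction, and without it the proof is incomplete. (Your computation that $\rectify(\ybox\sqcup L_i)$ has shape $(m,1)$ for $i<r$ is correct, but identifying \emph{which} cell of $\gamma^c/\alpha$ the $2$ returns to under un-rectification is again the same unaddressed issue.)
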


\begin{proof}
  Part 1 is clear because $\esh$ is a bijection between two one-element sets.
  
  For Part 2, it is clear that these are the only fillings.  So, it suffices to show that $\esh(L_i)=R_{i+1}$ for any $i$, where the indices are taken modulo $r$.  We will show this by induction on the size of $\alpha$.
  
  For the base case, if $\alpha=\emptyset$, then since we are in the case of Part 2, the total shape of the $\ybox$ and the tableau is a single row of length $m+1$.  (The other possibility is that the total partition shape is $(m,1)$, and the $\ybox$ slides up and down between the two squares of the first column, which is in Case 1.)  So, $\LRyb$ and $\LRby$ both have one element, $L_1$ and $R_1$ respectively, and so $L_1$ must go to $R_1$ under $\esh$ and we are done.  Notice that this base case is a special jump.
  
  Now, suppose the theorem holds for a given $\alpha$, and we wish to show it holds for a partition $\alpha'$ formed by adding an outer co-corner to $\alpha$.  Let $T'\in \LR(\alpha',\ebox,\beta,\gamma')$ for some $\beta$ and $\gamma'$, and let $T\in \LRyb$ be the tableau formed by the first jeu de taquin slide on $T'$ in the rectification of $(\ybox,T')$ in the evacuation-shuffle, where we start with the unique outer co-corner of $\alpha$ that is contained in $\alpha'$. Here $\gamma$ is formed from $\gamma'$ by adding the unique corner determined by this slide.  
  
  Note that $T'=L_i'$ for some $i$, where $L_i'$ is the tableau having the $\ybox$ in the $i$th row from the top in $(\gamma')^c/\alpha'$.  Defining $R_i'$ similarly, we wish to show $\esh(L_i')=R_{i+1}'$ with the indices mod $r$.

  Recall that $\esh$ is the procedure of rectifying $(\ybox,T')$, shuffling the box past the rectified tableau, and then unrectifying both using the reverse sequence of slides.  Let $S\in \LRby$ be the tableau preceding the last unrectification step in forming $S'=\esh(T')$.  These steps necessarily involve the same inner and outer co-corners, and so $S$ and $T$ have the same shape.  Furthermore, $\esh(T)=S$ by the definition of $\esh$, and so by the induction hypothesis $S$ is formed from $T$ by one of the two Pieri rules.  
  
  We will use this to show that $S'$ is formed from $T'$ by the same rules, by considering the rectification/unrectification step that relates them to $S$ and $T$ respectively.  We consider the cases of a special jump and a non-special jump separately. Let $r$ be the number of nonempty rows of $(\gamma')^c/\alpha'$.
  
  \textbf{Case 1:}  Suppose $T'=L_i'$ for some $i\neq r$.  The tableau $T$ is formed by a single inwards jeu de taquin slide on $T'$, which can either be on the inner co-corner just to the left of the $\ybox$ or not. 
  
  If the inner co-corner we start at is to the left of the $\ybox$ in $T'$, then since we assumed our shape has no vertical domino, the entire row containing the $\ybox$, say row $r$, simply slides to the left to form $T$.  Then by the induction hypothesis, $S$ has the $\ybox$ at the end of the next row down, either just below the $\ybox$ in $T$ (the vertical domino case) or to its left.  Clearly $S'$ is formed by sliding the new contents of row $r$ back to the right, and so $S'=R'_{i+1}$ as desired.
  
  Otherwise, if the inner co-corner we start at is not to the left of the $\ybox$ in $T'$, the inwards slide consists of either (a) sliding a horizontal row of $1$'s to the left, if the co-corner is to the left of but not above a $1$, (b) sliding a $1$ on an outer corner up by one row, if the co-corner is just above this $1$.  
  
  In the subcase (a), the number of rows remains unchanged and $T=L_i$.  Thus $S=R_{i+1}'$ by the induction hypothesis and we are done.  For (b), the number of rows either remains the same and we are done again, or the $1$ that we moved up forms a new row.  If the new row is above the $\ybox$, then $T=L_{i+1}$, by the induction hypothesis $S=R_{i+2}$, and $S'$ is formed by moving the $1$ back down and therefore $S'=R_{i+1}'$, as desired.  Otherwise, if the new row is below the $\ybox$, we have $T=L_i$ and $S=R_{i+1}$, keeping in mind that if the $\ybox$ is in row $i$ in $T$ then row $i+1$ is the new row and the $\ybox$ is in this new square in $S$.  Therefore we again have $S'=R_{i+1}'$, and we are done.
  
  \textbf{Case 2:}  Suppose $T'=L_r'$.  Then the $\ybox$ is weakly below and strictly to the left of all other entries. Notice that any inwards jeu de taquin slide does not change this property; hence $T=L_q$ where $q$ is the bottom row of $T$.  Then $S=R_1$ by the induction hypothesis, and by the same argument, any outwards jeu de taquin slide doesn't change the property of the $\ybox$ being weakly above and strictly to the right of the rest of the entries in $S$.  Hence $S'=R'_1$, as desired.  \qed
\end{proof}

For use in Section \ref{sec:main-result}, we describe how to determine the outcome of the Pieri case based on the location of the $\ybox$ in \emph{either} the original skew tableau \emph{or} its rectification:
\begin{proposition} \label{prop:pieri-criteria}
Let $\beta=(m)$. The following are equivalent for $T \in \LRyb$:
\begin{itemize}
\item[(i)] Applying $\esh$ results in a special jump;
\item[(ii)] The $\ybox$ precedes the rest of $T$ in reading order;
\item[(iii)] The rectification of $T$, including the $\ybox$, forms a horizontal strip.
\end{itemize}
\end{proposition}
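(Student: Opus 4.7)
The plan is to prove (i)$\Leftrightarrow$(ii) and (ii)$\Leftrightarrow$(iii) separately, treating the two cases from the detailed Pieri statement: the vertical-domino case (Case~1) and the horizontal-strip case (Case~2).

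For (i)$\Leftrightarrow$(ii), this is essentially immediate from Theorem~\ref{thm:Pieri}. In the horizontal-strip case, a special jump is by definition an application of $\esh$ in which $\ybox$ precedes all $1$'s in reading order, giving the equivalence at once. In the vertical-domino case, $\LRyb$ is a singleton whose unique element has $\ybox$ at the top of the domino, directly above a $1$; this $1$ precedes $\ybox$ in reading order, and $\esh$ acts as a slide rather than a special jump, so both (i) and (ii) fail.

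For (ii)$\Leftrightarrow$(iii), I would compute $\rectify(T)$ (with $\ybox$ treated as the letter $0$) by RSK row-insertion of the reading word. Since $T$ has content $\{0, 1^m\}$, the only straight-shape SSYTs with this content are the single row of length $m+1$ (a horizontal strip) and the shape $(m,1)$ with a $1$ at position $(2,1)$ (not a horizontal strip). If $\ybox$ is the first letter of the reading word, then inserting $0$ puts it at $(1,1)$, after which each subsequent $1$ is appended to row~1 without bumping, producing the single row. Otherwise, row~1 already contains a $1$ at the moment $\ybox$ is inserted, and the $\ybox$ bumps this $1$ down to row~2, producing shape $(m,1)$. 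Hence (iii) holds if and only if $\ybox$ precedes all $1$'s in reading order, i.e.\ if and only if (ii) holds.

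The main subtlety is remembering to handle the vertical-domino case separately in (i)$\Leftrightarrow$(ii), since the definition of ``special jump'' in Theorem~\ref{thm:Pieri} is phrased only for the horizontal-strip setting; otherwise the argument is a short, elementary RSK computation enabled by the fact that $T$ uses only a single non-box letter.
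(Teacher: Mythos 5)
Your proof is correct, but it takes a genuinely different route from the paper's. The paper disposes of the proposition in one line (``this follows immediately from the proof of the Pieri case'') because its inductive proof of the detailed Pieri theorem already tracks exactly the relevant data: the base case of the induction, where the rectification is a single row, is by construction the special jump, and Case 2 of that induction observes that the property of the $\ybox$ preceding all other entries in reading order is preserved under each (un)rectification slide. You instead give a self-contained argument: (i)$\Leftrightarrow$(ii) is read off from the statement of Theorem \ref{thm:Pieri}, with the vertical-domino case handled separately (and correctly: there the $1$ directly below the $\ybox$ lies in a lower row, hence precedes it in the reading word, so both conditions fail), and (ii)$\Leftrightarrow$(iii) is a direct computation of $\rectify(\ybox \sqcup T)$ as the row-insertion tableau of the reading word, exploiting that the word has content $\{0,1^m\}$. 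The insertion computation is right: the shape is the single row $(m+1)$ exactly when the $0$ is inserted first, and $(m,1)$ otherwise, and only the former is a horizontal strip. The one thing worth making explicit is the standard fact you invoke silently, namely that the rectification of a skew tableau equals the insertion tableau of its bottom-to-top reading word; this is in \cite{bib:Fulton} and matches the paper's reading-word convention. What your approach buys is independence from the internals of the induction — a reader can verify the proposition without re-deriving the Pieri proof — while the paper's approach gets the proposition for free as a by-product of that proof.
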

\begin{proof}
This follows immediately from the proof of the Pieri case. \qed 
\end{proof}

\subsection{The proof of Theorem \ref{thm:main-theorem}}


\noindent{\bf Step 1}. Fix $(\ybox,T) \in \LRby$. We choose $s = s(\ybox,T)$ as follows: consider $\sh_1 (\sh_2 \sh_1(\ybox,T))$, the tableau obtained by rectifying, then shuffling $\ybox$ past $T$. Let $s$ be the row containing $\ybox$. We will use the $s$-decomposition with this choice of $s$, and compute the effect of $e_t \cdots e_1$ on $(\ybox,\iota_s(T))$. We write
\[\iota_s(T) = (H_1,\ldots, H_{s-1}, V_s, \ldots, V_t).\]

We note that, if we rectify and shuffle the $\ybox$ past the entirety of $\iota_s(T)$, the shuffle path of the $\ybox$ through the rectification of $\iota_s(T)$ is to move (one square at a time) down to row $s$, then over to the right. (See Figure \ref{fig:R-diagram}.)\\

\noindent{\bf Step 2}. We show that $\esh$ and $\lesh$ agree up to Phase 1.

\begin{lemma} \label{lem:phase1-agrees} Suppose $s > 1$ and let $1 \leq i \leq s-1$. Then $T_i = e_i \cdots e_1(\ybox,\iota_s(T))$ agrees with the result of applying $i$ Phase 1 local evacuation-shuffle moves to $(\ybox,T)$.
\end{lemma}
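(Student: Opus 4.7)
The plan is to prove the lemma by induction on $i$, using Lemma \ref{lem:upper-shuffle} to localize the action of $e_j$, together with the Pieri case (Theorem \ref{thm:Pieri}) and the reading-order criterion in Proposition \ref{prop:pieri-criteria}, to determine what that localized action is in the original skew shape.

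For the base case $i=1$: observe that $e_1 = \sh_1\sh_2\sh_1\sh_2\sh_1$ is exactly the word for $\esh_2$ given by Lemma \ref{lem:upper-shuffle}. Hence, applied to the chain $(\alpha, \ybox, H_1, H_2, \ldots, \gamma)$, it performs an evacuation-shuffle of $(\ybox, H_1)$ relative to the preceding $\alpha$. Since $H_1$ is a single horizontal strip, we are in the Pieri case. By Proposition \ref{prop:pieri-criteria}, the jump is special iff $\ybox$ precedes every cell of $H_1$ in reading order, iff the rectification of $(\ybox, H_1)$ is a single horizontal strip with $\ybox$ at the left end. A special jump here would place $\ybox$ at the right end of row $1$ of the rectified sub-chain, and since rectifying the remaining strips $H_2, \ldots$ only adds cells in rows $\geq 2$, $\ybox$ would end up in row $1$ of $\sh_1\sh_2\sh_1(\iota_s(T))$, contradicting $s > 1$. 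So the jump is non-special, and $\ybox$ swaps with the nearest $1 \in T$ prior to it, which is exactly Phase 1 step $1$.

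For the inductive step ($1 < i \leq s-1$): by the induction hypothesis, $e_{i-1}\cdots e_1$ produces the chain corresponding to $(\ybox, T)$ after $i-1$ Phase 1 moves. Since Phase 1 step $j$ only swaps $\ybox$ with an entry labeled $j$, the placements of the labels $\geq i$ (and hence the components $H_i, \ldots, H_{s-1}, V_s, \ldots, V_t$ of the $s$-decomposition, viewed in the modified tableau) are unchanged. Another application of Lemma \ref{lem:upper-shuffle} shows that $e_i$ performs a Pieri-case evacuation-shuffle on $(\ybox, H_i)$, relative to the new sub-chain preceding $\ybox$. Applying Proposition \ref{prop:pieri-criteria} once more, the same propagation argument shows that a special jump at this step would force $\ybox$ to sit in a row $\leq i$ of the full rectification, contradicting $s > i$. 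Hence the jump is non-special, $\ybox$ swaps with the nearest $i$ preceding it in the modified tableau, and this matches Phase 1 step $i$.

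The main obstacle is verifying the propagation statement used twice above: that a special jump at step $i \leq s-1$ places $\ybox$ at the right end of a row $\leq i$ of the full rectification. The clean argument is that in the Pieri case a special jump leaves $\ybox$ at the right end of the current ``top row'' of the rectified sub-chain, and the remaining strips $H_{i+1}, \ldots, H_{s-1}, V_s, \ldots, V_t$ extend this shape without disturbing $\ybox$'s row (horizontal strips extend downward and to the right, vertical strips extend to the right); hence $\ybox$'s row in the full rectification is at most $i$, contradicting the definition of $s$. The secondary subtlety --- translating ``nearest $i$ in the locally rectified sub-chain'' back to ``nearest $i$ in the reading order of the original skew tableau'' --- is routine, following from the fact that jeu de taquin preserves the relative reading order of equal entries via Knuth equivalence.
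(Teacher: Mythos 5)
Your overall strategy is the same as the paper's: induct on $i$, reduce each $e_i$ to a Pieri-case evacuation-shuffle of the pair $(\ybox, H_i)$, and use the definition of $s$ together with Proposition \ref{prop:pieri-criteria} to rule out the special jump. The base case is correct: $e_1$ is literally the word $\sh_1\sh_2\sh_1\sh_2\sh_1$ of Lemma \ref{lem:upper-shuffle}, so it is an honest evacuation-shuffle of $(\ybox,H_1)$, and your contradiction argument (a special jump would strand the $\ybox$ at the right end of row $1$ of the full rectification, impossible since the shuffle path must end at $(s,\beta_s+1)$ with $s>1$) is sound, if informally stated.

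There is, however, a gap in the inductive step. For $i>1$, the assertion that ``another application of Lemma \ref{lem:upper-shuffle} shows that $e_i$ performs a Pieri-case evacuation-shuffle on $(\ybox,H_i)$'' is not justified by that lemma. Lemma \ref{lem:upper-shuffle} computes the \emph{honest} evacuation-shuffle of a consecutive pair, which rectifies the pair all the way to the corner (equivalently, shuffles the entire straight-shape prefix $\alpha \sqcup H'_1 \sqcup \cdots \sqcup H'_{i-1}$ out past it). By contrast, $e_i=(\sh_1\cdots\sh_{i+1})\sh_i(\sh_{i+1}\cdots\sh_1)$ only moves $\alpha$ out of the way, so it rectifies $(\ybox,H_i)$ merely up to the outer boundary of the rectified strips $H''_1,\ldots,H''_{i-1}$; the paper is explicit that $e_i$ is a ``relative'' evacuation-shuffle, and these two operations do \emph{not} coincide in general (this is precisely why the naive all-horizontal-strip decomposition fails past the transition step). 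The missing ingredient — and the real content of the paper's proof — is that because $i\le s-1$, the $\ybox$ sits at the leftmost outer co-corner $(i,1)$ of the shape $(\beta_1,\ldots,\beta_{i-1})$ in the partial rectification, so $\ybox\sqcup H_i'$ is a vertically translated straight shape; only then does the relative shuffle agree with the fully rectified one, which is what licenses the appeal to Theorem \ref{thm:Pieri} and Proposition \ref{prop:pieri-criteria}. Note that this needed fact is essentially the same positional fact you invoke later to exclude the special jump, so the repair is close at hand, but as written the reduction of $e_i$ to the Pieri case is asserted rather than proved.
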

\begin{proof}
Assume the statement holds for $i-1$ (this is vacuous for $i=1$) and write
\[T_{i-1} = e_{i-1} \cdots e_1(\ybox,\iota_s(T)) = (H'_1, \ldots, H'_{i-1}, \ybox, H_i, \ldots, H_{s-1}, V_s, \ldots, V_t.\] In $T_i$, the $\ybox$ lies between $H_{i-1}$ and $H_i$.

We compute $e_i(T_{i-1})$. For simplicity, let $H'$ be the concatenation of $H'_1, \ldots, H'_{i-1}$. We are effectively computing
\[\xymatrix{
(T_\alpha,H',\ybox,H_i, \cdots) \ar@{|->}[d]_-{\sh_3\sh_2\sh_1} \\
(H'',\ybox,H_i',T'_\alpha, \cdots) \ar@{|->}[d]_-{\sh_2} \\
(H'',H''_i,\ybox,T'_\alpha, \cdots) \ar@{|->}[d]_-{\sh_1\sh_2\sh_3} \\ (T_\alpha,H''',H'''_i,\ybox, \cdots). }\]
By our definition of $s$, in the partial rectification $H'' \sqcup \ybox \sqcup H'_i$, the $\sh_2$ step causes the box to move down to row $i+1$ (since $i \leq s-1$).

In particular, we see that $S = \ybox \sqcup H'_i$ forms a straight shape in the partial rectification. Shuffling $\ybox$ and $H'_i$ does not change the overall (trivial) dual equivalence class of $S$; consequently, $e_i$ has no effect on the dual equivalence classes of $T_i$ other than the individual classes of $\ybox$ and $H_i$.

Moreover, since $e_i$ rectifies $(\ybox,H_i)$ to a straight shape, then shuffles and un-rectifies, it must have the same effect as simply applying $\esh$ to the pair $(\ybox,H_i)$, i.e. the Pieri Case. Moreover, in the rectification, the $\ybox$ shuffled downward rather than right, so by Proposition \ref{prop:pieri-criteria}, we see that \emph{prior} to rectifying, there was an $i$ below the $\ybox$. Thus we are in the non-special Pieri case, which agrees with the $i$-th (Phase 1) step of $\lesh$. \qed
\end{proof}

\begin{lemma}
The transition step of $\lesh(\ybox,T)$ is $s$.
\end{lemma}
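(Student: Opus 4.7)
The plan is to apply Proposition~\ref{prop:pieri-criteria} to the sub-tableau of $T_{s-1}$ formed by $\ybox$ together with the horizontal strip of $s$'s. By Lemma~\ref{lem:phase1-agrees}, after $s-1$ Phase 1 moves we have $T_{s-1}$, so the transition step is at least $s$. Since Phase 1 moves only swap $\ybox$ with entries of value at most $s-1$, the $s$'s in $T_{s-1}$ occupy the same positions as in $T$: namely the topmost entries of each of $V_s, V_{s+1}, \ldots, V_t$, which by ballotness form a horizontal strip of length $\beta_s$. The claim ``transition step equals $s$'' is then equivalent, via Proposition~\ref{prop:pieri-criteria}, to the assertion that the sub-tableau of $T_{s-1}$ consisting of $\ybox$ and this $s$-strip rectifies to a single row of length $\beta_s + 1$ (with $\ybox$ treated as a value-$0$ entry at the leftmost position).

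To establish this single-row rectification, I would invoke the choice of $s$ from Step~1. There, rectifying $(\ybox, T)$ via $\sh_1\sh_2\sh_1$ lands $\ybox$ at the outer co-corner $(s, \beta_s+1)$, while the $s$-entries of the rectified $T_\beta$ occupy $(s,1), \ldots, (s,\beta_s)$; together these fill row $s$ of the fully rectified picture, forming a single row of length $\beta_s + 1$.

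The main obstacle is to transfer this single-row structure from the ambient rectification down to the isolated rectification of just the sub-tableau $(\ybox, s\text{-strip})$ of $T_{s-1}$. My plan would be to prove a ``layering'' lemma: for a ballot skew SSYT $T'$ with adjoined $\ybox$, the isolated JDT rectification of $\ybox$ together with the $s$-strip agrees with the configuration of those same cells within the full rectification of $(\ybox, T')$. This decoupling should follow from the ballot structure—jeu de taquin slides that rearrange entries of value $\neq s$ can be carried out without perturbing the relative positions of the $s$'s and $\ybox$, since the regions of entries $<s$ and $>s$ both sit bounded away from the $s$-strip's rectification path by the partition-shape nature of the value layers. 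Combining this layering lemma with the full-rectification analysis of Step~1 will yield the single-row sub-tableau rectification, and Proposition~\ref{prop:pieri-criteria} then gives that $\ybox$ precedes every $s$ in the reading order of $T_{s-1}$, so the transition step is exactly $s$.
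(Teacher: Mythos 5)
Your overall strategy is the paper's: reduce the claim to Proposition~\ref{prop:pieri-criteria} applied to the pair consisting of $\ybox$ and the horizontal strip of $s$'s, using the fact that (by the choice of $s$ in Step~1) this pair ends up occupying row $s$ of the rectified picture as a single row of length $\beta_s+1$, i.e.\ the special-jump configuration. The first part of your argument (transition step $\geq s$ via Lemma~\ref{lem:phase1-agrees}, and the identification of the $s$-strip in $T_{s-1}$) is fine.

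The gap is precisely the ``layering lemma,'' which is the entire content of the statement and is neither correctly formulated nor convincingly justified as written. First, the formulation: the cells occupied by $\ybox$ and the $s$'s inside the full rectification $\sh_2\sh_1(\ybox,T)$ do \emph{not} coincide with the isolated rectification of that pair --- before the shuffle the $\ybox$ sits at $(1,1)$ while the $s$'s sit in row $s$, and after the shuffle the $\ybox$ sits at $(s,\beta_s+1)$, to the \emph{right} of the $s$'s, which is the rectification of $\sh(\ybox,H_s)$, not of $(\ybox,H_s)$. What is true, and what you need, is that the rectification of the pair \emph{relative to} the straight shape formed by the strips $1,\dots,s-1$ is the horizontal strip $\{(s,1),\dots,(s,\beta_s+1)\}$, which then rectifies further to a single row. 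Second, the justification: the claim that entries of value $\neq s$ ``sit bounded away from the $s$-strip's rectification path'' is not a valid argument --- during rectification the entries above and below the $s$-strip slide through and around it, and the $s$'s themselves move. The reason the layering works is the compatibility of horizontal-strip extraction with shuffling, i.e.\ Lemma~\ref{lem:factor-horiz} (proved via dual equivalence), not any geometric separation of the value layers. This is exactly how the paper closes the argument: using the $(s+1)$-decomposition, the partial evacuation shuffle $e_s$ reduces, after the relative rectification, to an honest Pieri-case $\esh$ of the pair $(\ybox,H_s)$; the choice of $s$ forces this to be the special jump; and Proposition~\ref{prop:pieri-criteria} (the equivalence of (i) and (ii)) converts that into the statement that $\ybox$ precedes every $s$ in reading order in $T_{s-1}$, which is the transition criterion. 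If you replace your ad hoc layering argument with an appeal to Lemma~\ref{lem:factor-horiz} and the $e_s$ machinery (or, equivalently, prove your layering statement by those means), your proof becomes the paper's.
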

\begin{proof}
By a similar argument to the previous lemma, we see that, had we used the $(s+1)$-decomposition rather than the $s$-decomposition, then applying $e_s$ would, after rectifying, slide the $\ybox$ all the way to the right through the $s$-th row. This is the `special jump' of the Pieri case (which would not agree with the behavior of $\lesh$). By Proposition \ref{prop:pieri-criteria}, this occurs only when, \emph{prior} to rectifying, there are no $s$'s below the $\ybox$. This is precisely the condition for $\lesh$ to transition at step $s$. \qed
\end{proof}

\noindent{\bf Step 3}. We have shown that $\lesh$ and $\esh$ agree up to the transition point of $\lesh$, and that this corresponds to the bend in the shuffle path of the $\ybox$ through the rectification of $\iota_s(T)$. We are left with determining the effects of $e_s, \ldots, e_t$.

\begin{lemma}[Antidiagonal symmetry] \label{lem:antidiag}
For $i \geq s$, $e_i$ corresponds to a \emph{conjugate move} across the vertical strip $V_i$, as in Definition \ref{def:conjugate-pieri}.
\end{lemma}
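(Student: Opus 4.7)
The plan is to mimic the argument of Lemma \ref{lem:phase1-agrees}, replacing horizontal strips with vertical strips and the Pieri case with its rotate-transpose analogue. I proceed by induction on $i$ from $s$ to $t$. Assuming the statement for $e_s, \ldots, e_{i-1}$, the tableau $T_{i-1} = e_{i-1} \cdots e_1(\ybox, \iota_s(T))$ has the form
\[(T_\alpha, H_1', \ldots, H_{s-1}', V_s', \ldots, V_{i-1}', \ybox, V_i, \ldots, V_t, T_\gamma),\]
with $\ybox$ immediately preceding $V_i$; in particular, $\ybox$ is an inner co-corner of $V_i$.

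Expanding $e_i = (\sh_1 \cdots \sh_{i+1})\sh_i(\sh_{i+1} \cdots \sh_1)$, its effect is to fully rectify the prefix $(T_\alpha, H_1', \ldots, V_{i-1}')$ to a straight-shape tableau $B$ and then to perform a relative evacuation-shuffle on $(\ybox, V_i)$ that rectifies the pair only up to the outer boundary of $B$, shuffles, and un-rectifies. Since $V_i$ has been reduced by $\iota_s$ to a Littlewood-Richardson tableau of content $(1^{|V_i|})$, this relative $\esh$ falls under the \emph{transpose} of the Pieri case, where $\beta$ is a single column. The transposed Pieri statement---proved either by rerunning the induction on $|\alpha|$ from Theorem \ref{thm:Pieri} with rows and columns swapped, or by transporting Theorem \ref{thm:Pieri} through the rotate-and-transpose involution $D \mapsto (D^R)^\ast$ of Lemma \ref{lem:highwt-lowwt}---asserts that $\esh$ on $(\ybox, V_i)$ swaps $\ybox$ with the nearest entry of $V_i$ \emph{after} it in reading order, unless $\ybox$ already follows all entries of $V_i$ in reading order, in which case it performs a special jump.

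To rule out the special case, I would appeal to the choice of $s$: by construction, in the complete rectification of $(\ybox, T)$ the $\ybox$ shuffles down to row $s$ and then strictly right along row $s$, one square at a time, passing through the rectified strips $V_s, \ldots, V_t$ in turn (by Lemma \ref{lem:5-facts}(iv) these are exactly the vertical strips of the $s$-decomposition of $\rectify(T)$). At no stage does the $\ybox$ pass all entries of a single $V_j$ at once, so the transpose of Proposition \ref{prop:pieri-criteria}, applied to the pair $(\ybox, V_i)$ at the relative-rectification stage inside $e_i$, forces the conjugate case to be non-special. This is precisely the conjugate move of Definition \ref{def:conjugate-pieri} on $V_i$.

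The main obstacle is the bookkeeping for the transposed Pieri case: although morally immediate from rotate-transpose symmetry, I would need to verify that this symmetry is compatible with the \emph{relative} (partial) $\esh$ appearing inside $e_i$, and that a conjugate move on the reduced strip of content $(1^{|V_i|})$ corresponds, under $\iota_s$, to the conjugate move on $V_i$ as it sits inside the original tableau. Once these compatibilities are checked, the induction closes and the lemma follows.
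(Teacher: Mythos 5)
Your overall strategy---reduce Phase 2 to the Pieri/Phase 1 analysis via a transpose-type symmetry---is the right one, but the step-by-step reduction has a genuine gap at exactly the point you flag as ``bookkeeping.'' In Lemma \ref{lem:phase1-agrees}, the reason $e_i$ can be identified with the \emph{honest} $\esh$ of the pair $(\ybox, H_i)$ (and hence with the Pieri case) is that $\ybox \sqcup H_i'$ forms a (translate of a) straight shape --- a single row --- in the partial rectification, so the relative shuffle inside $e_i$ coincides with the absolute one. For $i > s$ this fails: in the partial rectification the $\ybox$ sits in row $s$ at some column $j>1$ and the rectified $V_i$ hangs down and to the right of it, so $\ybox \sqcup V_i$ is a genuinely skew shape. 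Consequently $e_i$ is \emph{not} the honest $\esh$ of the pair $(\ybox, V_i)$, and you cannot invoke a ``transposed Theorem \ref{thm:Pieri}'' for that pair; the honest $\esh$ would rectify the pair all the way to the top-left corner, which is a different computation from the relative one appearing in $e_i$. This is precisely why the paper abandons the strip-by-strip induction in Phase 2 and instead treats the entire concatenation $M_V = V_s \sqcup \cdots \sqcup V_t$ at once: only the full pair $(\ybox, M_V)$ rectifies (relative to the prefix) to a translate of a straight shape, which legitimizes replacing the partial evacuation-shuffles by the honest $\esh(\ybox, M_V)$.

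A second, related omission: even once one has the honest $\esh(\ybox, M_V)$, it begins in Phase 2, so the Phase 1 machinery still does not apply directly. The paper's fix is Lemma \ref{lem:outer-esh}, which says $\esh_2$ may equally be computed by \emph{anti-rectifying} outward; only after this rewriting does the rotate-and-transpose involution $D \mapsto (D^R)^\ast$ (whose effect on Littlewood--Richardson representatives is controlled by Lemma \ref{lem:highwt-lowwt}) convert the computation into a Phase-1-only one, to which Lemma \ref{lem:phase1-agrees} and Proposition \ref{prop:pieri-criteria} apply, and which is then reflected back to yield the conjugate moves across $V_s, \ldots, V_t$ all at once. Your proposal cites the rotate-and-transpose involution but never Lemma \ref{lem:outer-esh}; without it the rotation is incompatible with the direction of rectification built into $e_i$, and the ``transposed Pieri case'' you want (a statement about where the $\ybox$ goes, which survives transposition of dual equivalence classes only because the Pieri-case tableaux are determined by the position of the $\ybox$) cannot be transported to the setting where it is needed. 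So the compatibility checks you defer are not routine: they are the content of the lemma, and closing them essentially forces the paper's global argument.
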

\begin{proof}
We will prove this for all remaining steps simultaneously. Put $M = e_{s-1} \cdots e_1(T)$. We have
\[M = (T_\alpha, H'_1, \ldots, H'_{s-1},\ybox,V_s, \ldots, V_t, T_\gamma^R),\]
where each $H'_i$ is a horizontal strip and each $V_i$ a vertical strip. Let $M_H, M_V$ be the concatenations of the $H'_i$'s and of the $V_j$'s. (We note that $M_H$ is the union of the first $s-1$ strips of $T$ at the transition point of $\lesh$, and that $M_V$ is simply the rest of the tableau.)

The remainder of the computation corresponds to partial-evacuation-shuffling the $\ybox$ past $M_V$,
\[\esh(T) = \sh_1\sh_2\sh_3\circ \sh_2 \circ \sh_3\sh_2\sh_1(T_\alpha, M_H, \ybox, M_V, T_\gamma^R).\]

We know that, in $\sh_3\sh_2\sh_1(M)$, the $\ybox$ and $M_V$ class form a straight shape. Thus, by similar reasoning to the proof of Lemma \ref{lem:phase1-agrees}, the remaining computation is the same as \emph{ordinary}  -- not partial -- evacuation-shuffling the pair $(\ybox,M_V)$. 
Note that in this (smaller) computation, the $\ybox$ slides right after rectifying, i.e. $\lesh(\ybox,M_V)$ begins in Phase 2, so our earlier results do not apply.
However, by Lemma \ref{lem:outer-esh}, we may instead write 
\[\esh(\ybox,M_V) = \sh_3\sh_4\circ \sh_3 \circ \sh_4\sh_3(\ybox,M_V),\]
i.e. we may instead anti-rectify outwards, 
then shuffle and return. To simplify the situation, we `rotate and transpose', obtaining
\[(M_V',\ybox) = \LR\big( \DE\big(\ybox,M_V\big){}^{R*} \big),\]
%
%
Note that the vertical strips of $M_V$ correspond to the horizontal strips of $M_V'$ after this transformation. That is, $M_V'$ has entries $i$ in the squares of the antidiagonal reflection of the strip $V_{t+1-i}$. In the rectification of $(\ybox,M_V)$, the $\ybox$ was to the left of one square from each $V_i$. So the anti-rectification of $M$ has the $\ybox$ in the leftmost corner, and so (by reflecting over the antidiagonal) the rectification of $(M_V',\ybox)$ has the $\ybox$ at the bottom of the first column:

\begin{figure}[h]
\begin{center} \hspace{0.2cm}
\includegraphics[scale=0.75]{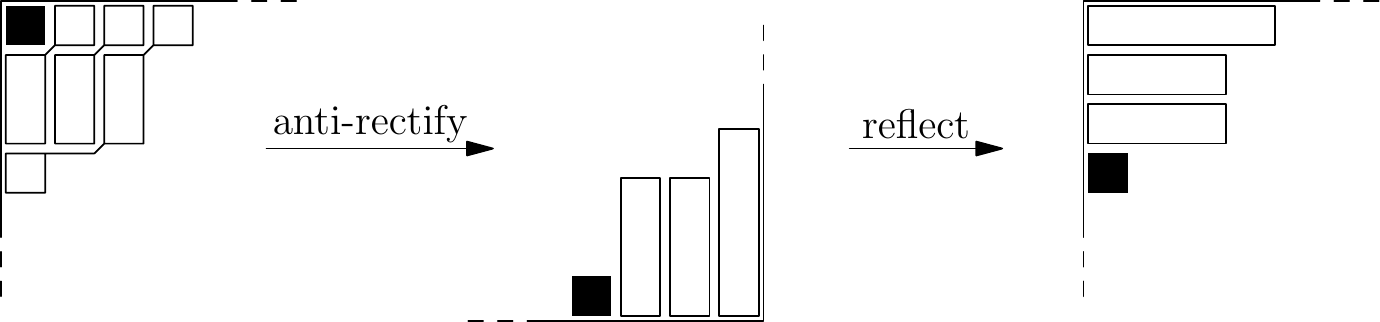}
\caption{From left to right, the rectification and anti-rectification of $(\ybox,M_V)$ and the rectification of $(M'_V,\ybox)$.}\vspace{-0.6cm}
\end{center}
\end{figure}
We set $(\ybox,N') = \esh(M_V',\ybox)$ and we compare $\esh(\ybox,N')$ to $\lesh(\ybox,N')$. By the above observation, $\esh(\ybox,N')$ corresponds to a local evacuation-shuffle that stays entirely in Phase 1.
Thus, by our existing Lemmas on evacuation-shuffling in Phase 1, we see that the \emph{partial} evacuation-shuffles of $\ybox$ through $N'$ correspond to non-special Phase 1 moves applied to the skew tableau. Reflecting back to our original setting $(\ybox,M_V)$, we deduce that the remaining Phase 2 \emph{partial} evacuation shuffles result in successive non-special conjugate moves of the $\ybox$ through the strips $V_s, \ldots, V_t$.  \qed 
\end{proof}

\noindent{\bf Step 4}. Finally, we prove that the description of $e_s, \ldots, e_t$ corresponding to conjugate Pieri moves produces the same $\ybox$ movements as Phase 2 of local evacuation-shuffling. Note that this step involves only ballot tableaux, not dual equivalence classes.

\begin{lemma} Conjugate moves correspond to nontrivial movements of the $\ybox$, in Phase 2, through its evacu-shuffle path.
\end{lemma}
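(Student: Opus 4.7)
The plan is to match the sequence of conjugate moves on $V_s, V_{s+1}, \ldots, V_t$ (whose role in $\esh$ is established by the antidiagonal symmetry lemma above) with the individual nontrivial steps of Phase 2$'$ of $\lesh$, one at a time, by induction on the step number. I fix notation: let $p_0$ denote the position of $\ybox$ at the end of Phase 1, and let $p_k$ (respectively $p'_k$) denote the position of $\ybox$ after the $k$-th conjugate move (respectively after the $k$-th nontrivial step of Phase 2$'$). The goal is to show that $p_k = p'_k$ for every $k$ and that the intermediate tableaux also agree.

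The base case $k=0$ follows from Lemma \ref{lem:phase1-agrees} together with the choice of $s$ in Step 1: Phase 1 of $\lesh$ coincides step-for-step with the partial evacuation shuffles $e_1,\ldots,e_{s-1}$, and both place $\ybox$ at the same transition square.

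For the inductive step, assume $p_{k-1}=p'_{k-1}=(i,c)$ inside a common tableau. The $k$-th conjugate move, on $V_{s+k-1}$, sends $\ybox$ to the first square $q$ of $V_{s+k-1}$ appearing after $(i,c)$ in reading order. Using that the $V_j$-square in row $i'$ is the $(t+1-j)$-th-from-last $i'$-entry of the tableau, passing from $V_{j-1}$ to $V_j$ in any fixed row shifts the selected square exactly one position to the right. I then split into two cases. In case (a), $q$ lies in row $i$; then $q$ is literally the next $i$-entry to the right of $(i,c)$, it has value $i$, and ballotness (Theorem \ref{thm:ballotness}) together with an inductive tracking of suffix counts gives that the $(i,i+1)$-suffix at $(i,c)$ is not yet tied, so the next nontrivial Phase 2$'$ step is precisely the horizontal slide $\horiz$ inside $\jump_i$ to this same $q$. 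In case (b), $q$ lies in some row $i''<i$ with some value $v$, and $V_{s+k-1}$ has no square right of $(i,c)$ in row $i$; the $V_j$-decomposition encodes this as the statement that all $i$-entries after $(i,c)$ have just been used up in previous conjugate moves, i.e.\ the $(i,i+1)$-suffix has just become tied. Phase 2$'$ therefore closes $\jump_i$; some subsequent $\jump$'s may be trivial until $\jump_v$, whose first nontrivial step (a conjugate Pieri move $\cpieri$) lands at exactly the same $q$.

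The main obstacle is the bookkeeping in case (b): the conjugate-move index $j$ runs over $\{s,\ldots,t\}$, whereas the Phase 2$'$ index $i$ runs over $\{s,\ldots,\ell(\beta)\}$, and these evolve at different rates (some $\jump_v$'s are trivial, others contribute multiple moves). The cleanest way around this, I expect, is to strengthen the inductive hypothesis to maintain, at each step, the full multiset of suffix counts of $\ybox$ for each value; this synchronizes exactly which $\jump_v$'s are trivial with the jumps of $V_j$ between rows, and combined with Theorem \ref{thm:ballotness} and the definition of the $V_j$-decomposition it forces Phase 2$'$ to pick precisely the square $q$ produced by the conjugate move. Once the induction goes through for all $k$, each conjugate move is accounted for by exactly one nontrivial Phase 2$'$ move and vice versa, completing the proof.
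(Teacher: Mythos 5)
Your strategy — set up an induction matching the $k$-th conjugate move with the $k$-th nontrivial Phase~2$'$ step — is reasonable in outline, and the base case and the high-level split into a ``same row'' move and a ``row change'' move are sensible, but the proposal contains a concrete error and stops precisely at the structural argument that makes the lemma true.

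The error is in case (a): you write that the square $q$ ``lies in row $i$'' where $(i,c)$ is the coordinate of $\ybox$, and conclude ``it has value $i$.'' In a skew tableau the entry in row $i$ is not $i$ in general, so you are conflating a row index with the running value counter of Phase~2$'$. The actual invariant you need is about the \emph{value} of the entry the conjugate move hits and how it compares to the value last processed, and your inductive hypothesis records only the position $p_{k-1}$, not that value — so your induction genuinely cannot carry the information you later need.

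The deeper gap is exactly the ``obstacle'' you name in case (b). The lemma's content is why, when the conjugate move jumps from some value $k$ in $V_j$ to a larger value $k'$ in $V_{j+1}$, the step-by-step algorithm's intermediate jumps $\jump_{k+1}, \ldots, \jump_{k'-1}$ are all trivial. The paper proves this directly rather than by strengthening the induction: if $i \le k' < k$, then the $k'$ and $k'+1$ appearing in $V_{j+1}$ both lie after $\ybox$ in reading order, and by the defining property of the $s$-decomposition (that $V_{j'}$ for $j' > j+1$ contains \emph{later} instances of every value $\geq s$) the $k'$'s and $(k'+1)$'s in every subsequent strip also lie after $\ybox$; hence the $(k',k'+1)$ suffix is tied. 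Meanwhile the $k+1$ in $V_{j+1}$ precedes $\ybox$, so the $(k,k+1)$ suffix is not tied, making $k$ the minimal index the step-by-step algorithm acts on. Your proposal defers this to ``maintain the full multiset of suffix counts'' in the inductive hypothesis, but that is both vaguer and heavier than what is needed: the suffix comparisons are one-shot consequences of the $s$-decomposition's structure, not something you have to track across the induction. You should replace the conflation in (a) and the hand-waved bookkeeping in (b) with an explicit argument of the type above: show that the value chosen by the conjugate move is the smallest $k$ greater than or equal to the last value processed whose $(k,k+1)$ suffix is untied, and that the target square is the first $k$ after $\ybox$ in reading order.
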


\begin{proof}

First, notice that the Phase 2 algorithm, as described in Remark \ref{rmk:alternate-phase-2}, can be stated as follows.  Starting with $i=s$, at each step choose the smallest $k\ge i$ for which the $(k,k+1)$ suffix of the $\ybox$ is \textit{not} tied, and then switch the $\ybox$ with the first $k$ that occurs after it in reading order, incrementing $i$ to $k+1$ and repeating.  We will show that shuffling past the $V_j$'s using conjugate moves does the same thing.

 Suppose we are moving the $\ybox$ across the strip $V_{j+1}$.  Then either on the previous move it switched places with an element $i$ in $V_j$, or $i=s$ and it is at the start of Phase 2.  We first show that the $\ybox$ switches with an element $k\ge i$ by considering these two cases separately.
 
 \textbf{Case 1.}  If $i=s$ and it is the start of Phase 2, the next move will switch with an element $k\ge i=s$ because the vertical strips all have entries of size at least $s$.
 
  \textbf{Case 2.}  If the $\ybox$ just switched with an $i$ in $V_j$, then there exists an $i$ in $V_{j+1}$ because the $V_j$'s weakly increase in length as $j$ increases by the definition of $s$-decomposition.  Furthermore, the $i$ in $V_{j+1}$ occurs after the $i$ in $V_j$ in reading order, so the $\ybox$ switches with an entry $k$ in $V_j$ which weakly precedes the $i$ in reading order.  We must therefore have $k\ge i$ since $V_j$'s entries increase down the strip.

   Finally, in either case, suppose $k'$ is an index with $i\le k'<k$.  Then the $k'$ and $k'+1$ in $V_{j+1}$ both occur later than $\ybox$ in the reading word before the move, and by the definition of $s$-decomposition this means that the $k'$ and $k'+1$ in each later strip $V_{j'}$ also occur after the $\ybox$.  Hence the $(k',k'+1)$ suffix of $\ybox$ is tied prior to the move.  Since the $k+1$ in $V_{j+1}$ precedes the $\ybox$, the $(k,k+1)$ suffix is not tied.
   
   So indeed, the $\ybox$ switches with the smallest $k\ge i$ for which the $(k,k+1)$ suffix is tied. \qed 
\end{proof}

This completes the proof of Theorem \ref{thm:main-theorem}.

\begin{remark}
Since we work with semistandard tableaux, a natural question is to ask what happens if we use only horizontal strips to factor $\esh$ (i.e. if we attempt to use the $(\ell(\beta)+1)$-decomposition for all $T$) rather than the appropriate $s$-decomposition. In fact, an earlier version of our algorithm used this approach; its proof relied on the `un-rectification' method, as demonstrated for the Pieri Case. However, the proof is more difficult, and there are drawbacks to the resulting local algorithm. The most notable are that it does not preserve ballotness at the intermediate steps, and, after Phase 1, it consists of 3-cycles rather than simple transpositions switching the $\ybox$ with one other entry. These drawbacks make the applications to K-theory (see Section \ref{sec:K-theory}) harder or impossible to deduce.
\end{remark}

\subsection{Corollaries on Evacuation-Shuffling}\label{sec:corollaries-evacu-shuffling}

For each of the following corollaries, let $(\ybox,T) \in \LRyb$.

\begin{corollary}\label{cor:transition-step}
Suppose the transition step of $\lesh(\ybox,T)$ is $s$. Then $\beta$ has an outer co-corner in row $s$, and the evacu-shuffle path of the $\ybox$ has length exactly $s+\beta_s$, including the initial and final locations of the $\ybox$.
\end{corollary}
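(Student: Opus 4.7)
The plan is to read this off directly from the structure established in the proof of Theorem \ref{thm:main-theorem}, specifically the $s$-decomposition framework and the antidiagonal symmetry of Lemma \ref{lem:antidiag}.

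For the first claim, recall that in Step 1 of the proof of Theorem \ref{thm:main-theorem}, the parameter $s = s(\ybox,T)$ was defined as the row in which the $\ybox$ comes to rest after performing $\sh_1(\sh_2\sh_1(\ybox,T))$, i.e., after rectifying $(\ybox,T)$ and shuffling $\ybox$ past the rectified tableau. The resulting configuration has $\ybox$ positioned as an outer co-corner of a straight-shape tableau of shape $\beta$ in row $s$, so $\beta$ necessarily has an outer co-corner in row $s$. Moreover, the second lemma of Step 2 of that proof establishes that this $s$ coincides with the transition step of $\lesh(\ybox,T)$, so the first assertion follows immediately.

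For the path-length claim, I would count the number of moves separately in each phase, using the step-by-step formulation Phase 2$'$. In Phase 1, each step for $i = 1, \ldots, s-1$ performs a single swap of $\ybox$ with a nearby $i$ (since condition (3) of Theorem \ref{thm:ballotness} guarantees such an $i$ exists, and Definition \ref{def:algorithm} specifies a single transposition), giving exactly $s - 1$ box movements. For Phase 2$'$, I would invoke Lemmas \ref{lem:partial-esh} and \ref{lem:antidiag}: under the injection $\iota_s$ and the factorization $\esh = e_t \circ \cdots \circ e_1$, the remaining partial evacuation-shuffles $e_s, e_{s+1}, \ldots, e_t$ (where $t = s - 1 + \beta_s$) each perform exactly one conjugate move across the corresponding vertical strip $V_i$ of the $s$-decomposition, by Definition \ref{def:conjugate-pieri}. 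By Step 4 of the proof of Theorem \ref{thm:main-theorem}, this sequence of conjugate moves agrees, one for one, with the individual box movements in Phase 2$'$ of the local algorithm. Hence Phase 2$'$ contributes exactly $t - s + 1 = \beta_s$ box movements.

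Summing across the two phases yields $(s - 1) + \beta_s$ movements of the $\ybox$, and therefore $s + \beta_s$ distinct positions along the evacu-shuffle path (counting both endpoints). I do not anticipate any real obstacle here: the argument is essentially a bookkeeping corollary of the detailed content of Theorem \ref{thm:main-theorem}. The only thing to be careful about is to count using Phase 2$'$ rather than the default Phase 2, since individual $\jump_i$ moves in the default version may aggregate several conjugate Pieri moves and horizontal slides, making the movement-count per $i$ nonuniform in the default formulation.
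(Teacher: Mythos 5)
Your proof is correct and follows essentially the same route as the paper's: both read off the outer co-corner claim from the position $(s,\beta_s+1)$ of the $\ybox$ after rectifying and shuffling, and both count $s-1$ Phase 1 moves plus $\beta_s$ conjugate (Phase 2$'$) moves to get $s+\beta_s$ squares in the path. Your added care about counting via Phase 2$'$ rather than the default $\jump_i$ moves is exactly the right point to flag, and matches how the paper implicitly handles it.
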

\begin{proof}
From the proof of Theorem \ref{thm:main-theorem}, the $\ybox$ ends up in the square $(s,\beta_s+1)$ after rectifying and shuffling past $T$. Thus, this square is an outer co-corner of $\beta$.

From the local description of $\esh$, the box moves through a total of $s-1$ squares in Phase 1. From the description of Phase 2 in terms of conjugate moves, the $\ybox$ moves through $\beta_s$ squares in Phase 2. \qed 
\end{proof}

\begin{corollary}[Antidiagonal symmetry and evacu-shuffle paths] \label{cor:antidiag-evacu-path}
Define $(T^{R\ast},\ybox)$ by rotating and transposing $(\ybox,\DE(T))$, then taking its highest-weight representative. 

Similarly, for $(S,\ybox) \in \LRby$, define $(\ybox,S^{R\ast})$ the same way. Then:
\[\lesh(\ybox,T) = (S,\ybox) \qquad \text{ iff } \qquad (T^{R\ast},\ybox) = \lesh(\ybox,S^{R\ast}).\]

Moreover, the evacu-shuffle path of the $\ybox$ for $\lesh(\ybox,S^{R\ast})$ is the antidiagonal reflection of the evacu-shuffle path of the $\ybox$ for $\lesh(\ybox,T)$.
\end{corollary}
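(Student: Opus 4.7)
The plan is to reduce the biconditional to the equivariance of $\esh$ under the rotation-and-transposition operation $R\ast$, then upgrade this equivariance to a step-by-step path correspondence via the $s$-decomposition machinery.

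First, since $\lesh = \esh$ by Theorem~\ref{thm:main-theorem}, it suffices to prove $\esh(\ybox,T) = (S,\ybox)$ if and only if $\esh(\ybox, S^{R\ast}) = (T^{R\ast}, \ybox)$. The operation $\esh$ is defined via jeu de taquin shuffles of dual equivalence classes (Section~\ref{sec:shuffling-ops}), and these commute with both rotation and transposition at the level of dual equivalence classes, as noted in Section~\ref{sec:rotate-transpose-DE}. Rotational equivariance is essentially the identity $\mathrm{rev} \circ \esh_2 \circ \mathrm{rev} = \esh_2$ used in the proof of Lemma~\ref{lem:outer-esh}; transposition equivariance is the analogous classical fact, provable via growth diagrams in the spirit of the proof that shuffling is an involution. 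Composing these yields equivariance of $\esh$ under $R\ast$, whence the biconditional follows.

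For the path statement, I would combine the local description of $\lesh$ from the proof of Theorem~\ref{thm:main-theorem} with Lemma~\ref{lem:highwt-lowwt}. If $\lesh(\ybox, T)$ has transition step $s$, then by Corollary~\ref{cor:transition-step} and the $s$-decomposition analysis, the evacu-shuffle path visits one square in each of the horizontal strips $H_1, \ldots, H_{s-1}$ during Phase 1 and one square in each of the vertical strips $V_s, \ldots, V_t$ during Phase 2 (step-by-step). Lemma~\ref{lem:highwt-lowwt} shows that under $R\ast$ the horizontal and vertical strips of the relevant decompositions exchange roles by antidiagonal reflection. So in $\lesh(\ybox, S^{R\ast})$, the Phase 1 portion traverses (in reflected position) the former Phase 2 strips, and the Phase 2 portion traverses the former Phase 1 strips. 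In particular, the new transition step becomes $\beta_s + 1$, consistent with the antidiagonal reflection of the bend in the original path.

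The main obstacle will be verifying the correspondence at the level of individual algorithmic moves rather than merely at the coarser level of the reflected strip decomposition. Each Phase 2 jump decomposes, step-by-step, into conjugate Pieri moves and horizontal slides, and these must match, after antidiagonal reflection, with the Phase 1 Pieri moves and vertical slides of $\lesh(\ybox, S^{R\ast})$. I would resolve this by invoking the framework developed in the proof of Lemma~\ref{lem:antidiag}, which already establishes the antidiagonal exchange of Phase 1 and Phase 2 in a local setting by passing to an anti-rectification; the argument there extends to every intermediate square along the path, using the ballotness invariant of Theorem~\ref{thm:ballotness} and the Pieri-case analysis of Proposition~\ref{prop:pieri-criteria} to ensure that each step and its $R\ast$-reflection satisfy matching local rules.
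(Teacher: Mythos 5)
Your proposal is correct and follows essentially the same route as the paper: the biconditional comes from the equivariance of $\esh$ under rotation-and-transposition of dual equivalence classes (with the $\ybox$ landing on the opposite side, so that the conjugated map is $\lesh^{-1}$, which is exactly what the identity $\mathrm{rev}\circ\esh_2\circ\mathrm{rev}=\esh_2$ encodes), and the path statement comes from observing that $R\ast$ interchanges $\iota_H$ with $\iota_V^\ast$, so the $s$-decomposition of $T^{R\ast}$ is the reflection of a ``dual'' $s$-decomposition of $T$ (extracting vertical strips first), to which the partial-evacuation-shuffle analysis of Theorem \ref{thm:main-theorem} applies verbatim. The paper makes the last step concrete by noting that the shuffle path of the $\ybox$ through the rectification is the same for both orders of extraction, which is the precise fact you would need to close your ``main obstacle.''
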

See Figure \ref{fig:antidiag-evacu-path} for an example of this phenomenon.
\begin{proof}
As a map on dual equivalence classes, $\esh$ is its own inverse, and it commutes with transposing and rotating (since shuffling does). However, since the $\ybox$ is then on the opposite side of the tableau, $\esh$ corresponds by our main theorem to $\lesh^{-1}$. Thus $\lesh(\ybox,S^{R\ast}) = (T^{R\ast},\ybox)$.

To see that the evacu-shuffle paths are the same, we compare $s$-decompositions. Observe that rotating and transposing interchanges the functions $\iota_H$ and $\iota_V^\ast$ of Lemma \ref{lem:factor-other}. So, the  $s$-decomposition of $T^{R\ast}$ corresponds to a `dual' $s$-decomposition of $T$,
\[\iota^\ast_s(T) = (\iota_H)^{s-1} \circ (\iota_V^\ast)^{\beta_s}(T) \ \big(= \iota_s(T^{R\ast})^{R\ast}\big),\]
where we extract the $\beta_s$ vertical strips first, \emph{then} extract the $s-1$ horizontal strips.

Consider rectifying and shuffling $(\ybox,T)$. It is easy to see that the shuffle path is the same for both $\iota_s(T)$ and $\iota_s^\ast(T)$. The proof of Theorem \ref{thm:main-theorem} then shows that the partial evacuation-shuffles corresponding to $\iota_s^\ast$ give the same Pieri and conjugate-Pieri moves as those corresponding to $\iota_s$. \qed 
\end{proof}

\begin{corollary}
The following are equivalent:
\begin{itemize}
\item[(i)] The transition step of $\lesh(\ybox,T)$ is $s$.
\item[(ii)] Let $(\ybox,T')$ be the `transposed class', obtained by transposing $(\ybox,\DE(T))$, then taking the highest-weight representative. Then the transition step of $\lesh$ on $(\ybox,T')$ is $\beta_s+1$.
\item[(iii)] Let $(T'',\ybox)$ be the `rotated class', obtained by rotating $(\ybox,\DE(T))$, then taking the highest-weight representative. Then the transition step of $\lesh^{-1}$ on $(T'',\ybox)$ is $s$.
\end{itemize}
\end{corollary}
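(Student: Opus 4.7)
The plan is to derive all three equivalences from a single geometric fact: by Corollary \ref{cor:transition-step}, the transition step $s$ of $\lesh(\ybox,T)$ is precisely the row index of the outer co-corner of $\beta$ at which the $\ybox$ comes to rest after rectifying $(\ybox,T)$ to straight shape and shuffling the $\ybox$ through. This co-corner sits at position $(s,\beta_s+1)$ of $\beta$. Thus the transition step is an invariant of the underlying dual equivalence class, determined by the rectification shape of $T$ together with the landing co-corner; we need only track how each of these behaves under transposition and under rotation.

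For the equivalence (i) iff (ii), I would invoke the fact that the transposition operator $D \mapsto D^\ast$ of Section \ref{sec:rotate-transpose-DE} commutes with jeu de taquin shuffling, and hence with $\esh = \lesh$ at the level of dual equivalence classes. Under transposition, the rectification shape $\beta$ becomes $\beta^\ast$, and the outer co-corner at $(s,\beta_s+1)$ of $\beta$ becomes the outer co-corner at $(\beta_s+1,s)$ of $\beta^\ast$, which sits in row $\beta_s+1$ (an easy check using that $(\beta^\ast)_{\beta_s+1}$ equals the number of rows of $\beta$ of length at least $\beta_s+1$, which is exactly $s-1$). Applying Corollary \ref{cor:transition-step} to the transposed class $(\ybox,T')$ then yields transition step $\beta_s+1$, as claimed.

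For the equivalence (i) iff (iii), I would use the rotation operator $D \mapsto D^R$ of Section \ref{sec:rotate-transpose-DE}. Unlike transposition, rotation \emph{preserves} the rectification shape $\beta$; it also commutes with shuffling. Rotation swaps the roles of the inner and outer $\ybox$ in the chain, sending $(\ybox,T) \in \LRyb$ to $(T'',\ybox) \in \LRby$ (with $\alpha$ and $\gamma$ interchanged) and correspondingly sending $\lesh$ to $\lesh^{-1}$. To read off the transition step of $\lesh^{-1}$ on $(T'',\ybox)$, I would use the anti-rectification formulation of $\esh$ from Lemma \ref{lem:outer-esh}: anti-rectifying $(T'',\ybox)$ brings $T''$ to straight shape $\beta$, and by rotation equivariance the $\ybox$ lands at the same outer co-corner $(s,\beta_s+1)$. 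Combined with Theorem \ref{thm:reverse-algorithm}, this shows that in the reverse algorithm the transition from Reverse Phase 2 to Reverse Phase 1 occurs at $i=s$.

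The main obstacle will be justifying that Corollary \ref{cor:transition-step} admits the direct analogue for $\lesh^{-1}$ used above, namely that the transition step of $\lesh^{-1}$ equals the row index of the outer co-corner of $\beta$ at which the $\ybox$ lands after \emph{anti-}rectification. This amounts to repeating Step 1 of the proof of Theorem \ref{thm:main-theorem} with Lemma \ref{lem:outer-esh} in place of the inward rectification formulation; the argument is essentially dual to the original, but should be spelled out explicitly to make the symmetry rigorous.
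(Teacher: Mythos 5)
Your argument for (i) $\Leftrightarrow$ (ii) is exactly the paper's: both rest on the fact that transposition of dual equivalence classes commutes with shuffling, so the $\ybox$ lands at the co-corner $(\beta_s+1,s)$ of $\beta^\ast$ after rectifying and shuffling the transposed class, and Step 1 of the proof of Theorem \ref{thm:main-theorem} identifies that row with the transition step; your check that $(\beta^\ast)_{\beta_s+1}=s-1$ is correct. For (iii) you diverge from the paper. You argue directly from rotation plus the anti-rectification formulation of Lemma \ref{lem:outer-esh}, which forces you to establish a \emph{dual} version of Step 1 (equivalently of Corollary \ref{cor:transition-step}): that the transition step of the reverse algorithm can be read off from the row of the $\ybox$ after anti-rectifying into the lower-right corner. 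You correctly flag this as the main obstacle, and it is a genuine one --- nothing in the paper as written gives it to you for free, and ``essentially dual to the original'' still requires redoing the Pieri-case criterion (Proposition \ref{prop:pieri-criteria}) and the $e_i$ analysis in the anti-rectified setting. The paper avoids this entirely by deriving (iii) from (ii): since the rotated class $(T'',\ybox)$ and the rotated-and-transposed class $(T^{R\ast},\ybox)$ are transposes of one another, Corollary \ref{cor:antidiag-evacu-path} (which is already proved, and which says that rotating \emph{and} transposing reverses $\lesh$ and antidiagonally reflects the evacu-shuffle path, hence swaps the Phase 1 and Phase 2 portions) combines with the transposition equivalence (i) $\Leftrightarrow$ (ii) to give the transition step of $\lesh^{-1}$ on $(T'',\ybox)$ with no new rectification argument. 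If you want to keep your route, you must actually write out the dual Step 1; otherwise, routing through Corollary \ref{cor:antidiag-evacu-path} is the shorter and already-justified path.
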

\begin{proof}
To see that (i) implies (ii), note that shuffling commutes with transposing dual equivalence classes, so in Step 1 of the proof of Theorem \ref{thm:main-theorem}, we find that the $\ybox$ is in the square $(\beta_s+1,s)$ after rectifying and shuffling. This means the transition step of $(\ybox,T')$ will be $\beta_s+1$. The same reasoning with $T$ and $T'$ exchanged shows (ii) implies (i).

To see that (ii) implies (iii), we use the previous corollary. Transposing \emph{and} rotating exchanges the Phase 1 and Phase 2 portions of the evacu-shuffle path. But the length of the Phase 1 portion of the path is exactly the value of the transition step. As above, the same reasoning with $(\ybox,T)$ and $(T'',\ybox)$ exchanged shows (iii) implies (ii).  \qed 
\end{proof}

Finally, we briefly consider the running time of $\lesh$. We assume the set $\LRyb$ is given along with, for each $(\ybox, T)$, the $1$-decomposition of $T$ into vertical strips. (Computing this decomposition in advance does not increase the asymptotic running time of computing $\LRyb$, since it can be obtained by simply labeling each $i$ with its distance from the end of its horizontal strip as the tableau is generated.)
\begin{corollary}\label{cor:running-time}
Given $\LRyb$ as above, computing $\lesh$ takes $O(b)$ steps, where $b = \ell(\beta) + \ell(\beta^\ast)$. Computing the entire orbit decomposition of $\omega$ takes $O(b \cdot |\LRyb|)$ steps.
\end{corollary}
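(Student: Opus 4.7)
The plan is to bound separately (a) the number of elementary steps performed by $\lesh$ on a single input, and (b) the cost of each step under a suitable representation of the tableau.

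For (a), I would invoke Corollary \ref{cor:transition-step}: if $s$ is the transition step of $\lesh(\ybox,T)$, then the evacu-shuffle path has length exactly $s + \beta_s$. Since $s \le \ell(\beta)+1$ and $\beta_s \le \beta_1 = \ell(\beta^\ast)$, the number of single-square moves in the step-by-step (Phase 2$'$) version of the algorithm is at most $\ell(\beta) + \ell(\beta^\ast) = b$. The shuffle step $\sh$ is an ordinary jeu de taquin slide of a single $\ybox$ through a skew shape of outer size at most $\ell(\beta) + \ell(\beta^\ast)$, so its slide path likewise has length $O(b)$. Hence $\omega = \sh \circ \lesh$ uses $O(b)$ elementary $\ybox$-moves per application.

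For (b), I would argue that each $\ybox$-move can be performed in amortized $O(1)$ time, using the vertical strip decomposition as a lookup structure. The key observations are: in Phase 1, the next $\ybox$-move at step $i$ is to the nearest $i$ preceding $\ybox$ in reading order, which (by the vertical strip decomposition and ballotness) is either in the vertical strip immediately left of $\ybox$'s current column or directly above $\ybox$; in Phase 2$'$, each move advances $\ybox$ by one square into the adjacent vertical strip, which is likewise indexed directly by the decomposition. Since consecutive moves of $\ybox$ occupy adjacent cells along the evacu-shuffle path, a simple pointer into the list of vertical strips can be advanced in amortized constant time as $\ybox$ traverses the path, making the total work per call to $\lesh$ equal to $O(b)$. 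The same argument applies to $\sh$, since the slide path visits adjacent cells.

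Finally, for the orbit decomposition of $\omega$, I would iterate: pick an unmarked $T \in \LRyb$, apply $\omega$ repeatedly, marking each tableau as visited, until returning to $T$. Each element is visited exactly once across all orbits, and each application of $\omega$ costs $O(b)$ by the preceding analysis, giving total time $O(b \cdot |\LRyb|)$. Marking and equality testing of tableaux is $O(b)$ as well (and can be absorbed into the cost of computing $\omega$), so no additional overhead arises.

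The main obstacle is step (b): justifying that each $\ybox$-move truly runs in amortized $O(1)$ time requires care, because naively scanning the tableau to locate ``the nearest $i$ before $\ybox$'' or to check whether the $(i,i+1)$-suffix is tied can be $\Theta(|T|)$. Exploiting the preprocessed $1$-decomposition, together with the observation that the $\ybox$ traverses a monotone path (downward-left in Phase 1, upward-right in Phase 2), lets one maintain a single moving pointer and a running suffix count, yielding the claimed amortized cost; this is the step that needs the most explicit verification.
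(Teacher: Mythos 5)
Your proposal matches the paper's proof in all essentials: both bound the number of $\ybox$-moves by the path length $s+\beta_s\le \ell(\beta)+\ell(\beta^\ast)=b$ and use the precomputed $1$-decomposition into vertical strips as the lookup structure making each move constant-time, which is exactly the paper's pair of observations (a Phase 1 move keeps the moved $i$ in the same vertical strip, so the decomposition can be maintained on the fly, and the $s$-decomposition needed in Phase 2 is just the $1$-decomposition with entries below $s$ deleted). The only quibble is your remark that the Phase 1 target $i$ may lie ``directly above'' the $\ybox$ --- for a vertical slide it lies directly \emph{below} --- but this slip does not affect the argument.
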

\begin{proof}
We compute $\lesh(\ybox,T)$ directly, for any transition step $s$, updating the $1$-decomposition at the same time. Note that during a Phase 1 move, the $i$ that switches with $\ybox$ remains part of the same \emph{vertical} strip, since its position among the $i$'s in reading order is unchanged. Thus, we apply Phase 1 moves until the transition step, updating the vertical strips accordingly.

For Phase 2, note that the $s$-decomposition is simply the $1$-decomposition with all squares of value less than $s$ deleted. We may thus compute conjugate Pieri moves using the 1-decomposition.

Note that there are at most $\ell(\beta) + \ell(\beta^\ast)$ steps in all.  \qed 
\end{proof}
\begin{figure}[t]
\begin{center}
\begin{tabular}{m{1cm} @{$=\ $} m{3.5cm} m{2.5cm} @{$=\ $} m{3cm}}
\raggedright $\ \ \ \ T $ & \centering \includegraphics[scale=0.85]{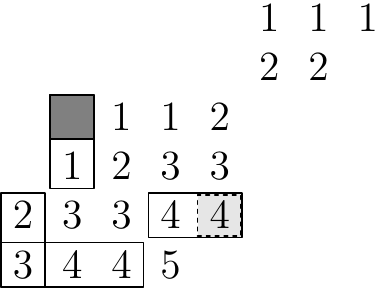} & $\ \xrightarrow{\lesh} \ \ \ S $ & \includegraphics[scale=0.85]{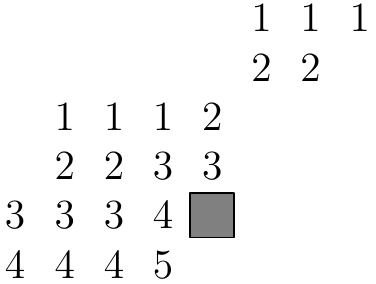} \vspace{0.1cm} \\
\raggedright $\ T^{R\ast} $ & \centering \includegraphics[scale=0.85]{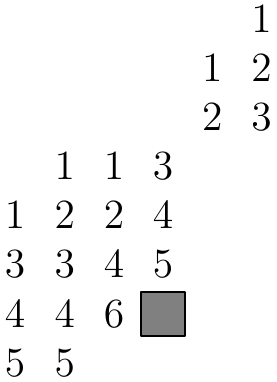} & $\ \xleftarrow{\lesh}\ \ S^{R\ast} $ & \includegraphics[scale=0.85]{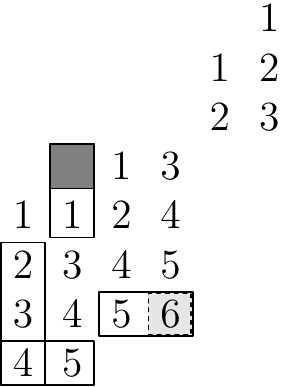}
\end{tabular}
\end{center}
\caption{An example of antidiagonal reflection. The dual equivalence classes of (the standardizations of) $T$ and $T^{R\ast}$ are antidiagonal reflections of one another, as are those of $S$ and $S^{R\ast}$. By Corollary \ref{cor:antidiag-evacu-path}, their evacu-shuffle paths are likewise antidiagonally-reflected. }
\label{fig:antidiag-evacu-path}
\end{figure}

\section{Connections to K-theory}\label{sec:K-theory}

\subsection{Background on K-theoretic (genomic) tableaux}

We recall the results we need on increasing tableaux and K-theory. The structure sheaves $\mathcal{O}_\lambda$ of Schubert varieties in $Gr(k,\mathbb{C}^n)$ form an additive basis for the K-theory ring $K(Gr(k,\mathbb{C}^n))$, and they have a product formula
\[[\mathcal{O}_\mu] \cdot [\mathcal{O}_\nu] = \sum_{|\lambda| \geq |\mu| + |\nu|} (-1)^{|\lambda| - |\mu| - |\nu|}k_{\mu \nu}^\lambda [\mathcal{O}_\lambda],\]
for certain nonnegative integer coefficients $k_{\mu \nu}^\lambda$. These coefficients enumerate certain tableaux, which we now discuss.

In \cite{bib:ThomasYong}, Thomas and Yong have defined a K-theoretic jeu de taquin for \newword{increasing tableaux}, i.e. tableaux that are both row- and column-strict; the tableaux analogous to highest-weight standard tableaux are those whose K-rectification is superstandard. When the K-rectification shape is a single row $\beta = (d)$, these are the \emph{Pieri strips of max-content $d$}:
\begin{definition}[\cite{bib:ThomasYong}, Section 5]
Let $\lambda/\mu$ be a horizontal strip, (no two squares are in the same column). We say a tableau $T$ of shape $\lambda/\mu$ is a \newword{Pieri strip} if:
\begin{itemize}
\item[(1)] the rows of $T$ are strictly increasing, 
\item[(2)] the reading word of $T$ is weakly increasing and does not omit any value $1, \ldots, \max(T)$.
\end{itemize}
We say the \newword{max-content} of $T$ is $\max(T)$.
\end{definition}

\begin{example}
For the shape $\lambda/\mu = {\tiny \young(:::\hfil\hfil,:\hfil\hfil,\hfil)}$, there is one Pieri strip of max-content $5$, two of max-content $4$ and one of max-content $3$. These are, respectively:
\[\young(:::45,:23,1) \qquad \young(:::34,:12,1) \qquad \young(:::34,:23,1) \qquad \young(:::23,:12,1).\]
\end{example}

For general shapes, there is an analogous theory of `(ballot) semistandard increasing tableaux'. These are the \newword{genomic tableaux} defined by Pechenik in \cite{bib:Pechenik}, whose entries are subscripted integers $i_j$, which we now define.
\begin{definition}[\cite{bib:Pechenik}]
Let $T$ be a genomic tableau with entries $i_j$. We call $i$ the \emph{gene family} and $j$ the \emph{gene}. First, we say $T$ is \newword{semistandard} if:
\begin{itemize}
\item The tableau $T_{ss}$ obtained by forgetting the genes is semistandard (that is, each gene family forms a horizontal strip);
\item Within each gene family, the genes form a Pieri strip.
\end{itemize}
We say the \newword{$K$-theoretic content} of $T$ is $(c_1, \ldots, c_r)$ if $c_i$ is the max-content of the Pieri strip of genes in the $i$-th gene family. Finally, we say $T$ is \newword{ballot} if it is semistandard and has the following property:
\begin{itemize}
\item[$(*)$] Let $T'$ be any genomic tableau obtained by deleting, within each gene family of $T$, all but one of every repeated gene. Let $T'_{ss}$ be the tableau obtained by deleting the corresponding entries of $T_{ss}$. Then the reading word of $T'_{ss}$ is ballot.
\end{itemize}
\end{definition}
\noindent We write $K(\lambda/\mu;\nu)$ for the set of ballot genomic tableaux of shape $\lambda/\mu$ and $K$-theoretic content $\nu$.
\begin{theorem}[\cite{bib:Pechenik}]
We have $k_{\mu  \nu}^\lambda = |K(\lambda/\mu;\nu)|$.
\end{theorem}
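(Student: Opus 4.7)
The overall plan is to adapt the classical proof of the Littlewood-Richardson rule via Pieri iteration to the K-theoretic setting. I would proceed by induction on $\ell(\nu)$, using the known K-theoretic Pieri rule (due to Lenart) as the base case and splitting off one row of $\nu$ at a time.

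First, I would verify the Pieri case $\nu = (d)$. Here there is only one gene family, so the ballot condition $(*)$ is vacuous, and $K(\lambda/\mu;(d))$ is precisely the set of Pieri strips of max-content $d$ defined earlier. These are (after an easy translation of conventions) the combinatorial objects enumerating Lenart's K-theoretic Pieri coefficients $k^{\lambda}_{\mu,(d)}$, so the base case reduces to matching definitions.

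For the inductive step, I would write $\nu = (\nu_1, \bar\nu)$ with $\bar\nu=(\nu_2,\ldots,\nu_r)$ and compute the product
\[
[\mathcal{O}_\mu] \cdot [\mathcal{O}_{(\nu_1)}] \cdot [\mathcal{O}_{\bar\nu}]
\]
in two ways. Expanding the leftmost factor first, the Pieri rule applied to $[\mathcal{O}_\mu]\cdot[\mathcal{O}_{(\nu_1)}]$ and then inductively to $[\mathcal{O}_{?}]\cdot[\mathcal{O}_{\bar\nu}]$ (with signs accounting for the grading by $|\lambda|$) produces an alternating sum of iterated genomic/Pieri-strip counts. Expanding the middle and right factors first yields $[\mathcal{O}_\mu]\cdot[\mathcal{O}_\nu]$ plus ``correction'' terms indexed by partitions of strictly larger size, whose coefficients can be rewritten inductively as enumerations of shorter genomic-type data. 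Matching the two expansions gives a recursion that $|K(\lambda/\mu;\nu)|$ must satisfy if it equals $k^\lambda_{\mu\nu}$.

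On the combinatorial side, each $T \in K(\lambda/\mu;\nu)$ decomposes uniquely by peeling off its first gene family, a Pieri strip of max-content $\nu_1$ sitting on an intermediate shape $\lambda^{(1)}/\mu$, leaving a ballot genomic tableau of content $\bar\nu$ on $\lambda/\lambda^{(1)}$. By induction this matches the inner-product expansion above. The main obstacle is showing that the ballot condition $(*)$ --- which requires deleting all but one representative of each repeated gene before testing latticeness --- produces \emph{exactly} the right alternating contributions, so that overcounting in the iterated Pieri expansion is cancelled. The cleanest way I see to handle this is to develop a K-theoretic jeu de taquin on genomic tableaux (in the spirit of Thomas-Yong) whose slides preserve ballotness and K-theoretic content, and to show that each ballot genomic tableau K-rectifies to a canonical ``K-superstandard'' tableau of shape $\nu$. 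Then $|K(\lambda/\mu;\nu)|$ is an invariant of a K-Knuth-equivalence class, and the recursion above follows from the compatibility of K-slides with splitting off the first gene family. Constructing this K-jdt so that the ``gene-repetition'' structure is preserved is, in my view, the technical heart of the argument; a sign-reversing involution on non-ballot configurations (analogous to Remmel-Whitney-Haglund cancellations) could serve as an alternative for the matching step if a direct K-jdt approach proves delicate.
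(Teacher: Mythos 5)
This theorem is not proved in the paper at all: it is quoted, with the citation \cite{bib:Pechenik}, from Pechenik and Yong's paper \emph{Genomic Tableaux}. The paper under review uses it as a black box, so there is no ``paper's own proof'' to compare against; the appropriate move here is simply to cite Pechenik--Yong.

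As for your sketch on its own merits: it is a plausible high-level outline but not a proof, and you yourself flag the missing piece as ``the technical heart of the argument.'' Two specific concerns. First, the iterated-Pieri-with-sign-cancellation strategy you lean on in the middle paragraphs is quite delicate in $K$-theory because the structure constants appear with alternating signs and the product is nonhomogeneous; you would need a genuine sign-reversing involution on a space of chains of Pieri strips, and you give no construction of it. Second, the route Pechenik and Yong actually take is different from (and cleaner than) an iterated-Pieri induction: they establish a content-preserving bijection between ballot genomic tableaux of shape $\lambda/\mu$ and $K$-content $\nu$ and the increasing tableaux of shape $\lambda/\mu$ that $K$-rectify to the superstandard tableau of shape $\nu$ (the Thomas--Yong model, already known to be counted by $k_{\mu\nu}^{\lambda}$), rather than reproving unique $K$-rectification from scratch or running a Pieri recursion. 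Your final suggestion---to relate genomic tableaux to Thomas--Yong's increasing-tableau $K$-jdt---is therefore pointed in the right direction, but the work of constructing and analyzing that bijection is exactly what is absent from your writeup.
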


We are most concerned with the case of partitions $\alpha, \beta, \gamma$ with $|\alpha| + |\beta| + |\gamma| = k(n-k)-1$. In this case there will only be one repeated gene, in one gene family. Let $K(\gamma^c/\alpha;\beta)(i)$ be the set of increasing tableaux in which $i$ is the repeated gene family. For convenience, we state the following simpler characterization of this set:
\begin{lemma} \label{lem:genomic-criterion}
Let $T$ be an (ordinary) semistandard tableau of shape $\gamma^c/\alpha$ and content equal to $\beta$ except for a single extra $i$. Let $\{\ybox_1, \ybox_2\}$ be two squares of $T$, such that
\begin{itemize}
\item[(i)] The squares are non-adjacent and contain $i$,
\item[(ii)] There are no $i$'s between $\ybox_1$ and $\ybox_2$ in the reading word of $T$,
\item[(iii)] For $k = 1,2,$ the word obtained by deleting $\ybox_k$ from the reading word of $T$ is ballot.
\end{itemize}
There is a unique ballot genomic tableau $T' \in K(\gamma^c/\alpha; \beta)(i)$ corresponding to the data $(T,\{\ybox_1,\ybox_2\})$. Conversely, each $T'$ corresponds to a unique $(T,\{\ybox_1,\ybox_2\})$.
\end{lemma}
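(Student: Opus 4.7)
The plan is to construct mutually inverse maps between $K(\gamma^c/\alpha;\beta)(i)$ and the set of admissible pairs $(T, \{\ybox_1,\ybox_2\})$. The size constraint $|\alpha|+|\beta|+|\gamma| = k(n-k)-1$ guarantees that any $T' \in K(\gamma^c/\alpha;\beta)$ has exactly one repeated gene, occurring in a unique family $i$; the two squares hosting this repeat will play the role of $\ybox_1, \ybox_2$.

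For the forward direction, given $T' \in K(\gamma^c/\alpha;\beta)(i)$, I take $T = T'_{ss}$ and let $\{\ybox_1,\ybox_2\}$ be the two squares in family $i$ sharing the repeated gene. Since family $i$ contains $\beta_i+1$ squares and every other family $j$ contains $\beta_j$, the content of $T$ is $\beta+e_i$. Condition (i) follows from the Pieri strip axioms: strict rows force $\ybox_1$ and $\ybox_2$ into distinct rows of family $i$, while the horizontal-strip structure of family $i$ in $T$ forces distinct columns, precluding edge-adjacency. Condition (ii) is the observation that a Pieri strip's reading word is weakly increasing with no omissions, so the two squares hosting the repeated gene must be consecutive in reading order. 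Condition (iii) is the ballotness axiom $(*)$ specialized to this setting: the only way to perform the deletion prescribed by $(*)$ is to remove either $\ybox_1$ or $\ybox_2$, and both choices must yield ballot.

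For the reverse direction, given $(T, \{\ybox_1,\ybox_2\})$ satisfying (i)--(iii), I assign genes to build $T'$. For each family $j \neq i$, give the squares of value $j$ the distinct genes $1, \ldots, \beta_j$ in reading order (a trivial Pieri strip). For family $i$, list the $i$-valued squares in reading order as $p_1, \ldots, p_{\beta_i+1}$; by (ii), $\{\ybox_1,\ybox_2\} = \{p_k, p_{k+1}\}$ for a unique $k$. Assign genes $1, 2, \ldots, k, k, k+1, \ldots, \beta_i$ in this order, so $\ybox_1, \ybox_2$ share gene $k$. This sequence is weakly increasing, covers $\{1, \ldots, \beta_i\}$, and the strict-row condition holds because (i) and (ii) together place $\ybox_1, \ybox_2$ in different rows, so the repeat $(k,k)$ never occurs within a single row of family $i$. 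Ballotness of $T'$ then reduces to (iii) via $(*)$.

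The two constructions are mutually inverse essentially by construction: the gene assignment on family $i$ is the unique one compatible with weak-increase, no omissions, and the prescribed position of the repeat. The main obstacle is the small combinatorial point that (i) and (ii) together force $\ybox_1, \ybox_2$ into different rows. In an SSYT, two squares of the same value $i$ in a common row must lie inside a contiguous block of $i$'s in that row, so being in the same row forces either immediate adjacency (forbidden by (i)) or an intervening $i$ between them (forbidden by (ii)). With this observation in hand, everything else is a direct unpacking of Pechenik's definition of a ballot genomic tableau.
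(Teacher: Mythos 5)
Your proof is correct and follows the same route as the paper's (much terser) argument: the gene assignment is forced — distinct genes in reading order for families $j\neq i$, and a single repeat at $\{\ybox_1,\ybox_2\}$ for family $i$ — and the ballotness axiom for a tableau with exactly one repeated gene reduces precisely to condition (iii). The extra details you supply (the Pieri-strip axioms forcing (i) and (ii) in the forward direction, and the "same row forces adjacency or an intervening $i$" observation in the reverse direction) are accurate fillings-in of what the paper leaves implicit.
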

\begin{proof}
The gene families of $T'$ are the entries of $T$. For $j \ne i$, the $j$-th gene family of $T'$ has all distinct genes, obtained by standardizing the $j$-th horizontal strip of $T$. For the $i$-th gene family, there are exactly two repeated genes, in the squares $\ybox_1, \ybox_2$. This uniquely determines the Pieri strip. Ballotness of $T'$ is then equivalent to (iii).  \qed 
\end{proof}

\subsection{Generating genomic tableaux}
We now establish connections between local evacuation-shuffling and genomic tableaux. We first describe how the tableaux $K(\gamma^c/\alpha;\beta)$ arise from evacuation-shuffling. In fact, each tableau arises once during some step of Phase 1 and once during Phase 2, for some $T_1, T_2 \in \LRyb$.

Let $(\ybox,T) \in \LRyb$. Suppose the evacu-shuffle path for $\lesh(\ybox,T)$ is not connected. This occurs whenever $\lesh$ applies a Pieri or conjugate Pieri move. Let $\ybox_1, \ybox_2$ be two successive non-adjacent squares in the path, and suppose the $\ybox$ switched with an $i$ during this move (that is, the movement occurred during $\pieri_i$ or $\jump_i$). Let $T_i$ be the tableau before this step, with the $\ybox$ replaced by $i$. We will show using Lemma \ref{lem:genomic-criterion} that $(T_i,\{\ybox_1, \ybox_2\})$ corresponds to a genomic tableau. See Figure \ref{fig:genomic-bijection} for an example.

\begin{figure}[b]
\[\left(\ \raisebox{-.45\height}{\includegraphics[scale=0.7]{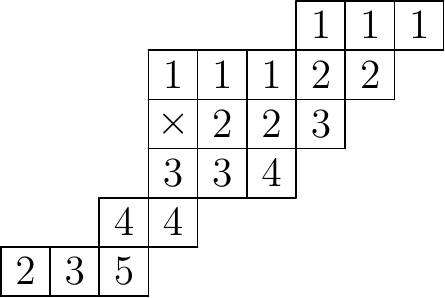}}
\xrightarrow{\pieri_2}
\raisebox{-.45\height}{\includegraphics[scale=0.7]{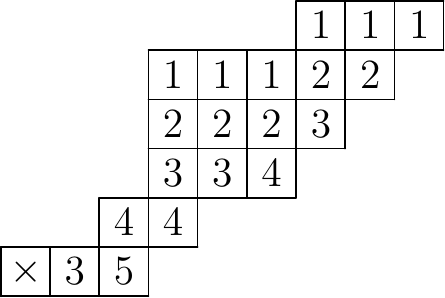}}\ \right)\
\xmapsto{\ \varphi_1\ }
\raisebox{-.45\height}{\includegraphics[scale=0.7]{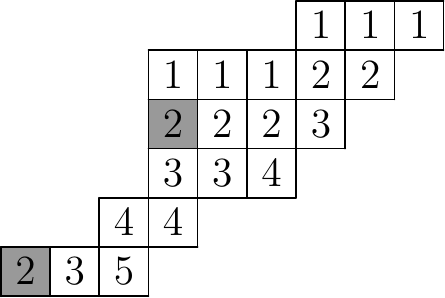}}\]
\caption{A Pieri move, and the genomic tableau generated by $\varphi_1$. The shaded squares of the genomic tableau correspond to $\ybox_1$ and $\ybox_2$, that is, the unique repeated gene.}
\label{fig:genomic-bijection}
\end{figure}

\begin{theorem} \label{thm:generating-ktheory}
The data $(T_i,\ybox_1,\ybox_2)$ corresponds to a ballot genomic tableau, as in Lemma \ref{lem:genomic-criterion}. Moreover, as $T$ ranges over $\LRyb$, every tableau $T_K \in K(\gamma^c/\alpha;\beta)(i)$ arises exactly once this way in Phase 1 and once more in Phase 2. This gives two bijections:

\begin{center}
\begin{tabular}{l c l}
$\varphi_1$ : & $\big\{\pieri_i \text{ moves} \big\}$ & $\to\ K(\gamma^c/\alpha;\beta)(i),$ \\
$\varphi_2$ : & $\big\{\cpieri_j \text{ moves during } \jump_i \big\}$ & $\to\ K(\gamma^c/\alpha;\beta)(i)$.
\end{tabular}
\end{center}

\end{theorem}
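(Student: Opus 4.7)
First I would verify that the maps $\varphi_1, \varphi_2$ are well defined, i.e., that each $(T_i, \{\ybox_1, \ybox_2\})$ arising from a $\pieri_i$ or $\cpieri_j$ step satisfies the hypotheses of Lemma \ref{lem:genomic-criterion}. Condition (i), non-adjacency of $\ybox_1, \ybox_2$ together with both containing $i$ in $T_i$, is built into the definitions of these moves. Condition (ii), that no $i$ lies strictly between $\ybox_1$ and $\ybox_2$, holds because the local algorithm always swaps $\ybox$ with the \emph{nearest} $i$ in reading order -- prior to $\ybox$ in Phase 1, following it in the step-by-step Phase 2. Condition (iii), that deleting either square yields a ballot word, is precisely Theorem \ref{thm:ballotness}(1) applied to the tableaux immediately before and after the move: the reading word of $T_i$ with the $i$ at $\ybox_2$ deleted equals that of the pre-move tableau with $\ybox$ deleted, and symmetrically for $\ybox_1$ and the post-move tableau.

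To prove $\varphi_1$ is a bijection I would construct its inverse via reverse local evacuation-shuffling (Definition \ref{def:reverse-algorithm}). Given $T_K \in K(\gamma^c/\alpha;\beta)(i)$ with repeated gene at $\ybox_1, \ybox_2$ (with $\ybox_1$ earlier in reading order), form the \emph{snapshot} $T_{\mathrm{snap}}$ by placing $\ybox$ at $\ybox_2$ and $i$ at $\ybox_1$, keeping all other entries as in the underlying semistandard tableau of $T_K$; this is the pre-move state of a $\pieri_i$ step. Applying the reverse Phase 1 rule for the indices $i-1, i-2, \ldots, 1$ in turn yields some $(\ybox, T_{\mathrm{start}}) \in \LRyb$. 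By Theorem \ref{thm:reverse-algorithm}, the forward algorithm applied to $T_{\mathrm{start}}$ recovers $T_{\mathrm{snap}}$ after $i-1$ Phase 1 steps and then performs the desired $\pieri_i$ jump from $\ybox_2$ to $\ybox_1$, whose image under $\varphi_1$ is $T_K$. Well-definedness of each reverse step -- existence of a swap target, preservation of ballotness and semistandardness, and no premature transition into Phase 2 -- follows from the ballotness of $T_K$ after deleting either repeated $i$, combined with the intermediate arguments in the proof of Theorem \ref{thm:ballotness}.

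The argument for $\varphi_2$ is analogous but more delicate, because a $\cpieri_j$ step takes place mid-$\jump_i$ and so must be reversed using the step-by-step version of Phase 2 (Remark \ref{rmk:alternate-phase-2}): one sets the snapshot with $\ybox$ at $\ybox_1$ and $i$ at $\ybox_2$, reverses the prior step-by-step moves within $\jump_i$, then reverses the earlier jumps $\jump_{i-1}, \ldots, \jump_s$, and finally reverses Phase 1 for $s-1, \ldots, 1$. A cleaner alternative is to invoke the antidiagonal symmetry of Corollary \ref{cor:antidiag-evacu-path}: the map $T \mapsto T^{R\ast}$ exchanges Phase 1 $\pieri$ jumps with Phase 2 $\cpieri$ moves, so bijectivity of $\varphi_2$ reduces to that of $\varphi_1$ once one lifts the symmetry to the level of genomic tableaux (matching repeated-gene pairs under the reflection). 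I expect the main obstacle to be the bookkeeping for $\varphi_2$: specifically, confirming that the reverse step-by-step algorithm crosses the transition step $s$ correctly, so that $\lesh$ applied forward to the resulting $T_{\mathrm{start}}$ really has its transition step at $s$ and processes the chosen $\cpieri_j$ (rather than a $\horiz_j$) at family $i$.
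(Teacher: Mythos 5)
Your proposal matches the paper's proof: well-definedness is checked exactly as you describe (non-adjacency and the nearest-$i$ rule give conditions (i)--(ii) of Lemma \ref{lem:genomic-criterion}, and Theorem \ref{thm:ballotness} gives (iii)), and bijectivity is established by turning a genomic tableau into the two ballot tableaux differing by a (conjugate) Pieri move and running the reverse algorithm back to an element of $\LRyb$. Your added care about the transition step in the $\varphi_2$ case, and the alternative route via the antidiagonal symmetry of Corollary \ref{cor:antidiag-evacu-path}, go slightly beyond what the paper writes down, but the argument is essentially the same.
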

\begin{proof}
By construction, the squares are non-adjacent. From the definition of local evacuation-shuffling, there is no $i$ between $\ybox_1$ and $\ybox_2$ in the reading word of $T$. We need only check that after deleting either one of $\ybox_1, \ybox_2$, the reading word of $T_i$ is ballot. This follows from Theorem \ref{thm:ballotness}.

We show that $\varphi_1$ is bijective. It is clearly injective. Next, given a genomic tableau $T_K$, let $(T,\{\ybox_1,\ybox_2\})$ be as defined in Lemma \ref{lem:genomic-criterion}. Replace either $\ybox$ entry with $i$, and leave the other as $\ybox$. This gives a pair of tableaux $T',T''$, which differ by an ordinary, non-vertical Pieri move. An argument similar to that of Theorem \ref{thm:ballotness} then shows that applying Reverse Phase 1 moves yields an element $T \in \LRyb$. (It is important that \emph{both} tableaux $T',T''$ are ballot.)

The proof for $\varphi_2$ is identical, only we elect to think of $T', T''$ as differing by a movement of the $\ybox$ in Phase 2. We again work backward to get to $T$. Note that the tableaux $T', T''$ occur in the opposite order when we think of them as arising during Phase 2.  \qed 
\end{proof}

\begin{example}[Pieri case, revisited] \label{exa:ktheory-pieri}
Suppose $\beta$ has only one row, and $\gamma^c/\alpha$ is a horizontal strip with $r$ nonempty rows. With notation as in Theorem \ref{thm:Pieri}, we have
\[\LRyb = \{L_1, \ldots, L_r\}, \qquad \omega(L_i) = L_{i+1 \pmod r}.\]
In this case, the corresponding genomic tableaux are the Pieri strips on $\gamma^c/\alpha$ of $K$-theoretic content $\beta$. Let $G_{i,i+1}$ be the tableau in which the two equal entries are at the beginning of the $i$-th row and the end of the $(i+1)$-st.

In Phase 1, the ordinary step $\omega(L_i) = L_{i+1}$ generates $G_{i,i+1}$ (for $1 \leq i < r$), while the special jump does not correspond to a genomic tableau.

In Phase 2, the ordinary steps $\omega(L_i) = L_{i+1}$ do not correspond to genomic tableaux, while the special jump generates all of them at once.
\end{example}

\subsection{The sign and reflection length of \texorpdfstring{$\omega$}{w} via genomic tableaux}

We recall the statements about $\omega$ known from geometry:
\begin{align}
|\Kabg| &\geq \mathrm{rlength}(\omega), \label{eqn:recall-ineq} \\
|\Kabg| &\equiv \mathrm{sgn}(\omega) \pmod 2. \label{eqn:recall-parity}
\end{align}
where $\mathrm{sgn}(\omega)$ is $0$ or $1$ when $\omega$ is even or odd respectively, and $\mathrm{rlength}(\omega)$ denotes the \textit{reflection length}, the minimum length of a factorization of $\omega$ as a product of transpositions (permutations consisting of a single $2$-cycle).  Note that the right-hand sides of equations \eqref{eqn:recall-ineq} and \eqref{eqn:recall-parity} are the same, mod $2$.

We now give enumerative proofs of these statements, using the bijection $\varphi_1$ of Theorem \ref{thm:generating-ktheory} to count ballot genomic tableaux.  The key idea is to break down the steps of the local algorithm and thereby factor $\omega$ into simpler permutations.

\begin{lemma}
Let $X_i$ be the set of all tableaux arising in between steps $i-1$ and $i$ of $\lesh$. Let $X'_i$ be the set of all tableaux arising during $\sh$, when the $\ybox$ is between the $(i-1)$-st and $i$-th strips. Then $X_i = X'_i$.
\end{lemma}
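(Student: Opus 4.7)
The plan is to identify both $X_i$ and $X'_i$ with the same combinatorial set, rather than trying to compare them directly. Define $Y_i$ to be the set of pairs $(Z,\ybox)$ such that $Z$ is a ballot semistandard skew tableau of shape $\gamma^c/\alpha$ with content $\beta$, together with a distinguished extra $\ybox$, and such that the $\ybox$ is simultaneously an outer co-corner of the sub-shape of entries $\leq i-1$ in $Z$ and an inner co-corner of the sub-shape of entries $\geq i$.

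First I would establish $X_i \subseteq Y_i$ as a direct consequence of Theorem~\ref{thm:ballotness}: the tableau $T_i$ appearing just before step $i$ of $\lesh$ satisfies properties (1), (2), (3) of that theorem, which together say exactly that $T_i$ is ballot, semistandard, and has the $\ybox$ at the required boundary. Next, I would establish $X'_i \subseteq Y_i$ by factoring $\sh$ through horizontal-strip shuffles. Using Lemma~\ref{lem:factor-horiz} iteratively on $T''$, we can decompose $\sh$ as the sequence of shuffles of the $\ybox$ past each horizontal strip of $T''$ in turn, from the outermost (largest value) inward. After the $\ybox$ has shuffled past the strips of values $\ell(\beta), \ldots, i$, the resulting tableau is a concatenation of the upper ballot tableau of values $\leq i-1$ (still untouched), the $\ybox$, and the lower ballot tableau of values $\geq i$ (now modified by the shuffles). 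This intermediate tableau is ballot and semistandard, and the $\ybox$ lies precisely at the $Y_i$ boundary position.

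Since both $\lesh$ and $\sh$ are bijections and the intermediate-tableau maps are injective (by Theorem~\ref{thm:reverse-algorithm} and invertibility of jeu de taquin slides), we have $|X_i| = |\LRyb| = |\LRby| = |X'_i|$. It therefore suffices to prove $|Y_i| \leq |\LRyb|$, for then the two chains of inclusions $X_i \subseteq Y_i$ and $X'_i \subseteq Y_i$ must both be equalities. To bound $|Y_i|$, I would construct an injection $Y_i \hookrightarrow \LRyb$ via the reverse algorithm of Definition~\ref{def:reverse-algorithm}. Given $(Z,\ybox) \in Y_i$, apply $i-1$ reverse steps working from $j = i-1$ down to $j = 1$, at each step selecting Reverse Phase 1 or Reverse Phase 2 according to whether the $\ybox$ currently has an $(j,j+1)$-suffix that is tied or strictly dominant. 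This is a local, deterministic check on the tableau. Theorem~\ref{thm:reverse-algorithm} then guarantees that the output $T \in \LRyb$ recovers $Z$ as its step-$(i-1)$ intermediate under forward $\lesh$, so the map is injective.

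The main obstacle will be the final step, specifically verifying that the reverse algorithm, when applied to an arbitrary $(Z,\ybox) \in Y_i$ rather than to a genuine $\lesh$-intermediate, remains well-defined throughout and produces a valid element of $\LRyb$. This requires an induction analogous to the proof of Theorem~\ref{thm:ballotness}, showing that ballotness, semistandardness, and the $\ybox$'s boundary position are preserved after each reverse step. The phase-detection criterion (whether the $(j,j+1)$-suffix from the $\ybox$ is tied) is the key ingredient making each step unambiguous, and the ballotness of $Z$ ensures the required $j$-entry exists to swap with the $\ybox$ in each case.
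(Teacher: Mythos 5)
Your proposal follows essentially the same route as the paper: the paper's (very terse) proof also identifies both $X_i$ and $X'_i$ with the set of punctured ballot semistandard tableaux of content $\beta$ having the $\ybox$ wedged between the $(i-1)$-st and $i$-th strips, using Theorem \ref{thm:ballotness} and preservation of ballotness under jeu de taquin for the inclusions, and invertibility of both $\sh$ and $\lesh$ (i.e.\ exactly your reverse-algorithm injection, phrased as surjectivity rather than as a cardinality count) for the reverse inclusions. The technical point you flag at the end is precisely what the paper compresses into the phrase ``both shuffling and evacuation-shuffling are invertible,'' so your plan is sound and, if anything, more explicit than the published argument.
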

\begin{proof}
Both sets consist of `punctured' semistandard tableaux of content $\beta$ and shape $\gamma^c / \alpha$, with ballot reading word, and where the $\ybox$ is between the $(i-1)$-st and $i$-th horizontal strips. (It is well-known that ballotness is preserved by jeu de taquin slides. Ballotness is also preserved during $\lesh$ by Theorem \ref{thm:ballotness}.) Both shuffling and evacuation-shuffling are invertible, so every such tableau arises in $X_i$ and $X'_i$.  \qed 
\end{proof}

We have $X_1 = \LRyb$ and we write $X_{t+1} = \LRby$, where $t$ is the length of $\beta$. \\

For $1 \leq i \leq t$, let $\ell_i : X_i \to X_{i+1}$ be the $i$-th step of $\lesh$. Let $s_i : X_{i+1} \to X_i$ be the jeu de taquin shuffle. We have the diagram

\[\xymatrix{
X_1 \ar@/_10pt/[r]_-{\ell_1} &
X_2 \ar@/_10pt/[r]_-{\ell_2} \ar@/_10pt/[l]_-{s_1}&
X_3 \ar@/_10pt/[r]_-{\ell_3} \ar@/_10pt/[l]_-{s_2} &
\cdots \ar@/_10pt/[r]_-{\ell_t} \ar@/_10pt/[l]_-{s_3} &
X_{t+1} \ar@/_10pt/[l]_-{s_t}.
}\]
By definition,
\[\omega = \sh \circ \lesh = s_1 \cdots s_t \circ \ell_t \cdots \ell_1.\] 

\begin{definition}
For $i=1,\ldots,t$, we define $$\omega_i=s_1s_2\cdots s_{i-1} (s_i \ell_i) s_{i-1}^{-1}\cdots s_2^{-1} s_1^{-1}.$$  
Note that we may factor $\omega$ as $$\omega=\omega_t \omega_{t-1}\cdots \omega_2\omega_1.$$
\end{definition}
\noindent Hence we have 
\begin{align}
\mathrm{sgn}(\omega) &\equiv \sum_{i=1}^t \mathrm{sgn}(\omega_i)\pmod{2}, \label{eqn:omega-i-inequality} \\
\mathrm{rlength}(\omega) &\leq \sum_{i=1}^t \mathrm{rlength}(\omega_i). \label{eqn:omega-i-sign}
\end{align}
It now suffices to determine the orbits of $\omega_i$, a computation interesting in its own right:
\begin{theorem}\label{thm:little-orbits}
Let $\mathrm{orb}_i$ be the set of orbits of $\omega_i$. Then:
\[\sum_{\mathcal{O} \in \mathrm{orb}_i} (|\mathcal{O}| - 1) = |K(\gamma^c/\alpha; \beta)(i)|.\]
In particular, $\mathrm{rlength}(\omega_i)=|K(\gamma^c/\alpha; \beta)(i)|$ and $\mathrm{sgn}(\omega_i) \equiv |K(\gamma^c/\alpha; \beta)(i)| \pmod 2$.
\end{theorem}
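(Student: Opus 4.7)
The plan is to reduce to the Pieri case (Theorem~\ref{thm:Pieri}) via a local fiber decomposition of $X_i$, then apply the bijection $\varphi_1$ of Theorem~\ref{thm:generating-ktheory} to count genomic tableaux. First, $\omega_i = s_1 \cdots s_{i-1}(s_i\ell_i)s_{i-1}^{-1}\cdots s_1^{-1}$ is conjugate as a permutation to $s_i\ell_i : X_i \to X_i$, so they share the same cycle structure; it suffices to analyze $s_i\ell_i$ on $X_i$. For $T \in X_i$ I would write $T = (T_{<i},M,T_{>i})$, where $T_{<i}$ collects the strips with values $<i$, $T_{>i}$ the strips with values $>i$, and $M$ consists of the $\ybox$ together with the $i$-th horizontal strip, occupying a skew shape $\gamma_i^c/\alpha_i$ determined by $T_{<i}$ and $T_{>i}$. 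Both $s_i$ and $\ell_i$ modify only $M$ (they swap the $\ybox$ with entries of value exactly $i$), so $X_i$ partitions into local fibers $X_i(T_{<i},T_{>i})$, each stabilized by $s_i\ell_i$.

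Next I would show that within a fiber, $s_i\ell_i$ acts precisely as the Pieri-case operator $\sh \circ \esh$ on $\gamma_i^c/\alpha_i$ with one-row content of length $\beta_i$. The equality $s_i = \sh_{\mathrm{local}}$ is immediate from the definition of shuffling. For $\ell_i = \esh_{\mathrm{local}}$, in Phase~1 the box swaps with the nearest prior $i$ globally, but all global $i$'s lie in the local region, so this coincides with the non-special local $\esh$ rule. In Phase~2, the key observation is that all $(i+1)$-entries live in $T_{>i}$, which is read \emph{before} the $\ybox$ in the bottom-to-top reading order. Hence an $i$ has tied $(i,i+1)$-suffix if and only if no $i$ follows it in reading order, which forces $\ell_i$ to swap with the unique last $i$ in reading order, matching the Pieri-case special jump exactly.

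With these facts in hand, Theorem~\ref{thm:Pieri} (together with Example~\ref{exa:ktheory-pieri}) classifies each fiber: it is either a single fixed point (when $\gamma_i^c/\alpha_i$ is not a horizontal strip, contributing $0$) or a single $r$-cycle (when $\gamma_i^c/\alpha_i$ is a horizontal strip with $r$ nonempty rows, contributing $r-1$). The bijection $\varphi_1$ of Theorem~\ref{thm:generating-ktheory} then identifies the Phase~1 $\pieri_i$-moves within the fiber with the genomic tableaux in $K(\gamma^c/\alpha;\beta)(i)$ having outer strips $(T_{<i},T_{>i})$, and in each case the count of such genomic tableaux equals $|\mathcal{O}|-1$ exactly. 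Summing over fibers gives
\[\sum_{\mathcal{O}\in\mathrm{orb}_i}\bigl(|\mathcal{O}|-1\bigr) \;=\; |K(\gamma^c/\alpha;\beta)(i)|,\]
and the sign statement follows from $\mathrm{sgn} \equiv \mathrm{rlength} \pmod 2$. The delicate step is verifying that $\ell_i$ restricts to the local $\esh$ on each fiber: Phase~1 is routine, while Phase~2 requires the reading-order observation above -- that the $(i+1)$'s precede the $\ybox$ -- to pin down the target of the special jump.
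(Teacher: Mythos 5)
There is a genuine gap: the reduction of each fiber to the Pieri case fails, because the ``key observation'' driving your Phase~2 analysis is false. The entries of $T_{>i}$ are \emph{not} all read before the $\ybox$: the strips of values $>i$ extend the $i$-th strip in a \emph{skew} shape, so their entries can sit in rows above the $\ybox$'s row (or to its right in the same row) and hence come \emph{after} the $\ybox$ in the bottom-to-top reading word. Consequently it is not true that an $i$ has tied $(i,i+1)$-suffix iff no $i$ follows it, and the Phase~2 move of $\ell_i$ is in general \emph{not} a special jump to the last $i$ in reading order. Example \ref{exa:first-evacu-shuffle} already refutes this: at step $i=3$ the local shape of the $\ybox$ together with the $3$'s is a horizontal strip with three nonempty rows, yet $\jump_3$ is a single $\horiz_3$ move to the adjacent $3$ (whose $(3,4)$-suffix is tied because of $4$'s lying \emph{after} the $\ybox$), so $s_3\ell_3$ fixes that element rather than acting as a $3$-cycle as your Pieri identification would predict. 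Two further problems compound this: the criterion for $\ell_i$ to be a Phase~1 move also requires the $(i-1,i)$-suffix from $\ybox$ to be untied (a condition with no analogue in the Pieri case, and one that varies across a fiber since the $\ybox$ moves); and not every Pieri-type placement of the $\ybox$ in the local shape yields a globally ballot element of $X_i$, so a fiber need not be a full Pieri orbit even when the local shape is a horizontal strip.

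What you do share with the paper is the conjugation reduction to $s_i\ell_i$ and the use of $\varphi_1$ to count; the fiber decomposition itself is also valid (both $s_i$ and $\ell_i$ only touch the $\ybox$ and the $i$-entries). But the paper does not identify the fiber dynamics with a Pieri operator. Instead it classifies the individual moves of $s_i\ell_i$ directly: vertical slides and all-$\horiz$ jumps are jeu de taquin slides, hence give fixed points and no genomic tableau; otherwise $s_i\ell_i$ either performs a genuine downward Pieri move through the $i$-strip (generating one genomic tableau via $\varphi_1$) or an upward jump (generating none), and since these are the only two move types, each orbit is a cycle with exactly one upward jump, yielding exactly $|\mathcal{O}|-1$ tableaux per orbit; surjectivity of $\varphi_1$ onto $K(\gamma^c/\alpha;\beta)(i)$ then closes the count. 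To repair your argument you would need to replace the Pieri identification with an analysis of this kind, or prove a genuinely local statement about which elements of a fiber are reachable and where the Phase~2 jump actually lands.
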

\begin{proof}
We use the bijection $\varphi_1$ of Theorem \ref{thm:generating-ktheory} to generate genomic tableaux. Let $T \in X_i$.

First, suppose $\ell_i$ applies a Phase 1 vertical slide, or a Phase 2 $\jump$ move consisting of all $\horiz$ steps. Both of these steps are equivalent to jeu de taquin slides, so in this case $\ell_i(T) = s_i^{-1}(T)$. Thus $T$ is a fixed point and does not contribute to the sum; it also does not generate a genomic tableau.

Next, it is easy to see that $\ell_i$ applies a Phase 1 move if and only if the following conditions hold:
\begin{itemize}
\item The suffix from $\ybox$ in $T$ is not tied for $(i-1,i)$, and
\item There is an $i$ before the $\ybox$ in the reading word of $T$.
\end{itemize}
The first condition implies that the $(i-1)$-st step of $\lesh$ was in Phase 1; the second rules out the transition to Phase 2.

We now analyze the orbits of $s_i \circ \ell_i$. If either of the above conditions fails, $\ell_i$ moves the $\ybox$ to the first $i$ after it in reading order for which the $(i,i+1)$ suffix is tied; then $s_i$ moves it to the start of that row of $i$'s. Otherwise, $s_i \circ \ell_i$ applies a Pieri move on the horizontal strip of $i$'s. Thus the $\ybox$ moves downwards in this strip, one row at a time, until one of the conditions fails.  

Since $\omega_i$ is a bijection, and the only two possible types of moves are moving down one row at a time on the $i$th strip or jumping upwards some number of rows, it follows that the orbit consists of a cycle, containing exactly one ``special jump'' up to the top of the cycle, and the rest downward Pieri moves.

Thus every orbit has a form similar to that of the Pieri case (Example \ref{exa:ktheory-pieri}): one step does not generate a genomic tableau; all other steps generate exactly one each. Thus during each orbit $\mathcal{O} \in \mathrm{orb}_i$, we generate $|\mathcal{O}| - 1$ genomic tableaux. Since every tableau of $K(\gamma^c/\alpha; \beta)(i)$ arises once in Phase 1, we are done.  \qed 
\end{proof}

Equations \eqref{eqn:recall-ineq} and \eqref{eqn:recall-parity} now follow from Theorem \ref{thm:little-orbits} and Equations \eqref{eqn:omega-i-inequality} and \eqref{eqn:omega-i-sign}.

\section{Orbits of \texorpdfstring{$\omega$}{w}}\label{sec:omega-orbits}


\subsection{A stronger conjectured inequality}

For the first statement, numerical evidence suggests that, using either $\varphi_1$ or $\varphi_2$, the inequality in fact holds orbit-by-orbit (see Figure \ref{fig:numerical-evidence}):
\begin{conjecture} \label{conj:orbit-by-orbit}
Let $\mathcal{O} \subseteq \LRyb$ be an orbit of $\omega$. Let $K_1(\mathcal{O}), K_2(\mathcal{O})$ denote the sets of genomic tableaux occuring in this orbit in Phases 1 and 2 via the bijections $\varphi_1, \varphi_2$. Then
\[|K_i(\mathcal{O})| \geq |\mathcal{O}| - 1 \qquad \text{for } i = 1, 2.\]
Note that, by Corollary \ref{cor:antidiag-evacu-path}, it is sufficient to prove this for $\varphi_1$.
\end{conjecture}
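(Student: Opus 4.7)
The plan is to refine the global inequality \eqref{eqn:ktheory-ineq} orbit-by-orbit, leveraging the $\omega_i$-factorization of Section \ref{sec:K-theory} together with the bijections of Theorem \ref{thm:MainResult2}. By Corollary \ref{cor:antidiag-evacu-path} it suffices to treat $\varphi_1$, and the case $|\mathcal{O}|=1$ is trivial, so I fix an orbit $\mathcal{O}$ with $|\mathcal{O}| \geq 2$. For $T \in \LRyb$ let $p(T)$ denote the number of Phase 1 Pieri jumps in the computation of $\lesh(T)$; then $|K_1(\mathcal{O})| = \sum_{T \in \mathcal{O}} p(T)$ by Theorem \ref{thm:generating-ktheory}, and the conjecture is equivalent to the assertion that the \emph{orbit deficit} $d(\mathcal{O}) = (|\mathcal{O}|-1) - \sum_{T \in \mathcal{O}} p(T)$ is non-positive.

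My first approach would be to construct, for each orbit $\mathcal{O}$, a distinguished element $T_\star \in \mathcal{O}$ and an injection $\mathcal{O} \setminus \{T_\star\} \hookrightarrow K_1(\mathcal{O})$. A natural candidate for $T_\star$ is an element whose $\omega$-step contributes zero Phase 1 Pieri jumps, i.e., one whose Phase 1 is either empty (the $\ybox$ already precedes all $1$'s in reading order) or consists entirely of vertical slides. For every other $T \in \mathcal{O}$, one hopes to designate a canonical Phase 1 Pieri move of $\lesh(T)$ -- e.g., the \emph{first} one encountered in the algorithm -- and show via Theorem \ref{thm:generating-ktheory} that distinct $T$'s produce distinct genomic tableaux. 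This works immediately when $p(T) \leq 1$ for every $T \in \mathcal{O}$, and in particular handles the Pieri base case (cf. Example \ref{exa:ktheory-pieri}) and its transpose, as well as any orbit in which all elements have a shallow Phase 1.

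The main obstacle arises when some elements of $\mathcal{O}$ have $p(T)=0$ while others have $p(T) \geq 2$: one must redistribute the excess Pieri jumps to cover the deficit, and there is no obvious canonical rule. Here the natural tool is the $\omega_i$-decomposition: each $\omega_i$-orbit of length $m$ contributes exactly $m-1$ genomic tableaux in gene family $i$ by Theorem \ref{thm:little-orbits}, and a single $\omega$-orbit $\mathcal{O}$ interacts with the $\omega_i$-orbit partitions of $\LRyb$. One would like to argue that the subadditivity $\mathrm{rlength}(\omega) \leq \sum_i \mathrm{rlength}(\omega_i)$ localizes to each $\omega$-orbit, but since orbits of $\omega = \omega_t \cdots \omega_1$ need not respect the individual $\omega_i$-orbits, this localization is not formal; producing it is the hard part of the proof.

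To make further progress I would attempt induction on $\ell(\beta)$: peeling off the last row of $\beta$ reduces the problem to a smaller content vector, and Lemma \ref{lem:phase1-agrees} together with the local nature of $\lesh$ suggests that Phase 1 moves at intermediate steps are largely unaffected by the peel. The delicate point is relating the orbit structure of $\omega$ on $\LRyb$ to that of the induced operator on the smaller set, since orbits can conceivably merge or split under the peel. In parallel, I would enumerate all orbits computationally in small cases (e.g., $|\beta| \leq 6$, or $\beta$ having at most two rows) both to pinpoint the precise failure mode of the naive injection and to test whether a modified distinguished-element rule -- perhaps the unique element on which $\esh$ transitions to Phase 2 at step $1$ -- suffices; failing the inductive approach, I would exploit the antidiagonal symmetry of Corollary \ref{cor:antidiag-evacu-path} to trade difficult Phase 1 orbits for potentially simpler Phase 2 orbits, concentrating the remaining obstruction onto a smaller collection of critical cases.
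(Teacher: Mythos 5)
This statement is a \emph{conjecture} in the paper, not a theorem: the authors explicitly state that they ``have proven Conjecture \ref{conj:orbit-by-orbit} in certain cases, but do not know a proof in general,'' and they establish it only for fixed points (Proposition \ref{prop:fixed-points}, giving the case $|\mathcal{O}|=1$ with equality) and for $\beta$ with two rows (Theorem \ref{thm:tworows}). Your proposal does not close this gap either — and to your credit, you say so. Your own diagnosis is accurate and matches the reason the statement remains open: the subadditivity $\mathrm{rlength}(\omega) \leq \sum_i \mathrm{rlength}(\omega_i)$ from Theorem \ref{thm:little-orbits} does not localize to individual $\omega$-orbits, because the orbits of $\omega = \omega_t \cdots \omega_1$ do not refine (or get refined by) the $\omega_i$-orbit partitions. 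Nothing in your write-up supplies the missing redistribution argument, so what you have is a research plan, not a proof.

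That said, your first approach — designating a distinguished element $T_\star$ whose $\omega$-step contributes no Phase 1 Pieri jump and injecting $\mathcal{O}\setminus\{T_\star\}$ into $K_1(\mathcal{O})$ — is essentially how the paper handles the two-row case: there, the orbit is cut into segments at the transitions where $s<3$, each segment is shown to contribute nonnegatively unless it begins at the unique \emph{exceptional} tableau of Definition \ref{def:exceptional}, and uniqueness of that tableau caps the total deficit at $1$. Your identity $|K_1(\mathcal{O})| = \sum_{T\in\mathcal{O}} p(T)$ is correct by Theorem \ref{thm:generating-ktheory}, and your reduction to $\varphi_1$ via Corollary \ref{cor:antidiag-evacu-path} is the one the paper uses. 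But the case you flag as problematic — orbits mixing elements with $p(T)=0$ and elements with $p(T)\geq 2$, where excess Pieri jumps must be redistributed with no canonical rule — is precisely where the argument stalls for general $\beta$, and neither your inductive peel of the last row of $\beta$ nor the antidiagonal trade is carried out far enough to resolve it. The strongest unconditional statement available by these methods remains the paper's Corollary \ref{cor:order2-orbits}, $|K_1(\mathcal{O})|+|K_2(\mathcal{O})| \geq |\mathcal{O}|-1$, which follows from the fixed-point characterization alone.
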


\noindent We have verified Conjecture \ref{conj:orbit-by-orbit} for $n$ up to size $10$ and all $k$, $\alpha$, $\beta$, and $\gamma$. Below, we prove the conjecture in two special cases.

\begin{figure}[b]
\centering
\begin{tabular}{m{2cm} m{2cm} m{2cm} m{2cm}|c|c|c} 
\multicolumn{4}{c|}{Schubert problem} & $|\mathcal{O}|$ & $K_1(\mathcal{O})$ & $K_2(\mathcal{O})$ \\ \hline \vspace{0.1cm}
\multirow{3}{*}{$\alpha = {\tiny \yng(6,5,3,1)}$} &
\multirow{3}{*}{$\beta = {\tiny \yng(7,4,3,2)}$} &
\multirow{3}{*}{$\gamma = {\tiny \yng(5,5,4,2)}$} &
\multirow{3}{*}{$\rect = 6 \times 8$} &
38 & 52 & 51 \\
&&&& 23 & 31 & 28 \\
&&&& 10 & 9  & 13 \\ \hline \vspace{0.1cm}
\multirow{2}{*}{$\alpha = {\tiny \yng(2,2,1)}$} &
\multirow{2}{*}{$\beta = {\tiny \yng(3,1,1)}$} &
\multirow{2}{*}{$\gamma = {\tiny \yng(3,2)}$} &
\multirow{2}{*}{$\rect = 4 \times 4$} &
1 & 0 & 0 \\
&&&& 1 & 0 & 0
\end{tabular}
\caption{Examples of Schubert problems. For each problem, we list the size of each orbit $\mathcal{O}$ and the genomic tableaux $K_1(\mathcal{O}), K_2(\mathcal{O})$ corresponding to that orbit in Phases 1 and 2.}
\label{fig:numerical-evidence}
\end{figure}

\begin{remark}
The inequalities of equation \eqref{eqn:recall-ineq} and Conjecture \ref{conj:orbit-by-orbit} are tight bounds, since equality holds in the Pieri case and in several others.  Indeed, in the Pieri case $\omega$ has only one orbit and $|K| = |\mathcal{O}| - 1$.  Geometrically, this implies that the Schubert curve $S(\alpha,\beta,\gamma)$ is integral and has $\chi(\mathcal{O}_S) = 1$, so $S \cong \mathbb{P}^1$.
\end{remark}

\subsection{Fixed points of \texorpdfstring{$\omega$}{w}}

As a base case of Conjecture \ref{conj:orbit-by-orbit}, we characterize the fixed points of $\omega$.

\begin{proposition}\label{prop:fixed-points}
Let $T \in \LRyb$. The following are equivalent:
\begin{itemize}
\item[(i)] $\omega(T) = T$.
\item[(ii)] In the computation of $\lesh(T)$, neither bijection $\varphi_1, \varphi_2$ generates a genomic tableau.
\item[(iii)] The evacu-shuffle path of the $\ybox$ is connected.
\end{itemize}
\end{proposition}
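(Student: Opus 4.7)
The plan is to reduce everything to the following structural biconditional: for each $i$, writing $T_{i-1} := \ell_{i-1} \cdots \ell_1(T)$, one has $\ell_i(T_{i-1}) = s_i^{-1}(T_{i-1})$ if and only if the $i$-th step of $\lesh$ applied to $T$ consists only of $\vertical$ and $\horiz$ substeps. The ``if'' direction is stated in the proof of Theorem \ref{thm:little-orbits} (vertical and horizontal slides are equivalent to jeu de taquin slides). For the ``only if'' direction, the same proof's analysis suffices: a $\pieri$ substep at step $i$ makes $s_i \circ \ell_i$ a nontrivial Pieri move on the horizontal strip of $i$'s, and a $\cpieri$ substep in Phase 2 sends $\ybox$ into a row different from its original row so that $s_i$ can only return it to the start of that new row, whence $s_i \ell_i(T_{i-1}) \ne T_{i-1}$.

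Granted this biconditional, (ii) $\Leftrightarrow$ (iii) follows immediately from the definitions in Theorem \ref{thm:generating-ktheory}: the bijection $\varphi_1$ outputs exactly one genomic tableau per $\pieri$ move and $\varphi_2$ exactly one per $\cpieri$ move, so neither bijection outputs anything precisely when the path is connected. Similarly, (iii) $\Rightarrow$ (i) is quick: a connected path gives $\ell_i(T_{i-1}) = s_i^{-1}(T_{i-1})$ at every $i$, and composing yields $\lesh(T) = \sh^{-1}(T)$, so $\omega(T) = \sh \circ \lesh(T) = T$.

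The main task is to prove (i) $\Rightarrow$ (iii), via a \emph{strip invariance} argument. Under the hypothesis $\omega(T) = T$, set $T_i := \ell_i \cdots \ell_1(T)$ and $U_i := s_i^{-1} \cdots s_1^{-1}(T)$, so that $T_t = \lesh(T) = \sh^{-1}(T) = U_t$. The key observation is that each $\ell_j$ and each $s_j^{-1}$ touches only the $j$-th horizontal strip (and the $\ybox$), leaving all other strips fixed. Therefore the strips of value $k \le i$ in $T_i$ coincide with those in $T_t$, while the strips of value $k > i$ in $T_i$ are inherited unchanged from $T$; the analogous statement holds for $U_i$ relative to $U_t$. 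Since $T_t = U_t$, all strips of $T_i$ and $U_i$ agree, and the $\ybox$ is then forced into the unique unoccupied square of $\gamma^c/\alpha$, giving $T_i = U_i$ for every $i$. Consequently $\ell_i(T_{i-1}) = s_i^{-1}(T_{i-1})$ for all $i$, and the structural biconditional yields the connected path.

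The obstacle I anticipate is the ``only if'' direction of the structural biconditional in the Phase 2 $\cpieri$ case, which is not spelled out verbatim in the proof of Theorem \ref{thm:little-orbits}. I expect to handle it by extracting the relevant Phase 2 portion of that proof, where the explicit description of $s_i \circ \ell_i$ as ``move $\ybox$ to the start of the new $i$-row'' cleanly distinguishes the all-$\horiz$ case (where $\ybox$ returns to its initial position) from the $\cpieri$-containing case (where it does not).
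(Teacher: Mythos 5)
Your proof is correct, and it rests on the same two pillars as the paper's: each $\ell_j$ and $s_j^{-1}$ touches only the $j$-th horizontal strip, and a genuine $\pieri$ or $\cpieri$ move displaces the $\ybox$ (equivalently, an $i$) in a way that the jeu de taquin shuffle past the $i$-strip cannot cancel. The packaging differs in a way worth noting. The paper proves (i)\,$\Rightarrow$\,(ii) by contraposition at the level of the whole composite $\omega$: a Phase 1 Pieri jump moves a single $i$ strictly up-and-right, the $i$-strip is untouched by all later steps, and the two possible jdt effects on a horizontal strip (one $i$ down a row, or a segment shifted right) cannot undo this; Phase 2 is then dispatched by antidiagonal symmetry (Corollary \ref{cor:antidiag-evacu-path}). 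You instead make the strip-invariance explicit to deduce $T_i=U_i$ for all $i$ from $T_t=U_t$ and $T_0=U_0$, thereby localizing the hypothesis $\omega(T)=T$ to the stepwise identities $\ell_i(T_{i-1})=s_i^{-1}(T_{i-1})$, and then invoke the per-step analysis of $s_i\circ\ell_i$ from the proof of Theorem \ref{thm:little-orbits}. What your route buys is (a) a direct rather than contrapositive argument, (b) no appeal to antidiagonal symmetry (your monotonicity observation --- $\cpieri$ moves the $\ybox$ to a strictly higher row and $s_i$ moves it weakly north-west, so it cannot return --- is a clean substitute), and (c) a statement ($\ell_i=s_i^{-1}$ at every step of a fixed point's computation) that is slightly stronger than (iii) alone and meshes with the $\omega_i$-factorization of Section 4.3. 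The one point to write out carefully is the degenerate Phase 2 case where $\jump_i$ does not move the $\ybox$ at all: there you need $s_i^{-1}$ to also fix the tableau, which is asserted in the proof of Theorem \ref{thm:little-orbits} but deserves the one-line ballotness check (no $i$ can immediately follow a tied $\ybox$ in reading order).
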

\begin{proof}
It is easy to see that (ii) and (iii) are equivalent.  Moreover, if (iii) holds then the movements of the $\ybox$ are equivalent to jeu de taquin slides, so $\omega(T) = T$.  Thus (iii) implies (i).

To show (i) implies (ii), suppose first that the computation of $\lesh(T)$ involves a Pieri jump in Phase 1. Let $i$ be the index of the jump; the effect is that a single $i$ moved strictly up \emph{and} to the right. Since the horizontal strip of $i$'s is unaffected by the remaining steps of $\lesh$, the movement must be undone by the $\sh$. But the jeu de taquin slides can only either move a single $i$ down one row, or move a strip of $i$'s to the right. Neither is enough to undo the movement, so we conclude $\omega(T) \ne T$.

We have shown that if Phase 1 generates a genomic tableau, then $T$ is not a fixed point. By a similar argument, or by antidiagonal symmetry (Corollary \ref{cor:antidiag-evacu-path}), if Phase 2 generates a genomic tableau, then $\omega(T) \ne T$. This completes the proof.  \qed 
\end{proof}

\noindent One immediate corollary of this result is the following \emph{geometric} fact:
\begin{corollary}\label{cor:w=id}
Suppose $\omega$ acts on $\LRyb$ as the identity. Then $\Kabg = \eset$; it follows that the curve $S(\alpha, \beta, \gamma)$ is a disjoint union of $\mathbb{P}^1$'s, and the map $S \to \mathbb{P}^1$ is locally an isomorphism.
\end{corollary}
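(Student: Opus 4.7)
The plan is to use Proposition~\ref{prop:fixed-points} together with the bijections of Theorem~\ref{thm:generating-ktheory} to deduce $\Kabg = \eset$ combinatorially, and then convert the resulting numerical equalities from Proposition~\ref{prop:numerics} into the geometric conclusions. For the combinatorial step: if $\omega$ is the identity on $\LRyb$, then every $T \in \LRyb$ is a fixed point, so by Proposition~\ref{prop:fixed-points} the computation of $\lesh(T)$ produces no Pieri and no conjugate-Pieri moves. The map $\varphi_1$ of Theorem~\ref{thm:generating-ktheory} is a bijection from $\pieri_i$-moves (over all $T$) onto $K(\gamma^c/\alpha;\beta)(i)$, so the vanishing of its domain forces each $K(\gamma^c/\alpha;\beta)(i) = \eset$, and summing over $i$ gives $\Kabg = \eset$.

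For the geometric conclusions, I would compute $\chi(\mathcal{O}_S) = |\LRyb| - |\Kabg| = |\LRyb|$ and $\eta(S) = |\mathrm{orbits}(\omega)| = |\LRyb|$, so Proposition~\ref{prop:numerics} forces $\iota(S) = \chi(\mathcal{O}_S) = |\LRyb|$. Writing $C_1, \ldots, C_{\iota(S)}$ for the reduced irreducible components of $S$, each satisfies $\chi(\mathcal{O}_{C_i}) \leq 1$ with equality if and only if $C_i \cong \mathbb{P}^1$; meanwhile the natural short exact sequence
\[0 \to \mathcal{O}_S \to \bigoplus_i \mathcal{O}_{C_i} \to \mathcal{Q} \to 0,\]
with $\mathcal{Q}$ a coherent sheaf on the (zero-dimensional) pairwise-intersection locus, yields $\chi(\mathcal{O}_S) = \sum_i \chi(\mathcal{O}_{C_i}) - \chi(\mathcal{Q}) \leq \sum_i \chi(\mathcal{O}_{C_i}) \leq \iota(S)$. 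The pinched equality forces each $C_i \cong \mathbb{P}^1$ and $\mathcal{Q} = 0$, so $S$ is literally a disjoint union of $|\LRyb|$ copies of $\mathbb{P}^1$.

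For the local-isomorphism claim, the map $S \to \mathbb{P}^1$ has total degree $|\LRyb|$, the size of a generic fiber (for instance $\LRyb$ itself). Each component $C_i \cong \mathbb{P}^1$ is defined over $\mathbb{R}$ (otherwise it would pair with a complex conjugate component, violating $\iota(S) = \eta(S)$), and its real locus maps to $\mathbb{RP}^1$ with degree equal to the size of the corresponding $\omega$-orbit, namely $1$. In particular the complex restriction $C_i \to \mathbb{P}^1$ is non-constant, hence of some degree $d_i \geq 1$; since $\sum_i d_i = |\LRyb| = \iota(S)$, each $d_i = 1$ and each $C_i$ maps isomorphically to $\mathbb{P}^1$. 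The main bookkeeping obstacle I expect is verifying the equality conditions in the double inequality $\chi(\mathcal{O}_S) \leq \sum_i \chi(\mathcal{O}_{C_i}) \leq \iota(S)$ and carefully matching the count of complex irreducible components with that of real connected components in order to apply the real-geometric degree computation on each $C_i$ — but both are direct consequences of Proposition~\ref{prop:numerics} and standard coherent-sheaf arguments.
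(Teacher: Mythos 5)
Your proof is correct, and the combinatorial half (deducing $\Kabg = \eset$ from Proposition \ref{prop:fixed-points} and the bijections of Theorem \ref{thm:generating-ktheory}) is exactly the paper's argument. The geometric half reaches the same conclusion by a slightly different route: the paper pinches the chain $\eta(S)\geq\iota(S)\geq\dim_{\mathbb C}H^0(\mathcal O_S)\geq\chi(\mathcal O_S)$ and reads off the structure from the middle term ($h^0=$ number of connected components, so equality says each connected component is irreducible, and $h^1=0$ says each has genus $0$), whereas you decompose into irreducible components and use the exact sequence $0\to\mathcal O_S\to\bigoplus_i\mathcal O_{C_i}\to\mathcal Q\to 0$, forcing $\mathcal Q=0$ (disjointness) and $\chi(\mathcal O_{C_i})=1$ (each $C_i\cong\mathbb P^1$) from the pinched inequality. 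The two are equivalent in substance; yours is a bit more hands-on and makes the disjointness explicit rather than packaging it into the meaning of $h^0$. You also supply a genuine addition: the paper's proof never actually addresses the ``locally an isomorphism'' claim, while your degree count $\sum_i d_i=\deg f=|\LRyb|=\iota(S)$ with each $d_i\geq 1$ (valid since $f$ is finite, hence non-constant on each component) cleanly forces $d_i=1$ and hence $C_i\xrightarrow{\sim}\mathbb P^1$. The side remark about components being defined over $\mathbb R$ is not needed for that count, but it is harmless and correct.
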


\begin{remark}
In general, a morphism of real algebraic curves $C \to D$, which is a covering map on real points, may have trivial real monodromy but be algebraically nontrivial (i.e. not a local isomorphism). Corollary \ref{cor:w=id} shows that this cannot occur for Schubert curves.
\end{remark}
\begin{proof}
If $\omega$ is the identity, Proposition \ref{prop:fixed-points} and Theorem \ref{thm:generating-ktheory} imply  $|\Kabg|=0$. Therefore $$\chi(\mathcal{O}_S) = |\LRyb| - |\Kabg| = |\LRyb|.$$ There are, moreover, exactly $|\LRyb|$ real connected components. It follows that $S$ has the desired form: using the notation of Proposition \ref{prop:numerics}, the inequalities
\[\eta(S)\geq\iota(S)\geq\dim_\mathbb{C} H^0(\mathcal{O}_S)\geq\chi(\mathcal{O}_S)\] become equalities. Note that $\dim_\mathbb{C} H^0(\mathcal{O}_S)$ is the number of $\mathbb{C}$-connected components of $S$. In particular each $\mathbb{C}$-connected component is irreducible, and of genus 0 because $\dim H^1(\mathcal{O}_S)=0$.  \qed 
\end{proof}

\noindent We also obtain a weaker form of the Orbits Conjecture:

\begin{corollary}\label{cor:order2-orbits}
For any orbit $\mathcal{O}$ of $\omega$,
\[|K_1(\mathcal{O})| + |K_2(\mathcal{O})| \geq |\mathcal{O}| - 1,\]
and if $|\mathcal{O}| \ne 1$ the inequality is strict. \end{corollary}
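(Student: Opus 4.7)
The plan is to deduce the inequality almost immediately from Proposition \ref{prop:fixed-points} combined with the injectivity of the bijections $\varphi_1$ and $\varphi_2$ from Theorem \ref{thm:generating-ktheory}. The key observation is that $|K_1(\mathcal{O})|$ counts the total number of Pieri jumps that occur across computations of $\lesh(T)$ for all $T$ in the orbit $\mathcal{O}$, and analogously $|K_2(\mathcal{O})|$ counts the total number of conjugate Pieri moves. Because $\varphi_1$ and $\varphi_2$ are each injective (as bijections to $K(\gamma^c/\alpha;\beta)(i)$), tableaux generated from distinct moves, even from distinct starting $T$'s within the orbit, are distinct in their respective sets.

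First I would handle the trivial case $|\mathcal{O}|=1$: the unique element $T \in \mathcal{O}$ is a fixed point of $\omega$, so by Proposition \ref{prop:fixed-points} neither phase generates any genomic tableau, giving $|K_1(\mathcal{O})| + |K_2(\mathcal{O})| = 0 = |\mathcal{O}|-1$.

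Next, for $|\mathcal{O}| > 1$, every $T \in \mathcal{O}$ satisfies $\omega(T) \ne T$, hence is a non-fixed-point. By Proposition \ref{prop:fixed-points}, the evacu-shuffle path of $\lesh(T)$ is disconnected, and each break in the path is exactly a Pieri jump (contributing to $\varphi_1$) or a conjugate Pieri move (contributing to $\varphi_2$). Thus each $T \in \mathcal{O}$ contributes at least one move to the union of the two bijections' domains, and by injectivity each such move produces a distinct element of $K_1(\mathcal{O}) \sqcup K_2(\mathcal{O})$ (counted with multiplicity across the two sets, which is all the sum $|K_1(\mathcal{O})|+|K_2(\mathcal{O})|$ requires). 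Summing over $T \in \mathcal{O}$ yields
\[|K_1(\mathcal{O})| + |K_2(\mathcal{O})| \geq |\mathcal{O}| > |\mathcal{O}| - 1,\]
so the inequality is strict.

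There is no real obstacle here: the entire argument is a bookkeeping step once Proposition \ref{prop:fixed-points} characterizes non-fixed-points as exactly those $T$ whose evacu-shuffle path is disconnected. The only point that deserves care in the write-up is emphasizing that we are summing cardinalities rather than taking a union, so possible overlap between $K_1(\mathcal{O})$ and $K_2(\mathcal{O})$ is harmless.
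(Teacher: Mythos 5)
Your proposal is correct and follows essentially the same route as the paper: both arguments reduce to Proposition \ref{prop:fixed-points}, noting that every non-fixed $T$ in the orbit has a disconnected evacu-shuffle path and hence generates at least one genomic tableau via $\varphi_1$ or $\varphi_2$, and that injectivity of these maps (Theorem \ref{thm:generating-ktheory}) makes the count over the orbit at least $|\mathcal{O}|$. Your explicit handling of the $|\mathcal{O}|=1$ case and the remark about summing cardinalities rather than taking a union are fine elaborations of the paper's one-line proof.
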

\begin{proof}
  This follows from Proposition \ref{prop:fixed-points}, since in each $\omega$-orbit that is not a fixed point, every step involves at least one genomic tableau generated in either Phase 1 or Phase 2. \qed  
\end{proof}
\noindent We think of this as an `order-2 approximation', since summing over the orbits gives
\[2 \cdot |\Kabg| \geq |\LRyb| - |\mathrm{orbits}(\omega)| = \mathrm{rlength}(\omega),\]
a weaker version of our Theorem \ref{thm:MainResult2}.

%

\subsection{When \texorpdfstring{$\beta$}{the rectification shape} has two rows}

In this section, we prove Conjecture \ref{conj:orbit-by-orbit} for $K_1(\mathcal{O})$ when $\beta$ has two rows. We note that the case where $\beta$ has one row (the Pieri Case) is trivial: equality holds for the (unique) orbit. See Example \ref{exa:ktheory-pieri}.

\begin{theorem}\label{thm:tworows}
Let $\beta$ have two rows. For an $\omega$-orbit $\mathcal{O} \subset \LRby$, let $K_1(\mathcal{O})$ be the set of ballot genomic tableaux occurring in $\mathcal{O}$ during Phase 1. Then
\begin{equation} \label{eqn:orbit-by-orbit-ineq}
|K_1(\mathcal{O})| \geq |\mathcal{O}| - 1.
\end{equation}
\end{theorem}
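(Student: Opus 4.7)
The plan is to analyze the orbit structure of $\omega$ by tracking, for each $T \in \LRby$, the number $p(T) \in \{0,1,2\}$ of Phase~1 Pieri jumps in the computation of $\lesh(T)$. By Theorem~\ref{thm:generating-ktheory},
\[|K_1(\mathcal{O})| = \sum_{T \in \mathcal{O}} p(T),\]
so the desired inequality is equivalent to
\[\#\{T \in \mathcal{O} : p(T) = 0\} \leq 1 + \#\{T \in \mathcal{O} : p(T) = 2\}.\]
This reformulation shows that one must compensate every "deficit" 0-Pieri tableau (except possibly one) by an "excess" 2-Pieri tableau in the same orbit.

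First I would classify the $0$-Pieri tableaux by their transition step $s(T) \in \{1,2,3\}$. If $s(T)=3$ then both Phase~1 steps are vertical slides, so the entire evacu-shuffle path is connected and by Proposition~\ref{prop:fixed-points}, $T$ is an $\omega$-fixed point; the inequality is vacuous on its singleton orbit. Hence we may assume every $0$-Pieri tableau in $\mathcal{O}$ has $s(T)\in\{1,2\}$, meaning its computation actually enters Phase~2 and performs at least one conjugate-Pieri or horizontal slide. Using the local description of $\lesh$ together with the reverse algorithm of Definition~\ref{def:reverse-algorithm}, I would explicitly describe $\omega(T)$ and $\omega^{-1}(T)$ for each of the three types ($p=0, 1, 2$) and each possible value of $s$, obtaining a finite table of orbit-step transitions.

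The heart of the proof is a \emph{gap-filling} lemma: for any $T \in \mathcal{O}$ with $p(T)=0$, the smallest $k \geq 1$ with $p(\omega^k(T))=0$ satisfies $p(\omega^{k-1}(T))=2$. In other words, two $0$-Pieri tableaux in an orbit are always separated by at least one $2$-Pieri tableau. Once this lemma is established, one obtains an injection
\[\iota \colon \{T \in \mathcal{O} : p(T)=0\} \setminus \{T_*\} \to \{T \in \mathcal{O} : p(T)=2\},\]
sending each $0$-Pieri tableau (other than a distinguished element $T_*$) to the $2$-Pieri tableau immediately preceding it in the orbit, which completes the proof.

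The main obstacle is the gap-filling lemma itself. I would try to prove it by tracking how $s(T)$ and the row occupied by the $\ybox$ evolve under $\omega$: the observation is that applying $\omega$ to a $0$-Pieri tableau necessarily introduces (through the shuffle $\sh$ composed with the Phase~2 activity of $\lesh$) enough rearrangement of the horizontal strips of $1$'s and $2$'s that returning directly to another $0$-Pieri configuration is combinatorially obstructed, unless the intervening tableau has two genuine Phase~1 Pieri jumps. Establishing this rigorously will require handling the $s=1$ and $s=2$ subcases separately and combining them with the ballotness constraints of Theorem~\ref{thm:ballotness} and the antidiagonal symmetry of Corollary~\ref{cor:antidiag-evacu-path}, which relates Phase~1 behavior of $\omega$ on one orbit to Phase~2 behavior of $\omega$ on the rotated-transposed orbit and so constrains how $s$ can decrease along $\omega$.
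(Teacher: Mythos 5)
Your setup is sound: since $\varphi_1$ is a bijection on Phase~1 Pieri moves, $|K_1(\mathcal{O})|=\sum_{T\in\mathcal{O}}p(T)$ with $p(T)\in\{0,1,2\}$, the reduction to $\#\{p=0\}\le 1+\#\{p=2\}$ is correct arithmetic, and the observation that a $0$-Pieri tableau with transition step $s=3$ is an $\omega$-fixed point (Proposition~\ref{prop:fixed-points}) correctly disposes of singleton orbits. The gap is the ``gap-filling lemma,'' which is both unproven and stronger than what is true: you assert that every non-distinguished $0$-Pieri tableau is \emph{immediately} preceded in the orbit by a $2$-Pieri tableau, but the compensating $2$-Pieri step need not be adjacent. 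The paper's proof instead cuts the orbit into segments delimited by the steps whose transition step is $s<3$ (all intermediate steps staying in Phase~1, hence contributing at least one Pieri move each), and shows only that a segment ending in a $0$-Pieri step either contains a $2$-Pieri step \emph{somewhere} among its intermediate steps, or else begins at the unique ``exceptional'' tableau (Definition~\ref{def:exceptional}), whose words are a run of all-$1$ words, at most one mixed word, then all-$2$ words. In the exceptional segment every intermediate step is exactly $1$-Pieri, so the tableau immediately preceding its terminal $0$-Pieri tableau has $p=1$; and in a compensated segment the $2$-Pieri step can occur well before the end. Your proposed injection ``send each $0$-Pieri tableau to the $2$-Pieri tableau immediately preceding it'' is therefore not well defined in general, and the heuristic you offer for the lemma (``enough rearrangement \dots is combinatorially obstructed'') is not an argument.

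What is actually needed, and what your sketch is missing, is (a) the segment decomposition, which converts the problem from a local adjacency claim into a per-segment accounting where a deficit at the end of a segment may be cancelled by an excess anywhere inside it, and (b) a \emph{global uniqueness} statement identifying the one possible uncompensated segment: one must show that if a segment's terminal step has no Pieri move and every intermediate step has exactly one, then the intermediate steps split into an initial run of $(\vertical_1,\pieri_2)$ steps followed by a run of $(\pieri_1,\vertical_2)$ steps, forcing the segment's starting tableau (after $\sh^{-1}$) to be the exceptional one --- of which there is at most one in $\LRby$. Without some substitute for this uniqueness argument, excluding a single distinguished $T_*$ cannot absorb all the $0$-Pieri tableaux that fail your adjacency condition.
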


If the skew shape $\gamma^c/\alpha$ contains a column of height 3, then $\omega$ is the identity and $k=0$. For the remainder of this section, we assume every column of $\gamma^c/\alpha$ has height at most 2.

We use the following idea: consider the sub-shape of $\gamma^c/\alpha$ consisting of only its height-one columns. This shape consists of a disjoint union of row shapes. For a tableau $T \in \LRyb$ or $\LRby$, we will call the fillings of these row shapes the \newword{words} of $T$.

\begin{definition} \label{def:exceptional}
Let $(T,\ybox) \in \LRby$. We say that $T$ is \newword{exceptional} if the following holds:
\begin{itemize}
\item Every square of $T$ strictly above $\ybox$ contains a $1$.
\item From top to bottom, the words of $T$ are a sequence of all-$1$ words, followed by at most one `mixed' word containing $1$'s, $2$'s and/or $\ybox$, followed by a sequence of all-$2$ words.
\end{itemize}
\begin{example}
The following tableaux are exceptional:
\[T_1: \young(:::::::::::::1,::::::::11111:,:::::11122\x,::1122,222) \qquad T_2 : \young(:::::::::::1,:::::::::11,::::1112\x,::1222,222)\]
From top to bottom, the words of $T_1$ are $(1,11,11,12,22)$ and the words of $T_2$ are $(1,11,12\ybox,2,22)$.
\end{example}

\end{definition}
Note that $\LRby$, if nonempty, contains exactly one exceptional tableau (the second condition determines the words and the first determines placement of the $\ybox$). 
\begin{proof}[Proof of Theorem]
As $\beta$ has two rows, $\lesh$ takes two steps. If both are Phase 1 Pieri moves, we have `gained' a Pieri move. If neither is, we have `lost' one. All other possibilities contribute 1 element to both $\mathcal{O}$ and $K_1(\mathcal{O})$, hence have no effect on the inequality. We will show that, in almost all cases, we `gain' a Pieri move between two successive `losses'. 

If $|\mathcal{O}| = 1$, we are done by Proposition \ref{prop:fixed-points}. Henceforth we assume $|\mathcal{O}| > 1$.

We divide the orbit into (disjoint) segments $T, \omega(T), \ldots, \omega^n(T)$, such that $\omega^{-1}(T) \to T$ and $\omega^{n-1}(T) \to \omega^n(T)$ have transition step $s < 3$, but the intermediate steps have $s=3$ (i.e. they remain in Phase 1). We will show that among all such segments, at most one contributes ${-1}$ to the inequality. The others contribute nonnegatively.

Within a segment, each intermediate $\lesh$ remains in Phase 1, hence involves at least one regular Pieri move (since the tableau is not fixed). If the last one does as well, or if some intermediate step involves two Pieri moves, then the entire segment contributes nonnegatively to the inequality. If not, we show:
\begin{lemma}
Suppose $\omega^{n-1}(T) \to \omega^n(T)$ does not involve a Pieri move, and every intermediate step involves exactly one. Then $\sh^{-1}(T)$ is exceptional.
\end{lemma}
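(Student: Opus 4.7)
The plan is to work backward through the segment, establishing progressively stronger constraints on the tableaux until we can describe $\sh^{-1}(T) = \esh(\omega^{-1}(T))$ explicitly. The underlying principle is that the step pattern of $\lesh$ within the segment (exactly one $\pieri$ per intermediate step, no $\pieri$ in the final step) severely restricts where the $\ybox$ can sit relative to the $1$s and $2$s, and the exceptional structure is essentially forced by these restrictions.

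First I would classify intermediate steps into two explicit local operations. Since $\beta$ has exactly two rows and each intermediate step is entirely in Phase 1 with exactly one Pieri, the step is either $\vertical_1$ followed by $\pieri_2$, or $\pieri_1$ followed by $\vertical_2$, after which the standard $\sh$ returns the $\ybox$ to the inner boundary. For each type of step I would compute the net rearrangement of entries within the words of the tableau, showing in particular that such a step preserves the property that each word is either monochromatic or a single ``mixed'' word containing both $1$s and $2$s. Next I would analyze the endpoint condition: the last step has transition step $s<3$ (so Phase 2 is entered) and no $\pieri$. In the language of Theorem~\ref{thm:Pieri} and Definition~\ref{def:algorithm}, this forces the $\ybox$ in $\omega^{n-1}(T)$ to precede all $i$s in reading order from the moment Phase 2 begins, and to have no $i$ strictly to its upper-right at each earlier Phase 1 step. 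This translates directly into an exceptional-like distribution of $1$s and $2$s in $\omega^{n-1}(T)$.

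With these ingredients, I would combine the backward propagation through $n-1$ intermediate steps with a forward computation of $\esh$ on $\omega^{-1}(T)$. The transition into Phase 2 at the start of the segment (i.e., for the step $\omega^{-1}(T)\to T$) means that $\esh(\omega^{-1}(T))$ ends with a $\jump_i$ for some $i<3$, which by the local rule deposits the $\ybox$ in a position consistent with the exceptional condition. The ballotness and semistandard properties at intermediate stages (Theorem~\ref{thm:ballotness}), together with the invertibility of $\lesh$ (Theorem~\ref{thm:reverse-algorithm}), then let me check both clauses of Definition~\ref{def:exceptional}: every square strictly above $\ybox$ contains a $1$, and the words from top to bottom form the prescribed three-block pattern.

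The main obstacle will be carefully tracking how the word structure of the tableau evolves across the $n-1$ intermediate steps, since either choice of intermediate move type (a) $\vertical_1\pieri_2$ or (b) $\pieri_1\vertical_2$ permutes entries within and across words in subtly different ways. I anticipate needing an induction on $n$ together with a case analysis on which move type occurs at each stage, and I expect to invoke the antidiagonal symmetry of Corollary~\ref{cor:antidiag-evacu-path} to trade Phase 1 for Phase 2 information and close off the final case. The uniqueness of the exceptional tableau in $\LRby$ (noted immediately after Definition~\ref{def:exceptional}) should provide a clean way to conclude: once the constraints narrow $\sh^{-1}(T)$ down to the exceptional class, there is only one candidate it can be.
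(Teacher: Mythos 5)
Your proposal correctly identifies the two possible intermediate move types ($\vertical_1,\pieri_2$ versus $\pieri_1,\vertical_2$) and the general backward/forward strategy, but it is missing the key structural fact that makes the paper's proof work: the two move types occur in a \emph{monotone order} within the segment. Concretely, the paper shows (i) if some step $\omega^i(T)\to\omega^{i+1}(T)$ is of type $\vertical_1,\pieri_2$, then \emph{every earlier} step is too, and every word weakly above the $\ybox$ in $\omega^i(T)$ consists only of $1$'s; and (ii) if a step is of type $\pieri_1,\vertical_2$, then \emph{every subsequent} step is too, and every word strictly below the $\ybox$ consists only of $2$'s. It is this splitting of the segment into a prefix of one type and a suffix of the other, together with the directional (above/below the box) word constraints and the fact that the non-mixed words are unchanged throughout the segment, that forces the three-block ordering required by Definition~\ref{def:exceptional}. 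Your proposed invariant --- that each step ``preserves the property that each word is either monochromatic or a single mixed word'' --- is both unproven and, even if granted, insufficient: it says nothing about the \emph{vertical order} of the word blocks, so it cannot rule out, say, an all-$2$ word sitting above an all-$1$ word, which would violate exceptionality.

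Two smaller issues. First, your concluding appeal to the uniqueness of the exceptional tableau in $\LRby$ is not usable as a proof device: uniqueness only identifies the tableau \emph{after} you have shown it satisfies both clauses of Definition~\ref{def:exceptional}; it cannot substitute for that verification. Second, the appeal to antidiagonal symmetry (Corollary~\ref{cor:antidiag-evacu-path}) to ``trade Phase 1 for Phase 2 information'' is not needed here and it is unclear what it would accomplish; the endpoint conditions are handled directly by noting that the final step must begin in Phase~2 (it cannot begin with $\vertical_1$ when the $\ybox$ sits above a $2$, and by hypothesis it contains no regular Pieri move), which forces all words below the $\ybox$ to be all-$2$'s. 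To repair the argument, you should prove the two directional claims above by a one-step backward induction (for type $\vertical_1,\pieri_2$) and a one-step forward induction (for type $\pieri_1,\vertical_2$), tracking the $\ybox$'s position through both $\lesh$ and the intervening $\sh$ slides.
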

Theorem \ref{thm:tworows} will follow since only one segment can begin with an exceptional tableau. \qed \end{proof}
\begin{proof}[Proof of Lemma]
By our hypotheses, every intermediate $\lesh$ step must consist of either $\vertical_1, \pieri_2$ or $\pieri_1, \vertical_2$.

First, we claim that if $\omega^i(T) \to \omega^{i+1}(T)$ consists of $\vertical_1, \pieri_2$, then every earlier step is also of this form, and every \emph{word} weakly above the $\ybox$ in $\omega^i(T)$ consists only of $1$'s. On the other hand, if $\omega^i(T) \to \omega^{i+1}(T)$ consists of $\pieri_1, \vertical_2$, we claim that every subsequent step is of this form, and every word strictly below $\ybox$ in $\omega^i(T)$ consists entirely of $2$'s.

For the first claim, we work backwards from $\omega^i(T)$ to $\omega^{i-1}(T)$. During the $\sh^{-1}$ step, the $\ybox$ slides one square down, then right; there must then be a $1$ directly above $\ybox$. If some row above $\ybox$ contains a $2$, $\lesh^{-1}$ must begin in Reverse Phase 1. (By construction, this will be the case as long as $i > 0$.) Hence $\lesh^{-1}$ consists of (Reverse) $\pieri_2$ and $\vertical_1$, as desired. For the claim about words, note that the (Reverse) $\pieri_2$ move will only move the $\ybox$ past words containing all $1$'s. Finally, if $i=0$, then $\lesh^{-1}$ begins in Reverse Phase 2 because there are no 2's in any word (in fact, any row) above $\ybox$ in $\sh^{-1}(T)$.

For the second claim, the argument is similar, only we work forward. The computation of $\lesh(\omega^i(T))$ terminates with $\ybox$ below a $2$; any words passed over by the $\ybox$ contain only $2$'s. During $\sh$, the $\ybox$ slides up and left, so it is above a $2$ in $\omega^{i+1}(T)$. If $i+1 < n-1$, then $\lesh$ will again have the form $\pieri_1, \vertical_2$. Finally, if $i+1 = n-1$, then $\lesh(\omega^{n-1}(T))$ must begin in Phase 2 (it can't begin with $\vertical_1$ since $\ybox$ is above a 2, and we have assumed it does not involve a regular Pieri move). Thus every row below the $\ybox$ contains only $2$'s.

We thus divide the segment into a first part, where $\lesh$ consists of $\vertical_1, \pieri_2$, and a second part, where $\lesh$ consists of $\pieri_1, \vertical_2$. Note that there can be a single `mixed' word in the tableau (if the second part begins with $\omega^i(T)$, this is the word to the right of the $\ybox$ in $\omega^i(T)$; in fact the $\ybox$ slides through this word during the $\sh$ step linking the two parts). We see, moreover, that all the non-mixed words remain unchanged from $\sh^{-1}(T)$ to $\omega^{n-1}(T)$.

Thus, from top to bottom, the words of $\sh^{-1}(T)$ are a (possibly empty) sequence of all-1 words, a single (possibly) `mixed' word containing $1$'s, $2$'s and/or the $\ybox$, followed by a (possibly empty) sequence of all-2 words. Thus $\sh^{-1}(T)$ is exceptional. \qed 
\end{proof}

In fact, our proof shows something slightly stronger: an orbit $\mathcal{O}$ for which $|K_1(\mathcal{O})| = |\mathcal{O}|-1$ is either a single fixed point, or is the unique orbit containing the exceptional tableau. All other orbits in fact satisfy $|K_1(\mathcal{O})| \geq |\mathcal{O}|$.

\section{Geometrical constructions}\label{sec:constructions}


We now give several families of values of $\alpha$, $\beta$, and $\gamma$ for which the Schubert curve $S(\alpha, \beta, \gamma)$ exhibits `extremal' numerical and geometrical properties.

\subsection{Schubert curves of high genus}

Recall that the arithmetic genus of a (connected) variety $S$ can be defined as \[g_a(S)=(-1)^{\dim S}(1 - \chi(\mathcal{O}_S)).\]
If $S$ is an integral curve, this is just $\dim_{\mathbb{C}} H^1(\mathcal{O}_S)$. (If $S$ is smooth, this is the usual genus of $S(\mathbb{C})$ as a topological space.)

In this section we construct a sequence of Schubert curves $S_t$, $t \geq 2$, for which $\omega$ has only one orbit, and so (by Proposition \ref{prop:numerics}) $S_t$ is integral. Moreover, we show that as $t \to \infty$, $g_a(S_t) \to \infty$ as well. In \cite{bib:Levinson}, the second author asked if Schubert curves are always smooth. K-theory does not in general detect singularities, but either possibility is interesting: that $S_t$ gives examples of singular Schubert curves for $t \gg 0$, or that it gives smooth Schubert curves of arbitrarily high genus.

As mentioned in the introduction, for our Schubert curves $S=S(\alpha,\beta,\gamma)$, we also have $$\chi(\mathcal{O}_S)=|\LRyb|-|K(\gamma^c/\alpha;\beta)|.$$  Therefore, if $S$ is connected (which is true if $\omega$ has one orbit), we have
\begin{eqnarray*}
  |\LRyb|-|K(\gamma^c/\alpha;\beta)| &=& \dim_{\mathbb{C}} H^0(\mathcal{O}_S)-\dim_{\mathbb{C}} H^1(\mathcal{O}_S)\\
  &=& 1-g_a(S).
\end{eqnarray*}
and so
\begin{equation}\label{eqn:connected} g_a(S)=|K(\gamma^c/\alpha;\beta)|-|\LRyb|+1. \end{equation}

We can now construct our family of high genus curves. Let $t \geq 3$ be a positive integer, and let
\begin{align*}
\alpha = \gamma &= (t,t-1,t-2,\ldots,2,1), \\
\beta &= (t+1,2,1^{t-2}).
\end{align*}
We work in the Grassmannian $G(t+1,\mathbb{C}^{2t+3})$, so $\rect$ has size $(t+1) \times (t+2)$, and $\gamma^c/\alpha$ is a staircase ribbon shape. (See Example \ref{exa:high-genus}.) We will call $\gamma^c/\alpha$ the \newword{staircase ribbon of size $t$}.

\begin{example}\label{exa:high-genus}
For $t=5$, two of the elements of $\LRyb$ are $$\young(:::::11,::::12,:::13,::14,:12,\x 5)\hspace{0.5cm}\text{  and  }\hspace{0.5cm} \young(:::::11,::::12,:::23,::\x 4,:11,15).$$  Each of these will be referred to as illustrations in our proof below.
\end{example}

\begin{proposition}\label{prop:high-genus}
With notation as above, $\omega$ has only one orbit. In particular, $S_t = S(\alpha,\beta,\gamma)$ is integral, and $g_a(S)=(t-1)(t-2).$
\end{proposition}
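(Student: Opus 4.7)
The plan is to prove the single-orbit claim by direct analysis of the local algorithm, and then obtain the genus formula from equation \eqref{eqn:connected}.

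First, I would enumerate $\LRyb$ explicitly. Since the staircase ribbon $\gamma^c/\alpha$ has $t+1$ rows of width exactly two, and the content $\beta = (t+1, 2, 1^{t-2})$ uses each of the ``large'' values $3, 4, \ldots, t$ only once, semistandardness forces these large values to appear as singletons in strictly increasing order down the right column of the ribbon. The $t+1$ ones fill most of the left column and the top row, while the two 2's and the $\ybox$ provide essentially all the flexibility. The ballot and column-strict constraints can then be analyzed row by row: starting from $b_1 = 1$ and $a_1 \in \{1, \ybox\}$, and working downward, each pair $(a_i, b_i)$ is restricted to a small list of options. I expect this to yield a tractable parameterization of $\LRyb$ by a small amount of discrete data (such as the row of the $\ybox$ together with the positions where the ``staircase'' pattern is broken).

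Second, I would compute $\omega = \sh \circ \lesh$ on these tableaux using the local algorithm of Section \ref{sec:local-esh}. Phase 1 of $\lesh$ moves the $\ybox$ down and left, Phase 2 moves it up and right, and $\sh$ then slides the $\ybox$ back to an inner co-corner via reverse jeu de taquin. Tracking these moves on the parameterization above, I would verify that $\omega$ induces a single cycle on $\LRyb$. For small $t$ (such as $t=2$) this can be checked by direct computation, and for general $t$ I expect either an induction on $t$ or a uniform structural description of $\omega$'s action on the parameters. The antidiagonal symmetry of Corollary \ref{cor:antidiag-evacu-path}, combined with the $\alpha = \gamma$ symmetry of the problem, should help organize the argument by reducing the number of essentially different configurations that must be tracked.

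Once single orbit is established, Proposition \ref{prop:numerics} gives $\eta(S) = \iota(S) = 1$, so $S$ is connected (in fact, by the $\chi$--$\eta$ argument reviewed in Corollary \ref{cor:w=id}, integral of arithmetic genus $1-\chi(\mathcal{O}_S)$). Combined with the identity $\chi(\mathcal{O}_S) = |\LRyb| - |K(\gamma^c/\alpha;\beta)|$, this is exactly \eqref{eqn:connected}. I would then compute $|\LRyb|$ and $|K(\gamma^c/\alpha;\beta)|$ in closed form, either by direct enumeration extending the parameterization above, or indirectly via the factorization $\omega = \omega_t \cdots \omega_1$ of Theorem \ref{thm:little-orbits}, which gives $|K| = \sum_i \mathrm{rlength}(\omega_i)$. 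Simplifying the difference $|K| - |\LRyb| + 1$ should yield $(t-1)(t-2)$.

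The main obstacle is proving single orbit: although the parameterization should be conceptually straightforward, verifying that $\omega$ cycles through all parameter values requires careful case analysis, because during the orbit the $\ybox$ traverses many different positions and the placement of the second $2$ and of the ``break'' in the staircase can both shift under $\omega$. The local algorithm makes each individual step computable, but piecing together enough cases to cover all of $\LRyb$ uniformly in $t$ is the core technical work.
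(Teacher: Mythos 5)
Your overall strategy is the same as the paper's: parameterize $\LRyb$ by a small amount of discrete data, track $\omega$ explicitly via the local algorithm to exhibit a single orbit, and then get the genus from $g_a(S)=|K(\gamma^c/\alpha;\beta)|-|\LRyb|+1$. The framing of the enumeration is also essentially right (the entries $3,\ldots,t$ sit in the right column in increasing order; the freedom lies in the placement of the second $2$ and of the $\ybox$). However, the proposal stops exactly where the real work begins, and in two places this is a genuine gap rather than a routine omission. First, the single-orbit claim: saying you ``would verify that $\omega$ induces a single cycle'' by induction or a uniform structural description is not an argument, and this is the crux of the proposition. The paper does this by naming the tableaux $A_{p,q}$ (both $2$'s in outer corners, $\ybox$ in the inner corner of row $p$, second $2$ in row $q$) and $B_{p,q}$ (second $2$ in an inner corner), then tracking the orbit move-by-move to establish the recursion $\omega^{2t}A_{t+1,q}=A_{t+1,q-1}$ for $4\le q\le t+1$ and $\omega^{2t}A_{t+1,3}=A_{t+1,t+1}$; combined with $|\LRyb|=2t(t-1)$ this forces a single orbit of that length. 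Some such explicit bookkeeping (including the ``special step'' where two Type-A tableaux occur consecutively, and the long Pieri cascade at $B_{1,q-1}$) is unavoidable, and nothing in your sketch substitutes for it. Note also that your appeal to the antidiagonal symmetry of Corollary \ref{cor:antidiag-evacu-path} only relates $\lesh$ on $(\ybox,T)$ to $\lesh^{-1}$ on the rotated-transposed class; it does not obviously cut down the case analysis for the orbit of $\omega$ itself, so you should not lean on it without checking what it actually gives here.

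Second, the two enumerations are left undone, and the suggested shortcut for $|K|$ via $\sum_i\mathrm{rlength}(\omega_i)$ is not easier: computing $\mathrm{rlength}(\omega_i)$ requires knowing the orbit structure of each $\omega_i$, which is a computation of the same flavor as counting the genomic tableaux directly. The paper counts $|\LRyb|=2t(t-1)$ by splitting into the Type A/Type B cases above, and counts $|K(\gamma^c/\alpha;\beta)|=3t^2-5t+1$ by a separate case analysis for repeated gene family $i=1$, $i=2$, and $i\ge 3$ (giving $t(t-1)$, $(t-1)^2$, and $t(t-2)$ respectively). Without these closed forms the final identity $g_a(S)=(t-1)(t-2)$ is not established. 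In short: correct roadmap, same route as the paper, but the proposal defers precisely the parts of the proof that carry its content.
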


We break the proof of Proposition \ref{prop:high-genus}, into several intermediate lemmas. We first compute the cardinalities in question.

\begin{lemma}\label{lem:count-LR}
With notation as above,
$$|\LRyb|=2t(t-1).$$
\end{lemma}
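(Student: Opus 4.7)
The plan is to count $|\LRyb|$ by cases on the position of the $\ybox$, which must lie at an outer co-corner of $\alpha = (t, t-1, \ldots, 1)$. Since $\alpha$ is a staircase, the $\ybox$ sits at $(s, t+2-s)$ for some $s \in \{1, 2, \ldots, t+1\}$, that is, at the leftmost available column of row $s$ in the ribbon $\gamma^c/\alpha$. Let $g(s)$ count the tableaux in $\LRyb$ with the $\ybox$ in row $s$. I will show
\[g(1) = 2(t-1), \qquad g(s) = 2t-3 \ \text{for}\ 2 \leq s \leq t, \qquad g(t+1) = t-1,\]
summing to $2(t-1) + (t-1)(2t-3) + (t-1) = (t-1)(2t) = 2t(t-1)$.

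To analyze these cases, I would label the boxes of the ribbon $c_1, c_2, \ldots, c_{2t+2}$ along its length from top-right to bottom-left. Semistandardness becomes $c_{2i-1} \geq c_{2i}$ (row) and $c_{2i+1} > c_{2i}$ (column), so odd-indexed entries (other than possibly $c_1$) are forced to be $\geq 2$. The reading word read right-to-left is precisely the sequence $c_1, c_2, c_3, \ldots$ with the $\ybox$ position skipped, and ballotness is the cumulative-count condition $\#i \geq \#(i+1)$ at every stage; this forces $c_1 = 1$ and $c_3 = 2$ (unless one of these is the $\ybox$), and so on. The cleanest case is $g(t+1)$: the $t+1$ ones of $\beta$ occupy $c_1$ together with all $t$ even positions, and the $t$ non-one entries $\{2, 2, 3, \ldots, t\}$ fill $c_3, c_5, \ldots, c_{2t+1}$. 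After reindexing $k \mapsto k-1$, the ballot condition on these odd entries is that of a ballot sequence of content $(2, 1^{t-2})$, equinumerous with SYT of the hook shape $(2, 1^{t-2})$, giving $t-1$. The case $g(1)$ (where $\ybox = c_2$) is analogous, but the missing $c_2$ removes the constraints linking $c_1$ to $c_3$, introducing an extra binary choice (whether $c_3 = 1$ or $c_3 = 2$) and doubling the count to $2(t-1)$.

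The main obstacle is the middle cases $2 \leq s \leq t$, where the $\ybox$ interrupts the ribbon at an interior row and the ballot condition couples the entries in the arms above and below. I would verify $g(s) = 2t-3$ by tracking the positions of the second $2$ and of the distinguished entry $t$ (the extremes of the non-one content): once these are placed, semistandardness and ballotness essentially force the remaining entries. An alternative is to exploit the antidiagonal symmetry of Corollary \ref{cor:antidiag-evacu-path} to pair up some of the middle cases and halve the verification. Summing over $s$ then yields $|\LRyb| = 2t(t-1)$.
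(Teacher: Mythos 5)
Your case split by the row $s$ of the $\ybox$ is a viable route, and your claimed counts are in fact correct: $g(1)=2(t-1)$, $g(s)=2t-3$ for $2\le s\le t$, $g(t+1)=t-1$, which refine the paper's count by the position of the $\ybox$ and sum to $2t(t-1)$. The $g(t+1)$ argument is essentially complete. However, as written the proposal has a genuine gap: the middle cases $2\le s\le t$, which account for $(t-1)(2t-3)$ of the $2t(t-1)$ tableaux, are only a plan ("I would verify \dots by tracking the positions of the second $2$ and of the entry $t$"), not an argument. Worse, the structural claim you lean on --- that all odd-indexed entries $c_3,c_5,\dots$ are forced to be $\ge 2$ by column-strictness --- fails exactly in these cases: when the $\ybox$ sits at $c_{2s}$, the constraint $c_{2s+1}>c_{2s}$ is vacuous, and there genuinely are tableaux with $c_{2s+1}=1$ (already for $t=3$, $s=2$). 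The $g(1)$ case is also asserted rather than proved: the sub-case $c_3=1$ is \emph{not} analogous to the $g(t+1)$ analysis, since the $t$ non-unit entries can no longer all occupy odd positions and one of them must sit in an even (left-hand) box; each sub-case does give $t-1$ tableaux, but for structurally different reasons that need to be spelled out.

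For comparison, the paper avoids the row-by-row split entirely and instead partitions $\LRyb$ by the \emph{filling pattern}: Type A, where every left-hand box of the ribbon contains $1$ or $\ybox$ (so the right-hand boxes below the top one carry $2,2,3,\dots,t$ with only the position of the second $2$ free, giving $(t-1)(t+1)$ tableaux), and Type B, where some left-hand box carries a $2$ (giving $(t-1)^2$ tableaux after a short ballotness argument pinning down the positions of the two $1$'s among the right-hand boxes). This decomposition gives uniform counts with no exceptional rows, and --- more importantly --- the resulting labels $A_{p,q}$, $B_{p,q}$ are reused directly in the orbit computation of Lemma \ref{lem:one-w-orbit}. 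If you pursue your approach, you should either carry out the middle-case enumeration honestly (accounting for the broken column constraint at the $\ybox$) or switch to a pattern-based split of this kind.
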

\begin{proof}  
We sort the tableaux into two types: those for which the inner corners are all $1$ or $\ybox$, as in the first example in Example \ref{exa:high-genus}, and those for which there is an inner corner whose entry is greater than $1$, as in the second example.  We will refer to these as Type A and Type B tableaux.
  
  In a Type A tableau, the topmost outer corner must be a $1$ since the tableau is ballot.  Since there are a total of $t$ entries greater than $1$ and exactly $t+1$ outer corners, the remaining outer corners must be filled with $2,2,3,4,\ldots,t$, and all but the second $2$ must occur in that order.  There are $t-1$ possibilities for the position of the second $2$, and the remaining outer corners are determined.  The $\ybox$ can then be in any of the $t+1$ inner corners, and the remaining entries then must be filled with $1$'s.  This gives a total of $(t-1)(t+1)$ Type A tableaux.

   In a Type B tableau, ballotness forces exactly one inner corner to contain a $2$; among the outer corners, the topmost and one other contain $1$'s. The $\ybox$ must be above this second $1$; the remaining entries are determined. If the $2$ is in the lowest inner corner, there are $(t-1)$ choices for the $\ybox$. Each of the $(t-1)$ other placements of the $2$ gives $(t-2)$ choices for the $\ybox$, for a total of $(t-1)+(t-1)(t-2)=(t-1)^2$ Type B tableaux. \qed 
%
\end{proof}

\begin{lemma}\label{lem:count-K}
With notation as above, $$|K(\gamma^c/\alpha;\beta)|=3t^2-5t+1.$$
\end{lemma}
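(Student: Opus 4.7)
The plan is to apply Lemma~\ref{lem:genomic-criterion} to identify each element of $K(\gamma^c/\alpha;\beta)$ with data $(T,\{\ybox_1,\ybox_2\},i)$, where $T$ is a ballot semistandard tableau of shape $\gamma^c/\alpha$ and content $\beta+\epsilon_i$ for some $i \in \{1,\ldots,t\}$, and $\{\ybox_1,\ybox_2\}$ is a pair of non-adjacent, reading-order-consecutive squares containing $i$ such that deleting either square individually leaves a ballot reading word. Writing $K(i)$ for the subset with repeated gene family $i$, we have $|K(\gamma^c/\alpha;\beta)| = \sum_{i=1}^{t} |K(i)|$, and I would compute each term separately.

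First, for $i \geq 4$, the content $\beta+\epsilon_i$ has only one $(i-1)$ (since $\beta_{i-1}=1$) but two $i$'s, so the full reading word already fails the $(i-1,i)$ ballot inequality; hence $|K(i)|=0$. So it remains to verify
\[|K(1)| = t(t-1), \qquad |K(2)| = (t-1)^2, \qquad |K(3)| = t(t-2),\]
whose sum is $3t^2 - 5t + 1$.

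The case $i=1$ is cleanest and serves as a model. Write $(a_j,b_j)$ for the left and right entries of row $j$. The column condition $a_{j-1}<b_j$ forces $b_j \geq 2$ for all $j \geq 2$, and ballotness of the length-$2$ suffix forces $a_1=b_1=1$. Each row $j \geq 2$ thus has at most one $1$, necessarily at its left square; since the content $(t+2,2,1^{t-2})$ has $t+2$ ones with row~$1$ containing two of them, every row $j \geq 2$ contains exactly one $1$, at $a_j$. The right squares $b_2,\ldots,b_{t+1}$ then carry the multiset $\{2,2,3,\ldots,t\}$. Ballotness of the length-$3$ suffix $(b_2,1,1)$ combined with $b_2 \geq 2$ forces $b_2=2$, and the ballot chain for pairs $(k,k+1)$ with $k \geq 3$ forces the reading-order positions of $3,\ldots,t$ to strictly decrease; the only remaining freedom is the placement of the second $2$ in one of the $t-1$ right squares other than $b_2$, giving $t-1$ SSYTs. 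For each, a direct computation of the difference between the number of $1$'s and the number of $2$'s in the reading-word suffix starting at position $p$, split by cases on the two $2$-positions and the descending chain, shows this difference is at least $1$ for every $p$; hence deleting any single $1$ preserves ballotness. Among the $t+1$ reading-consecutive pairs of $1$'s in each such $T$, only the edge-adjacent pair inside row $1$ is excluded by the non-adjacency rule, leaving $t$ valid pairs per SSYT. This gives $|K(1)| = (t-1)\cdot t$.

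The cases $i=2, 3$ follow by analogous but more intricate analysis, and this is the main obstacle. For $i=2$, content $(t+1,3,1^{t-2})$, exactly one row $r \in \{2,\ldots,t+1\}$ contains no $1$ (so $a_r \geq 2$); classifying SSYTs by $r$, by the placement of the three $2$'s, and by the descending chain of positions of $3,\ldots,t$, and then enumerating the valid pairs of $2$'s in each configuration via the same suffix-count bookkeeping, yields $(t-1)^2$. For $i=3$, content $(t+1,2,2,1^{t-3})$, each SSYT uniquely determines its candidate pair $\{\ybox_1,\ybox_2\}$ as its two $3$'s, so the count reduces to enumerating SSYTs whose two $3$'s lie in non-adjacent squares and satisfy the ballot condition after either single-deletion; parameterizing by the rows containing the two $3$'s together with the descending chain for $4,\ldots,t$ yields $t(t-2)$. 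Summing these three contributions gives $3t^2 - 5t + 1$, completing the proof.
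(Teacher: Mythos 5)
Your overall strategy (splitting $K(\gamma^c/\alpha;\beta)$ by the repeated gene family $i$ and counting each piece via Lemma \ref{lem:genomic-criterion}) is the same as the paper's, and your $i=1$ analysis is essentially correct and reproduces the count $t(t-1)$. However, there is a genuine error at the very first step of your case analysis. You claim $|K(i)|=0$ for $i\ge 4$ on the grounds that the content $\beta+\epsilon_i$ has two $i$'s but only one $i-1$, so the full reading word fails the $(i-1,i)$ ballot inequality. That observation about the full word is true, but irrelevant: condition (iii) of Lemma \ref{lem:genomic-criterion} requires only that the word obtained by deleting \emph{one} of the two marked squares $\ybox_1,\ybox_2$ be ballot. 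The full reading word of $T$, with both repeated $i$'s present, is \emph{not} required to be ballot, and indeed for every $i\ge 3$ it never is. In fact $|K(\gamma^c/\alpha;\beta)(i)|=t$ for \emph{each} $i\in\{3,\ldots,t\}$: the semistandard and (deleted-word) ballot conditions force the two repeated $i$'s into the right-hand squares of rows $i$ and $i+1$ of the ribbon, all remaining entries except the second $2$ are then determined, and the second $2$ can occupy any of $t$ positions. Summing over the $t-2$ admissible values of $i$ yields the $t(t-2)$.

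Consequently your claimed value $|K(3)|=t(t-2)$ is also wrong — the correct value is $t$ — and your sketch for $i=3$ (``parameterizing by the rows containing the two $3$'s'') would not produce $t(t-2)$ if carried out, since those rows are forced. Your three case totals happen to sum to $3t^2-5t+1$ only because the mass belonging to $i=4,\ldots,t$ has been silently reassigned to $i=3$. To repair the argument you need to (a) drop the vanishing claim for $i\ge 4$, (b) run the forced-position argument separately for each $i\ge 3$ to obtain $t$ tableaux apiece, and (c) actually supply the $i=2$ count of $(t-1)^2$, which in your write-up is only asserted.
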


\begin{proof}
We count the ballot genomic tableaux having an extra $i$ for each $i$ separately. We use the description from Lemma \ref{lem:genomic-criterion}.

For $i=1$, the tableau must contain $(t+2)$ $1$'s.
%
By semistandardness, we cannot have a $1$ in an outer corner besides the topmost outer corner.   Thus the entries \emph{larger} than $1$ fill all the outer corners except the topmost.  There are $t-1$ ways to place the second $2$, and all other entries are then determined by ballotness.  For each of these tableaux, there are then $t$ pairs of consecutive inner corners to mark as our chosen repeated $1$'s, and each of these satisfy the ballot condition on removal.  We therefore have $t(t-1)$ ballot genomic tableaux in this case.

For $i=2$, we wish to count for semistandard genomic tableaux with content $\beta''=(t+1,3,1,1,\ldots,1)$ and two marked $2$'s as above.  By semistandardness and ballotness, the topmost $2$ must be in the outer corner in the second row.  If the topmost $2$ is in the marked pair of $2$'s, then in order for the word to be ballot upon removing it, the next $2$ (necessarily the other marked $2$) must occur before the $3$.  The next $2$ therefore occurs in the third outer corner from the top, and by semistandardness and ballotness all entries larger than $2$ fill in the remaining outer corners, with the third $2$ in one of $t-1$ possible inner corners.  This gives $t-1$ genomic tableaux in this case. 

If the topmost $2$ is not in the marked pair, then the other two $2$'s must be in an inner and outer corner respectively which are not adjacent.  There are $(t-1)$ positions for the $2$ in the outer corner and then $(t-2)$ valid positions for the other $2$ for each of these choices, for a total of $(t-1)(t-2)$ possibilities in this case.  Thus we have a total of $(t-1)^2$ ballot genomic tableaux with two marked $2$'s.

Finally, if $i\ge 3$, it is easy to see by the semistandard and ballot conditions that the repeated $i$'s must be in the consecutive outer corners in the $i$th and $i+1$st rows from the top. For each $i$ there are then $t$ inner corners in which the second $2$ can be placed, and all other entries are determined.  It follows that there are a total of $t(t-2)$ ballot genomic tableaux in the case $i\ge 3$.

All in all, there are $t(t-1)+(t-1)^2+t(t-2)=3t^2-5t+1$ tableaux.  \qed 
\end{proof}

\begin{lemma}\label{lem:one-w-orbit}
With notation as above, $\omega:\LRyb\to \LRyb$ has only one orbit.
\end{lemma}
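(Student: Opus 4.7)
The plan is to establish that $\omega$ acts as a single cycle on $\LRyb$ by explicitly following the orbit of a suitable starting tableau and verifying that it has length exactly $2t(t-1) = |\LRyb|$. To keep the bookkeeping manageable, I would parametrize the tableaux following the proof of Lemma~\ref{lem:count-LR}: a Type~A tableau is determined by the row $j \in \{1, \ldots, t+1\}$ of $\ybox$ together with the row $k \in \{3, \ldots, t+1\}$ of the second $2$, while a Type~B tableau is determined by analogous data recording the row of $\ybox$ together with the row of the outer-edge $2$. This partitions the $2t(t-1)$ tableaux into the $(t-1)(t+1)$ Type~A and $(t-1)^2$ Type~B elements already enumerated.

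The crucial simplification is that $\gamma^c/\alpha$ is a ribbon whose rows all have length at most $2$, which makes both $\lesh$ and $\sh$ extremely constrained. By Theorem~\ref{thm:main-theorem}, Phase~1 of $\lesh$ just moves $\ybox$ one cell at a time up the ribbon's inner edge, swapping with successive $1$'s, until either a single Pieri jump takes it across to the outer edge (when the relevant $2$ is positioned favorably) or Phase~2 is triggered. Phase~2 is then pinned down by the antidiagonal symmetry of Corollary~\ref{cor:antidiag-evacu-path} and consists of at most one or two conjugate Pieri moves along the outer edge. The shuffle $\sh$ is even more rigid: each inward jeu de taquin step has essentially no choice of direction on a ribbon, so $\sh$ merely drags $\ybox$ from its inner corner of $\gamma^c$ along the ribbon to the first co-corner of $\alpha$ at which the slide must halt. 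Combining these descriptions, I would extract an explicit rule for how $\omega$ transforms the parameter pair, including the transitions between the Type~A and Type~B strata.

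Taking $T_0$ to be, say, the Type~A tableau with $j = k = t+1$, the remaining task is to iterate this rule and observe that the pair $(j,k)$ cycles through all $2t(t-1)$ admissible values before returning to its starting state. The main obstacle is the case analysis, since the rule for $\omega$ behaves differently near the boundary values of $j$ and $k$, and the Type~A / Type~B switch happens at specific points along the orbit. I also anticipate having to rule out shorter subcycles directly --- for example, by exhibiting an invariant (such as a carefully chosen winding number in $(j,k)$-space, or an appropriate sum of the parameters read modulo $2t(t-1)$) that strictly increases along the orbit, so that no intermediate iterate can coincide with $T_0$.
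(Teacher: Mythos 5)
Your proposal takes essentially the same route as the paper: the paper likewise parametrizes the Type~A and Type~B tableaux by the rows of the $\ybox$ and of the relevant $2$ (as $A_{p,q}$ and $B_{p,q}$), starts at $A_{t+1,t+1}$, and verifies by explicit iteration of $\omega=\sh\circ\lesh$ that $\omega^{2t}A_{t+1,q}=A_{t+1,q-1}$ for $q\ge 4$ and $\omega^{2t}A_{t+1,3}=A_{t+1,t+1}$, so that the orbit has length $2t(t-1)=|\LRyb|$. Note, however, that this explicit case analysis is the entire substance of the argument and your proposal defers it; also, no auxiliary ``invariant'' is needed to rule out shorter subcycles --- the explicit listing of the pairwise distinct intermediate tableaux within each length-$2t$ segment already shows the orbit does not close up early, and an orbit of size $|\LRyb|$ is automatically the unique one.
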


\begin{proof}
  By Lemma \ref{lem:count-LR}, it suffices to find an orbit of size $2t(t-1)$.

  We first introduce some new notation that will clarify the steps in our proof.  Let $A_{p,q}$ be the unique tableau having the $\ybox$ in the inner corner in the $p$th row from the top ($1 \leq p \leq t+1$) and with the $2$'s in the outer corners in the $2$nd and $q$th rows ($3 \leq q \leq t+1$).  Let $B_{p,q}$ be the tableau having the $\ybox$ in the $p$th row and the $2$'s in rows $2$ and $q$, but with the $2$ in the inner corner of row $q$. We have $2 \leq q \leq t+1$ and $1 \leq p \leq t$, and $q \ne p, p+1$.  (These are the Type A and Type B tableaux from Lemma \ref{lem:count-LR}.)
  
  We will show that, for any $q$ with $4\le q \le t+1$, we have \begin{equation}\label{eqn:q>3}\omega^{2t} A_{t+1,q}=A_{t+1,q-1},\end{equation} and for $q=3$ we have \begin{equation}\label{eqn:q=3}\omega^{2t}A_{t+1,3}=A_{t+1,t+1}.\end{equation}
These facts together will show that the $\omega$-orbit of $A_{t+1,t+1}$ has length $2t(t-1)$.

  To prove equations (\ref{eqn:q>3}) and (\ref{eqn:q=3}), let $q\in \mathbb{Z}$ such that $3\le q\le t+1$.  Starting with $A_{t+1,q}$, the first application of $\omega$ according to local evacuation shuffling and JDT consists of a single $\jump_1$ move to the very top row, followed by a JDT back to the inner corner.  Thus $\omega A_{t+1,q}=A_{1,q}$.

  Now, if $q$ is sufficiently large then $\lesh(A_{1,q})$ starts with $\pieri_1$ and $\pieri_2$, which results in the $\ybox$ being in row $q$ and the $2$ in the inner corner of row $2$.  There is then a single $\cpieri$ move and an upwards JDT slide.  Thus we have $$\omega^2A_{t+1,q}=\omega A_{1,q}=B_{q-2,2}.$$  The next move, to compute $\omega(B_{q-2,2})$, is $\vertical_1$ followed by a $\cpieri$ move to the $2$ in the inner corner and a $\horiz$ move that is undone by JDT to form $A_{2,q-1}$.  This pattern continues, with the next steps in the $\omega$-orbit being $$B_{q-3,3},A_{3,q-2},B_{q-4,4},A_{4,q-3},\ldots$$ until we reach $A_{r,q+1-r}$ where $r$ is such that $r$ and $q+1-r$ differ by either $2$ or $3$.  At this point, $\lesh (A_{r,q+1-r})$ starts with $\pieri_1$ and $\pieri_2$ as usual, but then the  $\cpieri$ leaves the $\ybox$ adjacent to the $2$, and the $\ybox$ and $2$ then switch via JDT.  Thus $$\omega A_{r,q+1-r}=A_{r+1,q+1-(r+1)}.$$
  
  After this special step with two consecutive Type A tableaux, the orbit resumes alternating between $A$'s and $B$'s with the first subscript of the $A$'s increasing by $1$ each time and the first subscript of the $B$'s decreasing, starting with $B_{q-r-2,r+2}$, and continuing until we reach $B_{1,q-1}$.  At this point we have applied $\omega$ exactly $2(q-2)$ times.
  
  Now, $\lesh(B_{1,q-1})$ consists of a single $\vertical_1$ followed by a long sequence of $\pieri$ moves, and the upwards JDT slide then results in the tableau $B_{t,q-1}$.  The orbit then alternates between $A$'s and $B$'s again in its usual manner until we reach $A_{v,q+t-v}$ where $v$ is such that $v$ and $q+t-v$ differ by either $2$ or $3$.  By the same reasoning as above, this maps to $A_{v+1,q+t-(v+1)}$ and the alternation pattern starts again, and continues until we reach $B_{q-2,t+1}$.  We have now applied $\omega$ an extra $2(t-q+2)-1$ times, for a total of $2t-1$ times.
  
  Finally, if $q\ge 4$ then $\omega B_{q-2,t+1}=A_{t+1,q-1}$ by the same reasoning as before, and so $\omega^{2t} A_{t+1,q}=A_{t+1,q-1}$.  If $q=3$, though, $\omega B_{q-2,t+1}=\omega B_{1,t+1}$, and so before the application of $\omega$ the $\ybox$ is in the top row and above a $1$, with the topmost $2$ in the row below that.  It follows that the local evacuation shuffle consists of a long sequence of $\pieri$ moves, and the JDT slide leaves us with $A_{t+1,t+1}$, as desired.  \qed 
\end{proof}

We now finish the proof of Proposition \ref{prop:high-genus}.

\begin{proof}[Proof of Proposition \ref{prop:high-genus}] 
  By Lemma \ref{lem:one-w-orbit} and Proposition \ref{prop:numerics}, $S_t = S(\alpha,\beta,\gamma)$ is integral.  It follows from Equation \ref{eqn:connected} and Lemmas \ref{lem:count-LR} and \ref{lem:count-K} that 
  \begin{align*}
  g(S)&=|K(\gamma^c/\alpha;\beta)|-|\LRyb|+1 \\ 
    &= 3t^2-5t+1-2t(t-1)+1\\
    &= (t-1)(t-2). 
  \end{align*}
  as desired.  \qed 
\end{proof}

\subsection{Curves with many connected components}

We next exhibit a sequence of Schubert curves $S(\alpha,\beta,\gamma)$ having arbitrarily many (complex) connected components. We use Corollary \ref{cor:w=id}, since in the case that $\omega$ is the identity map we know that the curve must consist of a disjoint union of $\mathbb{P}^1$'s. So, it suffices to find shapes $\alpha$, $\beta$, and $\gamma$ for which $\omega$ is the identity map and $\LRyb$ has many elements.

\begin{proposition}\label{prop:2x2hook}
  Suppose $\beta=(m,1,1,\ldots,1)$ is a hook shape and $\gamma^c/\alpha$ contains a $2\times 2$ square.  Then $\omega$ is the identity.
\end{proposition}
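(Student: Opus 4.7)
The plan is to establish that under the hypotheses, $K(\gamma^c/\alpha;\beta) = \eset$; by Proposition \ref{prop:fixed-points}, this forces every $T \in \LRyb$ to be a fixed point of $\omega$, so $\omega$ acts as the identity.

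To rule out ballot genomic tableaux, I would argue by contradiction. A nonempty $K(\gamma^c/\alpha;\beta)$ yields, via Lemma \ref{lem:genomic-criterion}, a semistandard tableau $T$ of shape $\gamma^c/\alpha$ with content $\beta + e_i$ (some $i$) together with two non-adjacent cells $\{\ybox_1,\ybox_2\}$ both containing $i$, such that $T \setminus \ybox_k$ is ballot for each $k \in \{1,2\}$. The key lemma I would prove is that \emph{every row of $T$ contains at most one entry that is $\geq 2$}. The hook shape of $\beta$ is essential: in a ballot tableau of hook content $\beta$, each value $j \in \{2,\ldots,t\}$ appears only once, and ballotness of the weak suffix starting at the unique $j+1$ forces the unique $j$ to appear later in reading order — hence (by row semistandardness) in a strictly higher row than $j+1$. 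Iterating gives the chain ``$j+1$ strictly below $j$'' for $j = 2,\ldots,t-1$, so any two entries $p<q$ both $\geq 2$ in the same row of $T$ would contradict this chain in $T \setminus \ybox_k$.

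I would verify the row lemma by cases on whether $p$ or $q$ equals $i$. If neither does, both $p$ and $q$ remain present and unique in $T\setminus\ybox_k$ for either choice of $k$, so the chain argument applies directly. If exactly one equals $i$, say $p=i$, I would track the position $P$ of the second $i$: if $P$ lies outside the row in question, then deleting that $i$ yields a ballot tableau still containing $p$ and $q$ in the same row, contradicting the chain; if $P$ lies in the same row, then semistandardness (rows are weakly increasing) forces the two $i$'s to be horizontally adjacent, violating the non-adjacency of $\{\ybox_1,\ybox_2\}$ (or else to leave a third $i$ in between, contradicting the content $\beta + e_i$). The case $p=q=i$ is handled similarly: two same-row $i$'s must be adjacent or create a third $i$.

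Once the row lemma is established, the hypothesized $2\times 2$ square yields the contradiction immediately. Writing its four entries as $a,b$ (top row) and $c',d$ (bottom row), column strictness gives $c' > a \geq 1$ (so $c' \geq 2$), and then $d \geq c' \geq 2$; hence the bottom row of the $2\times 2$ contains two entries $\geq 2$, violating the row lemma. Therefore $K(\gamma^c/\alpha;\beta) = \eset$, and $\omega$ is the identity. The main technical obstacle will be the mixed subcase of the row lemma, in which exactly one of the same-row entries equals $i$: since the location of the second $i$ is not fixed a priori, one must exploit \emph{both} ballot conditions (on $T\setminus\ybox_1$ and on $T\setminus\ybox_2$), rather than a single one, to close out every possibility for where that second $i$ sits.
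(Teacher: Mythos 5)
Your proposal is correct, but it takes a genuinely different route from the paper's. The paper argues directly on $\LRyb$: it first shows that in any $T\in\LRyb$ the $\ybox$ must occupy the upper-left cell of the (necessarily unique) $2\times 2$ square, with a $1$ below it and entries $a<b$ to its right, and then traces the local algorithm explicitly --- a $\vertical_1$ slide, a transition to Phase~2 at $s=2$, and a single $\horiz$ move within the square --- so the evacu-shuffle path is connected and the final jeu de taquin slide restores $T$. You instead prove the stronger intermediate statement $\Kabg=\eset$ by a purely enumerative argument (your row lemma: a ballot tableau of hook content has at most one entry $\geq 2$ per row, since ballotness of the weak suffix at the unique $j+1$ forces the unique $j$ into a strictly higher row, and this persists for the near-hook data of Lemma \ref{lem:genomic-criterion} because one can always delete the marked cell \emph{not} lying in the offending row), and then conclude via Theorem \ref{thm:generating-ktheory} and Proposition \ref{prop:fixed-points} that every evacu-shuffle path is connected. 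Your case analysis is sound; in particular, two same-row copies of $i$ must be horizontally adjacent by weak row-increase (or produce a third $i$), contradicting either non-adjacency of the marked pair or the content $\beta+e_i$. The trade-off: your argument leans on the K-theoretic machinery of Sections 4--6 but requires no analysis of the algorithm whatsoever, and it establishes $\Kabg=\eset$ directly, a fact the paper only recovers afterwards as a consequence of $\omega=\id$ via Corollary \ref{cor:w=id}. The paper's proof is more self-contained and more explicit: it describes the elements of $\LRyb$ and the path of the $\ybox$ concretely, which is the description actually reused in the counting argument later in Section \ref{sec:constructions}.
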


\begin{proof}
  Since the Littlewood-Richardson tableau are semistandard and ballot, the $\ybox$ must be in the upper left corner of the (necessarily unique) $2\times 2$ square in any tableau in $\LRyb$.  Moreover, there is a unique copy of each entry greater than $1$ and so these entries form a vertical strip.  Therefore, the entry just below the $\ybox$ must be a $1$, and so the $2\times 2$ square looks like $$\young(\x a,1b)$$ for some $a$ and $b$.  We also have $a<b$ since the tableau is semistandard, and so in particular $b>1$.
 
  Now, we wish to show that any such filling maps to itself under $\omega$.  The first step in $\lesh$ must be $\vertical_1$.  At this step, since $b>1$ and the reading word is ballot, the unique $2$ in the tableau must occur after $\ybox$ in the reading word, and so the transition step is $s=2$.

  At this step, since the entries greater than $1$ appear in reverse reading order by ballotness and each occur exactly once, the smallest $k$ for which the $(k,k+1)$ suffix not tied is $k=b$.  It follows that the $\ybox$ switches with the $b$ as its only Phase 2 move; after this point the remaining $(i,i+1)$-suffixes for $i\ge b$ are empty and therefore tied.

$$\raisebox{-5pt}{\young(\x a,1b)}\xrightarrow{\vertical_1} \raisebox{-5pt}{\young(1a,\x b)} \xrightarrow{\horiz_2} \raisebox{-5pt}{\young(1a,b\x)}\xrightarrow{\text{JDT}} \raisebox{-5pt}{\young(\x a,1b)}$$
  
  Finally, we perform a JDT slide to move the $\ybox$ past the tableau, and we see that all entries are restored to their original position, as shown above.  \qed  
\end{proof}

We will now construct our curve in the Grassmannian $\mathrm{Gr}(m+1,\mathbb{C}^{2m+2})$ so that our shapes fill an $(m+1)\times (m+1)$ rectangle.  

\begin{proposition}
 Let $m$ be a positive integer.  Let $\beta=(m,1,1)$, let $\alpha=(m,m-1,m-2,\ldots,2)$, and let $\gamma^c=(m+1,m,m-1\ldots,4,3,2,2)$.  Then $S(\alpha,\beta,\gamma)$ consists of a disjoint union of exactly $m-1$ copies of $\mathbb{P}^1$.
\end{proposition}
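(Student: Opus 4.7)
The plan is to combine Proposition \ref{prop:2x2hook} with Corollary \ref{cor:w=id}: together they will imply that $S(\alpha,\beta,\gamma)$ is a disjoint union of $|\LRyb|$ copies of $\mathbb{P}^1$, so the problem reduces to computing $|\LRyb|$.

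First I would describe the skew shape explicitly. Since $\alpha_m = \alpha_{m+1} = 0$ while $\gamma^c_m = \gamma^c_{m+1} = 2$, the bottom two rows of $\gamma^c/\alpha$ form a $2 \times 2$ square at columns $1$ and $2$. For $1 \leq i \leq m-1$ we have $\gamma^c_i - \alpha_i = (m+2-i) - (m+1-i) = 1$, so the remainder of $\gamma^c/\alpha$ is a disconnected ribbon with one box in each row, located at column $m+2-i$ in row $i$. In particular $\gamma^c/\alpha$ contains a $2 \times 2$ square, and since $\beta = (m,1,1)$ is a hook, Proposition \ref{prop:2x2hook} applies and gives $\omega = \mathrm{id}$. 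Corollary \ref{cor:w=id} then says $S(\alpha,\beta,\gamma)$ is a disjoint union of $|\LRyb|$ copies of $\mathbb{P}^1$, so it suffices to show $|\LRyb| = m-1$.

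To count $|\LRyb|$, I would invoke the structural observations from the proof of Proposition \ref{prop:2x2hook}: every $T \in \LRyb$ has $\ybox$ at the upper-left corner of the $2 \times 2$ square, i.e. at $(m,1)$, with a $1$ directly below it at $(m+1,1)$. Writing $a$ for the entry at $(m,2)$ and $b$ for the entry at $(m+1,2)$, the $2 \times 2$ square takes the form
\[\young(\x a,1b),\]
with $a < b$ drawn from the content $(m,1,1)$. The possibilities are $(a,b) \in \{(2,3),(1,2),(1,3)\}$. If $(a,b) = (2,3)$, the unique $2$ and $3$ both lie in the square, forcing the ribbon to be entirely filled with $1$'s, giving one tableau. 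If $(a,b) = (1,2)$, the $3$ must go somewhere in the ribbon, but this places it strictly later in reading order than the $2$, violating the ballot condition for $(2,3)$ at the $3$. If $(a,b) = (1,3)$, the $2$ must lie somewhere in the ribbon; ballotness for $(1,2)$ at the $2$'s position requires at least one $1$ weakly after it in reading order, which rules out the topmost row $1$ (which is last in reading order) but permits each of rows $2, 3, \ldots, m-1$, contributing $m-2$ tableaux.

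Summing these cases yields $|\LRyb| = 1 + (m-2) = m-1$, as required. The main (modest) obstacle is the case analysis itself, where the reading-order check for the ballot condition must be performed carefully; everything else follows directly from Proposition \ref{prop:2x2hook} and Corollary \ref{cor:w=id}.
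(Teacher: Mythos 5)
Your proposal is correct and follows essentially the same route as the paper: apply Proposition \ref{prop:2x2hook} and Corollary \ref{cor:w=id} to reduce to counting $|\LRyb|$, then use the forced position of the $\ybox$ and the $3$ in the $2\times 2$ square to find $m-1$ valid placements of the $2$. Your case analysis on $(a,b)$ is just a slight reorganization of the paper's count (which fixes the $3$ at the lower-right corner first and then enumerates positions of the $2$), and both yield $1+(m-2)=m-1$.
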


\begin{proof}
The shape $\gamma^c/\alpha$ consists of a single $2\times 2$ square in the lower left corner plus $m-1$ disconnected boxes to the northeast.  Thus we have $\omega=\id$ by Proposition \ref{prop:2x2hook}, and by Corollary \ref{cor:w=id}, it follows that $S(\alpha,\beta,\gamma)$ is a disjoint union of exactly $|\LRyb|$ copies of $\mathbb{P}^1$.

We claim that $|\LRyb|=m-1$.  Indeed, since $\beta=(m,1,1)$, we wish to count ballot fillings that have one $2$, one $3$, and the rest $1$'s.  Since the $2\times 2$ box is at the lower left corner, the $3$ must be in the lower right corner of the $2\times 2$ box by the ballot and semistandard conditions.  It is easy to check that the $2$ can be in any of the remaining squares except the top row or in the leftmost column of the skew shape.  The positions of the $2$ and $3$ determine the tableau, so we have a total of $m-1$ Littlewood-Richardson tableaux. \qed 
\end{proof}

\section{Conjectures}\label{sec:conjectures}


We recall the conjectural `orbit-by-orbit' inequality:
\begin{conjecture}[Conjecture \ref{conj:orbit-by-orbit}]
Let $\mathcal{O} \subseteq \LRyb$ be an orbit of $\omega$. Let $K_1(\mathcal{O}), K_2(\mathcal{O})$ denote the sets of genomic tableaux occuring in this orbit in Phases 1 and 2 (via the bijections $\varphi_1, \varphi_2$). Then
\[|K_i(\mathcal{O})| \geq |\mathcal{O}| - 1 \qquad (\text{for } i = 1, 2).\]
Note that, by Corollary \ref{cor:antidiag-evacu-path}, it is sufficient to prove this for $\varphi_1$.
\end{conjecture}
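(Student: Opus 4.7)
By Corollary \ref{cor:antidiag-evacu-path}, it suffices to establish the inequality for $K_1(\mathcal{O})$. The plan is to generalize the segment-based argument of Theorem \ref{thm:tworows}. Each application of $\lesh$ consists of $\ell(\beta)$ steps, and the contribution of $T \in \mathcal{O}$ to $K_1(\mathcal{O})$ via $\varphi_1$ equals the number of regular Pieri jumps $\pieri_i$ occurring in $\lesh(T)$. Explicitly, if $s(T)$ is the transition step and $v(T)$ of its Phase 1 moves are vertical slides, then $\lesh(T)$ contributes exactly $s(T) - 1 - v(T)$ Pieri jumps. The conjecture thus reduces to showing
$$\sum_{T \in \mathcal{O}} \bigl(s(T) - 1 - v(T)\bigr) \geq |\mathcal{O}| - 1.$$

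The strategy would be to assign, to each $T \in \mathcal{O}$ except possibly one distinguished tableau $T_0$, a distinct Pieri jump occurring somewhere in the orbit. Following the two-row proof, I would partition the orbit into maximal \emph{segments} during which consecutive $\lesh$ applications remain fully within Phase 1 (transition step $s = \ell(\beta)+1$), with segment boundaries marked by Phase 2 activity. Within a segment, the only tableaux that fail to produce any Pieri jump are those whose entire Phase 1 consists of vertical slides; the aim is to show that such failures force a rigid local structure on the tableau generalizing Definition \ref{def:exceptional}, and that this structure propagates both backward and forward along the segment. The central claim would then be that a \emph{deficient} segment (one with fewer Pieri jumps than tableaux) must begin with an exceptional tableau in the generalized sense, and that at most one exceptional tableau can appear in any single $\omega$-orbit.

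The principal obstacle is isolating the correct generalization of the exceptional configuration. In the two-row case the argument leverages the very simple structure of the ``words'' between rows and the fact that each $\lesh$ has only two steps, so the possibilities $\vertical_1 \pieri_2$ and $\pieri_1 \vertical_2$ completely determine the interaction with the word decomposition. For general $\beta$, a single $\lesh$ can interleave vertical slides, Pieri jumps, and Phase 2 activity across $\ell(\beta)$ steps in many ways, and the condition ``every Phase 1 step is a vertical slide'' is substantially more intricate; characterizing exactly when a segment fails to generate enough Pieri jumps involves simultaneous balancing conditions across many indices $i$. A natural auxiliary tool is the factorization $\omega = \omega_t \cdots \omega_1$ from Theorem \ref{thm:little-orbits}: each $\omega_i$ satisfies the desired inequality orbit-by-orbit by the Pieri-type cycle structure used in the proof of that theorem, and a cycle-lemma style argument tracking how an $\omega$-orbit is assembled from pieces of various $\omega_i$-orbits might suffice. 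However, the orbits of $\omega$ do not refine those of the individual $\omega_i$, and making this interaction precise---likely via an explicit injection from $\mathcal{O} \setminus \{T_0\}$ into $K_1(\mathcal{O})$ that matches each $T$ with a Pieri jump occurring in some (possibly other) tableau of its orbit---is the essential combinatorial difficulty.
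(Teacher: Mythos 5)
There is a fundamental issue to flag first: this statement is an open \emph{conjecture} in the paper. The authors explicitly state that they have proven it only in special cases --- for fixed points of $\omega$ (Proposition \ref{prop:fixed-points}), for the weaker ``order-2'' bound $|K_1(\mathcal{O})|+|K_2(\mathcal{O})| \geq |\mathcal{O}|-1$ (Corollary \ref{cor:order2-orbits}), and for $K_1(\mathcal{O})$ when $\beta$ has two rows (Theorem \ref{thm:tworows}) --- and that they ``do not know a proof in general.'' So there is no proof in the paper for you to match, and any complete argument would be a new result.

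Your proposal, as written, is not such an argument: it is a strategy outline whose central claims are left unestablished, as you yourself acknowledge. The bookkeeping at the start is correct --- Phase 1 of $\lesh(T)$ has $s(T)-1$ moves, each a vertical slide or a Pieri jump, and by Theorem \ref{thm:generating-ktheory} the Pieri jumps across the orbit inject into $K(\gamma^c/\alpha;\beta)$, so the conjecture does reduce to $\sum_{T\in\mathcal{O}} (s(T)-1-v(T)) \geq |\mathcal{O}|-1$. But everything after that is conditional on three things you do not supply: (i) a definition of ``exceptional'' for general $\beta$ generalizing Definition \ref{def:exceptional}; (ii) a proof that a deficient segment must begin with such a tableau (the two-row proof of this uses heavily that $\lesh$ has exactly two steps, so the only deficient patterns are $\vertical_1\pieri_2$ and $\pieri_1\vertical_2$, and that the ``words'' rigidly control propagation --- for general $\beta$ a single $\lesh$ can mix vertical slides and Pieri jumps at many indices, and a tableau contributing zero Pieri jumps need not have any analogous rigid word structure); and (iii) a proof that at most one exceptional tableau occurs per orbit. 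Your fallback via the factorization $\omega = \omega_t\cdots\omega_1$ also does not close the gap, for the reason you note: Theorem \ref{thm:little-orbits} gives the inequality orbit-by-orbit for each $\omega_i$ separately, but the orbits of $\omega$ are not unions of orbits of the $\omega_i$, and a single $\omega$-orbit can intersect many $\omega_i$-orbits in ways that do not obviously preserve the deficit count. To turn this into a proof you would need the explicit injection from $\mathcal{O}\setminus\{T_0\}$ into the set of Pieri jumps of $\mathcal{O}$ that you mention at the end --- that injection is precisely the open problem.
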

We have proven Conjecture \ref{conj:orbit-by-orbit} in certain cases, but do not know a proof in general. This conjecture suggests that there is additional combinatorial structure in the complex curve $S(\mathbb{C})$ -- in particular its irreducible decomposition and, for each irreducible component $S' \subset S(\mathbb{C})$, the number of real connected components of $S'(\mathbb{R})$. We have in mind the following observation:

\begin{proposition} \label{prop:ram-pts}
Suppose $S$ is smooth and let $R = R(\alpha,\beta,\gamma) \subset S(\alpha,\beta,\gamma)$ be the ramification locus of the map $f: S \to \mathbb{P}^1$ of Theorem \ref{thm:intro-2}. Then $R$ is a union of complex conjugate pairs of points and, counted with multiplicity,
\[\tfrac{1}{2}|R(\alpha,\beta,\gamma)| = |\Kabg|.\]
\end{proposition}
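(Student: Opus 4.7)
The plan is to apply the Riemann-Hurwitz formula to the map $f : S \to \mathbb{P}^1$. Since $S$ is assumed smooth, each complex connected component $S_i$ is a smooth projective curve of some genus $g_i$, and the restriction $f_i = f|_{S_i}$ is a finite map to $\mathbb{P}^1$ of some degree $d_i$. On each component I would write
\[2g_i - 2 \;=\; -2 d_i + |R_i|,\]
where $R_i \subset S_i$ is the ramification divisor (counted with multiplicity). Summing over $i$ and using $\chi(\mathcal{O}_S) = \sum_i \chi(\mathcal{O}_{S_i}) = \sum_i (1 - g_i) = \iota(S) - \sum_i g_i$, the $\iota(S)$ terms cancel and I get
\[|R| \;=\; 2\bigl(\deg(f) - \chi(\mathcal{O}_S)\bigr).\]

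Next I would identify the two quantities on the right. By Theorem \ref{thm:intro-2}, the fiber of $f$ over $0 \in \mathbb{P}^1$ is in bijection with $\LRyb$, so $\deg(f) = |\LRyb|$. On the other hand, by the K-theoretic formula recalled in Section \ref{sec:introduction},
\[\chi(\mathcal{O}_S) \;=\; |\LRyb| - |\Kabg|.\]
Substituting yields $|R| = 2|\Kabg|$, which is the claimed equality.

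Finally, to see that $R$ consists of complex conjugate pairs, I would use that $f$ is defined over $\mathbb{R}$ (since the osculating flags are placed at real points $t_i$), so complex conjugation on $S(\mathbb{C})$ preserves the ramification locus. Moreover, by Theorem \ref{thm:intro-2} the real locus $S(\mathbb{R}) \to \mathbb{RP}^1$ is a smooth covering, hence \emph{unramified}. Therefore no point of $R$ is real, and the involution of complex conjugation partitions $R$ into pairs, giving $|R| = 2 \cdot \tfrac{1}{2}|R|$ with $\tfrac{1}{2}|R| = |\Kabg|$.

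The proof is essentially a bookkeeping exercise once Riemann-Hurwitz is in hand; the only subtle point is handling the possibly-disconnected $S$ correctly so that $\sum(2g_i - 2)$ collapses to $-2\chi(\mathcal{O}_S)$. Everything else (the identification of $\deg(f)$ with $|\LRyb|$ and the absence of real ramification) is immediate from the results already quoted in the introduction.
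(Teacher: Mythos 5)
Your proof is correct and follows essentially the same route as the paper: Riemann--Hurwitz applied to $f$, combined with $\deg f = |\LRyb|$, $\chi(\mathcal{O}_S) = |\LRyb| - |\Kabg|$, and the observation that $f$ is defined over $\mathbb{R}$ with no real ramification points (which you justify, correctly, via the smooth covering of $\mathbb{RP}^1$ from Theorem \ref{thm:intro-2}). The component-by-component bookkeeping you carry out is just an expanded form of the single Riemann--Hurwitz identity $\chi(\mathcal{O}_S) = (\deg f)\cdot\chi(\mathcal{O}_{\mathbb{P}^1}) - \tfrac{1}{2}\deg R$ used in the paper.
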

\begin{proof}
The quantity $\tfrac{1}{2}R$ is the number (with multiplicity) of complex conjugate pairs of ramification points because $f$ is defined over $\mathbb{R}$ but none of its ramification points are real. The equation then follows from the Riemann-Hurwitz formula, which states
\[\chi(\mathcal{O}_S) = (\deg f) \cdot \chi(\mathcal{O}_{\mathbb{P}^1}) - \tfrac{1}{2} \deg R.\]
Note that $\deg f = |\LRyb|$, that $\chi(\mathcal{O}_{\mathbb{P}^1}) = 1$, and that $\chi(\mathcal{O}_S) = |\LRyb| - |\Kabg|$.  \qed 
\end{proof}
Proposition \ref{prop:ram-pts} suggests that genomic tableaux be used to index \emph{complex conjugate pairs} of ramification points.

\begin{question}
Is it possible to assign, to each complex conjugate pair of ramification points in $R(\alpha,\beta,\gamma)$, a genomic tableau from $K(\gamma^c/\alpha;\beta)$?
\end{question}

Conjecture \ref{conj:orbit-by-orbit} then suggests assigning to each ramification point $p \in R$ an arc on some component of $S(\mathbb{R})$ -- ideally on the same irreducible component as $p$ -- compatibly with the labeling by genomic tableaux and the bijections $\varphi_i$. Such an assignment would further relate the real and complex topology of $S$. For instance:

\begin{question}
Suppose $T \in \LRyb$ is an $\omega$-fixed point. Let $S' \subseteq S$ be the irreducible component containing $T$. Must $S'$ be a copy of $\mathbb{P}^1$, mapping (via $f$) to $\mathbb{P}^1$ with degree 1?
\end{question}
The converse is true: if some component $S'$ maps isomorphically to $\mathbb{P}^1$, then $S'(\mathbb{R}) \cap f^{-1}(0)$ corresponds to an $\omega$-fixed point under the identification of $f^{-1}(0)$ with $\LRyb$. On the other hand, we have shown (Proposition \ref{prop:fixed-points}) that if \emph{every} $T$ is a fixed point, that is, $\omega$ is the identity, then $S$ is indeed a disjoint union of $\mathbb{P}^1$'s, each mapping isomorphically under $f$. 

\begin{question}
Let $\mathcal{O} \subseteq \LRyb$ be an orbit such that $K_i(\mathcal{O}) = |\mathcal{O}| - 1$ for $i=1,2$. Let $S' \subseteq S$ be the irreducible component containing $\mathcal{O}$. Must $S'$ be a copy of $\mathbb{P}^1$, mapping to $\mathbb{P}^1$ with degree $|\mathcal{O}|$?
\end{question}

If the global inequality \eqref{eqn:ktheory-ineq} is replaced by an equality (and is then true of every orbit), it is possible to show that this is true, i.e. that $S$ is a disjoint union of $\mathbb{P}^1$'s, each mapping to $\mathbb{P}^1$ with the appropriate degree -- in particular, in the Pieri Case. On the other hand, if a single irreducible component $S'$ contains a number of ramification points equal to $(\deg f|_{S'}) - 1$, then the Riemann-Hurwitz formula implies that $g(S') = 0$, i.e. $S' \cong \mathbb{P}^1$ and $S'(\mathbb{R})$ has only one connected component.

Finally, although we have only defined \emph{local} evacuation-shuffling for Littlewood-Richardson tableaux, the evacuation-shuffle $\esh$ is defined on \emph{all} tableaux $(\ybox,T)$ as the conjugation of shuffling by rectification. 
Our results do yield local algorithms for certain other classes of tableaux, such as \emph{lowest}-weight semistandard tableaux, via straightforward alterations to $\lesh$. (For lowest-weight semistandard tableaux, the local algorithm resembles a rotated version of $\lesh^{-1}$.) It would be interesting to understand the actions of $\esh$ and $\omega$ on arbitrary representatives of dual equivalence classes, and on semistandard tableaux in general. We may be more precise:

\begin{conjecture}
Let $T$ be \textbf{any} (semi)standard skew tableau and $\ybox$ an inner co-corner of $T$. There exists a local algorithm for computing $\esh(\ybox,T)$, which does not require rectifying the tableau, such that:
\begin{itemize}
\item[(i)] Each step consists of exchanging the $\ybox$ with an entry of $T$, of weakly increasing value.
\item[(ii)] The slide equivalence class of $T$ is preserved throughout the algorithm.
\item[(iii)] The algorithm specializes to jeu de taquin (if $T$ is of straight shape) and $\lesh$ (if $T$ is ballot).
\end{itemize}
Each step should correspond (by conjugating with rectification) to a jeu de taquin slide of $\ybox$ through the rectification $\rectify(\ybox,T)$. \end{conjecture}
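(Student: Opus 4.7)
My strategy would be to reduce to the standard case via standardization and then mimic the proof of Theorem \ref{thm:main-theorem} using $s$-decompositions. First, I would verify that shuffling commutes with standardization: for a semistandard skew tableau $T$, if $T^{\mathrm{std}}$ denotes its standardization, then $\esh(\ybox,T)^{\mathrm{std}} = \esh(\ybox,T^{\mathrm{std}})$. This is routine from the compatibility of jeu de taquin slides with standardization (ties are broken by reading order, which is preserved by slides). Consequently, any local algorithm on standard tableaux satisfying (i)--(iii) lifts to one on semistandard tableaux, since adjacent entries of equal value in $T$ correspond to consecutive entries of $T^{\mathrm{std}}$ that can only swap with $\ybox$ in a controlled order. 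Thus we may assume $T$ is standard.

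Next, for a standard skew tableau $T$ with inner co-corner $\ybox$, the $s$-decomposition degenerates: every horizontal and vertical ``strip'' is a single square, since each entry of $T$ is unique. Thus the factorization of Lemma \ref{lem:partial-esh} expresses $\esh$ as a composition $e_t \cdots e_1$ of partial evacuation shuffles, each acting on $\ybox$ together with a single entry of $T$. Applying the Pieri case (Theorem \ref{thm:Pieri}) to each $e_i$ produces a single swap (or a ``special jump'') involving $\ybox$ and that entry. To implement this locally in $T$, I would identify, at step $i$, the appropriate entry of $T$ using the slide equivalence class structure (for instance, via descent compositions or growth diagrams), and perform the corresponding swap. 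The resulting path of $\ybox$ splits into Phase 1 moves (passing through the first $s-1$ entries in the rectification order) and Phase 2 moves (passing through the remaining entries in row $s$ of the rectification), with values exchanged weakly increasing. The transition row $s = s(\ybox,T)$ is determined by the shape of $\rectify(T)$ together with the data of where $\ybox$ lands after shuffling, which in turn depends only on the slide equivalence class of $T$.

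The main obstacle is providing a purely local criterion, analogous to ``nearest $i$ before $\ybox$ in reading order'' from $\lesh$, for identifying the target entry at each step in the non-ballot case. Ballotness was essential in Section \ref{sec:main-result} because it supplied canonical reading-order positions for entries sharing a value, positions that were compatible with rectification via the $s$-decomposition. For a standard tableau with no repeated entries, each entry does have a well-defined role in $\rectify(T)$, but that role is a priori encoded nonlocally in the slide equivalence class. My best guess is to record this information as a growth diagram relative to $\ybox$, or equivalently as a chain of dual equivalence classes decomposing $T$ into individual squares; the algorithm would then consult this chain to decide each swap. The antidiagonal symmetry of Corollary \ref{cor:antidiag-evacu-path} should then automatically produce the Phase 2 moves from the Phase 1 moves.

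Finally, once the standard-case algorithm is in hand, I would verify properties (i)--(iii) by induction on the size of $T$: property (ii) follows because each swap is arranged to be a single elementary jeu de taquin slide in some intermediate rectification stage; property (iii) for the ballot case reduces to Theorem \ref{thm:main-theorem}, and for the straight-shape case reduces to the observation that $\rectify$ is the identity on straight-shape tableaux. The weakly-increasing condition in (i) would follow from the fact that Phase 1 of the conjectured algorithm exchanges $\ybox$ with entries in rectification order, which is monotonic by construction.
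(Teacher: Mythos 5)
This statement is one of the paper's open conjectures (Section \ref{sec:conjectures}); the paper offers no proof of it, and your proposal does not close the gap either. The crux of the conjecture is the existence of a \emph{local} rule -- one that avoids rectification -- for selecting the square that $\ybox$ exchanges with at each step. In the ballot case this rule is ``nearest $i$ prior to $\ybox$ in reading order'' (or its Phase 2 analogue), and the entire content of Theorem \ref{thm:main-theorem} is that this reading-order criterion reproduces the partial evacuation shuffles $e_i$. At exactly this point your proposal says the target entry should be identified ``using the slide equivalence class structure (for instance, via descent compositions or growth diagrams)'' or by consulting ``a chain of dual equivalence classes decomposing $T$ into individual squares.'' But that data is precisely the global information the conjecture asks you to avoid: computing the growth diagram of $(\ybox,T)$ against a rectification order is computationally and conceptually equivalent to rectifying. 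So the step you defer is not a technical detail; it is the conjecture.

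There is also a structural problem with transporting the $s$-decomposition machinery. The two-phase shape of $\lesh$ (Phase 1 down through $s-1$ horizontal strips, then Phase 2 right through $\beta_s$ vertical strips) comes from the fact that for ballot $T$ the rectification is the highest-weight tableau of shape $\beta$, so the shuffle path of $\ybox$ through $\rectify(T)$ is the L-shaped path of Figure \ref{fig:R-diagram}, and Lemma \ref{lem:5-facts} (which identifies the strips $H_i$, $V_j$ inside the skew tableau) leans on ballotness and Knuth equivalence with the highest-weight word. For a general standard $T$, the rectification is an arbitrary standard straight-shape tableau and the jeu de taquin path of $\ybox$ through it is an arbitrary monotone lattice path with many bends; a decomposition of the form $(r_1,\ldots,r_{s-1},c_s,\ldots,c_t)$ -- all rows first, then all columns -- cannot track such a path, and one would need an interleaved generalization whose strips can still be located inside the skew tableau without rectifying. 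The paper explicitly flags finding such a generalization as an open problem in the same section. Your standardization reduction and the appeal to Corollary \ref{cor:antidiag-evacu-path} for Phase 2 are reasonable ingredients for an eventual proof, but as written the proposal restates the conjecture at its hardest point rather than resolving it.
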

It would also be interesting to investigate how such algorithms might relate to K-theoretic Schubert calculus.

For a \emph{straight-shape} tableau $T$ that is \emph{not} highest-weight, the shuffle path of the $\ybox$ is just the path given by jeu de taquin slides through $T$. It would be interesting to find a generalization of the $s$-decomposition that describes this shuffle path, and that gives rise to a local algorithm on any skew tableau $T'$ whose rectification is $T$. 

We may also ask analogous questions for computing $\esh(S,T)$ locally, where both $S$ and $T$ may have more than one box.


\begin{thebibliography}{99}

\bibitem{bib:BerKir} A. D. Berenstein, A. N. Kirillov, Domino tableaux, Sch\"{u}tzenberger involution and action of the symmetric group, {\em Discrete Math.}, 225(1--3):5--24, 2000.

\bibitem{bib:Chan} M. Chan, A. L\'{o}pez Mart{\i}n, N Pflueger, M. Teixidor i Bigas. Genera of Brill-Noether curves and staircase paths in Young tableaux. {\em Preprint}, arXiv:1506.00516, 2015.

\bibitem{bib:EH86} D. Eisenbud and J. Harris, Limit linear series: Basic theory, {\em Invent. Math.}, 85(2):337--371, 1986.

 \bibitem{bib:Fulton} W.~Fulton, \textit{Young Tableaux}, London Math. Soc. Student Texts \textbf{35}, Cambridge University Press (1997).

\bibitem{bib:Gillespie} M.~Gillespie, ``What do Schubert curves, Young tableaux, and K-theory have in common? (Part I).'' \textit{Mathematical Gemstones}, Web. 18 Jan 2016.                      \textlangle{}http://mathematicalgemstones.com\textrangle{}

 \bibitem{bib:GrHa} B. Gross and J. Harris, Real algebraic curves, {\em Ann. Sci. {\'{E}}c. {N}orm. {S}up{\'e}r.}, 14(2):157--182, 1981.
 
 \bibitem{bib:Haiman} M.~Haiman, Dual equivalence with applications, including a conjecture of Proctor, \textit{Discrete Math.} \textbf{92} (1992), 79--113.
 
 \bibitem{bib:HenrKam} A. Henriques, J. Kamnitzer, Crystals and coboundary categories, {\em Duke Math. J.}, 132 (2):191--216, 2006.
 
 \bibitem{bib:KirBer} A. N. Kirillov, A. D. Berenstein, Groups generated by involutions, Gelfand--Tsetlin patterns, and combinatorics of Young tableaux, {\em Algebra i Analiz}, 7(1):92--152, 1995.
 
  \bibitem{bib:Levinson} J.~Levinson, One-dimensional Schubert problems with respect to osculating flags, {\em Canadian Journal of Mathematics} (2016). doi:10.4153/CJM-2015-061-1

\bibitem{bib:MTV09} E.~Mukhin, V.~Tarasov, and A.~Varchenko, Schubert calculus and representations of the general linear group, {\em J. Amer. Math. Soc.}, 22(4):909--940, 2009.

\bibitem{bib:Oss06} B. Osserman, A limit linear series moduli scheme. {\em Ann. Inst. Four.}, 56(4):1165--1205, 2006.

\bibitem{bib:PakVallejo} I.~Pak and E.~Vallejo, Reductions of Young Tableau Bijections, {\em SIAM J. Discr. Math.}, 24(1):113--145, 2010.

 \bibitem{bib:Pechenik2} O.~Pechenik, A.~Yong, Equivariant $K$-theory of Grassmannians, \textit{Preprint}, arXiv:1506.01992.

 \bibitem{bib:Pechenik} O.~Pechenik, A.~Yong, Genomic Tableaux, \textit{Preprint}, arXiv:1603.08490.

 \bibitem{bib:Pur10} K. Purbhoo, Jeu de taquin and a monodromy problem for {W}ronskians of
  polynomials, {\em Adv. Math.}, 224(3):827--862, 2010.
 
 \bibitem{bib:Pur13} K. Purbhoo, Wronskians, cyclic group actions, and ribbon tableaux, {\em Trans. Amer. Math. Soc.}, 365 (2013), 1977--2030. 
 
 \bibitem{bib:ThomasYong} H.~Thomas, A.~Yong, A jeu de taquin theory for increasing tableaux, with applications to K-theoretic Schubert calculus, \textit{Algebra Number Theory} \textbf{3} (2009), no. 2, 121--148.
  
  \bibitem{sage} \emph{SageMath, the Sage Mathematics Software System (Version 7.3)}, The Sage Developers, 2016, {\tt http://www.sagemath.org}.
  
  \bibitem{bib:Sot10} F. Sottile, Frontiers of reality in {S}chubert calculus, {\em Bull. Amer. Math. Soc.}, 47(1):31--71, 2010.

 \bibitem{bib:Speyer} D.~Speyer, Schubert problems with respect to osculating flags of stable rational curves, \textit{Algebraic Geometry},
1:14--45, 2014.

 \bibitem{bib:StanleyEC2} R.~Stanley, \textit{Enumerative Combinatorics}, Vol. 2, Cambridge University Press (1999).

\end{thebibliography}
\end{document}